\newcommand*{\forbackarrow}[2]{\mathrel{
  \settowidth{\@tempdima}{$\scriptstyle#1$}
  \settowidth{\@tempdimb}{$\scriptstyle#2$}
  \ifdim\@tempdimb>\@tempdima \@tempdima=\@tempdimb\fi
  \mathop{\vcenter{
    \offinterlineskip\ialign{\hbox to\dimexpr\@tempdima+1em{##}\cr
    \rightarrowfill\cr\noalign{\kern.5ex}
    \leftarrowfill\cr}}}\limits^{\!#1}_{\!#2}}}
\numberwithin{equation}{section} \DeclareMathSizes{2}{10}{12}{13}
\newtheorem{thm}{Proposition}[section]
\newtheorem{cor}[thm]{Corollary}
\newtheorem{lem}[thm]{Lemma}
\newtheorem{defn}[thm]{Definition}
\newtheorem{exm}[thm]{Example}
\title{Classifying subcategories, local algebra and the Rosenberg  spectrum of a locally noetherian category}
\author{Abhishek Banerjee \footnote{Dept. of Mathematics, Indian Institute of Science, Bangalore-560012, Karnataka, India. Email: abhishekbanerjee1313@gmail.com }}
\date{ }
\begin{document}

\maketitle

\begin{abstract} Let $\mathcal A$ be a locally noetherian Grothendieck category. In this paper, we study subcategories 
of $\mathcal A$ using subsets of the Rosenberg spectrum $\mathfrak Spec(\mathcal A)$. Along the way, we also develop results in  local algebra with respect to the category $\mathcal A$ that we believe to be of independent interest. 
\end{abstract}

\medskip

\medskip
\emph{MSC(2010) Subject Classification: 16B50, 16D90, 16P40, 18E15} 

\medskip
\emph{Keywords: Noetherian, Module categories, Grothendieck categories}

\section{Introduction}

\medskip

\medskip
Let $R$ be a commutative ring having Zariski spectrum $Spec(R)$ and let $R-Mod$ be the category of $R$-modules. Then, the study of subcategories of $R-Mod$ using the
subsets of $Spec(R)$ is a topic that is well developed in the literature, especially when $R$ is noetherian (see, for example,  
\cite{Four}, \cite{Nee}, \cite{SW}, \cite{Taka}). 
For instance, a theorem of Garkusha and Prest \cite{GP1}, \cite{GP2} shows that torsion classes of finite type in 
$R-Mod$ correspond to complements of intersections of quasi-compact open subsets of $Spec(R)$.  If $R$ is a  commutative and noetherian, a theorem of H\"{u}gel,  Posp\'{i}\v{s}il, \v{S}\v{t}ov\'{i}\v{c}ek and Trlifaj \cite{Four}  shows that  that there is a one-one correspondence between
specialization closed subsets of the Zariski spectrum $Spec(R)$ and hereditary torsion pairs on the category of
$R$-modules. The connection between supports, torsion classes and subsets of $Spec(R)$ is a highly developed topic in the literature, studied by several
authors (see, for instance, Bravo, Odaba\c{s}i, Parra and P\'{e}rez \cite{BOPP}, Enochs, Garcia Rozas and Oyonarte \cite{EGRO}, Izurdiaga \cite{Izur}, Lindo \cite{Hayd}, Parra and Saor\'{i}n \cite{PS}). 

\smallskip
In this paper, our objective is to take a more general approach to this genre of results by working with a locally noetherian
Grothendieck category $\mathcal A$ instead of the category of modules over a  commutative noetherian ring. We feel
that this categorical approach vastly expands the scope of such results beyond commutative algebra. Consequently, 
most of our conclusions in this paper apply to large classes of noncommutative rings. Further, they also apply to the category
of quasi-coherent sheaves over a separated noetherian scheme.  We mention here our recent work in \cite{ABJp}, where we gave conditions for a Grothendieck
category to be locally noetherian, thus generalizing two criteria due to  Guil Asensio, Izurdiaga and Torrecillas \cite{GAIT} and Mohamed and M\"{u}ller \cite{MaMu} respectively. We therefore hope to continue our efforts to study properties of modules over a commutative noetherian ring in the more general context of locally noetherian Grothendieck categories.

\smallskip
In place of the Zariski spectrum $Spec(R)$ of a commutative ring, we will use the spectrum $\mathfrak Spec(\mathcal A)$ of an abelian category $\mathcal A$ constructed by
Rosenberg  \cite{Rose2}, \cite{R3}, \cite{Rose1} in order to study noncommutative algebraic geometry. A famous
result of Gabriel \cite{Gab} from the 1960s shows that a noetherian scheme may be reconstructed from its category
of quasi-coherent sheaves. The spectrum construction $\mathfrak Spec(\mathcal A)$ was used by Rosenberg \cite{R3}
to prove this for separated schemes, i.e., two separated schemes $X$ and $Y$ are isomorphic if and only
if they have equivalent categories $QCoh(X)$ and $QCoh(Y)$ of quasi-coherent sheaves.

\smallskip
 Each point of the spectrum $\mathfrak Spec(\mathcal A)$ corresponds to an equivalence
class $\langle P\rangle$ of some spectral object $P\in Spec(\mathcal A)$ (see Rosenberg \cite{Rose2}, \cite{Rose1}). The striking fact is that when $\mathcal A$
is the category $R-Mod$ of modules over a commutative ring, every spectral object of $R-Mod$ is equivalent to
some quotient $R/\mathfrak{p}$, with $\mathfrak{p}$ being a prime ideal in $R$. Here, we also refer the reader to the series of papers by Kontsevich
and Rosenberg on noncommutative spaces and noncommutative schemes (see \cite{KR1}, \cite{KR2}, \cite{KR3}, \cite{KR4}).  We also note here the approach of  Kanda towards studying commutative algebra
over Grothendieck categories by means of the atom spectrum (see  \cite{Kanda1}, 
\cite{Kanda2}, \cite{Kanda3}, \cite{Kanda4}).   When $\mathcal A$ is a locally noetherian Grothendieck category,
we know (see \cite[Theorem 5.9]{Kanda1}) that the atom spectrum of $\mathcal A$ is homeomorphic to the Ziegler spectrum $Zg(\mathcal A)$ of $\mathcal A$. Under suitable conditions, we have shown (see Proposition \ref{P8.65jq}) that the points of the Rosenberg spectrum $\mathfrak Spec(\mathcal A)$ are bijective
to the points of $Zg(\mathcal A)$ and hence to the points of the atom spectrum. 

\smallskip
Each point $\langle P\rangle\in \mathfrak Spec(\mathcal A)$ corresponds to a localizing subcategory
of $\mathcal A$ which is still denoted by $\langle P\rangle$. Accordingly, there is a canonical functor
$L_{\langle P\rangle}:\mathcal A\longrightarrow \mathcal A/\langle P\rangle$ as well as its right adjoint
$i_{\langle P\rangle}:\mathcal A/\langle P\rangle\longrightarrow \mathcal A$. With this in hand, Rosenberg \cite{Rose2} 
was able to construct good notions of associated points $Ass(M)$ and support $Supp(M)$ for each object $M\in \mathcal A$. This
categorical form of local algebra from \cite{Rose2} will be the main tool for our results in this paper. 

\smallskip
If $\mathcal A$ is a locally noetherian Grothendieck category, it is well known that  the subcategory $\mathcal A_{fg}$ of finitely generated
objects of $\mathcal A$ is an abelian subcategory. Under certain conditions, we show in Proposition \ref{P3.9} that there is a bijection between Serre subcategories
of $\mathcal A_{fg}$ and specialization closed subsets of $\mathfrak Spec(\mathcal A)$ (also compare \cite[Theorem 5.5]{Kanda1}).    For a commutative noetherian ring $R$, it was shown by Stanley and Wang \cite{SW} that Serre
subcategories of the category  $R-mod$ of finitely generated $R$-modules coincide with torsion classes, as also with thick and narrow subcategories of $R-mod$. We prove a counterpart of this result in Proposition \ref{AShah}. 

\smallskip
Since $\mathcal A$ is a Grothendieck category, every object $M\in \mathcal A$ has an injective hull $E(M)$. In
particular, the injective
hulls of spectral objects of $\mathcal A$ will play an important role in connecting specialization closed
subsets of $\mathfrak Spec(\mathcal A)$ to torsion pairs on $\mathcal A$. In particular, we give in Proposition \ref{P5.1} a one-one correspondence between hereditary torsion pairs on 
$\mathcal A$ and specialization closed subsets of the spectrum $\mathfrak Spec(\mathcal A)$.  The explicit description of these torsion pairs is given in terms of spectral objects of 
$\mathcal A$. 
We also develop  in Section 5 results on supports, associated
points and essential extensions of objects of $\mathcal A$ that we feel are of independent interest.

\smallskip
Thereafter, in Section 6, we begin to study resolving subcategories (as defined by  \v{S}\v{t}ov\'{i}\v{c}ek \cite{Stov}) with the help of subsets of
$\mathfrak Spec(\mathcal A)$. The main property of a resolving subcategory $\mathcal C\subseteq \mathcal A$
is that every object $M\in \mathcal A$ is the target of an epimorphism $C\twoheadrightarrow M$ with 
$C\in \mathcal C$.  This allows us to study syzygy like objects for each $M\in \mathcal A$ with respect to the 
resolving subcategory $\mathcal C\subseteq \mathcal A$. For given $n\geq 1$, we say that a resolving subcategory
$\mathcal C\subseteq \mathcal A$ is $n$-resolving if every object $M\in \mathcal A$ fits into an exact sequence
\begin{equation*}
0\longrightarrow C_n\longrightarrow C_{n-1}\longrightarrow \dots \longrightarrow C_0\longrightarrow M\longrightarrow 0
\end{equation*}
with $C_i\in \mathcal C$ for each $0\leq i\leq n$.  We show in Proposition \ref{P6.5} that   $1$-resolving subcategories $\mathcal C\subseteq \mathcal A$ that are torsion free classes of hereditary
torsion pairs correspond to certain 
specialization closed subsets $S\subseteq \mathfrak Spec(\mathcal A)$.

\smallskip
In order to study $n$-resolving subcategories, we develop a better understanding of local algebra in the 
category  $\mathcal A$. Since $\mathcal A$ is a Grothendieck category, we can consider for any object 
$M\in \mathcal A$ the cosyzygy $\mho_1(M):=E(M)/M$ as well as higher cosyzygies $\mho_k(M):=\mho_1(\mho_{k-1}(M))$
for $k>1$. The main result of Section 7 is a characterization of associated points of higher cosyzygies of objects
of $\mathcal A$ under certain conditions. 
\begin{equation*}
\begin{array}{c}
Ass(M)=\{\mbox{$\langle P\rangle $ $\vert$ $P$ is spectral and $Hom_{\mathcal A/\langle P\rangle}(L_{\langle P\rangle}(P),L_{\langle P\rangle}(M))\ne 0$}\}\\
 Ass(\mho_k(M))=\{\mbox{$\langle P\rangle$ $\vert$ $P$ is spectral and 
 $Ext^k_{\mathcal A/\langle P\rangle}(L_{\langle P\rangle}(P),L_{\langle P\rangle}(M))\ne 0$}\}\\
\end{array}
\end{equation*} These results are motivated by a famous lemma of Bass that gives a criterion for associated primes of higher cosyzygies
of modules over commutative noetherian rings using $Ext$ groups. 

\smallskip

We now fix a generator $G\in \mathcal A$ and consider decreasing sequences $\tilde{Y}=(Y_1\supseteq ... \supseteq Y_n)$
of subsets of $\mathfrak Spec(\mathcal A)$ satisfying $Ass(\mho_{i-1}(G))\cap Y_i=\phi$ for $1\leq i\leq n$. We refer 
to these as $G$-sequences of length $n$. On the other hand, any $n$-resolving subcategory $\mathcal C\subseteq
\mathcal A$ leads to a chain
\begin{equation*}
\mathcal A=\mathcal C_{(n+1)}\supseteq \mathcal C_{(n)} \supseteq ... \supseteq \mathcal C_{(1)}=\mathcal C
\end{equation*}  of subcategories with $\mathcal C_{(j)}$ being an $(n-j+1)$-resolving subcategory. In Section 8, we give conditions under which  $G$-sequences
of length $n$ are in one-one correspondence with certain  $n$-resolving subcategories of $\mathcal A$ (see Proposition \ref{PR8.11}). Although not an analogue, this result 
is motivated by the techniques of  H\"{u}gel,  Posp\'{i}\v{s}il, \v{S}\v{t}ov\'{i}\v{c}ek and Trlifaj \cite[Theorem 3.7]{Four} in showing a bijection between certain sequences of
specialization closed subsets of $Spec(R)$ and $n$-cotilting classes of $R$-modules. 

 \smallskip
In Section 9, we deal with arbitrary subsets of $\mathfrak Spec(\mathcal A)$ (i.e., not necessarily specialization closed). Our main result  in this respect is to describe a correspondence
between arbitrary  subsets of $\mathfrak Spec(\mathcal A)$ and subcategories of finitely generated objects that are closed under subobjects, finite direct sums and essential
extensions. We conclude in Section 10 by giving examples of abelian categories whose subcategories may be studied using the theory developed in the paper.
 
\smallskip
{\bf Acknowledgements:} The author is grateful for the hospitality of the Fields Institute in Toronto, where some of this paper was written.

\medskip

\section{Preliminaries on the spectrum of an abelian category}

\medskip

\medskip
Let $\mathcal A$ be an abelian category that satisfies (AB5), i.e., the category $\mathcal A$ has arbitrary direct
sums and filtered colimits commute with finite limits. We recall the following three kinds of objects in $\mathcal A$ 
(see \cite{AdRos} and \cite{Bo} for definitions) 

\smallskip
(1) An object $X\in \mathcal A$ is said to be finitely generated if we have 
\begin{equation*}
\underset{\lambda\in\Lambda}{\varinjlim}\textrm{ }Hom(X,Y_\lambda)\cong Hom(X,\underset{\lambda\in\Lambda}{\varinjlim}\textrm{ }
Y_\lambda)
\end{equation*} for any filtered system $\{Y_\lambda\}_{\lambda\in \Lambda}$ of objects in $\mathcal A$
connected by monomorphisms. 

\smallskip
(2) An object $X\in \mathcal A$ is said to be finitely presented if the functor $Hom(X,\_\_):\mathcal A
\longrightarrow \mathbf{Ab}$ to the category $\mathbf{Ab}$ of abelian groups preserves filtered colimits. 

\smallskip
(3) An object $X\in \mathcal A$ is said to be noetherian if every subobject of $X$ is finitely generated. 

\smallskip
The (AB5) category $\mathcal A$ is said to be locally noetherian if it has a small generating family $\{G_i\}_{i\in I}$ of noetherian objects. In that case, the object 
$G:=\underset{i\in I}{\bigoplus}G_i$ is a generator for $\mathcal A$ and  it follows that
$\mathcal A$ is a Grothendieck category. Then, (see \cite[Proposition 1.9.1]{Tohoku})  every object $X\in \mathcal A$
may be expressed as the quotient of a direct sum of (possibly infinitely many copies) of the generator $G$. This implies, in particular,
that $\mathcal A$ is locally finitely generated, i.e., every object in $\mathcal A$ may be expressed as the filtered
colimit of its finitely generated subobjects.

\smallskip
In a locally noetherian category $\mathcal A$, the finitely generated objects coincide with the noetherian objects and also with the finitely presented objects. Further, the full subcategory $\mathcal A_{fg}$ of finitely generated
objects  is an abelian category. 

\smallskip
We will now describe the spectrum $\mathfrak Spec(\mathcal A)$ of an abelian category $\mathcal A$ constructed by Rosenberg \cite{Rose2}, \cite{Rose1}. This begins with the construction of a relation  on the objects of
the category: $X\prec Y$ for objects $X$, $Y\in \mathcal A$ if $X$ is the subquotient of a direct sum of finitely
many copies
of $Y$. This gives rise to an equivalence relation $\approx$ as follows : $X\approx Y$ if and only if 
$X\prec Y\prec X$. The equivalence class of an object $X\in \mathcal A$ is denoted by $[X]$. Further, for 
any $X\in \mathcal A$, we set $\langle X\rangle$ to be the full subcategory of $\mathcal A$ whose objects are 
given by:
\begin{equation}
Ob(\langle X\rangle) :=Ob(\mathcal A) - \{\mbox{$Y\in Ob(\mathcal A)$ $\vert$ $X\prec Y$}\}
\end{equation} It may be easily verified that $X\prec Y$ if and only if $\langle X\rangle \subseteq \langle Y
\rangle$. 

\smallskip
An object $P\in \mathcal A$ is said to be spectral if $P\ne 0$ and   $Q\approx P$ for any nonzero subobject
$0\ne Q\subseteq P$. The collection of spectral objects of $\mathcal A$ will be denoted by $Spec(\mathcal A)$. The remarkable fact is that when $\mathcal A=R-Mod$, the category of modules over a commutative
ring $R$, each spectral object of $R-Mod$ is equivalent to the quotient $R/\mathfrak{p}$ over a prime ideal $
\mathfrak{p}\subseteq R$. 

\smallskip The
spectrum of the abelian category is now defined as follows:
\begin{equation}\label{spectrum}
\mathfrak Spec(\mathcal A):=\{\mbox{$\langle P\rangle$ $\vert$ $P\in \mathcal A$ is spectral }\}
\end{equation} The support of an object $M\in \mathcal A$ is defined as follows:
\begin{equation}\label{support}
Supp(M):=\{\mbox{$\langle P\rangle\in \mathfrak Spec(\mathcal A) $ $\vert$ $M\notin Ob(\langle P\rangle)$ }\}
=\{\mbox{$\langle P\rangle$ $\vert$ $P\in Spec(\mathcal A)$ and $P\prec M$}\}
\end{equation} In other words, we have:
\begin{equation}\label{supt}
\langle P\rangle \in Supp(M) \qquad \Leftrightarrow \qquad P\prec M 
\end{equation} for any $P\in Spec(\mathcal A)$. 
The associated points of an object $M\in \mathcal A$ are defined as follows (see \cite[$\S$ 8]{Rose2}):
\begin{equation}\label{ass}
Ass(M):=\{\mbox{$\langle P\rangle\in \mathfrak Spec(\mathcal A) $ $\vert$ $P\in Spec(\mathcal A)$ and $P\subseteq M$ }\}
\end{equation} It is evident that $Ass(M)\subseteq Supp(M)$ for each $M\in \mathcal A$.

\smallskip 
There are several topologies that may be considered on $\mathfrak Spec(\mathcal A)$ that may be studied in 
various ways, but the facts that we will need most in the paper are as follows (see \cite[$\S$ 1.6]{Rose1}) : 

\smallskip
(1) The topology $\tau_*$ on $\mathfrak Spec(\mathcal A)$ : A base of closed sets for $\tau_*$ is given by
the collection of all $Supp(M)$, where $M$ varies over all finite direct sums of spectral objects of $\mathcal A$. 

\smallskip
(2) The topology $\tau^*$ on $\mathfrak Spec(\mathcal A)$ : A base of closed sets for $\tau^*$ is given by 
the collection of all $Supp(M)$, where $M$ varies over all finitely generated objects of $\mathcal A$. 

\smallskip
(3) When $\mathcal A$ is a locally noetherian category such that every nonzero object in $\mathcal A$ has an associated
point, i.e., $Ass(M)\ne\phi$ for each $0\ne M\in \mathcal A$, the topologies $\tau^*$ and $\tau_*$ on $\mathfrak Spec(\mathcal A)$ coincide (see \cite[Corollary 1.6.4.3]{Rose1}). 

\section{Serre subcategories in terms of supports}

\smallskip

Throughout this section and the rest of this paper, we will assume that $\mathcal A$ is a locally noetherian Grothendieck category. Further, we always assume that every nonzero object in $\mathcal A$ has an associated point. Accordingly, we consider
the  spectrum $\mathfrak Spec(\mathcal A)$ with the topology $\tau_*=\tau^*$ explained as in Section 2. 

\smallskip
\begin{lem}\label{L3.1} The collection of 
subcategories of $\mathcal A_{fg}$ that are both full and replete forms a set.
\end{lem}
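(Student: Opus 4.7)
The plan is to reduce the claim to showing that $\mathcal{A}_{fg}$ is essentially small, \emph{i.e.}, that its isomorphism classes of objects form a set. Given this, a full and replete subcategory $\mathcal{C} \subseteq \mathcal{A}_{fg}$ is uniquely determined by the set of isomorphism classes of objects it contains, so the class of all such $\mathcal{C}$ embeds into the power set of a set and is therefore itself a set.

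To prove essential smallness of $\mathcal{A}_{fg}$, I would start from the small generating family $\{G_i\}_{i \in I}$ of noetherian objects provided by the assumption that $\mathcal{A}$ is locally noetherian. The first step is to verify that every finitely generated $X \in \mathcal{A}$ is a quotient of some finite direct sum $\bigoplus_{k=1}^n G_{i_k}$. Since $\{G_i\}_{i \in I}$ generates, there is an epimorphism $p: \bigoplus_{j \in J} G_{\alpha(j)} \twoheadrightarrow X$ for some index set $J$ and some map $\alpha : J \to I$. Writing $X$ as the directed union of the images $p(\bigoplus_{j \in F} G_{\alpha(j)})$ over finite subsets $F \subseteq J$, the defining property of finite generation (namely that $\mathrm{Hom}(X,-)$ commutes with filtered colimits of monomorphisms) forces this union to be attained at some finite stage, giving the required epimorphism from a finite direct sum.

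The second ingredient is the well-poweredness of the Grothendieck category $\mathcal{A}$: for each object, its collection of subobjects, and equivalently its collection of quotients, forms a set. Since the finite tuples $(i_1, \ldots, i_n)$ with entries in $I$ themselves form a set, the isomorphism classes of finitely generated objects of $\mathcal{A}$ arise as a set-indexed union of sets of quotients of the objects $\bigoplus_{k=1}^n G_{i_k}$, and hence form a set. Combining this with the reduction in the first paragraph completes the argument.

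The argument is essentially set-theoretic and no serious obstacle arises. The only point needing care is the passage from the categorical definition of finite generation, via filtered colimits of monomorphisms, to the concrete statement that every finitely generated $X$ is a quotient of a finite direct sum of the noetherian generators; everything else follows from standard facts about Grothendieck categories.
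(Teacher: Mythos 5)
Your proposal is correct and follows essentially the same route as the paper: reduce to showing $\mathcal A_{fg}$ is essentially small, realize every finitely generated object as a quotient of a member of a set-indexed family built from the noetherian generators (the paper uses $G^n$ with $G=\bigoplus_{i\in I}G_i$, you use finite sums $\bigoplus_{k=1}^n G_{i_k}$ — an immaterial difference), and conclude by well-poweredness of the Grothendieck category. Your explicit justification of the passage from an infinite to a finite direct sum via the filtered-colimit definition of finite generation is the same step the paper asserts more briefly.
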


\begin{proof} Let $\mathcal C\subseteq \mathcal A_{fg}$ be a full subcategory. Then 
$\mathcal C$ can be described completely by specifying its objects. Additionally, suppose that $\mathcal C$
is replete, i.e., if $X\in \mathcal C$ then $Y\in \mathcal C$ for any object $Y\in \mathcal A_{fg}$ isomorphic
to $X$. Hence, $\mathcal C$ can be described by specifying the isomorphism classes of objects of $\mathcal A_{fg}$
lying in $\mathcal C$.

\smallskip We now choose a generator $G$ for the locally noetherian (hence Grothendieck) category
$\mathcal A$.  We have mentioned before in Section 2 that every object $X\in \mathcal A$ may be expressed
as the quotient of a direct sum of (possibly infinitely many) copies of $G$. Additionally, if $X\in \mathcal A_{fg}$, i.e., 
$X$ is finitely generated, it follows that there exists an integer $n\geq 1$ such that $X$ is a quotient of a 
direct sum of $n$-copies of $G$. 

\smallskip
Finally, since a Grothendieck category is well powered (see, for instance, \cite[Proposition IV.6.6]{Bo}), the subobjects 
(and hence quotients) of each $G^n$ form a set. It follows that 
the isomorphism classes of objects in $\mathcal A_{fg}$ also form a set. This proves the result. 
\end{proof}

\smallskip
\begin{lem}\label{L3.2} Let $P\in Spec(\mathcal A)$ be a spectral object. Then, $P$ is finitely generated, i.e., 
$P\in \mathcal A_{fg}$. 
\end{lem}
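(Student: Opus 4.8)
The plan is to show that a spectral object $P$ is necessarily noetherian, which in a locally noetherian category is equivalent to being finitely generated. The key structural fact to exploit is that $\mathcal A$ is locally finitely generated: $P$ is the filtered colimit of its finitely generated subobjects $\{P_\lambda\}_{\lambda \in \Lambda}$, and since every nonzero object has an associated point, there is at least one nonzero such subobject (indeed, an associated point is represented by a spectral, hence nonzero, subobject of $P$, but more simply $P \neq 0$ forces some $P_\lambda \neq 0$). So first I would pick a nonzero finitely generated subobject $0 \neq P_0 \subseteq P$.

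Next I would use the defining property of a spectral object. Since $0 \neq P_0 \subseteq P$ and $P$ is spectral, we have $P_0 \approx P$, i.e., $P \prec P_0$ and $P_0 \prec P$. The crucial consequence is $P \prec P_0$: the object $P$ is a subquotient of a direct sum of finitely many copies of $P_0$. Since $P_0$ is finitely generated and $\mathcal A$ is locally noetherian, $P_0$ is noetherian; a finite direct sum $P_0^{\oplus n}$ of noetherian objects is noetherian; and any subquotient of a noetherian object is again noetherian. Hence $P$ is noetherian, and therefore finitely generated, so $P \in \mathcal A_{fg}$.

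The only point requiring a little care is the passage from ``$P \neq 0$ has an associated point'' to ``$P$ has a nonzero finitely generated subobject.'' In fact this is automatic in any locally finitely generated category: writing $P = \varinjlim_\lambda P_\lambda$ as the filtered colimit of its finitely generated subobjects, if all $P_\lambda$ were zero then $P$ itself would be zero. So I would not even need the associated-point hypothesis for this lemma; I would simply remark that $P$ being nonzero guarantees a nonzero finitely generated (hence noetherian) subobject $P_0$, and then invoke spectrality to get $P \prec P_0$. The main ``obstacle'' — really just the load-bearing observation — is recognizing that the relation $\prec$ transports the noetherian property downward: a subquotient of $P_0^{\oplus n}$ inherits noetherianity, and this is exactly what $P \prec P_0$ delivers.

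To summarize the steps in order: (i) since $P \neq 0$ and $\mathcal A$ is locally finitely generated, choose a nonzero finitely generated subobject $0 \neq P_0 \subseteq P$; (ii) since $P$ is spectral, $P_0 \approx P$, so in particular $P \prec P_0$; (iii) since $\mathcal A$ is locally noetherian, $P_0$ finitely generated implies $P_0$ noetherian, hence $P_0^{\oplus n}$ is noetherian for every $n$, hence every subquotient of $P_0^{\oplus n}$ is noetherian; (iv) by (ii), $P$ is such a subquotient, so $P$ is noetherian, and in a locally noetherian category this means $P \in \mathcal A_{fg}$.
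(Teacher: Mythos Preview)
Your proposal is correct and follows essentially the same approach as the paper. The only cosmetic difference is how the nonzero finitely generated subobject is produced: the paper picks a nonzero morphism $\phi:G_{i_0}\to P$ from a noetherian generator and takes $Q:=\mathrm{Im}(\phi)$, whereas you use local finite generation directly to find $0\ne P_0\subseteq P$; after that, both arguments invoke spectrality to get $P\prec Q$ (resp.\ $P\prec P_0$) and conclude via the noetherian property.
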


\begin{proof} We consider a generating family $\{G_i\}_{i\in I}$ of noetherian objects for the category
$\mathcal A$. Then, there exists $i_0\in I$ such that we have a nonzero morphism $\phi: G_{i_0}\longrightarrow P$. 
We set $Q:=Im(\phi)$. Since $G_{i_0}$ is finitely generated, so is the object $Q\subseteq P$. 

\smallskip
On the other hand, since $P$ is spectral, we must have $P\prec Q$, i.e., $P$ is a subquotient of a direct sum of finitely
many copies of $Q$. Since the finitely generated objects in $\mathcal A$ coincide with the noetherian objects, this shows that $P$ is finitely generated. 
\end{proof}

\begin{lem}\label{L3.3} Let $P\in Spec(\mathcal A)$ be a spectral object. Then, the closure of the point
$\langle P\rangle\in \mathfrak Spec(\mathcal A)$ is given by :
\begin{equation}
\overline{\langle P\rangle}=Supp(P)
\end{equation} 
\end{lem}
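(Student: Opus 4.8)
The plan is to prove the two inclusions $\overline{\langle P\rangle}\subseteq Supp(P)$ and $Supp(P)\subseteq \overline{\langle P\rangle}$ separately. For the first inclusion, I would observe that $Supp(P)$ is a closed subset of $\mathfrak Spec(\mathcal A)$ in the topology $\tau^*=\tau_*$: indeed, by Lemma \ref{L3.2} the spectral object $P$ is finitely generated, so $Supp(P)$ belongs to the base of closed sets for $\tau^*$ described in Section 2. Since clearly $\langle P\rangle\in Supp(P)$ (because $P\prec P$, using \eqref{supt}), any closed set containing $\langle P\rangle$ must contain $\overline{\langle P\rangle}$; applying this to the closed set $Supp(P)$ gives $\overline{\langle P\rangle}\subseteq Supp(P)$.

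For the reverse inclusion, I would take $\langle Q\rangle\in Supp(P)$ with $Q\in Spec(\mathcal A)$, so that $Q\prec P$ by \eqref{supt}, and show that $\langle Q\rangle\in\overline{\langle P\rangle}$, i.e.\ that every closed neighbourhood — equivalently every basic closed set containing $\langle P\rangle$ — also contains $\langle Q\rangle$. A basic closed set in $\tau^*$ has the form $Supp(M)$ for a finitely generated $M$; so I must show that $\langle P\rangle\in Supp(M)$ forces $\langle Q\rangle\in Supp(M)$. Translating via \eqref{supt}, this amounts to: $P\prec M$ implies $Q\prec M$. But this is immediate from transitivity of the relation $\prec$ (equivalently, of the containment of the subcategories $\langle\,-\,\rangle$): from $Q\prec P$ and $P\prec M$ we get $Q\prec M$. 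Hence $\langle Q\rangle$ lies in every basic closed set containing $\langle P\rangle$, so $\langle Q\rangle\in\overline{\langle P\rangle}$.

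I expect the main point requiring care to be the verification that the relation $\prec$ is transitive, i.e.\ that a subquotient of a finite direct sum of finite direct sums of copies of $M$ is again a subquotient of a finite direct sum of copies of $M$, and — more importantly — that closure in the space $\mathfrak Spec(\mathcal A)$ can indeed be tested against the given base of closed sets rather than against all closed sets. The latter is a purely topological fact: a point $y$ lies in $\overline{\{x\}}$ iff every closed set containing $x$ contains $y$, and since the basic closed sets generate the topology by arbitrary intersections, it suffices that every basic closed set containing $x$ contains $y$. Combining the two inclusions yields $\overline{\langle P\rangle}=Supp(P)$, completing the proof.
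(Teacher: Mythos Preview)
Your proof is correct and follows essentially the same approach as the paper: both arguments show that $Supp(P)$ is a closed set containing $\langle P\rangle$ (using Lemma \ref{L3.2}), and then use transitivity of $\prec$ to show that any basic closed set $Supp(M)$ containing $\langle P\rangle$ must contain every $\langle Q\rangle\in Supp(P)$. The paper phrases the second step as proving $Supp(P)\subseteq Supp(M)$ directly, while you phrase it pointwise as $\langle Q\rangle\in\overline{\langle P\rangle}$, but the content is identical.
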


\begin{proof} From \eqref{support}, it is clear that  $\langle P\rangle 
\in Supp(P)$. Further, we know from Lemma \ref{L3.2} that $P$ is finitely generated and hence
$Supp(P)\subseteq \mathfrak Spec(\mathcal A)$ is a closed set. 

\smallskip
If $M\in \mathcal A_{fg}$ is such that $\langle P\rangle \in Supp(M)$, i.e., $M\notin \langle P\rangle$, we now claim 
that $Supp(P)\subseteq Supp(M)$. Indeed, if $Q\in Spec(\mathcal A)$ is such that $\langle Q\rangle \in Supp(P)$, then
$P\notin \langle Q\rangle$, i.e., $Q\prec P$. Then, if $M\notin \langle P\rangle$, i.e., $P\prec M$, we must have
$Q\prec M$. Hence, $M\notin \langle Q\rangle$ and  $\langle Q\rangle \in Supp(M)$. 
\end{proof}

\begin{defn}\label{D3.4}  For a full and replete subcategory $\mathcal C\subseteq \mathcal A_{fg}$, set:
\begin{equation}
Supp(\mathcal C):=\underset{M\in\mathcal C}{\bigcup}\textrm{ }Supp(M)
\end{equation} Conversely, for a subset $S\subseteq \mathfrak Spec(\mathcal A)$, set:
\begin{equation}\label{eq3.3}
Supp^{-1}(S):=\{\mbox{$M\in \mathcal A_{fg}$ $\vert$ $Supp(M)\subseteq S$}\}\subseteq \mathcal A_{fg}
\end{equation}
\end{defn}

We know from Lemma \ref{L3.1} that the collection of full and replete subcategories of
$\mathcal A_{fg}$ forms a set. 
From the definition of the topology on $\mathfrak Spec(\mathcal A)$, it is clear that $Supp$ defines a map :
\begin{equation*}
Supp : \{\mbox{Full \& replete subcategories of $\mathcal A_{fg}$}\}\longrightarrow \{\mbox{Specialization closed subsets
of $\mathfrak Spec(\mathcal A)$}\}
\end{equation*} We recall that a subset  is specialization closed if it is a union of
closed sets. If $R$ is a commutative noetherian ring and $R-mod$ is the category of finitely generated $R$-modules,
Stanley and Wang \cite{SW} have constructed criteria on sets of subcategories of $R-mod$ so that the restriction 
of the map $Supp$ as in Definition \ref{D3.4} is a bijection with inverse $Supp^{-1}$. Our first purpose in this section
is to obtain analogous criteria for the locally noetherian category $\mathcal A$. We begin with the following result.

\begin{thm}\label{P3.5} Let $S\subseteq \mathfrak Spec(\mathcal A)$ be a specialization closed subset. Then, 
$Supp(Supp^{-1}(S))=S$. 
\end{thm}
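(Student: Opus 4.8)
The plan is to show the two inclusions $S\subseteq Supp(Supp^{-1}(S))$ and $Supp(Supp^{-1}(S))\subseteq S$ separately. The second inclusion is essentially formal: by Definition \ref{D3.4}, every $M\in Supp^{-1}(S)$ satisfies $Supp(M)\subseteq S$, so the union $Supp(Supp^{-1}(S))=\bigcup_{M\in Supp^{-1}(S)}Supp(M)$ is contained in $S$. No hypotheses on $S$ are needed here beyond what is given.

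The first inclusion is where the work lies, and the key is Lemma \ref{L3.3}. Fix a point $\langle P\rangle\in S$ with $P\in Spec(\mathcal A)$ spectral. By Lemma \ref{L3.2}, $P$ is finitely generated, so $P\in\mathcal A_{fg}$. I claim $P\in Supp^{-1}(S)$, i.e. $Supp(P)\subseteq S$. Indeed, by Lemma \ref{L3.3} we have $Supp(P)=\overline{\langle P\rangle}$, the topological closure of the point $\langle P\rangle$ in $\mathfrak Spec(\mathcal A)$. Since $S$ is specialization closed — a union of closed sets — and $\langle P\rangle\in S$, the closure $\overline{\langle P\rangle}$ must be contained in $S$: if $\langle P\rangle$ lies in some closed set $Z\subseteq S$ appearing in the defining union, then $\overline{\langle P\rangle}\subseteq Z\subseteq S$. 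Hence $P\in Supp^{-1}(S)$, and therefore $\langle P\rangle\in Supp(P)\subseteq Supp(Supp^{-1}(S))$. Since every point of $\mathfrak Spec(\mathcal A)$ is of the form $\langle P\rangle$ for some spectral $P$, and $S$ consists of such points, this proves $S\subseteq Supp(Supp^{-1}(S))$.

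Combining the two inclusions gives $Supp(Supp^{-1}(S))=S$. I do not expect any serious obstacle here: the only subtle point is making sure that a specialization closed set genuinely absorbs the closure of each of its points, which is immediate from the definition as a union of closed sets, and that the relevant $P$ actually lands in $\mathcal A_{fg}$ so that $Supp^{-1}(S)$ — a subcategory of $\mathcal A_{fg}$ — can contain it; both are supplied by Lemmas \ref{L3.2} and \ref{L3.3}. One should also remark that $Supp^{-1}(S)$ is full and replete, so that it is a legitimate object in the domain of the map $Supp$, but this is clear from \eqref{eq3.3} since the condition $Supp(M)\subseteq S$ depends only on the isomorphism class of $M$.
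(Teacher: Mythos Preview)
Your proof is correct and follows essentially the same approach as the paper: both argue the two inclusions separately, with the nontrivial direction using Lemma \ref{L3.2} to place the spectral object $P$ in $\mathcal A_{fg}$ and Lemma \ref{L3.3} to identify $Supp(P)$ with $\overline{\langle P\rangle}$, which then lies in $S$ by specialization closedness. The only difference is cosmetic ordering of the two inclusions.
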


\begin{proof} We consider a spectral object $P\in Spec(\mathcal A)$ such that $\langle P\rangle \in 
S\subseteq \mathfrak Spec(\mathcal A)$. Since $S$ is specialization closed, we must have
$\overline{\langle P\rangle}\subseteq S$ and it follows from Lemma \ref{L3.3} that $Supp(P)\subseteq S$. Further,
from Lemma \ref{L3.2} we know that $P\in \mathcal A_{fg}$. It follows that $P\in Supp^{-1}(S)$. Now since 
$\langle P\rangle \in Supp(P)$, we get $\langle P\rangle \in Supp(Supp^{-1}(S))$. 

\smallskip
Conversely, we consider some $P\in Spec(\mathcal A)$ such that $\langle P\rangle \in Supp(Supp^{-1}(S))$. Then, 
there exists some $M\in Supp^{-1}(S)$ such that $\langle P\rangle \in Supp(M)$. Using the definition in \eqref{eq3.3}, $M\in Supp^{-1}(S)$
implies that $Supp(M)\subseteq S$ and hence $\langle P\rangle \in S$. This proves the result.

\end{proof}

\begin{lem}\label{L3.6} (a) Let $\mathcal C\subseteq \mathcal A_{fg}$ be a full subcategory that contains
$0$ and is closed under extensions. Suppose that $M\in \mathcal A_{fg}$ is such that $P\in \mathcal C$ for 
every spectral object $P\in Spec(\mathcal A)$ with $\langle P\rangle \in Supp(M)$. Then, $M\in \mathcal C$. 

\smallskip
(b) Let $\mathcal C\subseteq \mathcal A_{fg}$ be a replete and full subcategory satisfying the following two conditions:

\smallskip
(1) $\mathcal C$ is closed under extensions and $0\in\mathcal C$. 

\smallskip
(2) If $M\in \mathcal C$, then $P\in \mathcal C$ for each spectral object $P$ such that $\langle P\rangle \in Supp(M)$. 

\smallskip
Then, $Supp^{-1}(Supp(\mathcal C))=\mathcal C$. 
\end{lem}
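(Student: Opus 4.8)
The plan is to establish (a) by a maximal-subobject (noetherian induction) argument that rests on the standing hypothesis that every nonzero object of $\mathcal A$ has an associated point, and then to derive (b) formally from (a) together with the two conditions imposed on $\mathcal C$.

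\emph{Proof of (a).} If $M=0$ there is nothing to prove, since $0\in\mathcal C$, so assume $M\neq 0$. Because $\mathcal A$ is locally noetherian, the finitely generated object $M$ is noetherian, so its poset of subobjects satisfies the ascending chain condition; hence every nonempty set of subobjects of $M$ has a maximal element. Let $\Sigma:=\{\,N\subseteq M\ \vert\ N\in Ob(\mathcal C)\,\}$, which is nonempty since $0\in\Sigma$, and choose a maximal element $N_0\in\Sigma$. I claim $N_0=M$. If not, then $M/N_0\neq 0$, so by hypothesis $M/N_0$ has an associated point: there is a spectral object $P\in Spec(\mathcal A)$ with $P\subseteq M/N_0$ and $\langle P\rangle\in Ass(M/N_0)$. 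Since $M/N_0$ is a quotient of $M$ we have $M/N_0\prec M$, and transitivity of $\prec$ gives $Supp(M/N_0)\subseteq Supp(M)$; in particular $\langle P\rangle\in Ass(M/N_0)\subseteq Supp(M/N_0)\subseteq Supp(M)$. By the hypothesis on $M$ we conclude $P\in\mathcal C$. Now let $N_1\subseteq M$ be the preimage of $P\subseteq M/N_0$ along the quotient map $M\twoheadrightarrow M/N_0$; then $N_0\subsetneq N_1$ because $P\neq 0$, and there is a short exact sequence $0\to N_0\to N_1\to P\to 0$. Since $\mathcal C$ is closed under extensions and $N_0,P\in\mathcal C$, we get $N_1\in\mathcal C$, i.e.\ $N_1\in\Sigma$, contradicting the maximality of $N_0$. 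Hence $N_0=M$ and $M\in\mathcal C$.

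\emph{Proof of (b).} The inclusion $\mathcal C\subseteq Supp^{-1}(Supp(\mathcal C))$ is immediate from Definition \ref{D3.4}: if $M\in\mathcal C$ then $M\in\mathcal A_{fg}$ and $Supp(M)\subseteq Supp(\mathcal C)$, so $M\in Supp^{-1}(Supp(\mathcal C))$. For the reverse inclusion, take $M\in Supp^{-1}(Supp(\mathcal C))$, so that $M\in\mathcal A_{fg}$ and $Supp(M)\subseteq Supp(\mathcal C)=\bigcup_{C\in\mathcal C}Supp(C)$. Given any spectral $P\in Spec(\mathcal A)$ with $\langle P\rangle\in Supp(M)$, we then have $\langle P\rangle\in Supp(C)$ for some $C\in\mathcal C$, whence $P\in\mathcal C$ by condition (2). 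Thus every spectral object whose class lies in $Supp(M)$ belongs to $\mathcal C$; since $\mathcal C$ is full, contains $0$, and is closed under extensions by condition (1), part (a) applies and gives $M\in\mathcal C$. Hence $Supp^{-1}(Supp(\mathcal C))=\mathcal C$.

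I do not anticipate a genuine obstacle: the argument is essentially formal, and the only place the standing hypotheses enter essentially is the induction step of (a), where noetherianity of $M$ produces the maximal element $N_0$ and the associated-point hypothesis produces a spectral subobject of the nonzero quotient $M/N_0$. The two points deserving care are (i) that every spectral $P$ arising in the induction has $\langle P\rangle\in Supp(M)$ itself — not merely in the support of some proper subquotient — so that the hypothesis on $M$ can be invoked directly, which holds because $M/N_0\prec M$; and (ii) that the extension-closure of $\mathcal C$ is applied to the concrete sequence $0\to N_0\to N_1\to P\to 0$ coming from the preimage construction rather than to an abstract extension.
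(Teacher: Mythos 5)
Your proof of part (b) is essentially the same as the paper's. Your proof of part (a) is correct but takes a genuinely different route: the paper cites Rosenberg's result \cite[1.6.4.1]{Rose1} that every finitely generated object $M$ admits a finite filtration $0=M_0\subseteq M_1\subseteq\dots\subseteq M_n=M$ with spectral subquotients $Q_i=M_i/M_{i-1}\in Spec(\mathcal A)$, observes that each $\langle Q_i\rangle\in Supp(M_i)\subseteq Supp(M)$ so $Q_i\in\mathcal C$, and then climbs the filtration via extension-closure; you instead run a direct noetherian induction, choosing a maximal subobject $N_0\subseteq M$ lying in $\mathcal C$ and enlarging it by pulling back a spectral subobject of $M/N_0$. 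The two are equivalent in substance --- Rosenberg's filtration lemma is itself proved by exactly this kind of maximal-subobject argument, and both uses hinge on the standing hypothesis that every nonzero object has an associated point --- but your version is self-contained, whereas the paper's is shorter by delegating the inductive construction to the cited lemma. Your careful check that $\langle P\rangle\in Supp(M)$ (via $P\prec M/N_0\prec M$) is exactly the point one must watch, and you handle it correctly; the paper makes the same observation in the form $\langle Q_i\rangle\in Supp(M_i)\subseteq Supp(M)$.
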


\begin{proof} (a) From \cite[1.6.4.1]{Rose1}, we know that any $M\in \mathcal A_{fg}$ can be given a finite
filtration $0=M_0\subseteq M_1 \subseteq \dots \subseteq M_n=M$ such that $Q_i:=M_i/M_{i-1}\in Spec(\mathcal A)$
for each $1\leq i\leq n$.  It is clear that $Q_i\prec M_i$ and hence  
$\langle Q_i\rangle\in Supp(M_i)\subseteq Supp(M)$.   From the assumption on $M$, it now follows that 
$Q_i\in \mathcal C$. In particular, $M_1=Q_1\in \mathcal C$. Since $\mathcal C$ is closed under extensions,
the successive  short exact sequences 
\begin{equation}
0\longrightarrow M_{i-1}\longrightarrow M_i\longrightarrow Q_i\longrightarrow 0\qquad 2\leq i\leq n
\end{equation} show that each $M_i\in \mathcal C$. Then, $M=M_n\in \mathcal C$. 

\smallskip
(b) It is evident from Definition \ref{D3.4} that $\mathcal C\subseteq Supp^{-1}(Supp(\mathcal C))$. Conversely,
we consider an object $M\in Supp^{-1}(Supp(\mathcal C))\subseteq \mathcal A_{fg}$. Then,  $Supp(M)\subseteq Supp(\mathcal C)$. 

\smallskip
We now claim that $P\in \mathcal C$ for any spectral object  $P\in Spec(\mathcal A)$ with $\langle P\rangle \in Supp(M)$. Indeed, if $\langle P\rangle 
\in Supp(M)\subseteq Supp(\mathcal C)$, we can choose some $N\in \mathcal C$ such that $\langle P\rangle \in Supp(N)$. From
 assumption (2) on $\mathcal C$, it follows that $P\in \mathcal C$. 
It now follows from part (a) that $M\in \mathcal C$. 
\end{proof}

\begin{thm}\label{P3.7}  Let $\mathfrak T$ be a collection of subcategories
of $\mathcal A_{fg}$ satisfying the following conditions:

\smallskip
(1) For each specialization closed subset $S\subseteq \mathfrak Spec(\mathcal A)$, $Supp^{-1}(S)\in \mathfrak T$. 

(2) Each subcategory $\mathcal C\in \mathfrak T$ is a full and replete subcategory of 
$\mathcal A_{fg}$   that contains zero and is closed under extensions. 

(3) If $\mathcal C\in\mathfrak T$ and $M\in \mathcal C$, then $P\in \mathcal C$ for each spectral object
$P$ such that $\langle P\rangle \in Supp(M)$. 

\smallskip
Then, the restriction of the map $Supp$ :
\begin{equation}
Supp_{\mathfrak T}:=Supp | \mathfrak T : \mathfrak T\longrightarrow \{\mbox{Specialization closed subsets
of $\mathfrak Spec(\mathcal A)$}\}
\end{equation} is a bijection having inverse $Supp^{-1}$. 
\end{thm} 

\begin{proof} We consider a specialization closed subset $S\subseteq \mathfrak Spec(\mathcal A)$. By assumption,
$Supp^{-1}(S)\in \mathfrak T$. From Proposition \ref{P3.5}, we now get $Supp_{\mathfrak T}(Supp^{-1}(S))=Supp(Supp^{-1}(S))=S$. 

\smallskip
On the other hand, any subcategory $\mathcal C\in\mathfrak T$ satisfies the conditions in part (b) of 
Lemma \ref{L3.6}. It follows from Lemma \ref{L3.6}(b) that $Supp^{-1}(Supp_{\mathfrak T}(\mathcal C))
=Supp^{-1}(Supp(\mathcal C))=\mathcal C$.
\end{proof}

We now recall the following definition 
(see, for instance, \cite[Tag 02MN]{Stacks}) 

\begin{defn}\label{D3.8} Let $\mathcal B$ be an abelian category.  A   subcategory $\mathcal B'\subseteq \mathcal B$ is said to be an abelian subcategory if $\mathcal B'$ is also abelian
and the inclusion functor $\mathcal B'\hookrightarrow \mathcal B$ is exact. 

\smallskip A full subcategory $\mathcal B'\subseteq \mathcal B$ is a Serre subcategory if it is an abelian subcategory 
that is closed under extensions and 
subobjects. Equivalently a full and replete subcategory $\mathcal B'\subseteq \mathcal B$ containing $0$ is said to be a Serre
subcategory if it satisfies the following conditions. 

\smallskip
(a) $\mathcal B'$ is closed under taking subobjects and quotients in $\mathcal B$.

\smallskip
(b) $\mathcal B'$ is closed under taking extensions in $\mathcal B$. 
\end{defn}

We have noted before that since $\mathcal A$ is locally noetherian, the category $\mathcal A_{fg}$ of finitely 
generated objects is itself an abelian subcategory of $\mathcal A$. 
 We will now use Proposition \ref{P3.7} to show that Serre subcategories of $\mathcal A_{fg}$
correspond to specialization closed subsets of $\mathfrak Spec(\mathcal A)$. 

\smallskip
\begin{thm}\label{P3.9}  There is a bijection : 

\begin{equation*}
Supp : \{\mbox{Serre subcategories of $\mathcal A_{fg}$}\}\forbackarrow{}{} \{\mbox{Specialization closed subsets of
$\mathfrak Spec(\mathcal A)$}\} : Supp^{-1}
\end{equation*}

\end{thm}

\begin{proof} We will have to verify that the collection $\mathfrak T$ of Serre subcategories of $\mathcal A_{fg}$
satisfies the conditions of Proposition \ref{P3.7}. First, we recall that given a short exact sequence 
\begin{equation}
0\longrightarrow M'\longrightarrow M\longrightarrow M''\longrightarrow 0
\end{equation} of objects in $\mathcal A$, we must have $Supp(M)=Supp(M')\cup Supp(M'')$ (see \cite[$\S$ 5.2.2]{Rose2}). 
Accordingly, it is clear from Definition \ref{D3.8} that the  collection $\mathfrak T$ of Serre subcategories of $\mathcal A_{fg}$
satisfies conditions  (1) and (2) appearing in Proposition \ref{P3.7}.

\smallskip It remains to check condition (3) in Proposition \ref{P3.7}. For this, we consider a Serre subcategory $\mathcal C\subseteq \mathcal A_{fg}$
and some $M\in \mathcal C$. If $P$ is a spectral object such that $\langle P\rangle \in Supp(M)$, it follows from
\eqref{supt} that $P\prec M$. Accordingly, $P$ is a subquotient of a direct sum of finitely many copies of 
$M\in \mathcal C$. Since $\mathcal C$ is a Serre subcategory, it follows that $P\in \mathcal C$. 

\end{proof}

\section{Classifying subcategories and restricting the spectrum}

\smallskip
In this section, we will restrict to a subcollection of spectral objects of $\mathcal A$ that satisfy a certain property 
with respect to the objects  in whose support they appear. To motivate the key definitions in this section, we will need the following observation from commutative algebra. 

\begin{thm}\label{P4.1} Let $R$ be a commutative noetherian ring and let $N$ be a spectral object in the abelian category
 $R-Mod$ of $R$-modules. Let $M$ be a finitely generated $R$-module such that $\langle N\rangle \in Supp(M)$. Then,
$Hom_{R-Mod}(M,N)\ne 0$.
\end{thm}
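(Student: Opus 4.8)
The plan is to reduce the assertion to a computation over the local ring $R_{\mathfrak p}$ attached to an associated prime $\mathfrak p$ of $N$, and there to exhibit a nonzero homomorphism as the composite of a Nakayama surjection onto a residue field with the inclusion of that residue field into $N_{\mathfrak p}$.

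First I would choose $\mathfrak p\in Ass(N)$; this is possible because $N$ is nonzero and $R$ is noetherian (in fact $N$ is finitely generated by Lemma \ref{L3.2}, but only $Ass(N)\ne\phi$ is needed, and this holds automatically for noetherian $R$). Fix a monomorphism $\iota:R/\mathfrak p\hookrightarrow N$, so in particular $R/\mathfrak p\prec N$. Since $N$ is spectral, the hypothesis $\langle N\rangle\in Supp(M)$ means exactly that $N\prec M$ by \eqref{supt}; transitivity of $\prec$ then gives $R/\mathfrak p\prec M$, i.e.\ $R/\mathfrak p$ is a subquotient of $M^{n}$ for some $n\ge 1$.

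Next I would localize at $\mathfrak p$. Localization is exact and commutes with finite direct sums, so the residue field $\kappa(\mathfrak p)=(R/\mathfrak p)_{\mathfrak p}$ is a subquotient of $(M_{\mathfrak p})^{n}$; as $\kappa(\mathfrak p)\ne 0$ this forces $M_{\mathfrak p}\ne 0$. Since $M$ is finitely generated and $R_{\mathfrak p}$ is local, Nakayama's lemma yields $M_{\mathfrak p}/\mathfrak p M_{\mathfrak p}\ne 0$, hence a surjection $\pi:M_{\mathfrak p}\twoheadrightarrow\kappa(\mathfrak p)$; on the other side, localizing $\iota$ gives a monomorphism $\iota_{\mathfrak p}:\kappa(\mathfrak p)\hookrightarrow N_{\mathfrak p}$. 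The composite $\iota_{\mathfrak p}\circ\pi$ is then a nonzero element of $Hom_{R_{\mathfrak p}}(M_{\mathfrak p},N_{\mathfrak p})$.

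Finally I would descend back to $R$: because $R$ is noetherian and $M$ finitely generated, $M$ is finitely presented, so the canonical map $Hom_{R}(M,N)_{\mathfrak p}\to Hom_{R_{\mathfrak p}}(M_{\mathfrak p},N_{\mathfrak p})$ is an isomorphism. Hence $Hom_{R}(M,N)_{\mathfrak p}\ne 0$, and therefore $Hom_{R}(M,N)\ne 0$. The one non-formal ingredient — and the only place where noetherianity of $R$ really intervenes beyond guaranteeing $Ass(N)\ne\phi$ — is this commutation of $Hom$ with localization, which fails without a finiteness hypothesis on $M$; I expect this to be the point needing the most care, while the rest of the argument is routine.
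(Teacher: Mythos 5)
Your proof is correct, and the route is genuinely different from the paper's in its second half. The paper obtains the monomorphism $R/\mathfrak p\hookrightarrow N$ by taking a cyclic submodule $N'\subseteq N$ and comparing annihilators to force $N'\cong R/\mathfrak p$; you obtain the same embedding more quickly by choosing $\mathfrak p\in Ass(N)$ (and indeed this is the same $\mathfrak p$, since $N$ spectral forces $Ass(N)=\{\mathfrak p\}$, though your argument never needs this uniqueness). After that the paper deduces $\mathfrak p\supseteq Ann(M)$ from $R/\mathfrak p\prec M$ and then invokes a cited lemma of Stanley--Wang to produce an explicit nonzero morphism $M\to R/\mathfrak p$, which it composes with $R/\mathfrak p\hookrightarrow N$. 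You instead pass to $R_{\mathfrak p}$, use Nakayama to build a surjection $M_{\mathfrak p}\twoheadrightarrow\kappa(\mathfrak p)$, compose with the localized inclusion into $N_{\mathfrak p}$, and then descend via the isomorphism $Hom_R(M,N)_{\mathfrak p}\cong Hom_{R_{\mathfrak p}}(M_{\mathfrak p},N_{\mathfrak p})$ available because $M$ is finitely presented. Your argument is self-contained and avoids the outside citation, essentially unpacking the same commutative-algebra content; the trade-off is that it produces only the nonvanishing of $Hom_R(M,N)$ rather than an explicit morphism $M\to N$ factoring through $R/\mathfrak p$, whereas the paper's composition is completely explicit.
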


\begin{proof} Since $N$ is spectral, we know from  Section 2 that there is a prime ideal $\mathfrak{p}\subseteq R$ such
that $R/\mathfrak{p}$ is 
equivalent to $N$, i.e., $N\prec R/\mathfrak{p}\prec N$. We consider some nonzero $n\in N$ and denote by $N'\subseteq N$ the submodule
generated by $n$. Since $N$ is spectral, we have an equivalence $N'\approx N$ and hence $N'\approx N\approx R/
\mathfrak{p}$. 

\smallskip
On the other hand, since $N'$ is generated by a single element, we know that $N'$ is isomorphic to the quotient
$R/I$ for some ideal $I\subseteq R$. Then, $R/I\cong N'\approx R/\mathfrak{p}$, i.e., $R/I\prec R/
\mathfrak{p}\prec R/I$. Then, the annihilators
satisfy 
$I=Ann(R/I)\supseteq \mathfrak{p}=Ann(R/\mathfrak{p})\supseteq Ann(R/I)=I$ and hence $I=\mathfrak{p}$. It follows that we have a monomorphism
$R/\mathfrak{p}=R/I\cong N'\hookrightarrow N$. 

\smallskip
Further since $\langle N\rangle \in Supp(M)$, we have $R/\mathfrak{p}\prec N\prec M$ and hence $
\mathfrak{p}\supseteq Ann(M)$. Since $M$
is finitely generated, it follows
from \cite[Lemma 3.5]{SW} that there is a nonzero morphism $\phi:M\longrightarrow R/\mathfrak{p}$. Finally, composing 
$\phi:M\longrightarrow R/\mathfrak{p}$ with the monomorphism $R/\mathfrak{p}\hookrightarrow N$ gives us a nonzero morphism
from $M$ to $N$. 
\end{proof}

\smallskip
We now come back to the category $\mathcal A$ and consider some point $\langle P\rangle \in \mathfrak Spec(\mathcal A)$. We will
say that the point $\langle P\rangle \in \mathfrak Spec(\mathcal A)$ satisfies property $(\ast)$ if given any 
$Q\in Spec(\mathcal A)$ and $M\in \mathcal A_{fg}$ such that $\langle Q\rangle =\langle P\rangle \in Supp(M)$, we must
have $Hom_{\mathcal A}(M,Q)\ne 0$. From Proposition \ref{P4.1}, we see that when $R$ is a commutative noetherian ring, all the points in $\mathfrak Spec(R-Mod)$ 
satisfy this condition $(\ast)$.  

\smallskip
We now consider the full subcategory $\mathcal A'\subseteq \mathcal A_{fg}$ defined as follows:
\begin{equation}
Ob(\mathcal A'):=\{\mbox{$M\in \mathcal A_{fg}$ $\vert$ Every $\langle P\rangle \in Supp(M)$ satisfies condition $(\ast)$}\}
\end{equation} Along with this, we also consider the following specialization closed subset of $\mathfrak Spec(\mathcal A)$:
\begin{equation}
\mathfrak Spec'(\mathcal A):=\underset{M\in \mathcal A'}{\bigcup}\textrm{ }Supp(M)
\end{equation} Since $\mathfrak Spec'(\mathcal A)\subseteq \mathfrak Spec(\mathcal A)$ is specialization closed, it  is clear that the maps $Supp$ and $Supp^{-1}$ in Definition \ref{D3.4} restrict to maps:
\begin{equation*}
\begin{array}{c}
Supp : \{\mbox{Full \& replete subcategories of $\mathcal A'$}\}\longrightarrow \{\mbox{Specialization closed subsets
of $\mathfrak Spec'(\mathcal A)$}\}\\
Supp^{-1}:\{\mbox{Specialization closed subsets
of $\mathfrak Spec'(\mathcal A)$}\}
\longrightarrow \{\mbox{Full \& replete subcategories of $\mathcal A'$}\}\\
\end{array}
\end{equation*}
We also notice here that $\mathcal A'=Supp^{-1}(\mathfrak Spec'(\mathcal A))$ is a Serre subcategory of
$\mathcal A_{fg}$. In particular, $\mathcal A'$  is an abelian subcategory of $\mathcal A_{fg}$ and hence an abelian
subcategory of $\mathcal A$. The following result will help us understand the condition $(\ast)$ and the 
subcategory $\mathcal A'$ better. 

\begin{thm}\label{P4.15}  For every object $M\in \mathcal A'$ and each nonzero subobject $N\subseteq M$, we have
$Hom_{\mathcal A}(M,N)\ne 0$.
\end{thm}

\begin{proof}We consider $M\in \mathcal A'$ and a subobject $0\ne N\subseteq M$.  Since $N\ne 0$, we can choose a spectral object
$P\in Spec(\mathcal A)$ having an inclusion $P\hookrightarrow N$. It is clear that $\langle P\rangle \in Supp(N)
\subseteq Supp(M)$. Hence, $\langle P\rangle$ satisfies condition $(\ast)$ and there exists a morphism
$0\ne \phi:M\longrightarrow P$. Composing $\phi$ with the monomorphism $P\hookrightarrow N$ gives a nonzero
morphism from $M$ to $N$. 
\end{proof}

We also record here the following corollary in the case of modules over a commutative noetherian ring.

\begin{cor}\label{C4.16} Let $R$ be a commutative noetherian ring and let $M$ be a finitely generated
$R$-module. Then, for any nonzero submodule $N\subseteq M$, we have $Hom_{R-Mod}(M,N)\ne 0$. 
\end{cor}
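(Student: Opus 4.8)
The plan is to deduce Corollary~\ref{C4.16} directly from Proposition~\ref{P4.15} by checking that the hypotheses of the latter are met in the module setting. First I would take $\mathcal A = R\text{-}Mod$ for a commutative noetherian ring $R$. Then $\mathcal A$ is locally noetherian: the single module $R$ is a noetherian generator, so $\mathcal A$ has a small generating family of noetherian objects. Next, I would recall that every nonzero $R$-module $M$ has a nonzero associated prime in the classical sense (a consequence of noetherianity: pick a maximal element among the annihilators of nonzero elements of $M$), and then invoke the discussion in Section~2 (and the identification of spectral objects of $R\text{-}Mod$ with the $R/\mathfrak p$) to see that $Ass(M)\neq\phi$ in the categorical sense as well; so the standing hypothesis ``every nonzero object has an associated point'' holds.

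The second ingredient is that $\mathcal A' = \mathcal A_{fg}$ in this case, i.e.\ that a finitely generated $R$-module $M$ lies in $\mathcal A'$. By the definition of $\mathcal A'$, it suffices to show that every $\langle P\rangle\in Supp(M)$ satisfies condition $(\ast)$. But Proposition~\ref{P4.1} establishes exactly this: for any spectral $Q\in Spec(\mathcal A)$ and any finitely generated $N$ with $\langle Q\rangle\in Supp(N)$, one has $Hom_{R-Mod}(N,Q)\neq 0$. Thus \emph{every} point of $\mathfrak Spec(R\text{-}Mod)$ satisfies $(\ast)$, hence $\mathcal A' = \mathcal A_{fg}$, and in particular any finitely generated $R$-module $M$ belongs to $\mathcal A'$.

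With these two observations in place, the corollary is immediate: given a finitely generated $R$-module $M$ and a nonzero submodule $N\subseteq M$, apply Proposition~\ref{P4.15} to $M\in\mathcal A'$ and the nonzero subobject $N\subseteq M$ to conclude $Hom_{R-Mod}(M,N)\neq 0$. I do not expect a genuine obstacle here, since the corollary is a transparent specialization; the only point requiring a word of care is the translation between the classical notion of associated prime and the categorical notion $Ass$ of Section~2, and the identification of spectral objects of $R\text{-}Mod$ with quotients $R/\mathfrak p$ — but both are already recorded in the preliminaries, so the proof will be just a couple of lines citing Proposition~\ref{P4.1} and Proposition~\ref{P4.15}.
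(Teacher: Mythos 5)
Your proposal is correct and takes precisely the same route as the paper: invoke Proposition~\ref{P4.1} to see that every point of $\mathfrak Spec(R\text{-}Mod)$ satisfies condition $(\ast)$, so that $\mathcal A' = \mathcal A_{fg}$, and then apply Proposition~\ref{P4.15}. The extra verifications you include (that $R\text{-}Mod$ is locally noetherian and that every nonzero module has an associated point) are left implicit in the paper but are the right things to check.
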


\begin{proof} From Proposition \ref{P4.1}, we know that every point in $\mathfrak Spec(R-Mod)$ satisfies
condition $(\ast)$. The result now follows immediately from Proposition \ref{P4.15}. 

\end{proof}

\begin{thm}\label{P4.2} Let $\mathfrak T$ be a collection of subcategories
of $\mathcal A'$ satisfying the following conditions:

\smallskip
(1) For each specialization closed subset $S\subseteq \mathfrak Spec'(\mathcal A)$, $Supp^{-1}(S)\in \mathfrak T$. 

(2) Each subcategory $\mathcal C\in \mathfrak T$ is a full and replete subcategory of 
$\mathcal A'$   that contains zero and is closed under extensions. 

(3) If $\mathcal C\in\mathfrak T$ and $M\in \mathcal C$, then $P\in \mathcal C$ for each spectral object
$P$ such that $\langle P\rangle \in Supp(M)$. 

\smallskip
Then, the restriction of the map $Supp$ :
\begin{equation}
Supp_{\mathfrak T}:=Supp | \mathfrak T : \mathfrak T\longrightarrow \{\mbox{Specialization closed subsets
of $\mathfrak Spec'(\mathcal A)$}\}
\end{equation} is a bijection having inverse $Supp^{-1}$. 

\end{thm}

\begin{proof} We consider a specialization closed subset $S\subseteq \mathfrak Spec'(\mathcal A)$. Since $\mathfrak Spec'(
\mathcal A)$ is specialization closed in $\mathfrak Spec(\mathcal A)$, it follows that $S$ is specialization closed
in $\mathfrak Spec(\mathcal A)$. By assumption, $Supp^{-1}(S)\in \mathfrak T$. From Proposition \ref{P3.5}, we now
obtain $Supp(Supp^{-1}(S))=S$. 

\smallskip
Conversely, any category $\mathcal C\in \mathfrak T$ satisfies the two conditions in Lemma \ref{L3.6}(b). Further,
since $\mathcal C\subseteq \mathcal A'$, we know that $Supp(\mathcal C)\subseteq \mathfrak Spec'(\mathcal A)$. 
From Lemma \ref{L3.6}(b), it now follows that $Supp^{-1}(Supp(\mathcal C))=\mathcal C$. 

\end{proof} 

\begin{thm}\label{P4.3}  There  is a bijection : 

\begin{equation*}
Supp : \{\mbox{Serre subcategories of $\mathcal A'$}\}\forbackarrow{}{} \{\mbox{Specialization closed subsets of
$\mathfrak Spec'(\mathcal A)$}\} : Supp^{-1}
\end{equation*}

\end{thm}

\begin{proof}
Using the criterion in Proposition \ref{P4.2}, this result follows exactly as in the proof of Proposition \ref{P3.9}. 
\end{proof}

\begin{defn}\label{D4.4} Let $\mathcal B$ be an abelian category. 

\smallskip
(a) A full subcategory $\mathcal B'\subseteq \mathcal B$ is said to be a torison class if
it is closed under extensions and quotients. 

(b) A full abelian subcategory $\mathcal B'\subseteq \mathcal B$ is said to be a thick subcategory
if it is closed under extensions. 

(c) A narrow subcategory $\mathcal B'\subseteq \mathcal B$ is a full subcategory that is closed under extensions and cokernels. 
\end{defn}

 The notion of a narrow subcategory of an abelian category was introduced in \cite{SW}. For the notion of a  thick subcategory, we refer the reader to a standard reference such
as \cite[Tag 02MN]{Stacks}. The notion of a torsion class in Definition \ref{D4.4} is taken from \cite[$\S$ 2]{SW}. In the special
case of abelian categories in which all objects are noetherian, we will show that it agrees with the standard notion of the torsion class of a torsion pair in the literature (see \cite[Chapter 1]{BI}). 

\smallskip For a general abelian category $\mathcal B$, the notions
in Definition \ref{D3.8} and Definition \ref{D4.4}   are related as follows (see \cite[$\S$ 2]{SW}): 

\begin{equation}\label{rel} 
\begin{array}{ccc}
\{\mbox{Serre subcategories of $\mathcal B$}\} & \implies & \{\mbox{Thick subcategories of $\mathcal B$}\} \\
\Downarrow && \Downarrow \\
\{\mbox{Torsion classes of $\mathcal B$}\} & \implies & \{\mbox{Narrow subcategories of $\mathcal B$}\} \\
\end{array}
\end{equation} It is also shown in \cite[$\S$ 2]{SW} that, in general, none of the types of subcategories appearing 
in \eqref{rel} is equivalent to any other. 

\begin{defn}\label{D4.39} (see, for instance, \cite[Chapter 1]{BI} Let $\mathcal B$ be an abelian category. A  torsion
pair $\tau$ on $\mathcal B$ consists of a pair $\tau=(\mathcal T,\mathcal F)$
of replete and full subcategories
of $\mathcal B$ satisfying the following two conditions:

\medskip
(a) Given any objects $X\in \mathcal T$ and 
$Y\in \mathcal F$, we have $Hom_{\mathcal B}(X,Y)=0$. 

\medskip
(b) For any object $M\in \mathcal B$, there exists a short exact
sequence:
\begin{equation}\label{eq2.1}
0\longrightarrow X_M\longrightarrow M\longrightarrow Y^M\longrightarrow 0
\end{equation} in $\mathcal B$ such that $X_M\in \mathcal T$ and 
$Y^M\in \mathcal F$. 

\medskip
The subcategory $\mathcal T$ is referred to as the torsion
class whereas the subcategory $\mathcal F$ is referred to as the
torsion free class of the torsion pair.
\end{defn} 

\smallskip
It is well known in the literature (see, for instance, \cite[Chapter 1]{BI}) that if $\mathcal B$ is an abelian category that is both complete and cocomplete,
any replete and full subcategory $\mathcal C\subseteq \mathcal B$ that is closed under extensions, quotients and coproducts
must arise as the torsion class of some torsion pair on $\mathcal B$. 

\smallskip However, if $\mathcal A$ is a locally noetherian abelian category and $\mathcal A'\subseteq \mathcal A_{fg}$
is as above,  every object in $\mathcal A'$ is finitely generated. Hence the abelian category $\mathcal A'$ does
not contain all coproducts (i.e. it is not cocomplete). Therefore, in order to characterize torsion classes inside
$\mathcal A'$, we will need the following result. 

\begin{thm}\label{P4.391} Let $\mathcal B$ be an abelian category such that every object in $\mathcal B$
is noetherian. Let $\mathcal C\subseteq \mathcal B$ be a full and replete subcategory that is closed under extensions
and quotients. Let  $\mathcal C^\perp\subseteq \mathcal B$  be the full subcategory given by
\begin{equation}\label{eqs4.5}
Ob(\mathcal C^\perp) :=\{\mbox{$N\in \mathcal B$ $\vert$ $Hom_{\mathcal B}(C,N)=0$ for all $C\in \mathcal C$}\}
\end{equation} Then $(\mathcal C,\mathcal C^\perp)$ is a torsion pair on $\mathcal B$. 
\end{thm}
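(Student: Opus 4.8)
The plan is to verify the two axioms of a torsion pair for the pair $(\mathcal C, \mathcal C^\perp)$. Axiom (a) of Definition \ref{D4.39} is immediate: if $X \in \mathcal C$ and $Y \in \mathcal C^\perp$ then $Hom_{\mathcal B}(X,Y) = 0$ by the very definition \eqref{eqs4.5} of $\mathcal C^\perp$. So the real content is axiom (b): every object $M \in \mathcal B$ sits in a short exact sequence $0 \to X_M \to M \to Y^M \to 0$ with $X_M \in \mathcal C$ and $Y^M \in \mathcal C^\perp$. The natural candidate for $X_M$ is the ``largest'' subobject of $M$ that lies in $\mathcal C$, constructed as the sum of all subobjects of $M$ belonging to $\mathcal C$; the noetherian hypothesis is exactly what makes this work.

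First I would fix $M \in \mathcal B$ and consider the (possibly empty) family of subobjects of $M$ that lie in $\mathcal C$; it is nonempty since $0 \in \mathcal C$ (which follows as $\mathcal C$ is closed under quotients, hence contains the zero quotient of any of its objects, or one simply includes $0$ as part of being a nonempty such subcategory). Since $M$ is noetherian, every ascending chain of such subobjects stabilizes, so there is a \emph{maximal} subobject $X_M \subseteq M$ with $X_M \in \mathcal C$. I would then check that $X_M$ actually contains every subobject $N \subseteq M$ with $N \in \mathcal C$: given such an $N$, the subobject $X_M + N \subseteq M$ is a quotient of $X_M \oplus N$ (via the natural surjection $X_M \oplus N \twoheadrightarrow X_M + N$), and since $\mathcal C$ is closed under finite direct sums (this follows from closure under extensions, as $X_M \oplus N$ is an extension of $N$ by $X_M$) and under quotients, we get $X_M + N \in \mathcal C$; maximality of $X_M$ forces $X_M + N = X_M$, i.e.\ $N \subseteq X_M$. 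Thus $X_M$ is the unique largest $\mathcal C$-subobject of $M$, and in particular it is canonical.

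Next I would set $Y^M := M/X_M$ and show $Y^M \in \mathcal C^\perp$, i.e.\ $Hom_{\mathcal B}(C, Y^M) = 0$ for every $C \in \mathcal C$. Suppose $f : C \to Y^M$ is a nonzero morphism; let $I := \mathrm{Im}(f) \subseteq Y^M$, which is nonzero and lies in $\mathcal C$ (closure under quotients). Writing $\pi : M \to Y^M$ for the projection, let $N := \pi^{-1}(I) \subseteq M$, so that there is a short exact sequence $0 \to X_M \to N \to I \to 0$. Since $X_M \in \mathcal C$ and $I \in \mathcal C$ and $\mathcal C$ is closed under extensions, $N \in \mathcal C$; but then $N$ is a $\mathcal C$-subobject of $M$ strictly containing $X_M$ (strictly, since $I \neq 0$), contradicting the maximality of $X_M$. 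Hence no such $f$ exists and $Y^M \in \mathcal C^\perp$. Combining, the sequence $0 \to X_M \to M \to Y^M \to 0$ witnesses axiom (b), and $(\mathcal C, \mathcal C^\perp)$ is a torsion pair.

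The only delicate points are bookkeeping ones rather than genuine obstacles: one must make sure that ``sum of subobjects'' is legitimate (it is, as $\mathcal B$ is abelian with the relevant finite colimits, and for the maximality argument only finite sums $X_M + N$ are needed, so no cocompleteness is required), and that $0 \in \mathcal C$ so the family of $\mathcal C$-subobjects is nonempty. The noetherian hypothesis on $\mathcal B$ is used in exactly one place — to extract the maximal element $X_M$ — and it is what replaces the coproduct/colimit argument available in the cocomplete setting. Everything else is a direct application of closure of $\mathcal C$ under extensions and quotients.
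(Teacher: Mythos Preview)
Your proof is correct and follows essentially the same approach as the paper. The only cosmetic difference is that the paper characterizes $X_M$ as a maximal element among \emph{images of morphisms} $C\to M$ with $C\in\mathcal C$, whereas you take $X_M$ to be a maximal $\mathcal C$-subobject of $M$; since $\mathcal C$ is closed under quotients these two families of subobjects coincide, and the remaining verification that $M/X_M\in\mathcal C^\perp$ is identical in both arguments.
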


\begin{proof} From \eqref{eqs4.5}, it is clear that the pair $(\mathcal C,\mathcal C^\perp)$ satisfies condition (a)
in Definition \ref{D4.39}. To show that it also satisfies condition (b), we choose some $M\in \mathcal B$.

\smallskip Since $M$ is noetherian, we can pick a subobject $X_M
\subseteq M$ that is a maximal object among the images of morphisms $C\longrightarrow M$ with $C\in \mathcal C$. We note
that it is possible that $X_M=0$. Since $\mathcal C$ is closed under quotients, it is clear that $X_M\in \mathcal C$. 

\smallskip If $\phi':C'\longrightarrow M$ is any morphism with $C'\in \mathcal C$, we now claim 
that $Im(\phi')\subseteq X_M$. Indeed, if $X_M=Im(\phi_0)$ for some morphism $\phi_0:C_0\longrightarrow M$
with $C_0\in \mathcal C$, we can consider the induced map $\phi_0+\phi':C_0\oplus C'\longrightarrow M$. 
It is clear that both $X_M=Im(\phi_0)\subseteq Im(\phi_0+\phi')$ and $Im(\phi')\subseteq Im(\phi_0+\phi')$. Since
$\mathcal C$ is closed under extensions, we know that $C_0\oplus C'\in \mathcal C$. From the maximality
of $X_M$, it follows that $X_M=Im(\phi_0+\phi')$ and hence $Im(\phi')\subseteq Im(\phi_0+\phi')=X_M$. 

\smallskip
It now remains to show that the quotient $M/X_M\in \mathcal C^\perp$. Let $\psi:C''\longrightarrow M/X_M$ be a morphism
with source $C''\in \mathcal C$. Since $\mathcal C$ is closed under quotients, we see that $Im(\psi)\in \mathcal C$. On the other
hand, we can express $Im(\psi)\subseteq M/X_M$ as a quotient $X'/X_M$, where $X_M\subseteq X'\subseteq M$. This 
gives a short exact sequence
\begin{equation}
0\longrightarrow X_M\longrightarrow X'\longrightarrow X'/X_M=Im(\psi)\longrightarrow 0
\end{equation} Since $X_M\in \mathcal C$ and $X'/X_M=Im(\psi)\in \mathcal C$ and $\mathcal C$ is closed
under extensions, we obtain $X'\in \mathcal C$. It follows from the maximality of $X_M$ that $X'=X_M$. Hence, $\psi=0$ and 
the result follows. 
\end{proof}

\begin{thm}\label{P4.5}
There is a bijection : 

\begin{equation*}
Supp : \{\mbox{Torsion classes in $\mathcal A'$}\}\forbackarrow{}{} \{\mbox{Specialization closed subsets of
$\mathfrak Spec'(\mathcal A)$}\} : Supp^{-1}
\end{equation*}

\end{thm}

\begin{proof} We have to show that the collection $\mathfrak T$ of all torsion classes in $\mathcal A'$ satisfies
the three conditions in 
Proposition \ref{P4.2}. For any specialization closed $S\subseteq \mathfrak Spec'(\mathcal A)$, we know that
$Supp^{-1}(S)$ is a Serre subcategory of $\mathcal A'$ and hence a torsion class. By definition,   torsion classes
are closed under extensions. It remains therefore to check that $\mathfrak T$ satisfies condition (3) 
in Proposition \ref{P4.2}. 

\smallskip
Accordingly, let $\mathcal C\in \mathfrak T$ be a torsion class inside the abelian category $\mathcal A'$. We know
that every object in $\mathcal A'\subseteq \mathcal A_{fg}$ is noetherian.
Then, it follows from Proposition \ref{P4.391} that $(\mathcal C,\mathcal C^\perp)$ is a torsion pair on 
$\mathcal A'$,  where $\mathcal C^\perp\subseteq \mathcal A'$ is the full subcategory given by 
\begin{equation}
Ob(\mathcal C^\perp) :=\{\mbox{$N\in \mathcal A'$ $\vert$ $Hom_{\mathcal A'}(C,N)=0$ for all $C\in \mathcal C$}\}
\end{equation}  Since $(\mathcal C,\mathcal C^\perp)$ is a torsion pair, it now follows (see, for instance, 
\cite[Chapter 1]{BI}) that there exist functors $\mathbf T:\mathcal A'\longrightarrow \mathcal C$ and 
$\mathbf F: \mathcal A'\longrightarrow \mathcal C^\perp$ such that for every $X\in \mathcal A'$ we have a short exact sequence
\begin{equation}
0\longrightarrow \mathbf T(X)\longrightarrow X\longrightarrow \mathbf F(X)\longrightarrow 0
\end{equation}
 We now choose 
$M\in \mathcal C$ and  consider some spectral object $P$ such that $\langle P\rangle \in Supp(M)$. Since 
$Supp(M)$ is specialization closed, it follows from Lemma \ref{L3.3} that $Supp(P)=\overline{\langle P\rangle}
\subseteq Supp(M)\subseteq Supp(\mathcal C)$. We will show that $P\in \mathcal C$ by checking that
$\mathbf F(P)=0$. 

\smallskip
Indeed, suppose that $\mathbf F(P)\ne 0$. Then, $Ass(\mathbf F(P))\ne \phi$ and there exists some 
$Q\in Spec(\mathcal A)$ with $Q\subseteq \mathbf F(P)$. We have:
\begin{equation}
\langle Q\rangle \in Ass(\mathbf F(P))\subseteq Supp(\mathbf F(P))\subseteq Supp(P)\subseteq Supp(\mathcal C)
\end{equation} Here, the inclusion $Supp(\mathbf F(P))\subseteq Supp(P)$ follows from the fact that
$P\longrightarrow \mathbf F(P)$ is an epimorphism. Now since $\langle Q\rangle \in Supp(\mathcal C)$, we can 
find $M'\in \mathcal C$ such that $\langle Q\rangle \in Supp(M')$. Since $M'\in \mathcal C\subseteq \mathcal A'$, it follows that we can
find a nonzero morphism $M'\longrightarrow Q$. Composing this morphism with the inclusion $Q\hookrightarrow 
\mathbf F(P)$ gives a nonzero morphism $M'\longrightarrow \mathbf F(P)$. However, this is a contradiction since
$M'\in\mathcal C$ and $\mathbf F(P)\in \mathcal C^\perp$. Hence, $ \mathbf F(P)= 0$ and we must
have $P=\mathbf T(P)\in \mathcal C$. 
\end{proof}

\begin{thm}\label{AShah} Let  $\mathcal A'\subseteq \mathcal A$ be the abelian subcategory as defined above. Then, for a full
and replete subcategory $\mathcal C\subseteq \mathcal A'$ the following conditions are equivalent.

\smallskip
(1) $\mathcal C$ is a Serre subcategory of $\mathcal A'$. 

(2) $\mathcal C$ is a torsion class in $\mathcal A'$. 

(3) $\mathcal C$ is a thick subcategory of $\mathcal A'$. 

(4) $\mathcal C$ is a narrow subcategory of $\mathcal A'$.

\end{thm}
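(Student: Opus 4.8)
The plan is to prove the four-way equivalence by assembling the implications recorded in the diagram \eqref{rel} together with the three bijections already established --- Proposition \ref{P4.3} for Serre subcategories, Proposition \ref{P4.5} for torsion classes --- so that the entire argument reduces to showing that the \emph{outermost} classes in \eqref{rel} collapse. Recall from \eqref{rel} that for any abelian category we have the implications: Serre $\implies$ thick, Serre $\implies$ torsion class, thick $\implies$ narrow, and torsion class $\implies$ narrow. Thus (1)$\implies$(2), (1)$\implies$(3), (2)$\implies$(4) and (3)$\implies$(4) are all automatic. It therefore suffices to close the loop by proving, say, (4)$\implies$(1): every narrow subcategory $\mathcal C\subseteq\mathcal A'$ is already Serre.

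First I would observe that, since a Serre subcategory is in particular a narrow subcategory and Proposition \ref{P4.3} gives $Supp^{-1}(Supp(\mathcal C))=\mathcal C$ for Serre $\mathcal C$, it is enough to show that for an arbitrary narrow $\mathcal C$ one still has $Supp^{-1}(Supp(\mathcal C))=\mathcal C$; by Proposition \ref{P4.3} the right-hand subcategory $Supp^{-1}(Supp(\mathcal C))$ is Serre, and the equality would force $\mathcal C$ to be Serre. To prove this equality I would invoke Lemma \ref{L3.6}(b), whose two hypotheses on $\mathcal C$ are (1) closure under extensions with $0\in\mathcal C$, and (2) if $M\in\mathcal C$ then every spectral $P$ with $\langle P\rangle\in Supp(M)$ lies in $\mathcal C$. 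Condition (1) is part of the definition of a narrow subcategory (closed under extensions; and $0$ is an extension of $0$ by $0$, or simply adjoin it). So the whole argument funnels into verifying condition (2) for a narrow subcategory.

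The key step, then, is: given $M\in\mathcal C$ and a spectral object $P$ with $\langle P\rangle\in Supp(M)$, show $P\in\mathcal C$ using only closure under extensions and cokernels. By \eqref{supt}, $\langle P\rangle\in Supp(M)$ means $P\prec M$, i.e.\ $P$ is a subquotient of $M^{\oplus n}$ for some $n$; say $P\cong X/Y$ with $Y\subseteq X\subseteq M^{\oplus n}$. Closure under cokernels handles quotients, but the passage to the \emph{subobject} $X\subseteq M^{\oplus n}$ is exactly where narrow subcategories are \emph{not} obviously strong enough, and this is the main obstacle. Here is where the hypothesis $M\in\mathcal A'$ and Proposition \ref{P4.15} enter decisively: for $M\in\mathcal A'$ and any nonzero subobject $N\subseteq M$ there is a nonzero morphism $M\to N$. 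I would use this to produce, for the spectral $P$ with $\langle P\rangle\in Supp(M)$, a nonzero morphism from (a finite direct sum of copies of) $M$ \emph{onto} $P$: concretely, pick an inclusion $\iota\colon P\hookrightarrow$ (a subquotient of $M^{\oplus n}$), lift to a subobject, and use Proposition \ref{P4.15} to get a nonzero map $M^{\oplus n}\to P'$ where $P'\subseteq P$ is a nonzero subobject; since $P$ is spectral, $P'\approx P$, and in fact (arguing as in the proof of Proposition \ref{P4.1}, or directly) one can arrange the image of such a morphism to be $P$ itself up to the equivalence, in which case $P$ becomes a cokernel of a morphism between objects of $M^{\oplus n}$ --- hence obtained from $M$ by finitely many direct sums and cokernels, so $P\in\mathcal C$. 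The cleanest route may be to mimic the proof of Proposition \ref{P3.9}: there, for a Serre $\mathcal C$, one uses that $P\prec M$ makes $P$ a subquotient of $M^{\oplus n}\in\mathcal C$; for narrow $\mathcal C$ one must instead realize $P$ as a genuine quotient of $M^{\oplus n}$, and Proposition \ref{P4.15} applied inside $\mathcal A'$ is precisely the tool that upgrades "subquotient" to "quotient" for spectral targets.

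To summarize the order of steps: (i) cite \eqref{rel} to get (1)$\implies$(2),(3) and (2),(3)$\implies$(4) for free; (ii) reduce the remaining implication (4)$\implies$(1) to the equality $Supp^{-1}(Supp(\mathcal C))=\mathcal C$ via Proposition \ref{P4.3}; (iii) apply Lemma \ref{L3.6}(b), noting hypothesis (1) is immediate from "closed under extensions"; (iv) verify hypothesis (2) by showing that a spectral $P$ with $\langle P\rangle\in Supp(M)$, $M\in\mathcal C\subseteq\mathcal A'$, is a cokernel of a map between finite direct sums of $M$, using Proposition \ref{P4.15} to realize $P$ as a quotient rather than merely a subquotient; (v) conclude $\mathcal C=Supp^{-1}(Supp(\mathcal C))$ is Serre. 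I expect step (iv) --- the "subquotient to quotient" upgrade --- to be the only place requiring real care, and to be the reason the statement is restricted to $\mathcal A'$ rather than all of $\mathcal A_{fg}$.
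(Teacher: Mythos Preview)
Your overall architecture matches the paper's: both use the implications in \eqref{rel} together with the bijections of Propositions \ref{P4.3} and \ref{P4.5} to reduce everything to a single hard implication. The paper proves $(4)\Rightarrow(2)$ directly; you instead aim for $(4)\Rightarrow(1)$ via Lemma \ref{L3.6}(b). Both routes funnel into the same obstacle, namely your step (iv), and that is where your argument has a genuine gap.

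Having a nonzero morphism $\phi:M\to P$ (which condition $(\ast)$, or equivalently Proposition \ref{P4.15}, does provide) is not enough. The image $P'=Im(\phi)$ is a \emph{quotient} of $M$, but a narrow subcategory is only closed under cokernels of morphisms \emph{between objects already in $\mathcal C$}; from $P'=Cok(Ker(\phi)\hookrightarrow M)$ you cannot conclude $P'\in\mathcal C$ unless you know $Ker(\phi)\in\mathcal C$, which you do not. Your phrase ``$P$ becomes a cokernel of a morphism between objects of $M^{\oplus n}$'' does not supply this: you would need both the source \emph{and the target} of that morphism to lie in $\mathcal C$, and nothing in your sketch arranges that. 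Likewise, applying Proposition \ref{P4.15} to the subobject $X\subseteq M^{\oplus n}$ in a presentation $P\cong X/Y$ only gives a nonzero $M^{\oplus n}\to X$, whose composite with $X\twoheadrightarrow P$ may well be zero.

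The paper closes this gap with an iterative noetherian trick, proving $(4)\Rightarrow(2)$ by contradiction. Suppose $\mathcal C$ is narrow but not a torsion class; choose an epimorphism $\pi_0:M_0\twoheadrightarrow N$ with $M_0\in\mathcal C$ and $N\notin\mathcal C$, so $Ker(\pi_0)\neq 0$. Proposition \ref{P4.15} gives a nonzero $\epsilon_0:M_0\to Ker(\pi_0)$. The key move is to take the cokernel of the \emph{composite endomorphism} $M_0\xrightarrow{\epsilon_0}Ker(\pi_0)\hookrightarrow M_0$: this is a map from $M_0$ to $M_0$, both in $\mathcal C$, so its cokernel $M_1$ lies in $\mathcal C$ by narrowness. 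Since the composite factors through $Ker(\pi_0)$, the map $\pi_0$ descends to an epimorphism $\pi_1:M_1\twoheadrightarrow N$; iterating produces a strictly increasing chain $Ker(\rho_0)\subsetneq Ker(\rho_1\rho_0)\subsetneq\cdots$ inside the noetherian object $M_0$, a contradiction. This device---feeding the nonzero map from Proposition \ref{P4.15} back into $M_0$ so that one is always taking cokernels of \emph{self-maps} of an object of $\mathcal C$---is precisely what your step (iv) is missing.
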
 

\begin{proof} (1) $\Leftrightarrow$ (2) : Using Proposition \ref{P4.3} and Proposition \ref{P4.5},  this follows from the fact that both the collection of Serre subcategories of $\mathcal A'$ and the collection of 
torsion classes in $\mathcal A'$ are the image of the map  
\begin{equation*}
Supp^{-1}:\{\mbox{Specialization closed subsets
of $\mathfrak Spec'(\mathcal A)$}\}
\longrightarrow \{\mbox{Full \& replete subcategories of $\mathcal A'$}\}
\end{equation*}

\smallskip (4) $\Rightarrow$ (2) : Let $\mathcal C\subseteq \mathcal A'$ be a narrow subcategory. Then,
$\mathcal C$ is closed under cokernels.  If $\mathcal C$ is not a torsion class, there exists an epimorphism 
$\pi_0: M_0\longrightarrow N$ with $M_0\in \mathcal C$ and $N\notin\mathcal C$. Then, $0\ne Ker(\pi_0)
\subseteq M_0$ and using Proposition \ref{P4.15}, we get a nonzero morphism $\epsilon_0:
M_0\longrightarrow Ker(\pi_0)$. We now set 
\begin{equation}\label{eq4.11s}
M_1:=Cok(M_0\overset{\epsilon_0}{\longrightarrow}Ker(\pi_0)\hookrightarrow M_0)
\end{equation} and denote by $\rho_0$ the canonical epimorphism $\rho_0:M_0\longrightarrow M_1$ to the cokernel. 
Since $\mathcal C$ is closed under cokernels, we must have $M_1\in \mathcal C$. From 
\eqref{eq4.11s}, it is also clear that $Ker(\rho_0)\ne 0$ and the morphism $\pi_0:M_0\longrightarrow N$ factors through $M_1$. This gives
us an epimorphism $\pi_1:M_1\longrightarrow N$ such that $\pi_1\circ\rho_0=\pi_0$. 

\smallskip
Since $N\notin \mathcal C$ and $M_1\in \mathcal C$, the epimorphism $\pi_1:M_1\longrightarrow N$ must
satisfy $Ker(\pi_1)\ne 0$. Repeating the above process over and over again, we can obtain for each $n\geq 1$ an epimorphism 
$\pi_n:M_n\longrightarrow N$ with $M_n\in \mathcal C$ and an epimorphism $\rho_{n-1}:
M_{n-1}\longrightarrow M_n$ with $Ker(\rho_{n-1})\ne 0$ such that $\pi_n\circ \rho_{n-1}=\pi_{n-1}$. Since
$M_n\in \mathcal C$ and $N\notin\mathcal C$, we must have $Ker(\pi_n)\ne 0$. 

\smallskip
We now consider the increasing chain of subobjects $Ker(\rho_0)\subseteq Ker(\rho_1\circ \rho_0)
\subseteq Ker(\rho_2\circ\rho_1\circ \rho_0)\subseteq \dots$ of $M_0$.  A simple application of the Snake Lemma
shows that
\begin{equation}
\frac{Ker(\rho_n\circ \rho_{n-1}\circ ...\circ \rho_0)}{Ker(\rho_{n-1}\circ ...\circ \rho_0)}\cong Ker(\rho_n)\ne 0\qquad\forall\textrm{ }n\geq 1
\end{equation} This shows that the chain is strictly increasing, which contradicts the fact that $M_0\in \mathcal C
\subseteq \mathcal A'\subseteq \mathcal A_{fg}$
is Noetherian.

\smallskip 
(2) $\Rightarrow$ (4) : This follows from the diagram of relations in \eqref{rel}. 

\smallskip
(3) $\Leftrightarrow$ (4) : We have already shown that (1) $\Leftrightarrow$ (2) 
$\Leftrightarrow$ (4). The result now follows by noting that the diagram of relations in \eqref{rel} gives 
(3) $\Rightarrow$ (4) and that (1) $\Rightarrow$ (3). 

\smallskip
\end{proof}  

\section{Subsets of the spectrum and torsion theories on $\mathcal A$}

\smallskip
 In the previous section, we have shown that certain specialization closed subsets of
$\mathfrak Spec(\mathcal A)$ correspond to Serre subcategories, torsion classes as well as thick and narrow
subcategories of a certain abelian subcategory $\mathcal A'\subseteq \mathcal A_{fg}$. 

\smallskip
In this section, we will be concerned with showing that specialization closed subsets in the spectrum
$\mathfrak Spec(\mathcal A)$ of the locally noetherian category correspond to hereditary torsion pairs 
on $\mathcal A$. 
If $R$ is a  commutative noetherian ring, it is shown in \cite[Proposition 2.3]{Four} (see also \cite[VI]{Bo} and 
\cite[Theorem 4.1]{Taka}) that there is a one-one correspondence between
specialization closed subsets of the Zariski spectrum $Spec(R)$ and hereditary torsion pairs on the category of
$R$-modules. The authors in \cite{Four} also describe how these torsion pairs relate to   injective hulls of the quotients
$R/\mathfrak{p}$ as $\mathfrak{p}$ varies over all the prime ideals of $R$. Accordingly, we will  describe the 
hereditary torsion pairs on $\mathcal A$ and their connection to  injective hulls of spectral objects of $\mathcal A$.

\begin{thm}\label{P5.1} Let $S\subseteq \mathfrak Spec(\mathcal A)$ be a specialization closed subset. Then, the pair
$(\mathcal T(S),\mathcal F(S))$ of subcategories defined as follows
\begin{equation}\label{eq5.1}
\mathcal T(S):=\{\mbox{$M\in \mathcal A$ $\vert$ $Supp(M)\subseteq S$}\}\qquad 
\mathcal F(S):=\{\mbox{$M\in \mathcal A$ $\vert$ $Ass(M)\cap S=\phi$}\}
\end{equation} gives  a torsion pair on the abelian category $\mathcal A$. Further, the torsion pair
$(\mathcal T(S),\mathcal F(S))$ is hereditary, i.e., the torsion class $\mathcal T(S)$ is closed under subobjects.
\end{thm}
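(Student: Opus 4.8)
The plan is to verify the two axioms of a torsion pair from Definition \ref{D4.39}, and then separately check heredity. I would first establish the vanishing condition (a): given $M\in\mathcal T(S)$ and $N\in\mathcal F(S)$, I claim $Hom_{\mathcal A}(M,N)=0$. Suppose $\phi:M\to N$ is nonzero, so its image $Im(\phi)$ is a nonzero subobject of $N$, hence has an associated point $\langle P\rangle\in Ass(Im(\phi))$ (using the standing hypothesis that every nonzero object has an associated point). Then $P\subseteq Im(\phi)$, so $\langle P\rangle\in Ass(N)$, which forces $\langle P\rangle\notin S$ since $N\in\mathcal F(S)$. On the other hand $Im(\phi)$ is a quotient of $M$, so $Supp(Im(\phi))\subseteq Supp(M)\subseteq S$; but $\langle P\rangle\in Ass(Im(\phi))\subseteq Supp(Im(\phi))\subseteq S$, a contradiction. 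Hence (a) holds.

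For axiom (b), I need to produce for each $M\in\mathcal A$ a short exact sequence $0\to X_M\to M\to Y^M\to 0$ with $X_M\in\mathcal T(S)$ and $Y^M\in\mathcal F(S)$. The natural construction is to take $X_M$ to be the largest subobject of $M$ whose support lies in $S$: concretely, let $X_M=\sum\{N\subseteq M : Supp(N)\subseteq S\}$, the sum of all such subobjects. First one checks this is well-defined and that $Supp(X_M)\subseteq S$; since $\mathcal A$ is locally finitely generated, $X_M$ is a filtered colimit of finitely generated subobjects $N_\lambda$, each contained in some finite sum of the chosen $N$'s, so each $N_\lambda$ has support in $S$ (using that support sends short exact sequences to unions and is monotone), and since any $\langle P\rangle\in Supp(X_M)$ comes with $P\prec X_M$ with $P$ finitely generated, $P$ factors through some $N_\lambda$, giving $\langle P\rangle\in Supp(N_\lambda)\subseteq S$. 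So $X_M\in\mathcal T(S)$. The remaining work is to show $Y^M:=M/X_M\in\mathcal F(S)$, i.e. $Ass(M/X_M)\cap S=\phi$. If some $\langle P\rangle\in Ass(M/X_M)\cap S$, then $P\subseteq M/X_M$ is a spectral subobject; pulling back along $M\twoheadrightarrow M/X_M$ gives $X_M\subseteq X'\subseteq M$ with $X'/X_M\cong P$. Since $\langle P\rangle\in S$ and $S$ is specialization closed, $Supp(P)=\overline{\langle P\rangle}\subseteq S$ (Lemma \ref{L3.3}, valid since $P$ is finitely generated by Lemma \ref{L3.2}), and combining with $Supp(X_M)\subseteq S$ via the short exact sequence $0\to X_M\to X'\to P\to 0$ yields $Supp(X')\subseteq S$. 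By maximality of $X_M$ this forces $X'=X_M$, contradicting $P=X'/X_M\ne 0$. So $Y^M\in\mathcal F(S)$, and axiom (b) holds; both $\mathcal T(S)$ and $\mathcal F(S)$ are clearly full and replete.

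Finally, heredity: I must show $\mathcal T(S)$ is closed under subobjects. If $N\subseteq M$ with $M\in\mathcal T(S)$, then $Supp(N)\subseteq Supp(M)\subseteq S$ (support is monotone under subobjects, being part of the union decomposition for short exact sequences), so $N\in\mathcal T(S)$.

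The main obstacle I anticipate is the maximal-subobject argument for $X_M$: unlike the noetherian setting of Proposition \ref{P4.391}, here $M$ need not be noetherian, so the maximum is realized as a (possibly infinite) sum rather than picked off a finite ascending chain, and one has to argue carefully — using local finite generation of $\mathcal A$ and the fact that spectral objects are finitely generated (Lemma \ref{L3.2}) together with the behaviour of $Supp$ under subquotients and filtered colimits of subobjects — that this sum still has support contained in $S$. Everything else is a fairly direct manipulation of the definitions of $Supp$, $Ass$, $\prec$, and the specialization-closed hypothesis.
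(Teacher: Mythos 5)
Your proof is correct, but it takes a genuinely different route from the paper's. The paper first observes that, by the properties of $Supp$ on short exact sequences and coproducts (citing \cite[\S 5.2.2--5.2.3]{Rose2}), $\mathcal T(S)$ is closed under subobjects, extensions, quotients and coproducts; since $\mathcal A$ is Grothendieck, the general existence theorem for torsion pairs (\cite[Chapter 1]{BI}) then immediately yields that $\mathcal T(S)$ is the torsion class of a hereditary torsion pair. The remaining work in the paper is purely to \emph{identify} the torsion-free class as $\mathcal F(S)$, done by a two-sided orthogonality argument. You instead verify the two torsion-pair axioms directly from Definition \ref{D4.39}, and in particular you construct the torsion subobject $X_M$ by hand as the sum of all subobjects of $M$ with support in $S$. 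This is a valid alternative, and what it buys you is self-containment: you don't need the existence theorem at all, only the behaviour of $Supp$ under subquotients and filtered colimits of subobjects. The price is that you must justify the $X_M$ construction carefully, which is precisely the step you flag as the main obstacle. Your treatment of it is essentially sound: $X_M$ is a filtered union of its finitely generated subobjects, each of which lands in a finite sum of the defining subobjects (hence has support in $S$ by the s.e.s.\ and finite-direct-sum formulas), and then since every spectral $P$ with $P\prec X_M$ is finitely generated (Lemma \ref{L3.2}) the relation $P\prec X_M$ already holds at a finitely generated stage. The phrase ``$P$ factors through some $N_\lambda$'' is a slight abuse of language — what one needs is that the subquotient realization of $P$ inside $X_M^n$ already sits inside some $N_\lambda^n$, using that $P$ is noetherian so the images stabilize — but the idea is the right one and the paper uses the same unstated fact in the proof of Lemma \ref{L5.41}. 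Your Hom-vanishing argument and the heredity check agree with the paper's.
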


\begin{proof} Given a short exact sequence in $\mathcal A$
\begin{equation}
0\longrightarrow M'\longrightarrow M\longrightarrow M''\longrightarrow 0
\end{equation} it follows from \cite[$\S$ 5.2.2]{Rose2} that $Supp(M)=Supp(M')\cup Supp(M'')$. Further, if 
$\{M_i\}_{i\in I}$ is a family of objects in $\mathcal A$, it follows from \cite[$\S$ 5.2.3]{Rose2} that 
\begin{equation}
Supp\left(\underset{i\in I}{\bigoplus}M_i\right)=\underset{i\in I}{\bigcup}Supp(M_i)
\end{equation} Combining these with the definition in \eqref{eq5.1}, it is immediately clear that $\mathcal T(S)$
is closed under subobjects, extensions, quotients and coproducts. Since $\mathcal A$ is a Grothendieck category, it is  both complete and
cocomplete. It now follows from \cite[Chapter 1]{BI} that $\mathcal T(S)$ is the torsion class of a hereditary
torsion pair on $\mathcal A$. 

\smallskip
Suppose now that there is a morphism $\psi:M\longrightarrow N$ with $M\in \mathcal T(S)$ and 
$N\in \mathcal F(S)$. Then, $Ass(Im(\psi))\subseteq Ass(N)$ (see \cite[$\S$ 8.2]{Rose2}). On the other hand,
we have $Ass(Im(\psi))\subseteq Supp(Im(\psi))\subseteq Supp(M)\subseteq S$. Since $Ass(N)\cap S=\phi$,
we get $Ass(Im(\psi))=\phi$ and hence $Im(\psi)=0$. 

\smallskip
Conversely, suppose that $N\in \mathcal A$ is such that $Hom_{\mathcal A}(M,N)=0$ for all 
$M\in \mathcal T(S)$. Suppose that $Ass(N)\cap S\ne \phi$, i.e., there exists a spectral object
$P\in Spec(\mathcal A)$ along with a monomorphism $P\hookrightarrow N$ such that $\langle P\rangle \in 
S$. Since $S$ is specialization closed, we obtain from Lemma \ref{L3.3} that $\overline{\langle P\rangle }=Supp(P)\subseteq S$. 
Then, $P\in \mathcal T(S)$ and the monomorphism $P\hookrightarrow N$ must be zero, which is a contradiction.
\end{proof}

\begin{thm}\label{P5corr} The association $
S\mapsto \tau(S):=(\mathcal T(S),\mathcal F(S))
$ defines a one-one correspondence between hereditary torsion pairs on 
$\mathcal A$ and specialization closed subsets of the spectrum $\mathfrak Spec(\mathcal A)$. 
\end{thm}

\begin{proof} If $\mathcal C\subseteq \mathcal A$ is the torsion class of a hereditary torsion pair
on $\mathcal A$, we associate to $\mathcal C$ a specialization closed subset of $\mathfrak Spec(\mathcal A)$
as follows:
\begin{equation}\label{eq5.4l}
\mathcal C\mapsto Z(\mathcal C):=\underset{M\in \mathcal C\cap \mathcal A_{fg}}{\bigcup}\textrm{ }Supp(M)
\end{equation} We claim that the associations $S\mapsto \mathcal T(S)$ and $\mathcal C\mapsto Z(\mathcal C)$ are inverse
to each other. 

\smallskip
We begin with a specialization closed subset $S\subseteq \mathfrak Spec(\mathcal A)$. By definition, we know that
\begin{equation}\label{eq5.5l}
\mathcal T(S)=\{\mbox{$M\in \mathcal A$ $\vert$ $Supp(M)\subseteq S$}\}
\end{equation} From \eqref{eq5.5l}, it is clear that $Z(\mathcal T(S))\subseteq S$. Conversely, let $P\in Spec(\mathcal A)$
be a spectral object with $\langle P\rangle \in S$. Since $S$ is specialization closed, we
see that $Supp(P)=\overline{\langle P\rangle}\subseteq S$ and hence $P\in \mathcal T(S)$. From Lemma \ref{L3.2}, we know that the spectral object
$P$ is finitely generated and hence $P\in \mathcal T(S)\cap \mathcal A_{fg}$. Since $\langle P\rangle 
\in Supp(P)$, it follows from  \eqref{eq5.4l}  that $S\subseteq Z(\mathcal T(S))$. 

\smallskip
On the other hand, let $\mathcal C$ be the torsion class of a hereditary torsion pair on $\mathcal A$. From the definitions
in \eqref{eq5.4l} and \eqref{eq5.5l}, it is immediate that $\mathcal C\cap\mathcal A_{fg}\subseteq \mathcal T(Z(\mathcal C))$. 
We choose some $M\in \mathcal C$. 
Since $\mathcal A$ is locally finitely generated (see Section 2), the object $M$ may be expressed as the filtered colimit
\begin{equation}
M=\underset{M'\in \mathcal A_{fg},M'\subseteq M}{\varinjlim}\textrm{ }M'
\end{equation} Since $\mathcal C$ is the torsion class of a hereditary torsion pair, it is closed under
subobjects and hence every finitely generated subobject $M'\subseteq M$ lies in $\mathcal C$ and hence
in $\mathcal C\cap \mathcal A_{fg}$. Then, $M'\in \mathcal T(Z(\mathcal C))$ for each finitely generated
$M'\subseteq M$. But $\mathcal T(Z(\mathcal C))$ being a torsion class, it is closed under colimits and 
we see that $M\in \mathcal T(Z(\mathcal C))$. Hence, $\mathcal C\subseteq \mathcal T(Z(\mathcal C))$.

\smallskip
It remains to show that $\mathcal T(Z(\mathcal C))\subseteq \mathcal C$. First, we consider some spectral
object $P\in Spec(\mathcal A)$ such that $P\in \mathcal T(Z(\mathcal C))$.  From the definitions 
in \eqref{eq5.4l} and \eqref{eq5.5l}, we obtain : 
\begin{equation}
\overline{\langle P\rangle}=Supp(P)\subseteq  \underset{M\in \mathcal C\cap \mathcal A_{fg}}{\bigcup}\textrm{ }Supp(M)
\end{equation} Hence, we can find some $M\in \mathcal C\cap\mathcal A_{fg}$ with $\langle P\rangle \in Supp(M)$, i.e.,
$P\prec M$. But $\mathcal C$ is closed under subobjects, quotients and coproducts and hence $P\prec M$
implies that $P\in \mathcal C$. 

\smallskip We now choose some 
$N\in \mathcal T(Z(\mathcal C))$ and consider a finitely generated subobject $N'\subseteq N$. From Proposition 
\ref{P5.1}, we already know that $\mathcal T(Z(\mathcal C))$ is closed under subobjects and hence 
$N'\in \mathcal T(Z(\mathcal C))$. From the definitions 
in \eqref{eq5.4l} and \eqref{eq5.5l}, we now obtain : 
\begin{equation}
Supp(N')\subseteq Supp(N)\subseteq \underset{M\in \mathcal C\cap \mathcal A_{fg}}{\bigcup}\textrm{ }Supp(M)
\end{equation} From \cite[$\S$ 1.6.4.1]{Rose1}, we know that the finitely generated object $N'$ can be filtered as follows
\begin{equation}
N'=N'_k\supseteq N'_{k-1}\supseteq \dots \supseteq N'_0=0
\end{equation} with each successive quotient $N'_i/N'_{i-1}\in Spec(\mathcal A)$. Again since $\mathcal T(Z(\mathcal C))$
is closed under subobjects and quotients, we see that each spectral object $N'_i/N'_{i-1}\in \mathcal T(Z(\mathcal C))$. It
follows that each $N'_i/N'_{i-1}\in \mathcal C$. Since $\mathcal C$ is closed under extensions, we can now  show by a succession
of short exact sequences that $N'\in \mathcal C$. Finally, since the object $N$ is the colimit of its finitely generated
subobjects and $\mathcal C$ is closed under colimits, it follows that $N\in \mathcal C$. Hence, $\mathcal T(Z(\mathcal C))\subseteq \mathcal C$ and the result follows. 
\end{proof}

Since $\mathcal A$ is a Grothendieck category, it is well known that every object $M\in \mathcal A$ has an injective hull 
(see, for instance, \cite[$\S$ V.2]{Bo}),
which we denote by $E(M)$. We will now describe the categories $\mathcal T(S)$ and $\mathcal F(S)$ in terms of
injective hulls of spectral objects of $\mathcal A$.

\begin{thm}\label{P5.3} Let $S\subseteq \mathfrak Spec(\mathcal A)$ be a specialization closed subset. We set 
\begin{equation}
\begin{array}{c}
\mathcal T'(S)=\{\mbox{$M\in \mathcal A$ $\vert$ $Hom_{\mathcal A}(M,E(P))=0$ for all $P\in Spec(A)$ with $
\langle P\rangle \notin S$}\} \\
\mathcal F'(S)=\{\mbox{$M\in\mathcal A$ $\vert$ $Hom_{\mathcal A}(P,M)=0$ for all $P
\in Spec(\mathcal A)$ with $\langle P\rangle \in S$}\}\\
\end{array}
\end{equation}
Then, $\mathcal T(S)=\mathcal T'(S)$ and $\mathcal F(S)=\mathcal F'(S)$. 
\end{thm}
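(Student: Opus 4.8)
The plan is to treat the two equalities by different routes: $\mathcal F(S)=\mathcal F'(S)$ will follow quickly from the torsion-pair structure of Proposition~\ref{P5.1}, while $\mathcal T(S)=\mathcal T'(S)$ will be deduced from a single fact about the injective hull of a spectral object.

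For $\mathcal F(S)=\mathcal F'(S)$: by Proposition~\ref{P5.1} the pair $(\mathcal T(S),\mathcal F(S))$ is a torsion pair on $\mathcal A$, so $Hom_{\mathcal A}(T,N)=0$ whenever $T\in\mathcal T(S)$ and $N\in\mathcal F(S)$. Every spectral $P$ with $\langle P\rangle\in S$ satisfies $Supp(P)=\overline{\langle P\rangle}\subseteq S$ by Lemma~\ref{L3.3} and the hypothesis that $S$ is specialization closed, hence $P\in\mathcal T(S)$; therefore $Hom_{\mathcal A}(P,N)=0$ for every $N\in\mathcal F(S)$ and every such $P$, which is exactly the statement $\mathcal F(S)\subseteq\mathcal F'(S)$. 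Conversely, if $M\notin\mathcal F(S)$ then $Ass(M)\cap S\ne\phi$, so there is a spectral $P$ with a monomorphism $P\hookrightarrow M$ and $\langle P\rangle\in S$; this monomorphism is a nonzero element of $Hom_{\mathcal A}(P,M)$, so $M\notin\mathcal F'(S)$, and thus $\mathcal F'(S)\subseteq\mathcal F(S)$.

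The crux of $\mathcal T(S)=\mathcal T'(S)$ is the claim that for every spectral $P\in Spec(\mathcal A)$ and every $M\in\mathcal A$, $Hom_{\mathcal A}(M,E(P))=0$ if and only if $\langle P\rangle\notin Supp(M)$. Suppose first that $\langle P\rangle\notin Supp(M)$ but, for contradiction, some $f\colon M\to E(P)$ is nonzero, and set $I:=Im(f)\subseteq E(P)$. Since $E(P)$ is an essential extension of $P$ we have $I\cap P\ne 0$, and since $P$ is spectral, $I\cap P\approx P$; combining $P\prec I\cap P$, the inclusion $I\cap P\prec I$, and the fact that $I$ is a quotient of $M$ (so $I\prec M$), we get $P\prec M$, i.e.\ $\langle P\rangle\in Supp(M)$ by \eqref{supt} --- a contradiction. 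Conversely, suppose $\langle P\rangle\in Supp(M)$, so $P\prec M$ and hence $P$ is a subquotient of $M^{\oplus n}$ for some $n\ge 1$, say $P\cong L/K$ with $K\subseteq L\subseteq M^{\oplus n}$; the composite $L\twoheadrightarrow L/K\cong P\hookrightarrow E(P)$ is nonzero, and since $E(P)$ is injective it extends along $L\hookrightarrow M^{\oplus n}$ to a nonzero morphism $M^{\oplus n}\to E(P)$, whence $Hom_{\mathcal A}(M,E(P))\ne 0$.

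Granting the claim, $\mathcal T(S)=\mathcal T'(S)$ is formal, because $\langle P\rangle$ ranges over all of $\mathfrak Spec(\mathcal A)$ as $P$ ranges over spectral objects: $M\in\mathcal T'(S)$ iff $Hom_{\mathcal A}(M,E(P))=0$ for every spectral $P$ with $\langle P\rangle\notin S$, iff $\langle P\rangle\notin Supp(M)$ for every spectral $P$ with $\langle P\rangle\notin S$, iff $Supp(M)\subseteq S$, iff $M\in\mathcal T(S)$. I expect the main obstacle to be the first half of the claim --- that $\langle P\rangle\notin Supp(M)$ forces every morphism $M\to E(P)$ to vanish. This is the only place where the structure of the injective hull enters, and it works precisely because essentiality of $E(P)$ over $P$ lets one intersect the image with $P$ and spectrality of $P$ then propagates the subquotient relation $\prec$ back to $M$; everything else is routine bookkeeping with $\prec$ and Lemma~\ref{L3.3}.
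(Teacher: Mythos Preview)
Your proof is correct and follows essentially the same route as the paper's. The argument for $\mathcal F(S)=\mathcal F'(S)$ is identical, and for $\mathcal T(S)=\mathcal T'(S)$ both you and the paper reduce to the biconditional $Hom_{\mathcal A}(M,E(P))\ne 0\iff \langle P\rangle\in Supp(M)$; the only cosmetic difference is that where you chain the preorder $P\prec I\cap P\prec I\prec M$ directly, the paper instead forms a pullback of $M\twoheadrightarrow I$ along $I\cap P\hookrightarrow I$ to exhibit $I\cap P$ as a subquotient of $M$ and then compares supports --- your transitivity argument is the cleaner packaging of the same content.
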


\begin{proof} If $P\in Spec(\mathcal A)$ is such that $\langle P\rangle \in S$, it follows from Lemma \ref{L3.3} and the
fact that $S$ is specialization closed that $Supp(P)=\overline{\langle P\rangle}\subseteq S$. Then, $P\in \mathcal T(S)$ and it
follows that $Hom_{\mathcal A}(P,M)=0$ for any $M\in \mathcal F(S)$. Hence, $\mathcal F(S)\subseteq \mathcal F'(S)$.

\smallskip
Conversely, we consider $M\in \mathcal F'(S)$. If $M\ne 0$, we  consider some 
$P\in Spec(\mathcal A)$ along with a monomorphism $P\hookrightarrow M$. Then, $\langle P\rangle \notin S$, i.e.,
$Ass(M)\cap S=\phi$. This gives $\mathcal F'(S)\subseteq \mathcal F(S)$ and hence $\mathcal F(S)=\mathcal F'(S)$. 

\smallskip
On the other hand, let us consider $M\in \mathcal T(S)$ along with $P\in Spec(\mathcal A)$ with 
$\langle P\rangle \notin S$. Suppose there exists a nonzero morphism $\phi: M\longrightarrow E(P)$. We set
$I:=Im(\phi)\subseteq E(P)$. Since $P\subseteq E(P)$ is an essential subobject, this gives a nonzero subobject
$I\cap P\subseteq P$. Since $P$ is spectral, it follows that $\langle P\rangle =\langle I\cap P\rangle$. The morphism
$\phi:M\longrightarrow E(P)$ induces an epimorphism to its image $I\subseteq E(P)$ which we denote
by $\phi':M\longrightarrow I$. We now form a pullback square:
\begin{equation}
\begin{CD}
N @>>> I\cap P \\
@VVV @VVV \\
M @>\phi' >> I \\
\end{CD}
\end{equation} Since $\mathcal A$ is an abelian category, the pullback of $\phi':M\longrightarrow I$ gives an 
epimorphism $N\twoheadrightarrow I\cap P$. It is also clear that $N\subseteq M$. Then, $Supp(I\cap P)
\subseteq Supp(N)\subseteq Supp(M)\subseteq S$ since $M\in \mathcal T(S)$. However, since $\langle P\rangle =\langle I\cap P\rangle$, we obtain 
\begin{equation}
\langle P\rangle \in \overline{\langle P\rangle}=\overline{\langle I\cap P\rangle}=Supp(I\cap P)\subseteq S
\end{equation} This contradicts the assumption that $\langle P\rangle \notin S$. Hence, $\mathcal T(S)\subseteq 
\mathcal T'(S)$. 

\smallskip
Finally, we consider some $M\in \mathcal T'(S)$. Suppose $M\notin\mathcal T(S)$, i.e., $Supp(M)\not\subseteq S$. 
In other words, we can choose $P\in Spec(\mathcal A)$ with $P\prec M$ such that $\langle P\rangle \notin S$. Since
$P\prec M$, it follows that we can choose  a direct sum $M^n$ of $n$-copies of $M$ and a subobject 
$N\subseteq M^n$ along with an epimorphism $\pi:N\longrightarrow P$. Composing $\pi:N\longrightarrow P$
with the inclusion $P\hookrightarrow E(P)$ gives a nonzero morphism $\phi:N\longrightarrow E(P)$. However,
since $E(P)$ is injective, the morphism $\phi:N\longrightarrow E(P)$ extends to a nonzero morphism $M^n\longrightarrow E(P)$. 
This gives $Hom_{\mathcal A}(M^n,E(P))\ne 0$, whence it follows that $Hom_{\mathcal A}(M,E(P))\ne 0$. This is a contradiction.
Hence, $\mathcal T'(S)\subseteq \mathcal T(S)$.

\end{proof}

\begin{cor}\label{C5.4}  Let $S\subseteq \mathfrak Spec(\mathcal A)$ be a specialization closed subset. Then,
 for any $Q\in Spec(\mathcal A)$ with 
$\langle Q\rangle \notin S$, the injective hull $E(Q)$ lies in $\mathcal F(S)$. 
\end{cor}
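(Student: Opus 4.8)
The plan is to work directly with the definition $\mathcal F(S)=\{M\in\mathcal A\mid Ass(M)\cap S=\phi\}$ from \eqref{eq5.1}, so that it suffices to prove $Ass(E(Q))\cap S=\phi$. Since $Q\hookrightarrow E(Q)$ we certainly have $\langle Q\rangle\in Ass(E(Q))$ by \eqref{ass}, which is consistent with the claim precisely because we are assuming $\langle Q\rangle\notin S$; the real content is that no other associated point of $E(Q)$ can lie in $S$ either — indeed, that $E(Q)$ has no associated point besides $\langle Q\rangle$.

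First I would argue by contradiction: suppose $\langle P\rangle\in Ass(E(Q))\cap S$ for some spectral object $P\in Spec(\mathcal A)$. By the definition \eqref{ass} of associated points there is a monomorphism $P\hookrightarrow E(Q)$. Because $E(Q)$ is the injective hull of $Q$, the subobject $Q\subseteq E(Q)$ is essential, so the intersection $P\cap Q$ formed inside $E(Q)$ is a nonzero subobject of both $P$ and $Q$.

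Next I would invoke spectrality twice: since $P$ is spectral and $0\ne P\cap Q\subseteq P$, we get $P\cap Q\approx P$; since $Q$ is spectral and $0\ne P\cap Q\subseteq Q$, we get $P\cap Q\approx Q$. Hence $P\approx Q$, and by the basic equivalence $X\prec Y\Leftrightarrow\langle X\rangle\subseteq\langle Y\rangle$ recalled in Section 2 this forces $\langle P\rangle=\langle Q\rangle$. But then $\langle Q\rangle=\langle P\rangle\in S$, contradicting the hypothesis $\langle Q\rangle\notin S$. Therefore $Ass(E(Q))\cap S=\phi$, i.e. $E(Q)\in\mathcal F(S)$.

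This is essentially a direct combination of the essentiality of the injective hull with the defining property of spectral objects, so I do not expect a genuine obstacle; the only point requiring care is that $P\cap Q$ is genuinely nonzero, which is exactly what essentiality of $Q\hookrightarrow E(Q)$ supplies. One could alternatively deduce the statement from Proposition \ref{P5.3} by checking $Hom_{\mathcal A}(P,E(Q))=0$ for every $P$ with $\langle P\rangle\in S$: run the same essentiality-plus-spectrality argument on $Im(\phi)\cap Q$ for a would-be nonzero $\phi\colon P\to E(Q)$ to conclude $Q\prec P$, then use Lemma \ref{L3.3} together with the fact that $S$ is specialization closed to land $\langle Q\rangle$ in $S$, again a contradiction.
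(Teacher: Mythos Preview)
Your proof is correct. The essentiality-plus-spectrality argument establishes $Ass(E(Q))=\{\langle Q\rangle\}$, and then $\langle Q\rangle\notin S$ gives $E(Q)\in\mathcal F(S)$ directly from the definition \eqref{eq5.1}.

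The paper takes a different route: it invokes Proposition \ref{P5.3}, specifically the equality $\mathcal T(S)=\mathcal T'(S)$, to observe that $Hom_{\mathcal A}(M,E(Q))=0$ for every $M\in\mathcal T(S)$ (since $\langle Q\rangle\notin S$ appears in the defining condition of $\mathcal T'(S)$), whence $E(Q)\in\mathcal F(S)$ because $(\mathcal T(S),\mathcal F(S))$ is a torsion pair. Your argument is more elementary and self-contained: it bypasses Proposition \ref{P5.3} entirely and does not use that $S$ is specialization closed. In fact, your computation of $Ass(E(Q))$ is exactly the special case $N=Q$, $M=E(Q)$ of Proposition \ref{L5.5}(a), which the paper only proves afterward; so you are effectively anticipating that result. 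The paper's approach, on the other hand, makes Corollary \ref{C5.4} a genuine corollary of the preceding proposition and highlights the role of the injective hulls $E(P)$ as the ``test objects'' for $\mathcal T'(S)$.
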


\begin{proof} We consider $Q\in Spec(\mathcal A)$ with $\langle Q\rangle \notin S$. From Proposition 
\ref{P5.3}, we know that every object $M\in \mathcal T(S)=\mathcal T'(S)$ satisfies 
$Hom_{\mathcal A}(M,E(Q))=0$. Hence, $E(Q)$ lies in the torsion free class $\mathcal F(S)$. 

\end{proof}

\begin{cor}\label{C5.5} Let $S\subseteq \mathfrak Spec(\mathcal A)$ be a specialization closed subset and let
$\tau(S)=(\mathcal T(S),\mathcal F(S))$ be the hereditary torsion pair on $\mathcal A$ associated to $S$. Then, 
\begin{equation}
S=\mathfrak Spec(\mathcal A)\backslash Ass(\mathcal F(S))=\mathfrak Spec(\mathcal A)\backslash 
\left(\underset{M\in \mathcal F(S)}{\bigcup}\textrm{ }Ass(M)\right)
\end{equation}
\end{cor}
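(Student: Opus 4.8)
\textbf{Proof proposal for Corollary \ref{C5.5}.}

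The plan is to show the two displayed sets coincide by proving each is contained in the other, using Proposition \ref{P5corr} together with the explicit description of $\mathcal F(S)$ from \eqref{eq5.1} and the injective-hull description from Corollary \ref{C5.4}. First I would unpack the right-hand side: by definition $Ass(\mathcal F(S)) = \bigcup_{M\in\mathcal F(S)}Ass(M)$, so $\mathfrak Spec(\mathcal A)\setminus Ass(\mathcal F(S))$ consists of exactly those $\langle P\rangle$ that do \emph{not} lie in $Ass(M)$ for any $M\in\mathcal F(S)$.

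For the inclusion $S \subseteq \mathfrak Spec(\mathcal A)\setminus Ass(\mathcal F(S))$: I would take $\langle P\rangle \in S$ with $P\in Spec(\mathcal A)$, and suppose toward a contradiction that $\langle P\rangle \in Ass(M)$ for some $M\in\mathcal F(S)$. But $\langle P\rangle\in Ass(M)$ forces $Ass(M)\cap S\ne\phi$, contradicting $M\in\mathcal F(S)$ by the definition of $\mathcal F(S)$ in \eqref{eq5.1}. So no such $M$ exists and $\langle P\rangle$ lies in the complement of $Ass(\mathcal F(S))$.

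For the reverse inclusion $\mathfrak Spec(\mathcal A)\setminus Ass(\mathcal F(S)) \subseteq S$: here is where I would use the hypothesis that every nonzero object has an associated point. Take $\langle Q\rangle\notin S$ with $Q\in Spec(\mathcal A)$; I must produce an object $M\in\mathcal F(S)$ with $\langle Q\rangle\in Ass(M)$. The natural candidate is $M = E(Q)$, the injective hull. By Corollary \ref{C5.4}, $E(Q)\in\mathcal F(S)$ since $\langle Q\rangle\notin S$. And the inclusion $Q\hookrightarrow E(Q)$ exhibits $Q$ as a spectral subobject of $E(Q)$, so $\langle Q\rangle\in Ass(E(Q))$ by the definition in \eqref{ass}. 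Hence $\langle Q\rangle\in Ass(\mathcal F(S))$, so every element outside $S$ lies in $Ass(\mathcal F(S))$; taking complements gives the claim. Combining the two inclusions yields $S = \mathfrak Spec(\mathcal A)\setminus Ass(\mathcal F(S))$, and the second equality in the statement is just the definition of $Ass(\mathcal F(S))$ as a union, so nothing further is needed.

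The only genuinely delicate point is the reverse inclusion, and even there the work has already been done: Corollary \ref{C5.4} is exactly the statement that $E(Q)\in\mathcal F(S)$, so the argument reduces to observing that $Q$ is an associated point of its own injective hull. I do not anticipate any real obstacle; the proof is short and the hypotheses (specialization-closedness of $S$, existence of associated points) are used only through results already established.
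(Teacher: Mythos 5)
Your proof is correct, but the reverse inclusion takes a heavier route than the paper's. You produce a witness $M\in\mathcal F(S)$ with $\langle Q\rangle\in Ass(M)$ by taking $M=E(Q)$ and invoking Corollary \ref{C5.4} (which in turn rests on Proposition \ref{P5.3} and the whole injective-hull characterization of the torsion pair). The paper instead takes the simpler witness $M=Q$: since $Q$ is spectral, every nonzero subobject of $Q$ is equivalent to $Q$, so $Ass(Q)=\{\langle Q\rangle\}$; then $\langle Q\rangle\notin S$ gives $Ass(Q)\cap S=\phi$, so $Q\in\mathcal F(S)$ directly from \eqref{eq5.1}, and $\langle Q\rangle\in Ass(Q)\subseteq Ass(\mathcal F(S))$. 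This makes the corollary genuinely immediate from Proposition \ref{P5.1} without touching the injective-hull machinery. One small caveat: you flag the existence of associated points for nonzero objects as the hypothesis doing the work in the reverse inclusion, but it is not used there --- it is a standing assumption of the section that underlies the topology and Proposition \ref{P5corr}, whereas the step you need follows from the definitions alone once the witness is exhibited.
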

\begin{proof} From Proposition \ref{P5.1}, we know that $\mathcal F(S):=\{\mbox{$M\in \mathcal A$ $\vert$ $Ass(M)\cap S=\phi$}\}$ and hence $S\cap Ass(\mathcal F(S))=\phi$. Conversely, if $Q\in Spec(\mathcal A)$ is such that
$\langle Q\rangle \notin S$, then $Ass(Q)\cap S=\{\langle Q\rangle\}\cap S=\phi$. Hence, $Q\in \mathcal F(S)$ and 
$\langle Q\rangle \in Ass(Q)\subseteq Ass(\mathcal F(S))$. 

\end{proof}

For a specialization closed subset $S\subseteq \mathfrak Spec(\mathcal A)$, we would like to show that the subcategory
$\mathcal T(S)$ contains all the injective hulls $E(P)$ with $P\in Spec(\mathcal A)$ such that $\langle P
\rangle \in S$. However, for this we will first need to do some commutative algebra in the framework of locally noetherian abelian 
categories.

\smallskip
We recall (see \cite[$\S$ 2.3.3]{Rose2}) that for any $P\in Spec(\mathcal A)$, the full subcategory $\langle P
\rangle$ is a Serre subcategory of $\mathcal A$. Hence, we may construct the quotient $\mathcal A/\langle P
\rangle$ along with the canonical functor $L_{\langle P\rangle}:\mathcal A\longrightarrow \mathcal A/\langle P\rangle$ which
is exact (see \cite[Proposition III.1]{Gab}). Further, since $\mathcal A$ is a Grothendieck category, the functor
\begin{equation}
L_{\langle P\rangle}:\mathcal A\longrightarrow \mathcal A/\langle P\rangle
\end{equation} is a flat localization (see \cite[$\S$ 2.4.8.2]{Rose2}) of  $\mathcal A$, i.e., 
$\langle P\rangle$ is a localizing subcategory of $\mathcal A$ (see Gabriel \cite[$\S$ III.2]{Gab}). In particular, this implies the following: 

\smallskip
(1) The exact functor $L_{\langle P\rangle}:\mathcal A\longrightarrow \mathcal A/\langle P\rangle$ admits a right adjoint
$i_{\langle P\rangle }:\mathcal A/\langle P\rangle \longrightarrow \mathcal A$ which is referred to as the section functor. 

\smallskip 
(2) The section functor $i_{\langle P\rangle}$ is fully faithful. Further, the counit $L_{\langle P\rangle}\circ i_{\langle P\rangle}\longrightarrow id_{\mathcal 
A/\langle P\rangle}$ of the adjunction is an isomorphism (see \cite[Proposition III.3]{Gab}). 

\smallskip
(3) An object $M\in \mathcal A$ is said to be $\langle P\rangle$-closed (see \cite[$\S$ III.2]{Gab}) if the morphism $M\longrightarrow 
i_{\langle P\rangle}\circ L_{\langle P\rangle}(M)$ induced by the unit of the adjunction is an isomorphism. Then, 
$i_{\langle P\rangle}$ induces an equivalence between $\mathcal A/\langle P\rangle$ and the full subcategory
of $\langle P\rangle$-closed objects in $\mathcal A$. 

\smallskip
(5) If $\{G_i\}_{i\in I}$ is a generating family for $\mathcal A$, then $\{L_{\langle P\rangle}(G_i)\}_{i\in I}$ is a generating
family for $\mathcal A/\langle P\rangle$ (see \cite[Lemme III.4]{Gab}). Further,  $\mathcal A/\langle P\rangle$ is a 
Grothendieck category (see \cite[Proposition III.9]{Gab}). 

\smallskip
(6) Since $\mathcal A$ is locally noetherian, the quotient $\mathcal A/\langle P\rangle$ is also a locally noetherian category
(see \cite[Corollaire III.1]{Gab}).

\begin{lem}\label{L5.39} Let $P\in Spec(\mathcal A)$ be a spectral object. Then, every nonzero object
in $\mathcal A/\langle P\rangle$ has an associated point.
\end{lem}

\begin{proof}
We pick  a nonzero object $N\in \mathcal A/\langle P\rangle$ and consider $i_{\langle P\rangle}(N)\in \mathcal A$.
Since $i_{\langle P\rangle}$ is full and faithful, we must have $i_{\langle P\rangle}(N)\ne 0$. Then, we can pick
some $Q\in Spec(\mathcal A)$ with a monomorphism $Q\hookrightarrow i_{\langle P\rangle}(N)$. We know that
$L_{\langle P\rangle}( i_{\langle P\rangle}(N))=N$. Then, from 
\cite[$\S$ 8.5.4]{Rose2}, it follows that if $Q\notin Ker(L_{\langle P\rangle})$, we must have $\langle L_{\langle P
\rangle}(Q)\rangle\in Ass(N)$. 

\smallskip
It remains therefore to verify that $Q\notin Ker(L_{\langle P\rangle})$. We notice that the image of the monomorphism
$Q\hookrightarrow i_{\langle P\rangle}(N)$ under the isomorphism $Hom_{\mathcal A/\langle P\rangle}(L_{\langle P\rangle}(Q),
N)\cong Hom_{\mathcal A}(Q,i_{\langle P\rangle}(N))$ gives a nonzero  morphism $L_{\langle P\rangle}(Q)\longrightarrow 
N$ in $\mathcal A/\langle P\rangle$. Hence, $L_{\langle P\rangle}(Q)\ne 0$. 
\end{proof}

\smallskip
The next two results will describe the spectrum $\mathfrak Spec(\mathcal A/\langle P\rangle)$ as a subset of $Spec(\mathcal A)$.

\begin{lem}\label{siL5.37}  Let $P\in Spec(\mathcal A)$ be a spectral object. Then, $L_{\langle P\rangle}:\mathcal A\longrightarrow \mathcal A/\langle P\rangle$
restricts to a map $L_{\langle P\rangle}:Spec(\mathcal A)-Ker(L_{\langle P\rangle})\longrightarrow Spec(\mathcal A/\langle P\rangle)$. The subset 
$Spec(\mathcal A)-Ker(L_{\langle P\rangle})\subseteq Spec(\mathcal A)$ is a union of equivalence classes of objects of $Spec(\mathcal A)$. Further,  $L_{\langle P\rangle}:Spec(\mathcal A)-Ker(L_{\langle P\rangle})\longrightarrow Spec(\mathcal A/\langle P\rangle)$ descends to a map
on equivalence classes
\begin{equation}
\begin{CD}
Spec(\mathcal A)-Ker(L_{\langle P\rangle}) @>L_{\langle P\rangle}>> Spec(\mathcal A/\langle P\rangle)\\
@VVV @VVV \\
\mathfrak Spec(\mathcal A)-Ker(L_{\langle P\rangle}) @>\overline{L}_{\langle P\rangle}>> \mathfrak Spec(\mathcal A/\langle P\rangle)\\
\end{CD}
\end{equation}

\end{lem}

\begin{proof} Let $Q\in Spec(\mathcal A)-Ker(L_{\langle P\rangle})$. Then, $\langle Q\rangle \in Ass(Q)-Ker(L_{\langle P\rangle})$ and it follows from \cite[$\S$ 8.5.4]{Rose2} that
$L_{\langle P\rangle}(Q)\in Spec(\mathcal A/\langle P\rangle)$. 

\smallskip
Next, we observe that  if $\langle M\rangle =\langle N\rangle$ for objects $M$, $N\in \mathcal A$, we must have
$\langle L_{\langle P\rangle }(M)\rangle =\langle L_{\langle P\rangle}(N)\rangle$ using the fact that $L$ is exact. In particular,  if $M\in Ker(L_{\langle P\rangle})$, i.e.,
$L_{\langle P\rangle}(M)=0$, we must also have $L_{\langle P\rangle}(N)=0$.   Hence, the subset 
$Spec(\mathcal A)-Ker(L_{\langle P\rangle})\subseteq Spec(\mathcal A)$ is a union of equivalence classes of objects of $Spec(\mathcal A)$. The same reasoning also shows that the map $L_{\langle P\rangle}$ descends to a map on equivalence classes.
\end{proof}

\begin{thm}\label{siP5.38} Let $P\in Spec(\mathcal A)$ be a spectral object. Then, we have a bijection $\overline{L}_{\langle P\rangle}: \mathfrak Spec(\mathcal A)-Ker(L_{\langle P\rangle})
\longrightarrow  \mathfrak Spec(\mathcal A/\langle P\rangle)$. In particular, $ \mathfrak Spec(\mathcal A/\langle P\rangle)$ may be treated as a subset of
$\mathfrak Spec(\mathcal A)$. 
\end{thm}

\begin{proof} We will first show  that $\overline{L}_{\langle P\rangle}$ is one-one. We consider therefore $Q_1$, $Q_2\in Spec(\mathcal A)-Ker(L_{\langle P\rangle})$ such that
$\langle L_{\langle P\rangle}(Q_1)\rangle=\langle L_{\langle P\rangle}(Q_2)\rangle$. Since $ L_{\langle P\rangle}(Q_1)\ne 0$, we have $P\prec Q_1$ or $\langle P\rangle \subseteq 
\langle Q_1\rangle$. From Lemma \ref{siL5.37}, we know that $L_{\langle P\rangle}(Q_1)\in Spec(\mathcal A/\langle P\rangle)$. This allows us to factor $L_{\langle Q_1\rangle}: \mathcal A\longrightarrow \mathcal A/\langle Q_1\rangle $ as a composition of exact
localization functors $L_{\langle Q_1\rangle}=L'_{\langle Q_1\rangle}\circ L_{\langle P\rangle}$.  Applying Lemma \ref{siL5.37} again, we get commutative diagrams
\begin{equation}\label{sieq5.16}
\begin{array}{c}
\begin{tikzcd}
Spec(\mathcal A)-Ker(L_{\langle Q_1\rangle}) \arrow{r}{L_{\langle Q_1\rangle}} \arrow[swap]{d}{L_{\langle P\rangle}} & Spec(\mathcal A/\langle Q_1\rangle) \\
Spec(\mathcal A/\langle P\rangle) - Ker(L'_{\langle Q_1\rangle}) \arrow{ur}[swap]{L'_{\langle Q_1\rangle}} 
\end{tikzcd} \\  \\ \\

\begin{tikzcd}
\mathfrak Spec(\mathcal A)-Ker(L_{\langle Q_1\rangle}) \arrow{r}{\overline{L}_{\langle Q_1\rangle}} \arrow[swap]{d}{\overline{L}_{\langle P\rangle}} & \mathfrak Spec(\mathcal A/\langle Q_1\rangle) \\
\mathfrak Spec(\mathcal A/\langle P\rangle) - Ker(L'_{\langle Q_1\rangle}) \arrow{ur}[swap]{\overline{L'}_{\langle Q_1\rangle}} 
\end{tikzcd}
\end{array}
\end{equation} From \eqref{sieq5.16}, we see that
\begin{equation*}
0\ne \langle L_{\langle Q_1\rangle}(Q_1)\rangle=\overline{L}_{\langle Q_1\rangle}\langle Q_1\rangle =\overline{L'}_{\langle Q_1\rangle}\overline{L}_{\langle P\rangle}\langle Q_1\rangle =
\overline{L'}_{\langle Q_1\rangle} \langle L_{\langle P\rangle}(Q_1)\rangle=\overline{L'}_{\langle Q_1\rangle} \langle L_{\langle P\rangle}(Q_2)\rangle= \langle L_{\langle Q_1\rangle}(Q_2)\rangle
\end{equation*} Then, $ L_{\langle Q_1\rangle}(Q_2)\ne 0$, which gives  $Q_1\prec Q_2$. We can similarly show that $Q_2\prec Q_1$
and it follows that $\langle Q_1\rangle =\langle Q_2\rangle$. 

\smallskip
It remains to show  that $\overline{L}_{\langle P\rangle}$ is onto. For this, we consider $Q\in Spec(\mathcal A/\langle P\rangle)$. Then, $Q\ne 0$. Since $i_{\langle P\rangle}$
is fully faithful, we know that $i_{\langle P\rangle}(Q)\ne 0$ in $\mathcal A$. By the assumption on $\mathcal A$, we may choose $u:Q'\hookrightarrow i_{\langle P\rangle}(Q)$ with $Q'\in Spec(\mathcal A)$ such that
$\langle Q'\rangle \in Ass(i_{\langle P\rangle}(Q))$. We now have a commutative diagram
\begin{equation}\label{sieq5.17}
\begin{CD}
Q' @>mono>u> i_{\langle P\rangle}Q \\
@VVV @V\approx VV \\
i_{\langle P\rangle}L_{\langle P\rangle}Q' @>i_{\langle P\rangle}L_{\langle P\rangle}(u)>>i_{\langle P\rangle}L_{\langle P\rangle} i_{\langle P\rangle}Q \\
\end{CD}
\end{equation} Since $i_{\langle P\rangle}Q$ is $\langle P\rangle$-closed, the right vertical map in \eqref{sieq5.17} is an isomorphism. It now follows from 
\eqref{sieq5.17} that $Q'\longrightarrow i_{\langle P\rangle}L_{\langle P\rangle}Q'$ is a monomorphism. In particular, $L_{\langle P\rangle}Q'\ne 0$. Since $L_{\langle P\rangle}$
is exact and $L_{\langle P\rangle}\circ i_{\langle P\rangle}=id$, we now obtain a monomorphism $0\ne L_{\langle P\rangle}Q'\hookrightarrow Q$ in $\mathcal A/\langle P\rangle$. Since 
$Q\in Spec(\mathcal A/\langle P\rangle)$, we now have $\langle L_{\langle P\rangle}Q'\rangle =\langle Q\rangle$. This proves the result.

\end{proof}

\begin{thm}\label{siP5.9vk}   Let $Q\in Spec(\mathcal A)$ be a spectral object. Then, for any object $M\in \mathcal A$, we have
\begin{equation}
Ass(M)\cap \mathfrak Spec(\mathcal A/\langle Q\rangle)\subseteq Ass(L_{\langle Q\rangle}(M))
\end{equation} where $\mathfrak Spec(\mathcal A/\langle Q\rangle)$ is treated as a subset of $\mathfrak Spec(\mathcal A)$. 
\end{thm}

\begin{proof}
We consider a monomorphism $P\hookrightarrow M$ with $P\in Spec(\mathcal A)$ such that $\langle P\rangle \in Ass(M)\cap \mathfrak Spec(\mathcal A/\langle Q\rangle)$. Since 
$\langle P\rangle \in  \mathfrak Spec(\mathcal A/\langle Q\rangle)$, it follows from the bijection described in Proposition \ref{siP5.38} that
$P\notin Ker(L_{\langle Q\rangle})$. Then, it follows from \cite[III.8.5.4]{R3} that $\langle L_{\langle Q\rangle}(P)\rangle\in Ass(L_{\langle Q\rangle}(M))$ and the result follows.
\end{proof}

Following \cite[V.C2.3]{R3}, we now consider the set $LAss(M)$
of locally associated points of $M\in \mathcal A$, defined as follows:
\begin{equation*}
LAss(M)=\{\mbox{$\langle P\rangle\in \mathfrak Spec(\mathcal A)$ $\vert$ $P\in Spec(\mathcal A)$ and $\langle L_{\langle P\rangle}(P)\rangle\in Ass(L_{\langle P\rangle}(M))$ }\}
\end{equation*} We now have the following result.

\begin{thm}\label{sIPx5.10l}  Let $M\in \mathcal A$ be such that $LAss(M)=Ass(M)$. Then, for any spectral object
$Q$ of $\mathcal A$, we must have
\begin{equation}
Ass(L_{\langle Q\rangle}(M))=Ass(M)\cap \mathfrak Spec(\mathcal A/\langle Q\rangle)
\end{equation} 
\end{thm}

\begin{proof} From Proposition \ref{siP5.9vk}, we already know that $Ass(M)\cap \mathfrak Spec(\mathcal A/\langle Q\rangle)\subseteq Ass(L_{\langle Q\rangle}(M))$. By assumption, we have  $LAss(M)=Ass(M)$. It suffices therefore to show that $ Ass(L_{\langle Q\rangle}(M))\subseteq LAss(M)$. 

\smallskip
We consider therefore $P'\in Spec(\mathcal A/\langle Q\rangle)$ along with a monomorphism $P'\hookrightarrow L_{\langle Q\rangle}(M)$. Using 
Proposition \ref{siP5.38}, we obtain $P\in Spec(\mathcal A)-Ker(L_{\langle Q\rangle})$ such that $\langle L_{\langle Q\rangle }(P)\rangle=\langle P'\rangle$.  
We claim that $\langle P\rangle \in LAss(M)$. 

\smallskip Indeed, since $L_{\langle Q\rangle}(P)\ne 0$, it follows that $Q\prec P$. From Lemma \ref{siL5.37}, we know that $L_{\langle Q\rangle}(P)\in Spec(\mathcal A/\langle Q\rangle)$. This allows us to factor $L_{\langle P\rangle}: \mathcal A\longrightarrow \mathcal A/\langle P\rangle $ as a composition of exact
localization functors $L_{\langle P\rangle}=L'_{\langle P\rangle}\circ L_{\langle Q\rangle}$. This induces as in \eqref{sieq5.16},a commutative diagram
\begin{equation}
\begin{tikzcd}
\mathfrak Spec(\mathcal A)-Ker(L_{\langle P\rangle}) \arrow{r}{\overline{L}_{\langle P\rangle}} \arrow[swap]{d}{\overline{L}_{\langle Q\rangle}} & \mathfrak Spec(\mathcal A/\langle P\rangle) \\
\mathfrak Spec(\mathcal A/\langle P\rangle) - Ker(L'_{\langle P\rangle}) \arrow{ur}[swap]{\overline{L'}_{\langle P\rangle}} 
\end{tikzcd}
\end{equation} We already know that $\langle L_{\langle Q\rangle }(P)\rangle=\langle P'\rangle \in Ass(L_{\langle Q\rangle}(M))$. We now see that 
we have
\begin{equation*}
0\ne \langle L_{\langle P\rangle}(P)\rangle =\overline{L'}_{\langle P\rangle} \overline{L}_{\langle Q\rangle}(\langle P\rangle)=\overline{L'}_{\langle P\rangle} \langle {L}_{\langle Q\rangle}(P)\rangle= \langle {L'}_{\langle P\rangle}(P')\rangle
\end{equation*}  whence it follows that $ {L'}_{\langle P\rangle}(P')\ne 0$. From \cite[III.8.5.4]{R3}, it now follows that 
$\langle L'_{\langle P\rangle}(P')\rangle \in Ass(L'_{\langle P\rangle}L_{\langle Q\rangle}(M))$. In other words, we have
$\langle L_{\langle P\rangle}(P)\rangle= \langle {L'}_{\langle P\rangle}(P')\rangle \in Ass(L_{\langle P\rangle}(M))$, i.e., $\langle P
\rangle \in LAss(M)$. This proves the result.
\end{proof}

For the remainder of this section, we will always assume that  for every spectral object $Q\in Spec(\mathcal A)$
and every $M\in \mathcal A$, we have
\begin{equation}\label{tcondc1}
Ass(L_{\langle Q\rangle}(M))=Ass(M)\cap \mathfrak Spec(\mathcal A/\langle Q\rangle)
\end{equation} In particular,  Proposition \ref{sIPx5.10l} gives a sufficient condition for this to happen. 

\begin{lem}\label{L5.4} For any finitely generated object $M\in \mathcal A$, we have:
\begin{equation}\label{eq5.14}
Supp(M)=\underset{\langle Q\rangle \in Ass(M)}{\bigcup}\textrm{ }\overline{\langle Q\rangle}
\end{equation}
\end{lem}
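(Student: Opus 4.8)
The plan is to prove the asserted equality by two inclusions, the inclusion $\bigcup_{\langle Q\rangle\in Ass(M)}\overline{\langle Q\rangle}\subseteq Supp(M)$ being routine and the reverse inclusion being the substantive one. For the easy inclusion: if $\langle Q\rangle\in Ass(M)$ there is a monomorphism $Q\hookrightarrow M$, so from the short exact sequence $0\longrightarrow Q\longrightarrow M\longrightarrow M/Q\longrightarrow 0$ and additivity of support on short exact sequences (\cite[$\S$ 5.2.2]{Rose2}) we get $Supp(Q)\subseteq Supp(M)$; by Lemma \ref{L3.3} this reads $\overline{\langle Q\rangle}=Supp(Q)\subseteq Supp(M)$, and taking the union over $\langle Q\rangle\in Ass(M)$ yields the inclusion. (This half uses neither finite generation of $M$ nor the hypothesis on associated points.)

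For the inclusion $Supp(M)\subseteq\bigcup_{\langle Q\rangle\in Ass(M)}\overline{\langle Q\rangle}$, a point of $Supp(M)$ has the form $\langle P\rangle$ with $P\in Spec(\mathcal A)$ and $P\prec M$, equivalently $M\notin\langle P\rangle$. I would localize at $\langle P\rangle$: since $\langle P\rangle$ is a localizing subcategory of $\mathcal A$ and the exact localization functor $L_{\langle P\rangle}$ has $\mathrm{Ker}(L_{\langle P\rangle})=\langle P\rangle$, the condition $M\notin\langle P\rangle$ gives $L_{\langle P\rangle}(M)\ne 0$. The quotient $\mathcal A/\langle P\rangle$ is again locally noetherian (\cite[Corollaire III.1]{Gab}) and, by Lemma \ref{L5.39}, each of its nonzero objects has an associated point; hence $Ass_{\mathcal A/\langle P\rangle}(L_{\langle P\rangle}(M))\ne\emptyset$. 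The crucial step is to descend this to $\mathcal A$: by the correspondence between the associated points of an object of $\mathcal A$ and those of its localization (\cite[$\S$ 8.5.4]{Rose2}, the companion of the half invoked in the proof of Lemma \ref{L5.39}), every associated point of $L_{\langle P\rangle}(M)$ has the form $\langle L_{\langle P\rangle}(Q)\rangle$ for some $\langle Q\rangle\in Ass(M)$ with $Q\notin\langle P\rangle$. Fixing such a $Q$, the relation $Q\notin\langle P\rangle$ is precisely $P\prec Q$, so $\langle P\rangle\in Supp(Q)=\overline{\langle Q\rangle}$ by Lemma \ref{L3.3}; as $\langle Q\rangle\in Ass(M)$, this puts $\langle P\rangle$ in $\bigcup_{\langle Q\rangle\in Ass(M)}\overline{\langle Q\rangle}$ and finishes the argument.

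The step I expect to be the main obstacle is this descent, i.e.\ the \emph{surjectivity} direction of the associated-points correspondence under the flat localization $L_{\langle P\rangle}$: recovering an honest associated point of $M$ lying outside $\langle P\rangle$ from one of $L_{\langle P\rangle}(M)$. The naive attempt---pulling a spectral subobject of $L_{\langle P\rangle}(M)$ back along the unit $M\longrightarrow i_{\langle P\rangle}L_{\langle P\rangle}(M)$, using that the image of the unit is essential in $i_{\langle P\rangle}L_{\langle P\rangle}(M)$ to keep the intersection nonzero, and extracting an associated point of the resulting pullback---runs into the possibility that the point extracted sits inside the $\langle P\rangle$-torsion subobject of $M$, forcing a recursion. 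It is the finite generation of $M$, which makes $M$ noetherian, that terminates this recursion and makes the cited result of Rosenberg applicable; this is where the finite-generation hypothesis in the statement is genuinely used. Alternatively, one may first invoke the finite filtration of $M$ with spectral subquotients $Q_i$ from \cite[1.6.4.1]{Rose1} to see that $Supp(M)=\bigcup_i\overline{\langle Q_i\rangle}$, thereby reducing the reverse inclusion to producing, for each $i$, some $\langle Q\rangle\in Ass(M)$ with $Q_i\prec Q$; the localization input enters in exactly the same way.
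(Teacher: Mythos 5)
Your proof is correct, and while it shares the same engine as the paper's argument, it takes a genuinely more economical route. The paper first exploits finite generation of $M$ to obtain a finite filtration with spectral subquotients $Q_1,\dots,Q_k$ (via \cite[1.6.4.1]{Rose1}), reduces $Supp(M)$ to the union of $\overline{\langle Q\rangle}$ over the $\prec$-maximal $Q_i$, and then for each such maximal $Q$ localizes at $\langle Q\rangle$ and uses the maximality together with the identification $Ass(L_{\langle Q\rangle}(M))=Ass(M)\cap\mathfrak{Spec}(\mathcal A/\langle Q\rangle)$ and Lemma \ref{L5.39} to force $\langle Q\rangle\in Ass(M)$. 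You bypass the filtration and the maximality bookkeeping entirely: for an arbitrary $\langle P\rangle\in Supp(M)$ you localize at $\langle P\rangle$ directly, observe that $L_{\langle P\rangle}(M)\ne 0$, invoke Lemma \ref{L5.39} and the same identification of associated points to produce some $\langle Q\rangle\in Ass(M)$ living in $\mathfrak{Spec}(\mathcal A/\langle P\rangle)$, unwind that membership as $P\prec Q$, and conclude $\langle P\rangle\in Supp(Q)=\overline{\langle Q\rangle}$ by Lemma \ref{L3.3}. All the individual steps check out against the definitions in the paper.

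One remark on your closing paragraph: you conjecture that finite generation is what terminates a hidden recursion and makes Rosenberg's associated-points correspondence applicable, but in fact your argument never uses finite generation at all. The identification $Ass(L_{\langle P\rangle}(M))=Ass(M)\cap\mathfrak{Spec}(\mathcal A/\langle P\rangle)$ is applied by the paper itself to arbitrary objects (see the proof of Lemma \ref{L7.4}), and Lemma \ref{L5.39} is stated for all nonzero objects of the quotient. So your direct argument actually establishes Lemma \ref{L5.41} (the version for arbitrary $M$) in one shot, rendering the paper's colimit deduction from Lemma \ref{L5.4} to Lemma \ref{L5.41} unnecessary. The ``alternative'' you sketch in your final sentence --- pass through the finite filtration and the spectral subquotients $Q_i$ --- is essentially the paper's own proof; the finite-generation hypothesis lives there, not in the route you actually took.
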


\begin{proof} Since $Ass(M)\subseteq Supp(M)$, the inclusion, $\underset{\langle Q\rangle \in Ass(M)}{\bigcup}\textrm{ }\overline{\langle Q\rangle}\subseteq Supp(M)$ is obvious. On the other hand, since $M$ is finitely generated, there
exists a finite filtration
\begin{equation}
M=M_k\supseteq M_{k-1}\supseteq ... \supseteq M_1\supseteq M_0=0
\end{equation} with each quotient $Q_i=M_i/M_{i-1}\in Spec(\mathcal A)$. We let $Max\{Q_1,...,Q_k\}$ be the subcollection
of elements in $\{Q_1,...,Q_k\}$ maximal with respect to the relation ``$\prec$'', i.e., $Q\in Max\{Q_1,...,Q_k\}$ if and only
if $Q\prec Q_i$ for some $1\leq i\leq k$ implies $\langle Q\rangle =\langle Q_i\rangle$.  We notice that:
\begin{equation}
Supp(M)=\underset{i=1}{\overset{k}{\bigcup}}\textrm{ }Supp(Q_i)=\underset{Q\in Max\{Q_1,...,Q_k\}}{\bigcup}
\overline{\langle Q\rangle}
\end{equation} Hence, in order to prove the equality in \eqref{eq5.14}, it suffices to show that  if $
Q\in Max\{Q_1,...,Q_k\}$, then $\langle Q\rangle \in Ass(M)$.  We now consider the flat localization:
\begin{equation}
L_{\langle Q\rangle}:\mathcal A\longrightarrow \mathcal A/\langle Q\rangle
\end{equation} for some $Q\in Max\{Q_1,...,Q_k\}$.  Since $\langle Q\rangle \in Supp(M)$, we know that $L_{\langle Q\rangle}(M)\ne 0$ (see 
\cite[$\S$ 5.2.1]{Rose2}). Hence, $Ass(L_{\langle Q\rangle}(M))\ne \phi$ by Lemma \ref{L5.39}. However, from condition \eqref{tcondc1}, we know that
\begin{equation}
Ass(L_{\langle Q\rangle}(M))=Ass(M)\cap \mathfrak Spec(\mathcal A/\langle Q\rangle)
\end{equation} where $\mathfrak Spec(\mathcal A/\langle Q\rangle)$ is treated as a subset 
of $\mathfrak Spec(\mathcal A)$. Since $Ass(M)\subseteq Supp(M)$, we now have
\begin{equation}
\phi\ne Ass(L_{\langle Q\rangle}(M))=Ass(M)\cap \mathfrak Spec(\mathcal A/\langle Q\rangle)\subseteq Supp(M)\cap 
\mathfrak Spec(\mathcal A/\langle Q\rangle)
\end{equation} Finally, we consider a spectral object $Q'\in Spec(\mathcal A)$ such that 
$\langle Q'\rangle\in  Supp(M)\cap \mathfrak Spec(\mathcal A/\langle Q\rangle)$. Since $\langle Q'\rangle\in   \mathfrak Spec(\mathcal A/\langle Q\rangle)$, we have $Q\prec Q'$. However, since $\langle Q'\rangle \in Supp(M)$, we can find
some $Q_i\in \{Q_1,...,Q_k\}$ such that $Q'\prec Q_i$. Then, $Q\prec Q'\prec Q_i$. However, since $Q\in Max\{Q_1,...,
Q_k\}$, it follows that $\langle Q_i\rangle =\langle Q\rangle$ and we must have $\langle Q'\rangle =\langle Q\rangle$. We now
see that
\begin{equation}\label{eq5.20}
\phi\ne Ass(L_{\langle Q\rangle}(M))=Ass(M)\cap \mathfrak Spec(\mathcal A/\langle Q\rangle)\subseteq Supp(M)\cap 
\mathfrak Spec(\mathcal A/\langle Q\rangle)=\{\langle Q\rangle\}
\end{equation} From \eqref{eq5.20}, it is clear that $\langle Q\rangle\in Ass(M)$. This proves the result. 
\end{proof}

\begin{lem}\label{L5.41}  For any object $M\in \mathcal A$, we have:
\begin{equation}\label{eq5.14r}
Supp(M)=\underset{\langle Q\rangle \in Ass(M)}{\bigcup}\textrm{ }\overline{\langle Q\rangle}
\end{equation}

\begin{proof} We consider a spectral object $P\in Spec(\mathcal A)$ with $\langle P\rangle \in Supp(M)$. Since $M$ is the union
of the system of its finitely generated subobjects, we can find some finitely generated $M'\subseteq M$ such that 
$\langle P\rangle \in Supp(M')$. From Lemma \ref{L5.4}, it follows that there is some $\langle Q'\rangle \in Ass(M')$
such that $\langle P\rangle \in \overline{\langle Q'\rangle}$. But $Ass(M')\subseteq Ass(M)$ and hence we get
$ Supp(M)\subseteq \underset{\langle Q\rangle \in Ass(M)}{\bigcup}\textrm{ }\overline{\langle Q\rangle}$. The reverse
inclusion is obvious and the result follows. 
\end{proof}

\end{lem}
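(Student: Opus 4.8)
The plan is to reduce to the finitely generated case already established in Lemma \ref{L5.4}, using that the locally noetherian category $\mathcal A$ is locally finitely generated. The inclusion $\bigcup_{\langle Q\rangle\in Ass(M)}\overline{\langle Q\rangle}\subseteq Supp(M)$ is the routine direction: since $Ass(M)\subseteq Supp(M)$, it suffices to know that $Supp(M)$ is specialization closed, which is immediate because for a spectral $Q$ with $Q\prec M$ and any spectral $P$ with $\langle P\rangle\in\overline{\langle Q\rangle}=Supp(Q)$ — that is, $P\prec Q$ — transitivity of $\prec$ gives $P\prec M$, so $\overline{\langle Q\rangle}=Supp(Q)\subseteq Supp(M)$ by Lemma \ref{L3.3}.

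For the reverse inclusion I would fix a spectral object $P$ with $\langle P\rangle\in Supp(M)$, i.e. $P\prec M$, and first show that $\langle P\rangle\in Supp(M')$ for some finitely generated subobject $M'\subseteq M$. Granting this, Lemma \ref{L5.4} applied to the finitely generated object $M'$ produces $\langle Q'\rangle\in Ass(M')$ with $\langle P\rangle\in\overline{\langle Q'\rangle}$; since $M'\subseteq M$ forces $Ass(M')\subseteq Ass(M)$, we conclude $\langle P\rangle\in\overline{\langle Q'\rangle}\subseteq\bigcup_{\langle Q\rangle\in Ass(M)}\overline{\langle Q\rangle}$, which is the desired containment.

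The main obstacle is precisely the reduction $P\prec M\Rightarrow P\prec M'$ for some finitely generated $M'\subseteq M$, equivalently $Supp(M)=\bigcup\{\,Supp(M') : M'\subseteq M\text{ finitely generated}\,\}$. I would argue it as follows: writing $M$ as the filtered union of its finitely generated subobjects gives $M^{n}=\varinjlim(M')^{n}$; choosing $A\subseteq B\subseteq M^{n}$ with $B/A\cong P$ to witness $P\prec M$, the subquotients $(B\cap(M')^{n})/(A\cap(M')^{n})$ embed into $B/A\cong P$ and form a directed family of subobjects of $P$ whose union is all of $P$. Since $P$ is finitely generated by Lemma \ref{L3.2} (hence noetherian), the identity of $P$ factors through one of these subobjects, so $(B\cap(M')^{n})/(A\cap(M')^{n})\cong P$ for a sufficiently large $M'$, which exhibits $P\prec M'$ and $\langle P\rangle\in Supp(M')$. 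With this in hand the lemma is immediate.
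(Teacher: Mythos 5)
Your proof is correct and follows the same route as the paper: reduce to the finitely generated case via Lemma \ref{L5.4} after finding a finitely generated subobject $M'\subseteq M$ with $\langle P\rangle\in Supp(M')$. The only difference is that the paper asserts this reduction step without justification, whereas you supply one --- picking $A\subseteq B\subseteq M^n$ with $B/A\cong P$, using (AB5) to realize $P$ as the filtered union of the subobjects $(A+B\cap(M')^n)/A$, and invoking finite generation of $P$ (Lemma \ref{L3.2}) to land on a single $M'$ --- which is a correct and worthwhile addition; a minor note is that finite generation alone is what that step uses, so the parenthetical appeal to noetherianity is not needed.
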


\begin{thm}\label{L5.5} Let $M$ be an object of $\mathcal A$ and let $N\subseteq M$ be an essential subobject. Then, we have: 

\smallskip
(a) $M$ and $N$ have the same associated points, i.e., $Ass(M)=Ass(N)$. 

(b)  $M$ and $N$ have the same support, i.e., $Supp(M)=Supp(N)$. 
\end{thm}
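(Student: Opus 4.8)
The plan is to prove (a) first and then deduce (b) from it using Lemma \ref{L5.41}. For the inclusion $Ass(N)\subseteq Ass(M)$ in part (a), this is immediate: any spectral object $P$ with a monomorphism $P\hookrightarrow N$ composes with the inclusion $N\hookrightarrow M$ to give a monomorphism $P\hookrightarrow M$, so $\langle P\rangle\in Ass(M)$. (This inclusion actually holds for any subobject, not just essential ones; it was already used implicitly in the proof of Lemma \ref{L5.41}.) The substance is the reverse inclusion $Ass(M)\subseteq Ass(N)$.

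For that, first I would take $\langle P\rangle\in Ass(M)$, witnessed by a monomorphism $\iota:P\hookrightarrow M$ with $P$ spectral. Since $N\subseteq M$ is essential, the subobject $\iota(P)\cap N\subseteq \iota(P)$ is nonzero. Pulling back along $\iota$, set $P':=\iota^{-1}(\iota(P)\cap N)$, which is a nonzero subobject of $P$; because $P$ is spectral, $P'$ is again spectral and $\langle P'\rangle=\langle P\rangle$. On the other hand, $\iota$ restricts to a monomorphism $P'\hookrightarrow \iota(P)\cap N\subseteq N$, so $\langle P'\rangle\in Ass(N)$, and hence $\langle P\rangle=\langle P'\rangle\in Ass(N)$. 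This proves (a). The only point requiring a little care is that a nonzero subobject of a spectral object is spectral and equivalent to the whole object, but this is immediate from the definition of spectral object recalled in Section 2 together with the observation that $\approx$ is an equivalence relation.

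For part (b), given the description of support in Lemma \ref{L5.41}, namely $Supp(M)=\bigcup_{\langle Q\rangle\in Ass(M)}\overline{\langle Q\rangle}$, and the analogous formula for $N$, the equality $Ass(M)=Ass(N)$ from part (a) immediately yields $Supp(M)=Supp(N)$. Alternatively, one can argue directly: $Supp(N)\subseteq Supp(M)$ since $N\subseteq M$, and for the reverse inclusion one uses that any spectral $P$ with $\langle P\rangle\in Supp(M)$, i.e. $P\prec M$, can be realized as a subquotient of $M^n$; intersecting with $N^n$ (essential in $M^n$, since a finite direct sum of essential extensions is essential) and using spectrality of $P$ one finds $P\prec N$. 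But invoking Lemma \ref{L5.41} is the cleaner route.

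There is no real obstacle here; the main thing to get right is the pullback/intersection argument in part (a) and the bookkeeping with $\prec$ and $\approx$. The essentialness of $N\subseteq M$ is used exactly once — to ensure $\iota(P)\cap N\neq 0$ — and that is precisely the hypothesis that distinguishes this statement from the trivial one-sided inclusion. The final sentence of the theorem statement, recording the consequences $Ass(M)=Ass(E(M))$ and $Supp(M)=Supp(E(M))$ for any $M$, follows by applying (a) and (b) to the essential inclusion $M\hookrightarrow E(M)$.
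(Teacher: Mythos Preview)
Your proof is correct and follows essentially the same approach as the paper: for (a) you intersect the spectral subobject with $N$ using essentiality and invoke spectrality to preserve the equivalence class, and for (b) you apply Lemma \ref{L5.41}. The only cosmetic difference is that the paper identifies $P$ with its image in $M$ and works directly with $P\cap N$ rather than pulling back along $\iota$.
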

\begin{proof} (a) We consider a point $\langle Q\rangle$ of $Ass(M)$ given by a spectral object
$Q\in Spec(\mathcal A)$ along with an inclusion $Q\hookrightarrow M$. Since $N$ is an essential subobject of $M$,
we must have $Q\cap N\ne 0$. Since $Q$ is spectral, the nonzero subobject $Q\cap N$ is also spectral
and we have $\langle Q\cap N\rangle =\langle Q\rangle$. Then, $\langle Q\rangle =\langle Q\cap N
\rangle\in Ass(N)$. This gives $Ass(M)\subseteq Ass(N)$. Since $N\subseteq M$, we already know that
$Ass(N)\subseteq Ass(M)$ and the result follows. The result of (b) follows from part (a) and Lemma \ref{L5.41}. 
\end{proof}

\begin{thm}\label{P5.7}  Let
$S\subseteq \mathfrak Spec(\mathcal A)$ be a specialization closed subset and let $(\mathcal T(S),
\mathcal F(S))$ be the torsion pair on $\mathcal A$ corresponding to $S$. 
Then, for any $P\in Spec(\mathcal A)$ with $\langle P\rangle \in S$, the injective hull 
$E(P)$ lies in $  \mathcal T(S)$. 
\end{thm}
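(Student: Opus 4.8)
The plan is to reduce the claim to the identity $Supp(E(P)) \subseteq S$, since by definition $\mathcal T(S) = \{M \in \mathcal A \mid Supp(M) \subseteq S\}$. So the entire task is to compute, or at least bound, the support of the injective hull $E(P)$.

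The key observation is that $P$ is an essential subobject of its injective hull $E(P)$, essentially by the definition of injective hull in a Grothendieck category. This is exactly the hypothesis of Theorem \ref{L5.5}, whose part (b) then gives $Supp(E(P)) = Supp(P)$. In other words, passing to the injective hull does not change the support, so we are reduced to bounding $Supp(P)$.

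At this point I would invoke Lemma \ref{L3.3}, which identifies $Supp(P)$ with the topological closure $\overline{\langle P\rangle}$ of the point $\langle P\rangle$ in $\mathfrak Spec(\mathcal A)$. Since $\langle P\rangle \in S$ by hypothesis and $S$ is specialization closed (a union of closed sets, hence containing the closure of each of its points), we get $\overline{\langle P\rangle} \subseteq S$. Chaining these equalities and inclusions, $Supp(E(P)) = Supp(P) = \overline{\langle P\rangle} \subseteq S$, which is precisely the statement that $E(P) \in \mathcal T(S)$.

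There is essentially no obstacle here: the real content has already been packaged into Theorem \ref{L5.5}(b) (equality of supports under essential extension, itself resting on Lemma \ref{L5.41}) and Lemma \ref{L3.3} (closure of a point equals the support of a representing spectral object). The only point to be careful about is to cite the right forms of these results and to note explicitly that $P \hookrightarrow E(P)$ is essential so that Theorem \ref{L5.5} applies.
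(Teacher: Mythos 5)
Your proof is correct and matches the paper's own argument essentially step for step: reduce to a support bound, use Proposition \ref{L5.5}(b) to replace $Supp(E(P))$ by $Supp(P)$, and use specialization closure (via Lemma \ref{L3.3}) to conclude $Supp(P)\subseteq S$. The only difference is that you cite Lemma \ref{L3.3} explicitly where the paper leaves that step implicit.
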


\begin{proof} We consider $P\in Spec(\mathcal A)$ with $\langle P\rangle \in S$. Since $S$ is specialization closed,
we have $Supp(P)\subseteq S$. From Proposition \ref{L5.5}, we know that $Supp(E(P))=Supp(P)$. Then, 
$Supp(E(P))\subseteq S$ and we have $E(P)\in\mathcal T(S)$. 
\end{proof}

\section{Subsets of the spectrum and $\mathbf 1$-resolving subcategories}

In this section, we will describe the connection between resolving subcategories of $\mathcal A$ and certain  specialization
closed subsets of the spectrum of $\mathcal A$. We begin by recalling the following definition from \cite{Stov}. 

\begin{defn}\label{D6.1} (see \cite[$\S$ Definition 2.1]{Stov}) Let $\mathcal B$ be an abelian category. A full subcategory 
$\mathcal C\subseteq \mathcal B$ is said to be a resolving subcategory if it satisfies the following conditions:

\smallskip
(1) The subcategory $\mathcal C$ is closed under taking direct summands. 

(2) Suppose that $0\longrightarrow M'\longrightarrow M\longrightarrow M''\longrightarrow 0$ is a short exact sequence in
$\mathcal B$ and that $M''\in \mathcal C$. Then, $M\in \mathcal C$ if and only if $M'\in \mathcal C$. 

(3) The subcategory $\mathcal C$ is generating. In other words, for every object $B\in \mathcal B$, there exists an  epimorphism
$C\twoheadrightarrow B$ in $\mathcal B$ with $C\in \mathcal C$. 
\end{defn}

For a resolving subcategory $\mathcal C$ of an abelian category $\mathcal B$, \v{S}\v{t}ov\'{i}\v{c}ek
establishes the following key result. 

\begin{thm}\label{P6.2} (see \cite[Proposition 2.3]{Stov} Let $\mathcal B$ be an abelian category and let $\mathcal C$ be a 
resolving subcategory. For some integer $n\geq 0$, let $B\in \mathcal B$ be an object such that there are exact sequences 
\begin{equation}
\begin{array}{c}
0\longrightarrow K\longrightarrow C_{n-1}\longrightarrow \dots \longrightarrow C_1\longrightarrow C_0
\longrightarrow B\longrightarrow 0\\
0\longrightarrow K'\longrightarrow C'_{n-1}\longrightarrow \dots \longrightarrow C'_1\longrightarrow C'_0
\longrightarrow B\longrightarrow 0\\
\end{array}
\end{equation}
in $\mathcal B$ with $C_i$, $C'_{i}\in \mathcal C$ for each $0\leq i\leq n-1$. Then, the object 
$K\in \mathcal C$ if and only if $K'\in \mathcal C$. 

\end{thm}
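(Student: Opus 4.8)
The plan is to prove the statement by induction on $n$. For $n=0$ both exact sequences reduce to $0\to K\to B\to 0$ and $0\to K'\to B\to 0$, so $K\cong B\cong K'$ and there is nothing to show. For $n=1$ this is exactly the classical Schanuel lemma: forming the pullback $P:=C_0\times_B C'_0$, the projection $P\to C_0$ is an epimorphism whose kernel is $\ker(C'_0\to B)\cong K'$, giving $0\to K'\to P\to C_0\to 0$, and symmetrically $0\to K\to P\to C'_0\to 0$; applying condition~(2) of Definition~\ref{D6.1} to these two sequences (with $C_0$, resp.\ $C'_0$, in $\mathcal C$) yields $K\in\mathcal C\iff P\in\mathcal C\iff K'\in\mathcal C$.

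For $n\geq 2$ I would first isolate two lemmas on transporting a $\mathcal C$-resolution along a short exact sequence $0\to A\to B\to C\to 0$ with $C\in\mathcal C$. The ``descent'' lemma: if $B$ has a length-$m$ resolution by objects of $\mathcal C$ with kernel $K_B$, then so does $A$, and for $m\geq 1$ one may take the kernel equal to $K_B$ (for $m=0$ one has $A\in\mathcal C\iff B\in\mathcal C$ directly from condition~(2)). This is easy: writing the resolution of $B$ as $0\to K_B\to D_{m-1}\to\cdots\to D_1\xrightarrow{\delta_1}D_0\xrightarrow{\delta_0}B\to 0$, the subobject $D_0^A:=\ker(D_0\xrightarrow{\delta_0}B\to C)$ lies in $\mathcal C$ by condition~(2) applied to $0\to D_0^A\to D_0\to C\to 0$; since $\operatorname{im}\delta_1\subseteq D_0^A$ and $D_0^A\twoheadrightarrow A$ has kernel $\operatorname{im}\delta_1$, splicing the tail $D_{m-1}\to\cdots\to D_1$ onto $0\to\operatorname{im}\delta_1\to D_0^A\to A\to 0$ gives a length-$m$ resolution of $A$ by objects of $\mathcal C$ with kernel $K_B$. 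The ``ascent'' lemma: if $A$ has a length-$m$ resolution by objects of $\mathcal C$ with kernel $K_A$, then $B$ has one with kernel $K_B$ satisfying $K_B\in\mathcal C\iff K_A\in\mathcal C$. Here I would induct on $m$: using that $\mathcal C$ is generating, pick a $\mathcal C$-cover $D_0\twoheadrightarrow B$ with kernel $K_{D_0}$; then $F:=\ker(D_0\to C)$ lies in $\mathcal C$ by condition~(2), and $F\twoheadrightarrow A$ is a $\mathcal C$-cover with kernel $K_{D_0}$. Comparing $F\twoheadrightarrow A$ with the first term $D_0^{(A)}\twoheadrightarrow A$ of the given resolution of $A$ through a Schanuel pullback $G$ produces $0\to K_{D_0}\to G\to D_0^{(A)}\to 0$ and $0\to\Omega A\to G\to F\to 0$, where $\Omega A:=\ker(D_0^{(A)}\to A)$ carries a length-$(m-1)$ resolution by objects of $\mathcal C$ with kernel $K_A$; applying the ascent lemma in length $m-1$ to the second sequence resolves $G$ with kernel in $\mathcal C$ iff $K_A$ is, and then the descent lemma applied to the first sequence resolves $K_{D_0}$ with kernel in $\mathcal C$ iff $K_A$ is. Prepending $0\to K_{D_0}\to D_0\to B\to 0$ gives the desired resolution of $B$; the base case $m=0$ is again condition~(2).

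Granting the two lemmas, the inductive step is short. Set $L:=\ker(C_0\to B)$ and $L':=\ker(C'_0\to B)$; truncating the two given sequences exhibits length-$(n-1)$ resolutions of $L$ and of $L'$ by objects of $\mathcal C$ with kernels $K$ and $K'$. Form $P:=C_0\times_B C'_0$, which sits in short exact sequences $0\to L\to P\to C'_0\to 0$ and $0\to L'\to P\to C_0\to 0$ with $C_0,C'_0\in\mathcal C$. The ascent lemma applied to the first gives $P$ a length-$(n-1)$ resolution by objects of $\mathcal C$ whose kernel lies in $\mathcal C$ iff $K$ does; applied to the second it gives another such resolution whose kernel lies in $\mathcal C$ iff $K'$ does. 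The inductive hypothesis for $n-1$, applied to these two length-$(n-1)$ resolutions of the single object $P$, forces their kernels to lie in $\mathcal C$ simultaneously, and hence $K\in\mathcal C\iff K'\in\mathcal C$.

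The part I expect to be the main obstacle is the ascent lemma — propagating a $\mathcal C$-resolution from a subobject $A$ to the ambient object $B$ in $0\to A\to B\to C\to 0$ with $C\in\mathcal C$. The naive ``horseshoe'' gluing fails, because objects of $\mathcal C$ need not be projective in any sense and there is no splitting of $B\to C$ to glue along. The device that rescues the argument is that although $B$ itself can only be covered by an \emph{abstract} $D_0\in\mathcal C$ supplied by the generating hypothesis, the kernel of $D_0\to C$ is automatically forced into $\mathcal C$ by the closure condition~(2); this converts the problem into a comparison of two genuine $\mathcal C$-covers of $A$, at which point the Schanuel pullback together with the (elementary) descent lemma and the induction on resolution length close the loop. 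Everything else reduces to routine diagram chases with the closure properties in Definition~\ref{D6.1}.
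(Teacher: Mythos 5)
Your argument is correct, but it is worth noting that the paper does not supply its own proof of this proposition: it is imported as a citation of \v{S}{t}'ov\'{i}\v{c}ek \cite[Proposition~2.3]{Stov}, so the comparison is against the standard argument rather than a proof printed here.

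Your base cases and your ``descent'' lemma are exactly right, and so is the global induction scheme built around the pullback $P=C_0\times_B C'_0$. However, the ``ascent'' lemma --- which you correctly single out as the delicate part, and which forces a double induction on resolution length with a nested Schanuel pullback --- can be eliminated entirely. The key observation is that you never actually need to \emph{lift} the given resolutions of $L=\ker(C_0\to B)$ and $L'=\ker(C'_0\to B)$ up to $P$; it is enough to build one resolution of $P$ from scratch and push it down. Concretely: using that $\mathcal C$ is generating, choose any length-$(n-1)$ exact sequence $0\to M\to D_{n-2}\to\cdots\to D_0\to P\to 0$ with all $D_i\in\mathcal C$. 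Applying your descent lemma to $0\to L\to P\to C'_0\to 0$ (with $C'_0\in\mathcal C$) turns this into a length-$(n-1)$ resolution of $L$ by objects of $\mathcal C$ with the \emph{same} kernel $M$; combining with the truncation of the first given sequence, the object $L$ now carries two length-$(n-1)$ resolutions with kernels $K$ and $M$, and the inductive hypothesis gives $K\in\mathcal C\iff M\in\mathcal C$. The symmetric argument through $0\to L'\to P\to C_0\to 0$ gives $K'\in\mathcal C\iff M\in\mathcal C$, and the two equivalences chain to the desired conclusion. This route replaces the ascent lemma --- whose proof requires its own internal induction and an auxiliary Schanuel comparison --- by the generating hypothesis plus a single application of the straightforward descent lemma, so it is both shorter and structurally cleaner. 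Your version is logically sound; this is purely a remark that the main obstacle you identified dissolves once $P$ is resolved independently rather than by transport.
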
 

The result of Proposition \ref{P6.2} motivates us to make the following definition. 

\begin{defn}\label{D6.3} 
 Let $\mathcal B$ be an abelian category and let $\mathcal C$ be a 
resolving subcategory. For a given integer $n\geq 0$, we will say that the subcategory $\mathcal C\subseteq \mathcal B$
is $n$-resolving if for each object $B\in \mathcal B$, there exists an exact sequence 
\begin{equation}
0\longrightarrow C_n\longrightarrow C_{n-1}\longrightarrow \dots \longrightarrow C_1\longrightarrow C_0
\longrightarrow B\longrightarrow 0\\
\end{equation} in $\mathcal B$ with $C_i\in \mathcal C$ for each $0\leq i\leq n$. 
\end{defn}

We now come back to our locally noetherian Grothendieck category $\mathcal A$ and let $\mathcal C\subseteq 
\mathcal A$ be a resolving subcategory. If $G\in \mathcal A$ is a generator, there is an epimorphism $G_0
\longrightarrow G$ with $G_0\in \mathcal C$. Consequently, for every object $M\in \mathcal A$, there is an epimorphism
$G_0^{I}\longrightarrow M$ from a direct sum of copies of $G_0$. It follows from \cite[Proposition 1.9.1]{Tohoku} that $G_0$
is also a generator for $\mathcal A$. Hence, every resolving subcategory $\mathcal C\subseteq \mathcal A$ must
contain a generator for $\mathcal A$. 

\smallskip

We remark here that Definition \ref{D6.3} essentially says that an $n$-resolving subcategory  
$\mathcal C\subseteq \mathcal A$ ``contains all 
$n$-th syzygy objects" of $\mathcal A$ with respect to $\mathcal C$. For a subcategory of modules over a commutative noetherian ring $R$, the property 
of containing all $n$-th syzygy objects is related to being an $n$-cotilting class (see the equivalence 
in \cite[Proposition 3.14]{Four}).  The relationship between $n$-cotilting classes in module categories and systems of
specialization closed subsets of the Zariski spectrum has been described in \cite[Theorem 2.7 \& Theorem 3.7]{Four}. For Grothendieck categories that contain enough projectives,  cotilting
classes have been considered by Colpi \cite{Colpi}. For more on tilting
or cotilting classes of modules over a ring, we refer the reader to \cite{Bazz}, \cite{CT}. 

\begin{lem}\label{L6.4} Let $\mathcal A$ be a locally noetherian Grothendieck category  and let $(\mathcal T,\mathcal F)$
be a hereditary torsion pair on $\mathcal A$. Then, the torsion free class $\mathcal F$ contains all coproducts. 
\end{lem}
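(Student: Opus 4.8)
The plan is to identify $\mathcal F$ as a right orthogonal class, observe that any such class is automatically closed under subobjects and under arbitrary products, and then realize the coproduct $\bigoplus_{i}F_i$ as a subobject of the product $\prod_{i}F_i$. First I would record that
$\mathcal F=\{M\in\mathcal A\ \vert\ Hom_{\mathcal A}(T,M)=0\text{ for all }T\in\mathcal T\}$: the inclusion $\subseteq$ is condition (a) of Definition \ref{D4.39}, while for $\supseteq$ one applies the short exact sequence $0\to X_M\to M\to Y^M\to 0$ of Definition \ref{D4.39}(b), with $X_M\in\mathcal T$ and $Y^M\in\mathcal F$; the inclusion $X_M\hookrightarrow M$ lies in $Hom_{\mathcal A}(X_M,M)=0$, so $X_M=0$ and $M\cong Y^M\in\mathcal F$.

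From this description, two closure properties are immediate. Left exactness of $Hom_{\mathcal A}(T,-)$ shows $\mathcal F$ is closed under subobjects: if $M'\subseteq M\in\mathcal F$ then $Hom_{\mathcal A}(T,M')\hookrightarrow Hom_{\mathcal A}(T,M)=0$ for every $T\in\mathcal T$. Since $Hom_{\mathcal A}(T,-)$ carries products to products, for any family $\{F_i\}_{i\in I}$ in $\mathcal F$ we get $Hom_{\mathcal A}\big(T,\prod_{i\in I}F_i\big)\cong\prod_{i\in I}Hom_{\mathcal A}(T,F_i)=0$ for all $T\in\mathcal T$, so $\prod_{i\in I}F_i\in\mathcal F$. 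It then remains to observe that in the Grothendieck (in particular (AB5)) category $\mathcal A$ the canonical morphism $\bigoplus_{i\in I}F_i\longrightarrow\prod_{i\in I}F_i$ is a monomorphism: writing $\bigoplus_{i\in I}F_i=\varinjlim_{J}\bigoplus_{i\in J}F_i$ over the filtered poset of finite subsets $J\subseteq I$, each $\bigoplus_{i\in J}F_i=\prod_{i\in J}F_i$ is a split subobject of $\prod_{i\in I}F_i$, and by (AB5) this filtered colimit of compatible monomorphisms is again a monomorphism. Hence $\bigoplus_{i\in I}F_i$ is a subobject of $\prod_{i\in I}F_i\in\mathcal F$, so $\bigoplus_{i\in I}F_i\in\mathcal F$.

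The only slightly delicate point is this last monomorphism claim, which I expect to be the main (though entirely standard) obstacle. If one prefers to avoid it, the standing hypotheses give a direct alternative: since $(\mathcal T,\mathcal F)$ is hereditary, $\mathcal T$ is closed under subobjects, and as $\mathcal A$ is locally noetherian every object of $\mathcal T$ is the filtered union of its finitely generated subobjects, each still in $\mathcal T$; thus $M\in\mathcal F$ if and only if $Hom_{\mathcal A}(T,M)=0$ for all $T\in\mathcal T\cap\mathcal A_{fg}$. For such a finitely generated $T$, any morphism $T\to\bigoplus_{i\in I}F_i$ factors through a finite subsum $\bigoplus_{i\in J}F_i$ (the finite subsums form a filtered system of subobjects with colimit $\bigoplus_{i\in I}F_i$, and $T$ is finitely generated), and $\bigoplus_{i\in J}F_i\in\mathcal F$ as a finite direct sum of torsion free objects; so that morphism vanishes, whence $\bigoplus_{i\in I}F_i\in\mathcal F$.
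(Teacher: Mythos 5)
Your first argument is correct and takes a genuinely different route from the paper, while your alternative at the end is essentially the paper's own proof. The paper fixes $M\in\mathcal T$, passes to a finitely generated subobject $M'\subseteq M$ (which stays in $\mathcal T$ by hereditariness), writes $N=\bigoplus_i N_i$ as the filtered colimit of its finite subsums $N_J$, and uses that $M'$ is finitely generated together with $Hom_{\mathcal A}(M',N_J)=0$ to conclude $Hom_{\mathcal A}(M',N)=0$, then passes back up to $M$ via local finite generation; this is exactly your second paragraph's reduction to $\mathcal T\cap\mathcal A_{fg}$. Your primary argument instead identifies $\mathcal F$ as the right orthogonal $\mathcal T^{\perp}$, notes that any such class is closed under products and subobjects, and embeds $\bigoplus_i F_i$ into $\prod_i F_i$ via (AB5). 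That argument is sound (the filtered colimit of the split inclusions $\bigoplus_{i\in J}F_i\hookrightarrow\prod_{i\in I}F_i$ over finite $J$ is indeed a monomorphism by (AB5)), and it is strictly more general than the paper's: it uses neither the hereditariness of the torsion pair nor the locally noetherian hypothesis, only that $\mathcal A$ is Grothendieck, so it shows the torsion free class of \emph{any} torsion pair on a Grothendieck category is closed under coproducts. What the paper's approach buys is that it stays inside the toolkit actually deployed elsewhere in the paper (finitely generated subobjects, filtered colimits of subobjects, hereditariness), which is presumably why the author states and proves the lemma under the stronger running hypotheses even though they are not needed here.
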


\begin{proof} We consider a collection $\{N_i\}_{i\in I}$ of objects of $\mathcal F$ and set $N:=\underset{i\in I}{
\bigoplus}N_i$. We denote by $Fin(I)$ the collection of all finite subsets of $I$ and for any $J\in Fin(I)$, we set
$N_J:=\underset{j\in J}{\bigoplus}N_j$. It is clear that the finite direct sum (finite direct product)  $N_J$ lies in $\mathcal F$ and 
that $N$ may be expressed as the filtered colimit $N=\underset{J\in Fin(I)}{\varinjlim}N_J$. 

\smallskip
We need to show that $N\in \mathcal F$. For this, we consider some $M\in \mathcal T$ and a finitely generated
subobject $M'\subseteq M$. Since the torsion pair $(\mathcal T,\mathcal F)$ is hereditary, we must have
$M'\in \mathcal T$. Then since $M'$ is finitely generated, we have:
\begin{equation}
Hom_{\mathcal A}(M',N)=\underset{J\in Fin(I)}{\varinjlim}Hom_{\mathcal A}(M',N_J)=0
\end{equation} Finally, since the category $\mathcal A$ is locally finitely generated, the object $M$
may be expressed as the colimit of its finitely generated subobjects. This shows that 
$Hom_{\mathcal A}(M,N)=0$ for each $M\in \mathcal T$ and hence $N\in \mathcal F$. 
\end{proof}

\begin{thm}\label{P6.5} The associations :
\begin{equation}\label{eq6.3}
\mathcal C\mapsto S_{\mathcal C}:=\mathfrak Spec(\mathcal A)\backslash Ass(\mathcal C)
\qquad S\mapsto \mathcal C_S:=\{\mbox{$M\in \mathcal A$ $\vert$ $Ass(M)\cap S=\phi$}\}
\end{equation} give a one-one correspondence between the following : 

\smallskip
(1) $1$-resolving subcategories $\mathcal C\subseteq \mathcal A$ that are torsion free classes of hereditary
torsion pairs on $\mathcal A$ 

(2) Specialization closed subsets $S\subseteq \mathfrak Spec(\mathcal A)$ such that there exists at least one generator
$G$ of $\mathcal A$ such that $S\cap Ass(G)=\phi$.  
\end{thm}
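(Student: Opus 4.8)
The plan is to reduce everything to the already-established bijection of Proposition \ref{P5corr} between hereditary torsion pairs on $\mathcal A$ and specialization closed subsets of $\mathfrak Spec(\mathcal A)$. Observe first that the two maps in the statement are merely restrictions of that correspondence: for specialization closed $S$ we have $\mathcal C_S=\mathcal F(S)$ by \eqref{eq5.1}, and if $\mathcal C=\mathcal F(S)$ is the torsion free class of the hereditary torsion pair $\tau(S)$, then $S_{\mathcal C}=\mathfrak Spec(\mathcal A)\setminus Ass(\mathcal F(S))=S$ by Corollary \ref{C5.5}. So mutual inverseness will be automatic once we know that $\mathcal C\mapsto S_{\mathcal C}$ sends collection (1) into collection (2) and $S\mapsto \mathcal C_S$ sends collection (2) into collection (1). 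The genuine content is therefore the single equivalence: for specialization closed $S$, the torsion free class $\mathcal F(S)$ is $1$-resolving if and only if there is a generator $G$ of $\mathcal A$ with $S\cap Ass(G)=\phi$.

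For the ``only if'' direction, a $1$-resolving subcategory is in particular resolving, hence generating, hence---as recorded just before Lemma \ref{L6.4}---contains a generator $G$ of $\mathcal A$; since $G\in\mathcal F(S)$, the very definition of $\mathcal F(S)$ forces $Ass(G)\cap S=\phi$, which is condition (2). For the ``if'' direction, suppose $G$ is a generator with $S\cap Ass(G)=\phi$, so that $G\in\mathcal F(S)$, and write $\mathcal C:=\mathcal F(S)$. The plan is to verify the axioms of Definition \ref{D6.1} together with the $1$-resolving property essentially for free from the formal behaviour of torsion free classes. Indeed, $\mathcal C$ is closed under subobjects and under extensions (true for the torsion free class of any torsion pair, by left exactness of $Hom$), hence closed under direct summands; this takes care of conditions (1) and (2) of Definition \ref{D6.1}. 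For condition (3) and for the $1$-resolving property, given $M\in\mathcal A$ choose an epimorphism $G^{(I)}\twoheadrightarrow M$ from a direct sum of copies of the generator $G$; since $\tau(S)$ is hereditary, Lemma \ref{L6.4} gives $G^{(I)}\in\mathcal C$, so $\mathcal C$ is generating and therefore resolving. Finally, the kernel $C_1$ of $G^{(I)}\twoheadrightarrow M$ is a subobject of $G^{(I)}\in\mathcal C$, hence $C_1\in\mathcal C$, and the exact sequence $0\to C_1\to G^{(I)}\to M\to 0$ exhibits $\mathcal C$ as $1$-resolving.

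It then remains only to assemble the correspondence. Given $\mathcal C$ in collection (1), Proposition \ref{P5corr} identifies $\mathcal C$ as $\mathcal F(S)$ for a unique specialization closed $S$, Corollary \ref{C5.5} gives $S_{\mathcal C}=S$, and the ``only if'' direction shows this $S$ satisfies the generator condition; given $S$ in collection (2), Proposition \ref{P5.1} makes $\mathcal C_S=\mathcal F(S)$ the torsion free class of a hereditary torsion pair and the ``if'' direction makes it $1$-resolving. The composites $\Psi\circ\Phi$ and $\Phi\circ\Psi$ are the identity by Proposition \ref{P5corr} and Corollary \ref{C5.5} respectively. I expect the only mildly delicate point to be this bookkeeping---correctly invoking Proposition \ref{P5corr} and Corollary \ref{C5.5} to pin down which $S$ corresponds to a given torsion free class---rather than any real obstacle; the ``$1$-resolving'' hypothesis turns out to be automatic for a generating torsion free class precisely because the relevant syzygy sits as a subobject of an object already in the class.
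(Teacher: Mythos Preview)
Your proof is correct and follows essentially the same route as the paper's: both reduce to the bijection of Proposition \ref{P5corr} via Corollary \ref{C5.5}, obtain a generator in $\mathcal C$ from the resolving hypothesis for one direction, and for the other direction use Lemma \ref{L6.4} to place $G^{(I)}$ in $\mathcal C_S$ and then invoke closure under subobjects and extensions to verify the axioms of Definition \ref{D6.1} and the $1$-resolving property.
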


\begin{proof} From Corollary \ref{C5.5},  it is clear that both the associations in \eqref{eq6.3} are induced
by the one-one correspondence in Proposition \ref{P5corr} between torsion free classes of hereditary torsion pairs on $\mathcal A$ and 
specialization closed subsets of $\mathfrak Spec(\mathcal A)$. 

\smallskip
We now consider a subcategory $\mathcal C\subseteq \mathcal A$ as in (1). Since $\mathcal C$ is resolving, 
it must contain a generator $G$ of the Grothendieck category $\mathcal A$. Hence, $S_{\mathcal C}=\mathfrak Spec(\mathcal A)\backslash Ass(\mathcal C)$ satisfies $S_{\mathcal C}\cap Ass(G)=\phi$. 

\smallskip
Conversely, we consider a specialization closed subset $S\subseteq \mathfrak Spec(\mathcal A)$ as in 
(2). By assumption, there is a generator $G$ of $\mathcal A$ such that $Ass(G)\cap S=\phi$. Hence, 
$G\in \mathcal C_S$. Then, for any object $M\in \mathcal A$, there is an epimorphism
$G^{I}\longrightarrow M$ from a direct sum of copies of $G$. Since $\mathcal C_S$ is the torsion free class
of a hereditary torsion pair on $\mathcal A$, it follows from Lemma \ref{L6.4} that the  direct sum $G^{I}$ lies in $\mathcal C_S$.  Further, since the torsion free class $\mathcal C_S$
is closed under subobjects, the kernel $K$ of the epimorphism $G^{I}\longrightarrow M$ lies in 
$\mathcal C_S$. This gives a short exact sequence 
\begin{equation}
0\longrightarrow K\longrightarrow G^{I}\longrightarrow M\longrightarrow 0
\end{equation} with both $G^{I}$, $K\in \mathcal C_S$.
 Since the torsion free class $\mathcal C_S$ is also closed under extensions and subobjects, given a short
exact sequence $0\longrightarrow N'\longrightarrow N\longrightarrow N''\longrightarrow 0$ with $N''\in \mathcal C_S$,
then $N'\in \mathcal C_S$ if and only if $N\in \mathcal C_S$. In particular, the torsion free class
$\mathcal C_S$ is also closed under taking direct summands. It follows that $\mathcal C_S$ is a $1$-resolving subcategory. 
\end{proof}

\section{Injective hulls, cosyzygies and their associated points}

\smallskip
In order to proceed further, we will need to generalize some classical results in commutative algebra to the 
locally noetherian Grothendieck category $\mathcal A$. The most important among these is a characterization of the associated points
of higher cosyzygies of an object of $\mathcal A$ in terms of $Ext$ groups. This is motivated by a result of Bass (see, for instance, 
\cite[Proposition 3.2.9]{BH})
which describes the associated primes of higher cosyzygies of a module over a commutative noetherian ring. 

\smallskip
Given an object $M\in \mathcal A$ and its injective hull $E(M)$, we have the following short exact sequence in $\mathcal A$:
\begin{equation}\label{eq7.1}
0\longrightarrow M\longrightarrow E(M)\longrightarrow \mho(M):=E(M)/M\longrightarrow 0
\end{equation}

\begin{defn}\label{D7.1} For $M\in \mathcal A$, put $E_0(M):=E(M)$ and $\mho_0(M):=M$. For $k\geq 1$, $E_k(M)$ and
$\mho_k(M)$ are defined by setting:
\begin{equation}
\mho_k(M):=\mho(\mho_{k-1}(M))=E(\mho_{k-1}(M))/\mho_{k-1}(M) \qquad E_k(M):=E(\mho_k(M))
\end{equation} For any $k<0$, we always set $\mho_k(M)=0$.  For each $k\in \mathbb Z$, the object $\mho_k(M)$ is referred to as the $k$-th cosyzygy of $M$. The sequence 
\begin{equation}
0\longrightarrow M\longrightarrow E_0(M)\longrightarrow E_1(M)\longrightarrow E_2(M)\longrightarrow \dots
\end{equation} is referred to as the minimal injective resolution of $M$. 
\end{defn}

The short exact sequence in \eqref{eq7.1} shows that for any object $T\in \mathcal A$ and any $k\geq 1$, we have:
\begin{equation}\label{ert7.4}
Ext^k_{\mathcal A}(T,\mho(M))=Ext^{k+1}_{\mathcal A}(T,M)
\end{equation}

\begin{lem}\label{L7.1}
If $M$, $N\in \mathcal A$ are objects such that $Hom_{\mathcal A}(M,N)\ne 0$, we must have
$Supp(M)\cap Ass(N)\ne \phi$.
\end{lem}

\begin{proof}
Let $\psi: M\longrightarrow N$ be a nonzero morphism in $\mathcal A$. Then, $Ass(Im(\psi))\ne \phi$. We see that 
$M/Ker(\psi)=Im(\psi)\subseteq N$ which gives us 
$Ass(Im(\psi))\subseteq Ass(N)$ and $Ass(Im(\psi))\subseteq Supp(Im(\psi))\subseteq Supp(M)$. Hence the result. 
\end{proof}

From now onwards, for any spectral object $P\in Spec(\mathcal A)$, we will denote by $K(P)$ the object 
$L_{\langle P\rangle}(P)\in \mathcal A/\langle P\rangle$. In the remainder of this paper, we will always assume that  $\mathcal A$ satisfies the condition in \eqref{tcondc1}. We also recall from Section 5 that $\mathcal A/\langle P\rangle$
must be a locally noetherian Grothendieck category.

\begin{lem}\label{L7.2} For any spectral object $P\in Spec(\mathcal A)$, the support of 
$K(P)$ is a single point, i.e., $Supp(K(P))=\{\langle P\rangle \}$.
\end{lem}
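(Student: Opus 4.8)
The plan is to show that $\langle P\rangle \in \mathfrak{Spec}(\mathcal{A}/\langle P\rangle)$ and that it is the only point of $\mathfrak{Spec}(\mathcal{A})$ lying in the support of $K(P)$. First I would recall that $L_{\langle P\rangle}$ carries spectral objects to spectral objects or to zero: concretely, since $\langle P\rangle$ is a localizing subcategory, the quotient $\mathcal{A}/\langle P\rangle$ is locally noetherian, and a standard property of flat localizations (the section functor $i_{\langle P\rangle}$ being fully faithful and the counit an isomorphism) shows that $L_{\langle P\rangle}$ preserves subobjects and the relation $\prec$. Hence if $Q \in \mathrm{Spec}(\mathcal{A})$ and $L_{\langle P\rangle}(Q) \neq 0$, then $L_{\langle P\rangle}(Q)$ is again spectral in $\mathcal{A}/\langle P\rangle$, and these account for all spectral objects of the quotient via the identification $\mathfrak{Spec}(\mathcal{A}/\langle P\rangle) \subseteq \mathfrak{Spec}(\mathcal{A})$ used already in Lemma~\ref{L5.4} (citing \cite[$\S$ VI.2.7.5.1]{R3}).

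Next I would compute $Supp(K(P))$ directly. A point $\langle Q\rangle \in \mathfrak{Spec}(\mathcal{A}/\langle P\rangle)$ lies in $Supp(K(P))$ iff $L_{\langle P\rangle}(Q) \prec K(P) = L_{\langle P\rangle}(P)$ in $\mathcal{A}/\langle P\rangle$. The key point is that because $P$ is spectral, $P \prec Q'$ for every nonzero subobject $Q' \subseteq P$; transporting this along the exact functor $L_{\langle P\rangle}$ gives that $K(P)$ is a spectral object of $\mathcal{A}/\langle P\rangle$ whose equivalence class is $\langle P\rangle$ itself (viewing $\langle P\rangle$ as a point of the quotient spectrum). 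Therefore $Supp(K(P)) \subseteq \{\langle P\rangle\}$, since $L_{\langle P\rangle}(Q) \prec K(P)$ together with spectrality of $K(P)$ forces $L_{\langle P\rangle}(Q) \approx K(P)$, i.e. $\langle Q\rangle = \langle P\rangle$. Conversely $K(P) \neq 0$ because $L_{\langle P\rangle} \circ i_{\langle P\rangle} \cong \mathrm{id}$ and $P \notin \langle P\rangle$ (an object is never in the localizing subcategory it determines, as $P \prec P$), so $Supp(K(P)) \neq \phi$, giving the reverse inclusion $\{\langle P\rangle\} \subseteq Supp(K(P))$.

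Alternatively, and perhaps more cleanly, I would argue via associated points: by Lemma~\ref{L5.41} applied inside $\mathcal{A}/\langle P\rangle$ (which by Lemma~\ref{L5.39} satisfies the hypothesis that every nonzero object has an associated point), $Supp(K(P)) = \bigcup_{\langle Q\rangle \in Ass(K(P))} \overline{\langle Q\rangle}$, so it suffices to show $Ass(K(P)) = \{\langle P\rangle\}$ and that $\langle P\rangle$ is a closed point of $Supp(K(P))$, i.e. that no other spectral object is $\prec K(P)$. Using $Ass(L_{\langle P\rangle}(P)) = Ass(P) \cap \mathfrak{Spec}(\mathcal{A}/\langle P\rangle)$ from \cite[$\S$ VI.2.7.5.1]{R3}, and the fact that $P$ spectral implies $Ass(P) = \{\langle P\rangle\}$ (any spectral subobject of $P$ is equivalent to $P$), we get $Ass(K(P)) \subseteq \{\langle P\rangle\}$; and it is nonempty since $K(P) \neq 0$. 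Combined with the spectrality of $K(P)$ itself, which collapses its whole support to one point, this yields $Supp(K(P)) = \{\langle P\rangle\}$.

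The main obstacle I anticipate is the bookkeeping around identifying $\mathfrak{Spec}(\mathcal{A}/\langle P\rangle)$ as a subset of $\mathfrak{Spec}(\mathcal{A})$ and checking that $K(P) = L_{\langle P\rangle}(P)$ is genuinely spectral in the quotient — that is, that exactness of $L_{\langle P\rangle}$ together with preservation of subobjects really does transport the defining property of a spectral object. Once that is in place, the computation of the support is essentially formal, and the nonvanishing $K(P) \neq 0$ follows from $P \notin \langle P\rangle$ together with faithfulness of $i_{\langle P\rangle}$ and the counit isomorphism.
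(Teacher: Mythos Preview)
Your second approach is essentially the paper's argument: compute $Ass(K(P))$ via the formula $Ass(L_{\langle P\rangle}(P))=Ass(P)\cap \mathfrak Spec(\mathcal A/\langle P\rangle)$ together with $Ass(P)=\{\langle P\rangle\}$, then use Lemma~\ref{L5.41} to conclude that $Supp(K(P))$ is the closure of the single point $\langle L_{\langle P\rangle}(P)\rangle$ in $\mathfrak Spec(\mathcal A/\langle P\rangle)$. The paper does exactly this.

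However, there is a genuine gap at the final step. You claim that ``spectrality of $K(P)$ \dots\ collapses its whole support to one point,'' and in your first approach that ``$L_{\langle P\rangle}(Q)\prec K(P)$ together with spectrality of $K(P)$ forces $L_{\langle P\rangle}(Q)\approx K(P)$.'' Neither inference is valid. Spectrality of $K(P)$ only tells you that every nonzero \emph{subobject} of $K(P)$ is equivalent to $K(P)$; it says nothing about arbitrary spectral objects $Q'$ with $Q'\prec K(P)$, which are merely subquotients of some $K(P)^n$. In general a spectral object has support equal to the closure of its own point, which can be large (e.g.\ $Supp(R/\mathfrak p)=V(\mathfrak p)$ in $R\text{-}Mod$).

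What is actually needed, and what the paper invokes, is the \emph{quasi-final} property of $K(P)=L_{\langle P\rangle}(P)$ in the local category $\mathcal A/\langle P\rangle$: for every nonzero $T\in \mathcal A/\langle P\rangle$ one has $L_{\langle P\rangle}(P)\prec T$ (see the proof of \cite[Proposition 3.3.1]{Rose2}, and Definition~\ref{D7.7}). This is what forces $\overline{\langle L_{\langle P\rangle}(P)\rangle}=\{\langle P\rangle\}$: if $Q'$ is spectral in $\mathcal A/\langle P\rangle$ with $Q'\prec K(P)$, then quasi-finality also gives $K(P)\prec Q'$, so $\langle Q'\rangle=\langle K(P)\rangle$. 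Once you replace the appeal to spectrality by quasi-finality, your argument goes through and matches the paper's.
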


\begin{proof}
Since $K(P)=L_{\langle P\rangle}(P)$, it follows from \eqref{tcondc1} that
\begin{equation}
Ass(K(P))=Ass(P)\cap \mathfrak Spec(\mathcal A/\langle P\rangle)\subseteq Ass(P)
\end{equation} where $ \mathfrak Spec(\mathcal A/\langle P\rangle)$ is treated as a subset 
of $\mathfrak Spec(\mathcal A)$.  From \cite[$\S$ 8.1]{Rose2}, we know that $Ass(P)=\{\langle P\rangle\}$ and from Lemma 
\ref{L5.39} we know that $K(P)$ being a nonzero object of $\mathcal A/\langle P\rangle$ must have an associated
point, i.e., $Ass(K(P))\ne \phi$. It follows that $Ass(K(P))=\{\langle P\rangle\}$. 

\smallskip
From Lemma \ref{L5.41} it now follows that $Supp(K(P))$ is the closure of the point $\langle L_{\langle P\rangle}(P)
\rangle \in \mathfrak
Spec(\mathcal A/\langle P\rangle)$. However, for any nonzero object $T\in \mathcal A/\langle P\rangle$, we know that
$L_{\langle P\rangle}(P)\prec T$ (see the proof of \cite[Proposition 3.3.1]{Rose2}). Hence, the closure of $L_{\langle P\rangle}(P) \in \mathfrak
Spec(\mathcal A/\langle P\rangle)$ is the single point $\{\langle P\rangle\}$.

\end{proof}

\begin{lem}
\label{L7.4} Let $M$ be an object of $\mathcal A$. Then, the associated points of $M$ may be described as follows: 
\begin{equation}
Ass(M)=\{\mbox{$\langle P\rangle $ $\vert$ $P\in Spec(\mathcal A)$ and $Hom_{\mathcal A/\langle P\rangle}(K(P),L_{\langle P\rangle}(M))\ne 0$}\}
\end{equation} 
\end{lem}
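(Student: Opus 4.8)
The plan is to transport the question into the quotient category $\mathcal A/\langle P\rangle$, which by Lemma \ref{L5.39} is again locally noetherian with every nonzero object having an associated point, so that Lemma \ref{L7.1} and Lemma \ref{L7.2} are available there. A preliminary reduction to finitely generated $M$ is convenient: spectral subobjects are finitely generated (Lemma \ref{L3.2}), so $Ass(M)=\bigcup_{M'\subseteq M\ \mathrm{f.g.}}Ass(M')$, while $K(P)$ is noetherian in $\mathcal A/\langle P\rangle$ and $L_{\langle P\rangle}$ is exact with a right adjoint, so $L_{\langle P\rangle}(M)=\varinjlim L_{\langle P\rangle}(M')$ is a filtered colimit of subobjects and $Hom_{\mathcal A/\langle P\rangle}(K(P),L_{\langle P\rangle}(M))\ne 0$ iff this holds with $M$ replaced by some finitely generated $M'\subseteq M$. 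For finitely generated $M$ the localization formula $Ass(L_{\langle P\rangle}(M))=Ass(M)\cap\mathfrak Spec(\mathcal A/\langle P\rangle)$ (as used in Lemma \ref{L5.4}) is then in force.

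Granting this, the inclusion $\supseteq$ is quick: a nonzero map $K(P)\to L_{\langle P\rangle}(M)$ gives, by Lemma \ref{L7.1} applied in $\mathcal A/\langle P\rangle$, that $Supp(K(P))\cap Ass(L_{\langle P\rangle}(M))\ne\phi$, and $Supp(K(P))=\{\langle P\rangle\}$ by Lemma \ref{L7.2}, so $\langle P\rangle\in Ass(L_{\langle P\rangle}(M))=Ass(M)\cap\mathfrak Spec(\mathcal A/\langle P\rangle)\subseteq Ass(M)$. For $\subseteq$, if $\langle P\rangle\in Ass(M)$ then the same formula puts $\langle P\rangle$ in $Ass(L_{\langle P\rangle}(M))$, hence there is a spectral object $K'$ of $\mathcal A/\langle P\rangle$ with a monomorphism $K'\hookrightarrow L_{\langle P\rangle}(M)$ and $\langle K'\rangle=\langle P\rangle$; it remains to produce a nonzero morphism $K(P)\to K'$ and compose.

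I expect this last step to be the real obstacle. From the proof of Lemma \ref{L7.2}, $K(P)=L_{\langle P\rangle}(P)$ is spectral in $\mathcal A/\langle P\rangle$, satisfies $K(P)\prec T$ for every nonzero $T\in\mathcal A/\langle P\rangle$, and has $Ass(K(P))=Supp(K(P))=\{\langle P\rangle\}$; since $K'$ is spectral of the same class, $Supp(K')=\{\langle P\rangle\}$ as well, so $\langle P\rangle$ is at once the generic and a closed point of $\mathfrak Spec(\mathcal A/\langle P\rangle)$. The plan is to argue that an object of $\mathcal A/\langle P\rangle$ with support the closed point $\langle P\rangle$ behaves like a finite length object, so that $K'$ has a nonzero socle, and that $K(P)$ is the unique simple object of $\mathcal A/\langle P\rangle$, whence $K(P)$ embeds into $K'$. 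The difficulty is precisely that the relation $K(P)\prec K'$ by itself only gives $K(P)$ as a subquotient of a finite power of $K'$, not as the source of a morphism to $K'$; closing this gap is where Rosenberg's structural analysis of $L_{\langle P\rangle}(P)$ and of objects with punctual support in the quotient category is needed.
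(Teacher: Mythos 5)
Your $\supseteq$ direction (a nonzero map $K(P)\to L_{\langle P\rangle}(M)$ forces $\langle P\rangle\in Ass(M)$) coincides with the paper's argument: Lemma \ref{L5.39} to get associated points in the quotient, Lemma \ref{L7.1} in $\mathcal A/\langle P\rangle$, Lemma \ref{L7.2} to pin down $Supp(K(P))=\{\langle P\rangle\}$, and then the localization formula. The problem is with your $\subseteq$ direction, which takes a detour that manufactures the very gap you identify. The paper's proof of $\subseteq$ is entirely direct: if $\langle P\rangle\in Ass(M)$, then by the definition of $Ass$ in equation \eqref{ass} there is a representative spectral object $P\in Spec(\mathcal A)$ that actually embeds, $P\hookrightarrow M$. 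Applying the exact functor $L_{\langle P\rangle}$ gives a monomorphism $K(P)=L_{\langle P\rangle}(P)\hookrightarrow L_{\langle P\rangle}(M)$, and since $P\prec P$ implies $P\notin\langle P\rangle$, the object $K(P)$ is nonzero, so this monomorphism is already a nonzero element of $Hom_{\mathcal A/\langle P\rangle}(K(P),L_{\langle P\rangle}(M))$. There is no reason to first pass to $Ass(L_{\langle P\rangle}(M))$, extract a spectral $K'$, and then try to build a map $K(P)\to K'$; that is precisely the dead end you flag, and it evaporates once one localizes the given embedding $P\hookrightarrow M$ rather than reconstructing an embedding after localizing.

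For what it is worth, your plan to close the gap is not wrong in spirit: the paper does record (after Definition \ref{D7.7}) that $K(P)=L_{\langle P\rangle}(P)$ is a finite direct sum of copies of the unique simple object $S_{\langle P\rangle}$ of $\mathcal A/\langle P\rangle$, and any spectral $K'$ of the same class would contain a copy of $S_{\langle P\rangle}$, so a nonzero $K(P)\to K'$ exists. But this structural machinery is used later (Lemma \ref{L7.8y}, Proposition \ref{Bass9}), not here. Likewise your preliminary reduction to finitely generated $M$ is unnecessary for this lemma: the paper invokes the formula $Ass(L_{\langle P\rangle}(M))=Ass(M)\cap\mathfrak Spec(\mathcal A/\langle P\rangle)$ for arbitrary $M$, with no colimit argument.
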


\begin{proof} We choose a point   $\langle P\rangle \in Ass(M)$ represented by a spectral object 
$P\in Spec(\mathcal A)$ such that there is a monomorphism 
$P\hookrightarrow M$. Since $L_{\langle P\rangle}:\mathcal A\longrightarrow \mathcal A/\langle P\rangle$ is exact,
we obtain a monomorphism $K(P)=L_{\langle P\rangle}(P)\hookrightarrow L_{\langle P\rangle}(M)$. Then, 
$Hom_{\mathcal A/\langle P\rangle}(K(P),L_{\langle P\rangle}(M))\ne 0$.

\smallskip
Conversely, let $P\in Spec(\mathcal A)$ be such that $Hom_{\mathcal A/\langle P\rangle}(K(P),L_{\langle P\rangle}(M))\ne 0$. From Lemma \ref{L5.39}, we know
that every nonzero object in $\mathcal A/\langle P\rangle$  has an associated point. Applying Lemma \ref{L7.1}, we see
that $Supp(K(P))\cap Ass(L_{\langle P\rangle}(M))\ne \phi$. From Lemma \ref{L7.2}, we know that 
$Supp(K(P))=\{\langle P\rangle\}$ is a single point and hence $\langle P\rangle\in Ass(L_{\langle P\rangle}(M))$. Again,
$Ass(L_{\langle P\rangle}(M))=Ass(M)\cap \mathfrak Spec(\mathcal A/\langle P\rangle)\subseteq Ass(M)$ and hence
$\langle P\rangle \in Ass(M)$. 
\end{proof}

\begin{lem}\label{L7.5} Let $M$ be an object of $\mathcal A$ and $P\in Spec(\mathcal A)$ a spectral object. Then:

\smallskip
(a) Let $N\in \mathcal A/\langle P\rangle$ be such that $M$ is an essential subobject
of $i_{\langle P\rangle}(N)$. Then, $L_{\langle P\rangle}(M)\subseteq 
L_{\langle P\rangle}(i_{\langle P\rangle}(N))=N$ is an essential subobject.

(b) 
Let $E(M)$ be the injective hull of $M$ in $\mathcal A$. Then, 
the localization $L_{\langle P\rangle}(E(M))$ is the injective hull of $L_{\langle P\rangle}(M)$ in $\mathcal A/\langle P\rangle$. 

(c) Let $0\longrightarrow L_{\langle P\rangle}(M)\longrightarrow E_0(L_{\langle P\rangle}(M)) 
\longrightarrow E_1(L_{\langle P\rangle}(M))\longrightarrow \dots  $ be a minimal injective resolution of
$L_{\langle P\rangle}(M)$ in $\mathcal A/\langle P\rangle$ and let $\{\mho_k(L_{\langle P\rangle}(M))\}_{k\geq 0}$ be
the corresponding cosyzygies. Then, for each $k\geq 0$,  we have
\begin{equation}\label{er7.7}
\mho_k(L_{\langle P\rangle}(M))=L_{\langle P\rangle}(\mho_k(M))\qquad E_k(L_{\langle P\rangle}(M))=
L_{\langle P\rangle}(E_k(M))
\end{equation} 
\end{lem}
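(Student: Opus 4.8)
The plan is to prove (a) first, since it does the real work, and then to deduce (b) and (c) from it essentially formally. Throughout I use only the properties of the flat localization recorded in Section 5: writing $L:=L_{\langle P\rangle}$ and $i:=i_{\langle P\rangle}$, the functor $L$ is exact, $i$ is fully faithful with counit $L\circ i\cong\mathrm{id}$, $i$ identifies $\mathcal A/\langle P\rangle$ with the full subcategory of $\langle P\rangle$-closed objects in $\mathcal A$, and both $\mathcal A$ and $\mathcal A/\langle P\rangle$ are locally noetherian Grothendieck categories, so injective hulls exist in both. In part (b) I also invoke the classical fact that in a locally noetherian Grothendieck category every localizing subcategory, in particular $\langle P\rangle$, is stable --- equivalently, an essential extension of a $\langle P\rangle$-torsion object is again $\langle P\rangle$-torsion.

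For (a): exactness of $L$ turns the essential monomorphism $M\hookrightarrow i(N)$ into a monomorphism $L(M)\hookrightarrow L(i(N))=N$, so only essentiality is at issue. Given a nonzero subobject $Z\subseteq N$, note that $i$, being a right adjoint, preserves monomorphisms, and it is faithful, so $i(Z)\subseteq i(N)$ is a nonzero subobject in $\mathcal A$; essentiality of $M$ in $i(N)$ then gives $i(Z)\cap M\ne 0$. The substantive observation is that a $\langle P\rangle$-closed object $X$ admits no nonzero subobject lying in $\langle P\rangle$, since for $C\in\langle P\rangle$ one has $\mathrm{Hom}_{\mathcal A}(C,X)=\mathrm{Hom}_{\mathcal A}(C,i(L(X)))=\mathrm{Hom}_{\mathcal A/\langle P\rangle}(L(C),L(X))=0$. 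Hence the nonzero subobject $i(Z)\cap M$ of $i(N)$ is not in $\langle P\rangle$, so $L(i(Z)\cap M)\ne 0$; and since $L$ is exact it preserves intersections of subobjects, so $L(i(Z)\cap M)=L(i(Z))\cap L(M)=Z\cap L(M)$. Therefore $Z\cap L(M)\ne 0$, proving $L(M)\subseteq N$ essential.

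For (b): apply the exact functor $L$ to $0\to M\to E(M)\to\mho(M)\to 0$ to get a monomorphism $L(M)\hookrightarrow L(E(M))$; it remains to see that $L(E(M))$ is injective in $\mathcal A/\langle P\rangle$ and that this monomorphism is essential. By stability, the $\langle P\rangle$-torsion subobject $t(E(M))$ is itself injective --- its injective hull inside $E(M)$ is again torsion, hence contained in, hence equal to, $t(E(M))$ --- so it splits off: $E(M)=t(E(M))\oplus E'$ with $E'$ injective and $\langle P\rangle$-torsion-free, hence $\langle P\rangle$-closed, so the unit gives $E'\cong i(L(E'))$. Then $L(E(M))=L(E')$, and since $i(L(E'))=E'$ is injective in $\mathcal A$ and $i$, being right adjoint to the exact functor $L$, reflects injectivity (an object of $\mathcal A/\langle P\rangle$ is injective once its image under $i$ is, by full faithfulness and left exactness of $i$), $L(E')$ is injective in $\mathcal A/\langle P\rangle$. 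For essentiality, let $\overline{M}$ be the image of $M$ under the projection $E(M)\to E'$; its kernel is $M\cap t(E(M))\in\langle P\rangle$, so $L(\overline{M})=L(M)$, and $\overline{M}\subseteq E'$ is essential because for any $0\ne Y\subseteq E'\subseteq E(M)$ we have $M\cap Y\ne 0$ by essentiality of $M$ in $E(M)$, and $M\cap Y\subseteq E'$ is carried identically by the projection, so $0\ne M\cap Y\subseteq\overline{M}\cap Y$. Since $E'=i(L(E'))$, part (a) applies to $\overline{M}\subseteq i(L(E'))$ and yields $L(M)=L(\overline{M})\subseteq L(E')=L(E(M))$ essential; with the injectivity above, $L(E(M))$ is the injective hull of $L(M)$.

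For (c): induct on $k$ to prove $L(\mho_k(M))=\mho_k(L(M))$, from which $L(E_k(M))=E_k(L(M))$ follows at once --- part (b) applied to the object $\mho_k(M)$ says $L(E_k(M))=L(E(\mho_k(M)))$ is the injective hull of $L(\mho_k(M))=\mho_k(L(M))$, that is, it equals $E_k(L(M))$. The case $k=0$ is immediate. For the inductive step, apply the exact functor $L$ to $0\to\mho_k(M)\to E_k(M)\to\mho_{k+1}(M)\to 0$, obtaining a short exact sequence $0\to\mho_k(L(M))\to E_k(L(M))\to L(\mho_{k+1}(M))\to 0$; comparing it with the stretch $0\to\mho_k(L(M))\to E_k(L(M))\to\mho_{k+1}(L(M))\to 0$ of the minimal injective resolution of $L(M)$ --- same first term and same map --- the cokernels agree, giving $L(\mho_{k+1}(M))=\mho_{k+1}(L(M))$. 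I expect part (a) to be the only genuine obstacle: the single non-formal point in the lemma is recognizing that $\langle P\rangle$-closedness of $i(N)$ forces the witnessing subobject $i(Z)\cap M$ to avoid $\langle P\rangle$, so that it survives $L$; everything after that is bookkeeping, the only input beyond Section 5 being the stability of $\langle P\rangle$ used in (b).
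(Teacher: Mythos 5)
Your proof is correct, and for parts (a) and (b) it follows a genuinely different route from the paper's.

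For (a), the paper shows $L(T)\ne 0$, where $T=M\cap i(K)$, by observing that $T$ is essential in $i(K)$ and invoking the earlier Proposition~\ref{L5.5} to get $\mathrm{Supp}(T)=\mathrm{Supp}(i(K))$, whence $\langle P\rangle\in\mathrm{Supp}(T)$. Your argument replaces this with the cleaner observation that $i(N)$ is $\langle P\rangle$-closed, hence $\langle P\rangle$-torsion-free, so no nonzero subobject of it can lie in $\langle P\rangle$; this is self-contained and avoids the essential-extension/support lemma entirely. You also give the slightly sharper identification $L(i(Z)\cap M)=Z\cap L(M)$ using exactness of $L$ on pullbacks, whereas the paper only locates $L(T)$ as a nonzero subobject of both sides. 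Both are fine.

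For (b), the paper's proof is one line: it quotes Gabriel's Corollaire~III.2, which already yields $E(M)=E_1\oplus i_{\langle P\rangle}(E_2)$ with $E_1\in\langle P\rangle$ and $E_2$ injective in $\mathcal A/\langle P\rangle$, and then applies part~(a) to $M\cap i(E_2)\subseteq i(E_2)$. You reconstruct essentially that decomposition from first principles using stability of $\langle P\rangle$ (torsion part of an injective splits off), plus the observation that $i$ reflects injectives because $L\dashv i$ with $L$ exact, then massage $M$ to $\overline M\subseteq E'$ before applying~(a). This is longer but works, and it illuminates what Gabriel's corollary is doing. One remark worth adding: the stability you invoke as a ``classical fact'' is also an immediate consequence of the paper's own Proposition~\ref{L5.5} (proved just before): if $M\in\langle P\rangle$ then $\langle P\rangle\notin\mathrm{Supp}(M)=\mathrm{Supp}(E(M))$, so $E(M)\in\langle P\rangle$. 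So you could close that loop internally rather than by citation. Part~(c) is the same induction the paper gestures at.
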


\begin{proof}  (a) We consider a subobject $0\ne K\subseteq N$. Since $i_{\langle P\rangle}:
\mathcal A/\langle P\rangle \longrightarrow \mathcal A$ is fully faithful and a right
adjoint, we get  $0\ne i_{\langle P\rangle}(K)\subseteq i_{\langle P\rangle}(N)$. Since $M$ is  an essential subobject
of $i_{\langle P\rangle}(N)$, we see that $T:= M\cap i_{\langle P\rangle}(K)\ne 0$. Since $L_{\langle P\rangle}$
is exact, it follows that $L_{\langle P\rangle}(T)\subseteq L_{\langle P\rangle}(M)$ and $L_{\langle P
\rangle}(T)\subseteq L_{\langle P\rangle}\circ i_{\langle P\rangle}(K)=K$. 

\smallskip
It remains to show that $L_{\langle P\rangle}(T)\ne 0$. For this, we notice that since $M$ is an essential subobject
of $i_{\langle P\rangle}(N)$, the intersection $T= M\cap i_{\langle P\rangle}(K)$ must be an essential subobject of $i_{\langle P\rangle}(K)$. But then we know from  Lemma \ref{L5.5} that $Supp(T)=Supp(i_{\langle P\rangle}(K))$. Since 
$L_{\langle P\rangle}\circ i_{\langle P\rangle}(K)=K\ne 0$, we get  $L_{\langle P\rangle}(T)\ne 0$. 

\smallskip
(b) From \cite[Corollaire III.2]{Gab}, we know that the injective object $E(M)$ in $\mathcal A$ may be expressed
as a direct sum $E(M)=E_1\oplus i_{\langle P\rangle}(E_2)$ where $P\not\prec E_1$ and $E_2$ is an injective
object of $\mathcal A/\langle P\rangle$. Then, $L_{\langle P\rangle}(E(M))=L_{\langle P\rangle}(E_1)
\oplus L_{\langle P\rangle}\circ i_{\langle P\rangle}(E_2)=E_2$ must be injective. 

\smallskip
It remains to show that $L_{\langle P\rangle}(M)\subseteq L_{\langle P\rangle}(E(M))=E_2$ is an essential extension. For this,
we note that $M\cap i_{\langle P\rangle}(E_2)$ is an essential subobject of $i_{\langle P\rangle}(E_2)$. From part (a),
it follows that $L_{\langle P\rangle}(M\cap i_{\langle P\rangle}(E_2))$ is an essential subobject of $E_2$. Since 
$L_{\langle P\rangle}(M\cap i_{\langle P\rangle}(E_2))\subseteq L_{\langle P\rangle}(M)$, it follows that
$L_{\langle P\rangle}(M)$ is an essential subobject of $E_2$. 

\smallskip
(c) From part (b), we know that the injective hull commutes with the functor
$L_{\langle P\rangle}$. Since $L_{\langle P\rangle}:\mathcal A\longrightarrow \mathcal A/\langle P\rangle$ is an exact functor,
the equalities in \eqref{er7.7} are now evident from the constructions in Definition \ref{D7.1}. 

\end{proof}

\begin{thm}\label{Bass} Let $M$ be an object of $\mathcal A$.  
If
$P\in Spec(\mathcal A)$ is such that $Ext^k_{\mathcal A/\langle P\rangle}(K(P),L_{\langle P\rangle}(M))\ne 0$,
then the point $\langle P\rangle \in Ass(\mho_k(M))=Ass(E_k(M))$. 
\end{thm}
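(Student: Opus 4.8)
The plan is to reduce everything to the category $\mathcal A/\langle P\rangle$ using Lemma \ref{L7.5}(c), and then apply Lemma \ref{L7.4} in that quotient category. First I would observe that, by \eqref{er7.7}, we have $L_{\langle P\rangle}(\mho_k(M)) = \mho_k(L_{\langle P\rangle}(M))$ and similarly $L_{\langle P\rangle}(E_k(M)) = E_k(L_{\langle P\rangle}(M))$. Next, I would convert the $Ext$ hypothesis into a $Hom$ statement about the $k$-th cosyzygy. Iterating the shift \eqref{ert7.4} $k$ times gives
\begin{equation*}
Ext^k_{\mathcal A/\langle P\rangle}(K(P), L_{\langle P\rangle}(M)) \cong Ext^{k-1}_{\mathcal A/\langle P\rangle}(K(P), \mho_1(L_{\langle P\rangle}(M))) \cong \dots \cong Hom_{\mathcal A/\langle P\rangle}(K(P), \mho_k(L_{\langle P\rangle}(M))),
\end{equation*}
where I use that $\mho_1$ in the quotient category is the cosyzygy taken there, which by \eqref{er7.7} is compatible with $L_{\langle P\rangle}$. (Strictly, one should note $Ext^0 = Hom$ and that the first $k-1$ of these isomorphisms use \eqref{ert7.4} applied inside $\mathcal A/\langle P\rangle$, which is legitimate since $\mathcal A/\langle P\rangle$ is a Grothendieck category.)

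With that in hand, the hypothesis $Ext^k_{\mathcal A/\langle P\rangle}(K(P), L_{\langle P\rangle}(M)) \ne 0$ becomes $Hom_{\mathcal A/\langle P\rangle}(K(P), \mho_k(L_{\langle P\rangle}(M))) \ne 0$, i.e.\ $Hom_{\mathcal A/\langle P\rangle}(K(P), L_{\langle P\rangle}(\mho_k(M))) \ne 0$. Now I would apply Lemma \ref{L7.4} to the object $\mho_k(M) \in \mathcal A$: it says precisely that $\langle P\rangle \in Ass(\mho_k(M))$ whenever $Hom_{\mathcal A/\langle P\rangle}(K(P), L_{\langle P\rangle}(\mho_k(M))) \ne 0$. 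Finally, the equality $Ass(\mho_k(M)) = Ass(E_k(M))$ follows from Theorem \ref{L5.5}(a) together with the fact that $\mho_k(M) = \mho_{k+1}(M)$-free phrasing aside — more precisely, $\mho_k(M)$ embeds essentially in $E(\mho_k(M)) = E_k(M)$, so $Ass(E_k(M)) = Ass(\mho_k(M))$ by Theorem \ref{L5.5}(a) applied to the essential subobject $\mho_k(M) \subseteq E_k(M)$.

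The main obstacle I anticipate is making the iteration of \eqref{ert7.4} rigorous inside the quotient category: one must confirm that the cosyzygy $\mho_1(L_{\langle P\rangle}(M))$ computed in $\mathcal A/\langle P\rangle$ really does appear in the $Ext$-shift, which requires knowing that $L_{\langle P\rangle}(E(M))$ is an injective hull of $L_{\langle P\rangle}(M)$ — exactly the content of Lemma \ref{L7.5}(b) — and then chaining this through higher cosyzygies via Lemma \ref{L7.5}(c). Once the dictionary between cosyzygies upstairs and downstairs is set up, the rest is a direct invocation of Lemma \ref{L7.4}; no further subtlety is needed. One small point to be careful about: when $k=1$ the chain of isomorphisms degenerates to the single identity $Ext^1_{\mathcal A/\langle P\rangle}(K(P), L_{\langle P\rangle}(M)) \cong Hom_{\mathcal A/\langle P\rangle}(K(P), \mho_1(L_{\langle P\rangle}(M)))$, which is \eqref{ert7.4} itself, so the argument is uniform in $k \geq 1$.
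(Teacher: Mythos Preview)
Your overall strategy coincides with the paper's: pass to $\mathcal A/\langle P\rangle$ via Lemma~\ref{L7.5}(c), shift the $Ext$ degree down, and finish with Lemma~\ref{L7.4}. But there is a genuine gap in your iteration. The identity \eqref{ert7.4} reads $Ext^k(T,\mho(M))=Ext^{k+1}(T,M)$ for $k\geq 1$ only; it does \emph{not} yield $Hom(T,\mho(M))\cong Ext^1(T,M)$, because in the long exact sequence the term $Hom(T,E(M))$ need not vanish. Hence your chain of isomorphisms is valid only down to $Ext^1_{\mathcal A/\langle P\rangle}(K(P),\mho_{k-1}(L_{\langle P\rangle}(M)))$, and your assertion that for $k=1$ the isomorphism ``is \eqref{ert7.4} itself'' is incorrect.

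The repair is exactly what the paper does for the last step: the short exact sequence $0\to \mho_{k-1}\to E_{k-1}\to \mho_k\to 0$ in $\mathcal A/\langle P\rangle$ gives only a \emph{surjection}
\[
Hom_{\mathcal A/\langle P\rangle}\bigl(K(P),\mho_k(L_{\langle P\rangle}(M))\bigr)\;\twoheadrightarrow\; Ext^1_{\mathcal A/\langle P\rangle}\bigl(K(P),\mho_{k-1}(L_{\langle P\rangle}(M))\bigr),
\]
but a surjection is all you need: nonvanishing of the target forces nonvanishing of the source, and then Lemma~\ref{L7.4} applied to $\mho_k(M)$ finishes the argument just as you describe.
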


\begin{proof} Since $E_k(M)=E(\mho_k(M))$ is the injective hull of $\mho_k(M)$, it follows from Proposition \ref{P5.7} that
$Ass(\mho_k(M))=Ass(E_k(M))$. Now suppose that $P\in Spec(\mathcal A)$ is  such that 
$Ext^k_{\mathcal A/\langle P\rangle}(K(P),L_{\langle P\rangle}(M))\ne 0$. If $k=0$, we have
$Hom_{\mathcal A/\langle P\rangle}(K(P),L_{\langle P\rangle}(M))\ne 0$ and it follows from Lemma \ref{L7.4} that
$\langle P\rangle \in Ass(M)=Ass(\mho_0(M))$. 

\smallskip 
We now suppose that $k\geq  1$. By repeatedly applying \eqref{ert7.4} to the category $\mathcal A/\langle P\rangle$,
we obtain
\begin{equation}\label{te7.8}
0\ne Ext^k_{\mathcal A/\langle P\rangle}(K(P),L_{\langle P\rangle}(M))  = ... = Ext^1_{\mathcal A/
\langle P\rangle}(K(P),\mho_{k-1}(L_{\langle P\rangle}(M)))
\end{equation} We now consider the following short exact sequence in $\mathcal A/\langle P\rangle$:
\begin{equation}\label{et7.9}
0\longrightarrow \mho_{k-1}(L_{\langle P\rangle}(M))\longrightarrow E_{k-1}(L_{\langle P\rangle}(M))
\longrightarrow \mho_{k}(L_{\langle P\rangle}(M))\longrightarrow 0
\end{equation} Then \eqref{et7.9} induces the exact sequence:
\begin{equation*}
\begin{array}{c}
0\longrightarrow Hom_{\mathcal A/\langle P\rangle}(K(P),\mho_{k-1}(L_{\langle P\rangle}(M)))\longrightarrow Hom_{\mathcal A/\langle P\rangle}(K(P),E_{k-1}(L_{\langle P\rangle}(M)))
\longrightarrow \\
\qquad \qquad \qquad Hom_{\mathcal A/\langle P\rangle}(K(P),\mho_{k}(L_{\langle P\rangle}(M)))\longrightarrow Ext^1_{\mathcal A/
\langle P\rangle}(K(P),\mho_{k-1}(L_{\langle P\rangle}(M)))\longrightarrow 0
\end{array}
\end{equation*} Then, $Ext^1_{\mathcal A/
\langle P\rangle}(K(P),\mho_{k-1}(L_{\langle P\rangle}(M)))\ne 0$ implies that $Hom_{\mathcal A/\langle P\rangle}(K(P),\mho_{k}(L_{\langle P\rangle}(M)))\ne 0$. 
By Lemma \ref{L7.5}, $\mho_k(L_{\langle P\rangle}(M))=L_{\langle P\rangle}(\mho_k(M))$ and hence 
$Hom_{\mathcal A/\langle P\rangle}(K(P),L_{\langle P\rangle}(\mho_k(M)))\ne 0$. It now follows from 
Lemma \ref{L7.4} that $\langle P\rangle \in Ass(\mho_k(M))$. 

\end{proof}

The above result gives a sufficient condition for the associated points of the $k$-th cosyzygy $\mho_k(M)$ of an 
object $M\in\mathcal A$. In order to obtain a necessary condition, we will need a better understanding of the
abelian category $\mathcal A/\langle P\rangle$. 

\begin{defn}\label{D7.7} (see \cite[$\S$ 3]{Rose2}) Let $\mathcal B$ be an abelian category. A nonzero object $F\in \mathcal B$ is said
to be quasi-final if it satisfies $F\prec B$ for any nonzero object $B\in \mathcal B$. 

\smallskip
An abelian category is said to be local if it contains a quasi-final object.
\end{defn} 

\smallskip
From \cite[$\S$ 3]{Rose2}, we know that for any $P\in Spec(\mathcal A)$, the category $\mathcal A/\langle P
\rangle$ is a local abelian category with  $L_{\langle P\rangle}(P)$ as a quasi-final object. Further, since $\mathcal 
A/\langle P\rangle$ is a locally noetherian Grothendieck category, its spectral objects are  finitely
generated by  Lemma \ref{L3.2}. A local abelian category has finitely generated objects if and only if its quasi-final objects 
are semi-simple, i.e., they may be expressed as a direct sum of simple objects (see proof of \cite[Lemma 5.4.1]{Rose2}). We recall
that an object in an abelian category is said to be simple if it has no nonzero proper subobjects. 

\smallskip
Accordingly, we know that the quasi-final object $K(P)=L_{\langle P\rangle}(P)$ of $\mathcal A/\langle P\rangle$ may be expressed as a finite direct sum
of simple objects of $\mathcal A/\langle P\rangle$. Since all simple objects of a local abelian category are isomorphic 
(see \cite[$\S$ 3.1.2]{Rose2}), we may express $L_{\langle P\rangle}(P)$ as a direct sum $K(P)=L_{\langle P\rangle}(P)=S_{\langle P\rangle}^n$,
where $S_{\langle P\rangle}$ is a simple object of $\mathcal A/\langle P\rangle$. Since the category $\mathcal 
A/\langle P\rangle$ depends only on the point $\langle P\rangle \in \mathfrak Spec(\mathcal A)$, 
so does the object $S_{\langle P\rangle}$. 

\begin{lem}\label{L7.8y} Let $M\subseteq N$ be an essential extension in the abelian category $\mathcal A/\langle P\rangle$. 
Then, for any morphism $\phi:S_{\langle P\rangle}\longrightarrow N$ in $\mathcal A/\langle P\rangle$, the composition
$S_{\langle P\rangle}\overset{\phi}{\longrightarrow}N\longrightarrow N/M$ is zero.
\end{lem}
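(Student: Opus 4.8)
The plan is to exploit the simplicity of $S_{\langle P\rangle}$ together with the defining property of an essential extension, so the argument is short. First I would observe that any morphism $\phi\colon S_{\langle P\rangle}\longrightarrow N$ in $\mathcal A/\langle P\rangle$ is either zero or a monomorphism: its kernel is a subobject of the simple object $S_{\langle P\rangle}$, hence equals $0$ or $S_{\langle P\rangle}$, and in the latter case $\phi=0$. If $\phi=0$ there is nothing to prove, so I may assume $\phi$ is a monomorphism, and thus $Im(\phi)\subseteq N$ is a nonzero subobject isomorphic to $S_{\langle P\rangle}$.

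Next I would invoke the essentiality of $M\subseteq N$: the nonzero subobject $Im(\phi)$ must satisfy $Im(\phi)\cap M\ne 0$. But $Im(\phi)\cap M$ is a nonzero subobject of the simple object $Im(\phi)\cong S_{\langle P\rangle}$, forcing $Im(\phi)\cap M=Im(\phi)$, i.e. $Im(\phi)\subseteq M$. Consequently the image of the composite $S_{\langle P\rangle}\overset{\phi}{\longrightarrow}N\longrightarrow N/M$ equals the image of $Im(\phi)$ under the epimorphism $N\twoheadrightarrow N/M$, namely $(Im(\phi)+M)/M=0$, so the composite vanishes.

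The only facts I rely on are the ones already recorded above: that $\mathcal A/\langle P\rangle$ is an abelian category whose simple objects are all isomorphic to $S_{\langle P\rangle}$, and that $S_{\langle P\rangle}$ is genuinely simple; no further input from the theory of the spectrum is needed. There is no real obstacle here — the statement amounts to the general observation that a simple subobject of $N$ whose image in $N/M$ is nonzero would contradict the essentiality of $M\subseteq N$, and the only care required is the routine identification of the image of the composite.
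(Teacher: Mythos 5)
Your argument is correct and follows essentially the same route as the paper: assume $\phi\neq 0$, use essentiality to get $Im(\phi)\cap M\neq 0$, and use simplicity of $Im(\phi)$ to conclude $Im(\phi)\subseteq M$. The only cosmetic difference is that you observe $\phi$ must be a monomorphism (hence $Im(\phi)\cong S_{\langle P\rangle}$), whereas the paper notes directly that $Im(\phi)$, being a quotient of the simple object $S_{\langle P\rangle}$, is itself simple; both yield the same conclusion.
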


\begin{proof} For $\phi=0$, the result is obvious. We suppose therefore that $\phi\ne 0$ and set
$0\ne I:=Im(\phi)\subseteq N$. Since $M$ is an essential subobject of $N$, we get 
$0\ne I\cap M\subseteq I$. Since  $I$ is a quotient of the simple object $S_{\langle P\rangle}$, $I$
is also simple.  It follows that
$I\cap M=I$, i.e.,
$I\subseteq M$. It is now clear that the composition of $\phi$ with the canonical morphism
$N\longrightarrow N/M$ is zero. 
\end{proof}

We are now ready to show that the condition in Proposition \ref{Bass} is an if and only if condition
for the associated points of $\mho_k(M)$. 

\begin{thm}\label{Bass9}    Let $M$ be an object of $\mathcal A$. Then, 
\begin{equation*} Ass(\mho_k(M))=Ass(E_k(M))=\{\mbox{$\langle P\rangle$ $\vert$ $P\in Spec(\mathcal A)$
such that $Ext^k_{\mathcal A/\langle P\rangle}(K(P),L_{\langle P\rangle}(M))\ne 0$}\}
\end{equation*}
\end{thm}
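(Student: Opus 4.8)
The plan is to read one inclusion off Proposition \ref{Bass} and to prove the converse by hand. Recall first, as in the proof of Proposition \ref{Bass} (using Proposition \ref{L5.5}), that $E_k(M)=E(\mho_k(M))$ is an essential extension of $\mho_k(M)$, so $Ass(\mho_k(M))=Ass(E_k(M))$; and Proposition \ref{Bass} already shows that $\langle P\rangle$ lies in this common set whenever $Ext^k_{\mathcal A/\langle P\rangle}(K(P),L_{\langle P\rangle}(M))\ne 0$. Thus the only thing left is the implication: if $\langle P\rangle\in Ass(\mho_k(M))$, then $Ext^k_{\mathcal A/\langle P\rangle}(K(P),L_{\langle P\rangle}(M))\ne 0$. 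For $k=0$ this is exactly Lemma \ref{L7.4} (since $\mho_0(M)=M$ and $Ext^0=Hom$), so one may assume $k\geq 1$.

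For $k\geq 1$, I would start from a monomorphism $P\hookrightarrow \mho_k(M)$ witnessing $\langle P\rangle\in Ass(\mho_k(M))$, apply the exact localization $L_{\langle P\rangle}$, and use Lemma \ref{L7.5}(c) to identify $L_{\langle P\rangle}(\mho_k(M))$ with $\mho_k(L_{\langle P\rangle}(M))$; this produces a nonzero (indeed monic) morphism $K(P)=L_{\langle P\rangle}(P)\longrightarrow \mho_k(L_{\langle P\rangle}(M))$ inside the local Grothendieck category $\mathcal A/\langle P\rangle$, and in particular $Hom_{\mathcal A/\langle P\rangle}(K(P),\mho_k(L_{\langle P\rangle}(M)))\ne 0$. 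Next I would apply $Hom_{\mathcal A/\langle P\rangle}(K(P),-)$ to the cosyzygy short exact sequence
\[
0\longrightarrow \mho_{k-1}(L_{\langle P\rangle}(M))\longrightarrow E_{k-1}(L_{\langle P\rangle}(M))\longrightarrow \mho_k(L_{\langle P\rangle}(M))\longrightarrow 0,
\]
and use the injectivity of $E_{k-1}(L_{\langle P\rangle}(M))$ to identify $Ext^1_{\mathcal A/\langle P\rangle}(K(P),\mho_{k-1}(L_{\langle P\rangle}(M)))$ with the cokernel of the restriction map $Hom_{\mathcal A/\langle P\rangle}(K(P),E_{k-1}(L_{\langle P\rangle}(M)))\longrightarrow Hom_{\mathcal A/\langle P\rangle}(K(P),\mho_k(L_{\langle P\rangle}(M)))$.

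The crucial step — and the one that carries the argument — is showing that this last map is zero, so that the $Ext^1$ above equals all of $Hom_{\mathcal A/\langle P\rangle}(K(P),\mho_k(L_{\langle P\rangle}(M)))$, which we already know is nonzero. This is exactly where the local structure of $\mathcal A/\langle P\rangle$ enters: $K(P)=S_{\langle P\rangle}^{\,n}$ is a finite power of the simple object $S_{\langle P\rangle}$, and $\mho_{k-1}(L_{\langle P\rangle}(M))\subseteq E_{k-1}(L_{\langle P\rangle}(M))$ is an essential extension (it is an injective hull by Definition \ref{D7.1}), so Lemma \ref{L7.8y} applies to each copy of $S_{\langle P\rangle}$ and forces every morphism $K(P)\longrightarrow E_{k-1}(L_{\langle P\rangle}(M))$ to vanish upon composition with the quotient map onto $\mho_k(L_{\langle P\rangle}(M))$. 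Granting this, $Ext^1_{\mathcal A/\langle P\rangle}(K(P),\mho_{k-1}(L_{\langle P\rangle}(M)))\ne 0$, and then repeated application of the shift isomorphism \eqref{ert7.4} inside $\mathcal A/\langle P\rangle$ (exactly as in \eqref{te7.8}) gives $Ext^k_{\mathcal A/\langle P\rangle}(K(P),L_{\langle P\rangle}(M))=Ext^1_{\mathcal A/\langle P\rangle}(K(P),\mho_{k-1}(L_{\langle P\rangle}(M)))\ne 0$, which completes the proof. I do not expect a genuine obstacle beyond assembling these pieces carefully; the one point that needs attention is keeping track that the injective resolution of $L_{\langle P\rangle}(M)$ is the minimal one, i.e. that each $E_j(L_{\langle P\rangle}(M))$ is the injective hull of $\mho_j(L_{\langle P\rangle}(M))$ so that essentiality is available for Lemma \ref{L7.8y}.
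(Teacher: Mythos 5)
Your proof is correct and follows essentially the same route as the paper: both reduce to showing $\mathrm{Ext}^1_{\mathcal A/\langle P\rangle}(K(P),\mho_{k-1}(L_{\langle P\rangle}(M)))\ne 0$ via the shift isomorphism \eqref{ert7.4}, and both obtain the vanishing of the relevant boundary map from Lemma \ref{L7.8y} applied to the essential extension $\mho_{k-1}(L_{\langle P\rangle}(M))\subseteq E_{k-1}(L_{\langle P\rangle}(M))$, using that $K(P)$ is a finite power of the simple $S_{\langle P\rangle}$. The only cosmetic difference is that the paper computes the $\mathrm{Ext}$ group from the whole injective resolution (all differentials of the $\mathrm{Hom}(S_{\langle P\rangle},E_\bullet)$ complex vanish) and lands the nonvanishing on $\mathrm{Hom}(S_{\langle P\rangle},E_k(L_{\langle P\rangle}(M)))$, while you extract the single cokernel from one short exact sequence and land it on $\mathrm{Hom}(K(P),\mho_k(L_{\langle P\rangle}(M)))$; these coincide since morphisms from a simple into an essential extension factor through the subobject.
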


\begin{proof}
We have already shown one direction in Proposition \ref{Bass}. For the other direction, we consider some
$P\in Spec(\mathcal A)$ with a monomorphism $P\hookrightarrow \mho_k(M)$. If $k=0$, it follows from the proof of 
Lemma \ref{L7.4} that $Ext^0_{\mathcal A/\langle P\rangle}(K(P),L_{\langle P\rangle}(M))=Hom_{\mathcal A/\langle P\rangle}(K(P),L_{\langle P\rangle}(M))\ne 0$. 

\smallskip
We suppose therefore that $k\geq 1$. As in \eqref{te7.8}, we have:
\begin{equation}\label{zer}
 Ext^k_{\mathcal A/\langle P\rangle}(K(P),L_{\langle P\rangle}(M))  = ... = Ext^1_{\mathcal A/
\langle P\rangle}(K(P),\mho_{k-1}(L_{\langle P\rangle}(M)))
\end{equation} We notice that the following is an injective resolution of $\mho_{k-1}(L_{\langle P\rangle}(M))$:
\begin{equation}\label{te7.11}
\begin{array}{l}
0\rightarrow E_{k-1}(L_{\langle P\rangle}(M))=E(\mho_{k-1}(L_{\langle P\rangle}(M)))\rightarrow  \\
\qquad \qquad\qquad\qquad\qquad \qquad E_k(L_{\langle P\rangle}(M))=E(\mho_k(L_{\langle P\rangle}(M)))\rightarrow E_{k+1}(L_{\langle P\rangle}(M))\rightarrow 
\dots 
\end{array}
\end{equation} We notice that each differential in \eqref{te7.11} can be expressed as a composition (for any $t\geq 0$):
\begin{equation}\label{te7.12}
\begin{CD}
 E_{k-1+t}(L_{\langle P\rangle}(M))=E(\mho_{k-1+t}(L_{\langle P\rangle}(M)))\\
@VVV \\
\mho_{k+t}(L_{\langle P\rangle}(M))=E(\mho_{k-1+t}(L_{\langle P\rangle}(M)))/\mho_{k-1+t}(L_{\langle P\rangle}(M)) \\
@VVV\\
E_{k+t}(L_{\langle P\rangle}(M))=E(\mho_{k+t}(L_{\langle P\rangle}(M)))\\
\end{CD}
\end{equation} We can get the groups $Ext^\ast_{\mathcal A/\langle P\rangle}(S_{\langle P\rangle},\mho_{k-1}(L_{\langle P\rangle}(M))$
by applying the functor $Hom_{\mathcal A/\langle P\rangle}(S_{\langle P\rangle},\_\_)$ to the injective
resolution in \eqref{te7.11}. Since $\mho_{k-1+t}(L_{\langle P\rangle}(M))$ is an essential subobject of  $E(\mho_{k-1+t}(L_{\langle P\rangle}(M)))$ for each $t\geq 0$, it follows from Lemma \ref{L7.8y} and the description in \eqref{te7.12} that the differentials
of the complex $\{Hom_{\mathcal A/\langle P\rangle}(S_{\langle P\rangle},E_{k-1+\ast}(L_{\langle P\rangle}(M))))\}$
are all zero. It follows that
\begin{equation}\label{frq}
\begin{array}{ll}
Ext_{\mathcal A/\langle P\rangle}^1(S_{\langle P\rangle},\mho_{k-1}(L_{\langle P\rangle}(M))&= Hom_{\mathcal A/
\langle P\rangle}(S_{\langle P\rangle},E_k(L_{\langle P\rangle}(M)))\\&=Hom_{\mathcal A/
\langle P\rangle}(S_{\langle P\rangle},L_{\langle P\rangle}(E_k(M)))\\
\end{array}
\end{equation} On the other hand, we have a monomorphism $P\hookrightarrow \mho_k(M)\hookrightarrow E_k(M)=E(\mho_k(M))$ and it
follows from the proof of  Lemma \ref{L7.4} that 
$Hom_{\mathcal A/
\langle P\rangle}(K(P),L_{\langle P\rangle}(E_k(M)))\ne 0$. Since $K(P)$ is a finite direct sum of copies of $S_{\langle P\rangle}$, this gives $Hom_{\mathcal A/
\langle P\rangle}(S_{\langle P\rangle},L_{\langle P\rangle}(E_k(M)))\ne 0$.
Combining \eqref{frq} with \eqref{zer} and using again
the fact that  $K(P)$ is a finite direct sum of copies of $S_{\langle P\rangle}$, we now obtain
$Ext^k_{\mathcal A/\langle P\rangle}(K(P),L_{\langle P\rangle}(M)) \ne 0$. 
\end{proof}

\begin{cor}\label{Cor7.10}
(a) Let $0\longrightarrow M'\longrightarrow M\longrightarrow M''\longrightarrow 0$ be a short exact sequence in 
$\mathcal A$. Then, for any $k\in \mathbb Z$, we have
\begin{equation}
\begin{array}{c}
Ass(\mho_k(M'))\subseteq Ass(\mho_k(M))\cup Ass(\mho_{k-1}(M'')) \\ Ass(\mho_k(M))\subseteq 
Ass(\mho_k(M'))\cup Ass(\mho_k(M''))\\
Ass(\mho_k(M''))\subseteq Ass(\mho_k(M))\cup Ass(\mho_{k+1}(M'))
\end{array}
\end{equation}

\smallskip
(b) Let $0\longrightarrow K_n\longrightarrow K_{n-1}\longrightarrow ...\longrightarrow K_0\longrightarrow M\longrightarrow 0$ be an
exact sequence in $\mathcal A$. Then, we have:
\begin{equation}
\begin{array}{c}
Ass(\mho_k(K_n))\subseteq \left(\bigcup_{i=0}^{n-1}Ass(\mho_{i+k-n+1}(K_{i}))\right)\cup Ass(\mho_{k-n}(M))\\
Ass(\mho_k(M))\subseteq \bigcup_{i=0}^n Ass(\mho_{k+i}(K_i))
\end{array}
\end{equation}
\end{cor}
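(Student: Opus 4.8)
The plan is to reduce everything to the $Ext$-characterization of associated points of cosyzygies established in Proposition \ref{Bass9}, combined with the exactness of the localization functors $L_{\langle P\rangle}\colon\mathcal A\to\mathcal A/\langle P\rangle$. Throughout, fix a spectral object $P\in Spec(\mathcal A)$ and abbreviate $L:=L_{\langle P\rangle}$; recall from Proposition \ref{Bass9} that, for every $k$, one has $\langle P\rangle\in Ass(\mho_k(X))$ if and only if $Ext^k_{\mathcal A/\langle P\rangle}(K(P),L(X))\ne 0$.

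For part (a), I would apply the exact functor $L$ to the short exact sequence $0\to M'\to M\to M''\to 0$ to obtain a short exact sequence $0\to L(M')\to L(M)\to L(M'')\to 0$ in the locally noetherian Grothendieck category $\mathcal A/\langle P\rangle$, and then apply $Hom_{\mathcal A/\langle P\rangle}(K(P),-)$ to get the long exact sequence of $Ext$-groups
\[
\cdots\to Ext^{k-1}(K(P),L(M''))\to Ext^{k}(K(P),L(M'))\to Ext^{k}(K(P),L(M))\to Ext^{k}(K(P),L(M''))\to\cdots.
\]
Exactness at each of the three spots $Ext^k(K(P),L(M'))$, $Ext^k(K(P),L(M))$, $Ext^k(K(P),L(M''))$ shows that if the group at that spot is nonzero then so is one of its two neighbours. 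Since $P$ was an arbitrary spectral object, Proposition \ref{Bass9} translates these three statements precisely into the three claimed inclusions; for $k<0$ every cosyzygy and every $Ext$-group in sight vanishes, so the inclusions hold trivially there.

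For part (b), I would dismantle the exact sequence $0\to K_n\to K_{n-1}\to\cdots\to K_0\to M\to 0$ into short exact sequences in the usual way: put $Z_0:=\ker(K_0\to M)$ and $Z_i:=\ker(K_i\to K_{i-1})$ for $1\le i\le n-1$, so that one obtains short exact sequences $0\to Z_0\to K_0\to M\to 0$ and $0\to Z_i\to K_i\to Z_{i-1}\to 0$ for $1\le i\le n-1$, with $Z_{n-1}\cong K_n$ since $K_n\to K_{n-1}$ is a monomorphism. For the second inclusion, the third inclusion of part (a) applied to $0\to Z_0\to K_0\to M\to 0$ gives $Ass(\mho_k(M))\subseteq Ass(\mho_k(K_0))\cup Ass(\mho_{k+1}(Z_0))$; bounding $Ass(\mho_{k+1}(Z_0))$ via $0\to Z_1\to K_1\to Z_0\to 0$ and iterating, after $n$ steps the surviving $Z$-term becomes $Ass(\mho_{k+n}(K_n))$ and one is left with $\bigcup_{i=0}^{n}Ass(\mho_{k+i}(K_i))$. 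For the first inclusion, I would start from $Ass(\mho_k(K_n))=Ass(\mho_k(Z_{n-1}))$ and apply the first inclusion of part (a) to $0\to Z_{n-1}\to K_{n-1}\to Z_{n-2}\to 0$, then to $0\to Z_{n-2}\to K_{n-2}\to Z_{n-3}\to 0$, and so on down to $0\to Z_0\to K_0\to M\to 0$; each step peels off a term $Ass(\mho_{\bullet}(K_j))$ with the degree dropping by one, producing exactly $\bigl(\bigcup_{i=0}^{n-1}Ass(\mho_{i+k-n+1}(K_i))\bigr)\cup Ass(\mho_{k-n}(M))$.

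The computations here are entirely mechanical; the only point that needs genuine care is the bookkeeping of the degree shifts in part (b), namely verifying that the repeated application of part (a) to the short exact sequences $0\to Z_i\to K_i\to Z_{i-1}\to 0$ produces precisely the indices $i+k-n+1$ (respectively $k+i$) in the statement, and that the conventions $\mho_k(-)=0$ and $Ext^k_{\mathcal A/\langle P\rangle}(-,-)=0$ for $k<0$ make the boundary cases of the induction harmless.
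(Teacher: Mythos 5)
Your proof is correct and follows essentially the same route as the paper: for (a), apply the exact localization functor $L_{\langle P\rangle}$ to the short exact sequence, take the long exact sequence of $Ext_{\mathcal A/\langle P\rangle}(K(P),-)$-groups, and translate nonvanishing via Proposition \ref{Bass9}; for (b), the paper simply states it follows by repeated application of (a), which is exactly the dismantling into short exact sequences and degree-shift bookkeeping you carry out (and your index computations check out, including the $i=0$ boundary term from the final sequence $0\to Z_0\to K_0\to M\to 0$).
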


\begin{proof} (a) Since $L_{\langle P\rangle}$ is exact, we get a short exact sequence $0\longrightarrow L_{\langle P\rangle}(M')\longrightarrow L_{\langle P\rangle}(M)\longrightarrow L_{\langle P\rangle}(M'')\longrightarrow 0$. Then, for any
$k\in \mathbb Z$, we know that the following sequence is exact
\begin{equation*}
Ext^{k-1}_{\mathcal A/\langle P\rangle}(K(P),L_{\langle P\rangle}(M''))
\longrightarrow Ext^{k}_{\mathcal A/\langle P\rangle}(K(P),L_{\langle P\rangle}(M'))
\longrightarrow Ext^{k}_{\mathcal A/\langle P\rangle}(K(P),L_{\langle P\rangle}(M))
\end{equation*} The result now follows by applying Proposition \ref{Bass9}. The result of (b) follows
by repeated application of (a). 

\end{proof}

\section{Subsets of the spectrum and $\mathbf n$-resolving subcategories}

\smallskip
In this section, we will show how certain sequences of subsets of $\mathfrak Spec(\mathcal A)$ correspond to certain $n$-resolving
subcategories of $\mathcal A$. Although not an analogue, 
our result is  motivated by the methods of \cite[Theorem 3.7]{Four}, which gives a bijection between $n$-cotilting classes of $R$-modules 
and certain sequences of specialization closed subsets of $Spec(R)$, where $R$ is a commutative noetherian ring. 
We will begin  by taking a better look at $n$-resolving subcategories. 

\smallskip
Let $\mathcal A$ be a locally noetherian Grothendieck category as before and let $\mathcal C
\subseteq \mathcal A$ be a resolving subcategory. For any object $M\in \mathcal A$ and
each $j\geq 1$,  
we define a collection $\Omega^j_{\mathcal C}(M)$ as  follows : an object $K\in \Omega^j_{\mathcal C}(M)$ if and only
if there exists an exact sequence 
\begin{equation}
0\longrightarrow K\longrightarrow C_{j-1}\longrightarrow ... \longrightarrow C_0\longrightarrow M\longrightarrow 0
\end{equation} with $C_i\in \mathcal C$ for all $0\leq i\leq j-1$. Further, we set:
\begin{equation}
\Omega_{\mathcal C}^0(M):=\{M\}\qquad \Omega_{\mathcal C}^j(M)=\phi \qquad\forall\textrm{ }j<0
\end{equation}   We now fix some $n\geq 1$ and  let $\mathcal C$ be an $n$-resolving subcategory. For $j\geq 1$,
we now set:
\begin{equation}\label{8.3pn}
\mathcal C_{(j)}:=\{\mbox{$M\in \mathcal A$ $\vert$ $\Omega_{\mathcal C}^{j-1}(M)\subseteq \mathcal C$}\}=
\{\mbox{$M\in \mathcal A$ $\vert$ $\Omega_{\mathcal C}^{j-1}(M)\cap \mathcal C\ne \phi$}\}
\end{equation} We notice that the equality in \eqref{8.3pn} follows from Proposition \ref{P6.2}, since $\mathcal C$
is a resolving subcategory. 

\begin{thm}\label{P8.1} Let $\mathcal C\subseteq \mathcal A$ be an 
$n$-resolving subcategory. Then, we have an increasing chain:
\begin{equation}
\mathcal C=\mathcal C_{(1)}\subseteq \mathcal C_{(2)}\subseteq ... \subseteq \mathcal C_{(n)}\subseteq 
\mathcal C_{(n+1)}=\mathcal A
\end{equation} where each $\mathcal C_{(j)}$ is an $(n-j+1)$-resolving subcategory. Further, for any $k\geq 1$, 
we have $(\mathcal C_{(j)})_{(k)}=\mathcal C_{(j+k-1)}$. 
\end{thm}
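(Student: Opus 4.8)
The plan is to establish the three assertions in order: first the chain of inclusions, then that each $\mathcal C_{(j)}$ is $(n-j+1)$-resolving, and finally the composition rule $(\mathcal C_{(j)})_{(k)} = \mathcal C_{(j+k-1)}$. For the inclusions $\mathcal C_{(j)} \subseteq \mathcal C_{(j+1)}$, I would take $M \in \mathcal C_{(j)}$, so there is an exact sequence $0 \to K \to C_{j-1} \to \cdots \to C_0 \to M \to 0$ with all $C_i \in \mathcal C$ and $K \in \mathcal C$ as well. Splicing the trivial step $0 \to K \xrightarrow{=} K \to 0$ (or rather prepending $K$ itself to the resolution) produces an exact sequence $0 \to K \to K \to C_{j-1} \to \cdots \to C_0 \to M \to 0$ of length $j$ with all terms in $\mathcal C$; since $K$ is itself the $j$-th syzygy here and $K \in \mathcal C$, we get $M \in \mathcal C_{(j+1)}$ by \eqref{8.3pn}. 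The endpoints $\mathcal C_{(1)} = \mathcal C$ and $\mathcal C_{(n+1)} = \mathcal A$ are immediate from the definition together with the hypothesis that $\mathcal C$ is $n$-resolving: $\Omega_{\mathcal C}^0(M) = \{M\}$ gives the first, and $\Omega_{\mathcal C}^n(M) \cap \mathcal C \neq \phi$ for every $M$ (the defining property of $n$-resolving) gives the last.

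Next, to show $\mathcal C_{(j)}$ is $(n-j+1)$-resolving: it is clearly full, and closure under direct summands and the two-out-of-three property in short exact sequences (conditions (1) and (2) of Definition \ref{D6.1}) should follow from the analogous properties of $\mathcal C$ combined with Proposition \ref{P6.2}, which guarantees that membership in $\mathcal C_{(j)}$ does not depend on the choice of resolution. For the generating/resolution condition: given $M \in \mathcal A$, since $\mathcal C$ is $n$-resolving, choose $0 \to C_n \to C_{n-1} \to \cdots \to C_0 \to M \to 0$ with all $C_i \in \mathcal C$. Breaking this into the syzygy $K_{j-1} := \mathrm{Im}(C_{j-1} \to C_{j-2})$ (with the convention $K_0 = M$), one reads off an exact sequence $0 \to C_n \to \cdots \to C_{j} \to K_{j-1} \to 0$ exhibiting $K_{j-1} \in \mathcal C_{(1)} = \mathcal C$ via a resolution of length $n-j+1$, while the truncation $0 \to K_{j-1} \to C_{j-2} \to \cdots \to C_0 \to M \to 0$ has all terms in $\mathcal C \subseteq \mathcal C_{(j)}$ except possibly... — actually each $C_i$ lies in $\mathcal C \subseteq \mathcal C_{(j)}$, so this displays an $(n-j+1)$-step resolution of $M$ by objects of $\mathcal C_{(j)}$ once we check the last syzygy lands in $\mathcal C_{(j)}$; but that last syzygy is $K_{j-1}$ and the sequence $0\to C_n \to \cdots \to C_j \to K_{j-1}\to 0$ has length $n-j$, not $j-1$, so I will instead argue directly that $K_{j-1} \in \mathcal C \subseteq \mathcal C_{(j)}$. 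Concatenating gives an $(n-j+1)$-term $\mathcal C_{(j)}$-resolution of $M$, which is what is required.

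For the final identity $(\mathcal C_{(j)})_{(k)} = \mathcal C_{(j+k-1)}$, the inclusion $\supseteq$ should come from observing that a $\mathcal C$-resolution can be regrouped: if $M$ admits a $(j+k-2)$-step $\mathcal C$-resolution with syzygy in $\mathcal C$, then cutting it after $k-1$ steps exhibits the intermediate syzygy as an object with a $(j-1)$-step tail, hence in $\mathcal C_{(j)}$, and the $(k-1)$-truncation has terms in $\mathcal C \subseteq \mathcal C_{(j)}$, so $M \in (\mathcal C_{(j)})_{(k)}$. The reverse inclusion $\subseteq$ is the more delicate direction: given $M \in (\mathcal C_{(j)})_{(k)}$, one has a $(k-1)$-step resolution $0 \to K \to D_{k-2} \to \cdots \to D_0 \to M \to 0$ with $D_i, K \in \mathcal C_{(j)}$, and then each $D_i$ and $K$ has its own $(j-1)$-step $\mathcal C$-resolution; the task is to splice these $k$ resolutions into a single $(j+k-2)$-step $\mathcal C$-resolution of $M$ with terminal syzygy in $\mathcal C$. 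This is the horseshoe-lemma-type argument and is where the main obstacle lies: one must build the spliced complex by induction, using the two-out-of-three property of $\mathcal C$ (Definition \ref{D6.1}(2)) at each stage and Proposition \ref{P6.2} to know the final syzygy's membership in $\mathcal C$ is resolution-independent. I would handle the bookkeeping by induction on $k$, treating $k=1$ as trivial ($(\mathcal C_{(j)})_{(1)} = \mathcal C_{(j)}$) and passing from $k$ to $k+1$ by attaching one more layer, at each step invoking the already-established fact that $\mathcal C_{(j)}$ is itself resolving so that Proposition \ref{P6.2} applies inside it as well.
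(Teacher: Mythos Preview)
Your overall strategy is sound and close to the paper's, but two points deserve comment.

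First, the displayed sequence in your chain argument, $0 \to K \to K \to C_{j-1} \to \cdots \to C_0 \to M \to 0$, is not exact at the second $K$ (the map $K \to C_{j-1}$ is injective, not zero). What you want is to prepend $0$: since $K \in \mathcal C$, rename it $C_{j-1}$ and write $0 \to 0 \to C_{j-1} \to \cdots \to C_0 \to M \to 0$, exhibiting $0 \in \Omega_{\mathcal C}^{j}(M) \cap \mathcal C$. The paper instead quotes Proposition~\ref{P6.2} directly, comparing an arbitrary $j$-step $\mathcal C$-resolution of $M$ against the given $(j-1)$-step one padded by $0$; either route works.

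Second, and more substantively, the composition rule $(\mathcal C_{(j)})_{(k)} = \mathcal C_{(j+k-1)}$ does \emph{not} require any horseshoe-type splicing. The key observation you are missing for the inclusion $\subseteq$ is that $\mathcal C \subseteq \mathcal C_{(j)}$, so every $\mathcal C$-resolution is already a $\mathcal C_{(j)}$-resolution. Concretely: given $M \in (\mathcal C_{(j)})_{(k)}$, choose a $\mathcal C$-resolution $0 \to K \to C_{k-2} \to \cdots \to C_0 \to M \to 0$ (possible since $\mathcal C$ is resolving). This is in particular a $\mathcal C_{(j)}$-resolution, so by Proposition~\ref{P6.2} applied to $\mathcal C_{(j)}$ the syzygy $K$ lies in $\mathcal C_{(j)}$. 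But then $K$ admits its own $\mathcal C$-resolution $0 \to L \to C'_{j-2} \to \cdots \to C'_0 \to K \to 0$ with $L \in \mathcal C$, and concatenating the two sequences gives $L \in \Omega_{\mathcal C}^{j+k-2}(M) \cap \mathcal C$, i.e.\ $M \in \mathcal C_{(j+k-1)}$. No simultaneous resolution of the $D_i$'s is needed; you only resolve the single terminal syzygy $K$. The paper simply says this is ``clear from the definitions,'' which is perhaps too terse, but the argument is genuinely short once you use $\mathcal C$-objects from the start rather than arbitrary $\mathcal C_{(j)}$-objects.
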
 

\begin{proof}
We consider $j\geq 1$ and some object $M\in \mathcal C_{(j)}$. Since $\Omega_{\mathcal C}^{j-1}(M)\subseteq \mathcal C$, we have
an exact sequence
\begin{equation}\label{rv8.5}
0\longrightarrow  C_{j-1}\longrightarrow ... \longrightarrow C_0\longrightarrow M\longrightarrow 0
\end{equation} with $C_i\in \mathcal C$ for $0\leq i\leq j-1$. We now pick some $K\in\Omega^j_{\mathcal C}(M)$. Then, we have an exact
sequence 
\begin{equation}\label{rv8.6}
0\longrightarrow K\longrightarrow C'_{j-1}\longrightarrow ... \longrightarrow C'_0\longrightarrow M\longrightarrow 0
\end{equation} with $C'_i\in \mathcal C$ for $0\leq i\leq j-1$. Applying Proposition \ref{P6.2} to the exact sequences in 
\eqref{rv8.5} and \eqref{rv8.6} and using the fact that $0\in \mathcal C$, we get $K\in \mathcal C$. Hence, 
$\Omega^j_{\mathcal C}(M)\subseteq \mathcal C$ and $\mathcal C_{(j)}\subseteq \mathcal C_{(j+1)}$. From the definitions,
it is also evident that $\mathcal C=\mathcal C_{(1)}$ and $\mathcal C_{(n+1)}=\mathcal A$. 

\smallskip
From \cite[Proposition 2.3(2)]{Stov}, we know that each ${\mathcal C}_{(j)}$ is a resolving subcategory.  To show that 
it is $(n-j+1)$-resolving, we pick $M\in \mathcal A$ and construct an exact sequence
\begin{equation}\label{rv8.7}
0\longrightarrow K\longrightarrow C''_{n-j}\longrightarrow ... \longrightarrow C''_0\longrightarrow M\longrightarrow 0
\end{equation} with each $C''_i\in \mathcal C\subseteq \mathcal C_{(j)}$. We note that $K\in\Omega_{\mathcal C}^{n-j+1}(M)$ and it is clear from the definitions that
$\Omega_{\mathcal C}^{j-1}(K)\subseteq \Omega^n_{\mathcal C}(M)\subseteq \mathcal C$. It follows that
$K\in \mathcal C_{(j)}$. Hence, ${\mathcal C}_{(j)}$ is $(n-j+1)$-resolving. It is also clear from the definitions
that $(\mathcal C_{(j)})_{(k)}=\mathcal C_{(j+k-1)}$ for $k\geq 1$. 
\end{proof}

\begin{defn}\label{D8.2} Let $G$ be a generator for $\mathcal A$ and let $n\geq 1$. A $G$-sequence of
length $n$ will be an $n$-tuple $\tilde{Y}=(Y_1,Y_2,...,Y_n)$ of  subsets of $\mathfrak Spec(\mathcal A)$ satisfying
the following conditions. 

\smallskip
(a) $Y_1\supseteq Y_2\supseteq ... \supseteq Y_n$ 

(b) $Ass(\mho_{i-1}(G))\cap Y_i=\phi$ for all $1\leq i\leq n$. 
\end{defn}

\begin{thm}
\label{P8.3} Let $\tilde{Y}=(Y_1,Y_2,...,Y_n)$ be a $G$-sequence of length $n\geq 1$. We set
\begin{equation}\label{req8.8}
\mathcal C(\tilde{Y}):=\{\mbox{$M\in \mathcal A$ $\vert$ $Ass(\mho_{i-1}(M))\cap Y_i=\phi$ for all $1\leq i\leq n$}\}
\end{equation} Then, $\mathcal C(\tilde{Y})$ is an $n$-resolving subcategory of $\mathcal A$ that is closed under arbitrary direct sums and  
$G\in \mathcal C(\tilde{Y})$.
\end{thm}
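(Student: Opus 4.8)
The plan is to verify, in turn, that $G\in\mathcal C(\tilde Y)$, that $\mathcal C(\tilde Y)$ is closed under arbitrary direct sums, and that $\mathcal C(\tilde Y)$ satisfies Definitions \ref{D6.1} and \ref{D6.3}. The first is immediate: condition (b) of Definition \ref{D8.2} states exactly that $Ass(\mho_{i-1}(G))\cap Y_i=\phi$ for $1\le i\le n$, which is the membership condition in \eqref{req8.8}. Two standing facts will be used throughout. First, since $\mathcal A$ is locally noetherian an arbitrary direct sum of injectives is injective and $\bigoplus_j M_j$ is essential in $\bigoplus_j E(M_j)$, so $E\bigl(\bigoplus_j M_j\bigr)=\bigoplus_j E(M_j)$; by induction on $k$ and the construction in Definition \ref{D7.1} this gives $\mho_k\bigl(\bigoplus_j M_j\bigr)=\bigoplus_j\mho_k(M_j)$ for every $k$. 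Second, $Ass\bigl(\bigoplus_j N_j\bigr)=\bigcup_j Ass(N_j)$: the inclusion $\supseteq$ is clear, and for $\subseteq$ any spectral $P$ admitting a monomorphism $P\hookrightarrow\bigoplus_j N_j$ is finitely generated by Lemma \ref{L3.2}, hence the monomorphism factors through a finite subsum, reducing to the finite case, which follows by iterating the second inclusion of Corollary \ref{Cor7.10}(a).

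Combining the two facts, if each $M_j\in\mathcal C(\tilde Y)$ then $Ass\bigl(\mho_{i-1}(\bigoplus_j M_j)\bigr)=\bigcup_j Ass(\mho_{i-1}(M_j))$ misses $Y_i$ for every $i$, so $\bigoplus_j M_j\in\mathcal C(\tilde Y)$; in particular $G^{(I)}\in\mathcal C(\tilde Y)$ for every set $I$, which, as $G$ is a generator, yields condition (3) of Definition \ref{D6.1}. Condition (1) is clear since a direct summand $M'$ of $M$ has $\mho_{i-1}(M')$ a summand, hence a subobject, of $\mho_{i-1}(M)$, so $Ass(\mho_{i-1}(M'))\subseteq Ass(\mho_{i-1}(M))$. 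For condition (2), let $0\to M'\to M\to M''\to 0$ be exact with $M''\in\mathcal C(\tilde Y)$. If $M',M''\in\mathcal C(\tilde Y)$, the second inclusion of Corollary \ref{Cor7.10}(a) bounds $Ass(\mho_{i-1}(M))$ by $Ass(\mho_{i-1}(M'))\cup Ass(\mho_{i-1}(M''))$, which misses $Y_i$. If $M,M''\in\mathcal C(\tilde Y)$, the first inclusion bounds $Ass(\mho_{i-1}(M'))$ by $Ass(\mho_{i-1}(M))\cup Ass(\mho_{i-2}(M''))$, where the second term is empty for $i=1$ (as $\mho_j=0$ for $j<0$) and, for $i\ge 2$, is disjoint from $Y_{i-1}$, hence from $Y_i\subseteq Y_{i-1}$, because $M''\in\mathcal C(\tilde Y)$. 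Thus $\mathcal C(\tilde Y)$ is resolving.

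To obtain the $n$-resolving property, I would iterate the previous construction: for $M\in\mathcal A$ choose an epimorphism $G^{(I_0)}\twoheadrightarrow M$ with kernel $K_1$, then $G^{(I_1)}\twoheadrightarrow K_1$ with kernel $K_2$, and so on, producing an exact sequence
\begin{equation*}
0\longrightarrow K_n\longrightarrow G^{(I_{n-1})}\longrightarrow\dots\longrightarrow G^{(I_0)}\longrightarrow M\longrightarrow 0 .
\end{equation*}
Setting $C_n:=K_n$ and $C_i:=G^{(I_i)}$, it remains only to show $K_n\in\mathcal C(\tilde Y)$. The first inclusion of Corollary \ref{Cor7.10}(b), applied to this sequence, gives for $0\le k\le n-1$
\begin{equation*}
Ass(\mho_k(K_n))\subseteq\biggl(\bigcup_{i=0}^{n-1}Ass\bigl(\mho_{i+k-n+1}(G^{(I_i)})\bigr)\biggr)\cup Ass(\mho_{k-n}(M)),
\end{equation*}
and the last term is empty since $k-n<0$. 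Since $\mho_m(G^{(I_i)})=\mho_m(G)^{(I_i)}$ and $Ass$ distributes over direct sums, it is enough to prove $Ass(\mho_m(G))\cap Y_{k+1}=\phi$ for $m=i+k-n+1$ with $0\le i\le n-1$. If $m<0$ this holds trivially; if $0\le m\le n-1$ then $Ass(\mho_m(G))\cap Y_{m+1}=\phi$ by the $G$-sequence condition, and because $m+1\le k+1$ the nesting $Y_1\supseteq\dots\supseteq Y_n$ of Definition \ref{D8.2}(a) gives $Y_{k+1}\subseteq Y_{m+1}$, so the intersection with $Y_{k+1}$ is again empty. Hence $Ass(\mho_k(K_n))\cap Y_{k+1}=\phi$ for $0\le k\le n-1$, i.e.\ $K_n\in\mathcal C(\tilde Y)$, completing the argument.

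The only genuinely delicate point I foresee is the index bookkeeping in the last step: one must apply Corollary \ref{Cor7.10}(b) with the correct shifts and then check that precisely where the boundary $Ass$-terms $\mho_{k-n}(M)$ and $\mho_{i-2}(M'')$ would obstruct the conclusion they vanish, either because of the convention $\mho_j=0$ for $j<0$ or because of the decreasing-nesting hypothesis on $\tilde Y$. Everything else is a formal consequence of Corollary \ref{Cor7.10}, the behaviour of $\mho_k$ and $Ass$ under direct sums, and the fact that $G$ is a generator belonging to $\mathcal C(\tilde Y)$.
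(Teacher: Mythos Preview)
Your proof is correct and follows essentially the same route as the paper's: membership of $G$ from Definition \ref{D8.2}, closure under direct sums, the resolving conditions via Corollary \ref{Cor7.10}(a), and the $n$-resolving property via Corollary \ref{Cor7.10}(b) with the nesting $Y_1\supseteq\cdots\supseteq Y_n$ handling the index shifts. The only difference is that you spell out why $\mho_k$ and $Ass$ commute with direct sums (using that direct sums of injectives are injective in a locally noetherian category and that spectral objects are finitely generated), whereas the paper simply asserts that closure under direct sums is ``clear from the definition''; your extra care here is warranted and does not constitute a different approach.
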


\begin{proof} It is clear from the definition in \eqref{req8.8} that $\mathcal C(\tilde{Y})$ is closed
under arbitrary direct sums. 
Using Definition \ref{D8.2}, it follows  that $G\in \mathcal C(\tilde{Y})$ and hence the direct sum 
$G^{I}$ of copies of $G$ for any set $I$ lies in $\mathcal C(\tilde{Y})$. Since $G$ is a generator of $\mathcal A$, it follows that
every object $M$ in $\mathcal A$ is the target of an epimorphism from some $G^{I}\in \mathcal C(\tilde{Y})$. Further, if
$M'\in \mathcal A$ is a direct summand of $M$, then $\mho_k(M')$ is a direct summand of $\mho_k(M)$ for any
$k\in \mathbb Z$ and it follows from \eqref{req8.8} that $M\in \mathcal C(\tilde{Y})$ implies $M'\in \mathcal C(\tilde{Y})$. 

\smallskip
We now consider a short exact sequence $0\longrightarrow M'\longrightarrow M\longrightarrow M''\longrightarrow 0$ in
$\mathcal A$ 
with $M''\in \mathcal C(\tilde{Y})$. Then, if $M\in \mathcal C(\tilde{Y})$, it follows from Corollary \ref{Cor7.10} that
\begin{equation}
(Ass(\mho_{i-1}(M'))\cap Y_i)\subseteq (Ass(\mho_{i-1}(M)\cap Y_i))\cup (Ass(\mho_{i-2}(M''))\cap Y_i)=\phi
\end{equation} and hence $M'\in \mathcal C(\tilde{Y})$. Conversely, if $M'\in \mathcal C(\tilde{Y})$, we get
from Corollary \ref{Cor7.10} that
\begin{equation}
(Ass(\mho_{i-1}(M))\cap Y_i)\subseteq (Ass(\mho_{i-1}(M')\cap Y_i))\cup (Ass(\mho_{i-1}(M''))\cap Y_i)=\phi
\end{equation}  This gives $M\in \mathcal C(\tilde{Y})$. Since $\mathcal C(\tilde{Y})$ contains all  direct sums of copies of the generator $G$, we can construct
for any $M\in \mathcal A$ an exact sequence
\begin{equation}
0\longrightarrow K_n\longrightarrow K_{n-1}\longrightarrow ... \longrightarrow K_0\longrightarrow M\longrightarrow 0
\end{equation} with $K_j\in \mathcal C(\tilde{Y})$ for $0\leq j\leq n-1$. We claim that $K_n\in \mathcal C(\tilde{Y})$. 
For this, we note from Corollary \ref{Cor7.10} that 
\begin{equation}\label{8.12}
Ass(\mho_{i-1}(K_n))\subseteq \left(\bigcup_{j=0}^{n-1}Ass(\mho_{j+i-n}(K_{j}))\right)\cup Ass(\mho_{i-n-1}(M))
\end{equation} Since $K_j\in \mathcal C(\tilde{Y})$ for $j<n$, we obtain 
\begin{equation}
Ass(\mho_{j+i-n}(K_{j}))\cap Y_i\subseteq Ass(\mho_{j+i-n}(K_{j}))\cap Y_{j+i-n+1}=\phi
\end{equation} Further, since $i\leq n$, we must have $Ass(\mho_{i-n-1}(M))=\phi$ and it follows from \eqref{8.12}
that $K_n\in \mathcal C(\tilde{Y})$. 
\end{proof}

Given a $G$-sequence $\tilde{Y}=(Y_1,...,Y_n)$ and some $1\leq j\leq n$, we observe that the truncated sequence 
$\tilde{Y}_{(j)}:=(Y_j,...,Y_n)$ is a $G$-sequence of length $n-j+1$. Indeed, we have 
$Ass(\mho_{i-1}(G))\cap Y_{i+j-1}\subseteq Ass(\mho_{i-1}(G))\cap Y_{i}=\phi$. This leads to the next result. 

\begin{thm}\label{P8.5} Let 
$\tilde{Y}=(Y_1,...,Y_n)$ be a $G$-sequence of length $n\geq 1$. For each $1\leq j\leq n$, we have:
\begin{equation}
\mathcal C(\tilde{Y})_{(j)}=\mathcal C(\tilde{Y}_{(j)})=\mathcal C(Y_j,...,Y_n)
\end{equation}
\end{thm}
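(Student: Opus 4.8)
The plan is to establish the displayed equality by proving the two inclusions $\mathcal C(\tilde Y)_{(j)}\subseteq\mathcal C(\tilde Y_{(j)})$ and $\mathcal C(\tilde Y_{(j)})\subseteq\mathcal C(\tilde Y)_{(j)}$, treating $j=1$ and $j\ge 2$ separately; the case $j=1$ is immediate, since $\mathcal C(\tilde Y)_{(1)}=\mathcal C(\tilde Y)=\mathcal C(\tilde Y_{(1)})$ directly from \eqref{8.3pn} and Definition \ref{D8.2}. For $2\le j\le n$ the argument rests on three ingredients: Proposition \ref{P8.3}, which guarantees that $\mathcal C(\tilde Y)$ is an $n$-resolving subcategory containing $G$ and closed under arbitrary direct sums, so that $\mathcal C(\tilde Y)_{(j)}$ is defined through \eqref{8.3pn} and direct sums of copies of $G$ lie in $\mathcal C(\tilde Y)$; the elementary observation that an object $C$ lies in $\mathcal C(\tilde Y)$ if and only if $Ass(\mho_{l-1}(C))\cap Y_l=\phi$ for all $1\le l\le n$, together with $\mho_t(-)=0$ for $t<0$ and the nesting $Y_1\supseteq\dots\supseteq Y_n$; and Corollary \ref{Cor7.10}(b), which bounds the associated points of cosyzygies of the terms of a finite resolution.

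First I would prove $\mathcal C(\tilde Y)_{(j)}\subseteq\mathcal C(\tilde Y_{(j)})$. Let $M\in\mathcal C(\tilde Y)_{(j)}$; by \eqref{8.3pn} there is an exact sequence
\begin{equation*}
0\longrightarrow C_{j-1}\longrightarrow C_{j-2}\longrightarrow\dots\longrightarrow C_0\longrightarrow M\longrightarrow 0
\end{equation*}
with $C_i\in\mathcal C(\tilde Y)$ for $0\le i\le j-1$. The second inclusion of Corollary \ref{Cor7.10}(b) gives, for each integer $k$,
\begin{equation*}
Ass(\mho_k(M))\subseteq\bigcup_{i=0}^{j-1}Ass(\mho_{k+i}(C_i)).
\end{equation*}
Taking $k=m-j$ for $j\le m\le n$ and intersecting with $Y_m$, each summand $Ass(\mho_{m-j+i}(C_i))\cap Y_m$ is contained in $Ass(\mho_{l-1}(C_i))\cap Y_l$ with $l:=m-j+i+1$; since $0\le i\le j-1$ one has $1\le m-j+1\le l\le m\le n$, hence $Y_l\supseteq Y_m$ and this intersection vanishes because $C_i\in\mathcal C(\tilde Y)$. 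Thus $Ass(\mho_{m-j}(M))\cap Y_m=\phi$ for all $j\le m\le n$, which is exactly the condition defining membership of $M$ in $\mathcal C(Y_j,\dots,Y_n)=\mathcal C(\tilde Y_{(j)})$.

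For the reverse inclusion, let $M\in\mathcal C(Y_j,\dots,Y_n)$. Using that $G$ is a generator and that direct sums of copies of $G$ lie in $\mathcal C(\tilde Y)$, I would build an exact sequence
\begin{equation*}
0\longrightarrow K\longrightarrow C_{j-2}\longrightarrow\dots\longrightarrow C_0\longrightarrow M\longrightarrow 0
\end{equation*}
with each $C_i$ a direct sum of copies of $G$; then $K\in\Omega_{\mathcal C(\tilde Y)}^{j-1}(M)$, so by \eqref{8.3pn} it suffices to check $K\in\mathcal C(\tilde Y)$. The first inclusion of Corollary \ref{Cor7.10}(b) yields, for each integer $k$,
\begin{equation*}
Ass(\mho_k(K))\subseteq\left(\bigcup_{i=0}^{j-2}Ass(\mho_{i+k-j+2}(C_i))\right)\cup Ass(\mho_{k-j+1}(M)).
\end{equation*}
Taking $k=m-1$ for $1\le m\le n$ and intersecting with $Y_m$: each term $Ass(\mho_{i+m-j+1}(C_i))\cap Y_m$ vanishes by the same bookkeeping as above — the relevant shift is $l=i+m-j+2$, which satisfies $l\le m\le n$, so one uses monotonicity of the $Y$'s when $l\ge 1$ and $\mho_t=0$ for $t<0$ otherwise, together with $C_i\in\mathcal C(\tilde Y)$ — and $Ass(\mho_{m-j}(M))\cap Y_m$ is $\phi$ either because $m<j$ forces the cosyzygy to be zero, or because $j\le m\le n$ and $M\in\mathcal C(Y_j,\dots,Y_n)$. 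Hence $Ass(\mho_{m-1}(K))\cap Y_m=\phi$ for all $1\le m\le n$, i.e. $K\in\mathcal C(\tilde Y)$, and we conclude $M\in\mathcal C(\tilde Y)_{(j)}$. The only delicate point, and the one I would check most carefully, is the index arithmetic in the two applications of Corollary \ref{Cor7.10}(b): one must verify that the shifted index $l$ lands in $\{1,\dots,n\}$ — or else the corresponding cosyzygy is zero and contributes nothing — and that $l\le m$, so that the monotonicity built into the definition of a $G$-sequence can be invoked. No other obstruction is expected; everything else is a direct combination of Corollary \ref{Cor7.10}(b), the defining conditions \eqref{req8.8} and \eqref{8.3pn}, and Proposition \ref{P8.3}.
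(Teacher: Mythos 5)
Your proof is correct and follows essentially the same route as the paper's: for $j=1$ the equality is immediate, and for $j\geq 2$ you prove the two inclusions by (i) applying the second estimate of Corollary~\ref{Cor7.10}(b) to a $\mathcal C(\tilde Y)$-resolution of an object $M\in\mathcal C(\tilde Y)_{(j)}$, and (ii) building a $(j-1)$-step resolution of $M$ by direct sums of copies of $G$ and applying the first estimate of Corollary~\ref{Cor7.10}(b) to its kernel $K$, in each case intersecting with the appropriate $Y_m$ and using the nesting $Y_1\supseteq\dots\supseteq Y_n$ together with the convention $\mho_t=0$ for $t<0$. This is the same decomposition and the same use of Corollary~\ref{Cor7.10} and Proposition~\ref{P8.3} that the paper employs; the only difference is that you spell out the index bookkeeping (checking $1\le l\le m\le n$ or else the cosyzygy vanishes) more explicitly than the paper, which leaves those degenerate cases implicit.
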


\begin{proof} The case of $j=1$ is obvious and so we assume that $j\geq 2$. 
We consider some $M\in \mathcal C(\tilde{Y})_{(j)}$. Then, there is an exact sequence 
$0\longrightarrow K_{j-1}\longrightarrow ... \longrightarrow K_0\longrightarrow M\longrightarrow 0$ with
each $K_i\in \mathcal C(\tilde{Y})$. From Corollary \ref{Cor7.10}, it follows that
\begin{equation}
Ass(\mho_{l-1}(M))\subseteq \bigcup_{i=0}^{j-1}Ass(\mho_{l-1+i}(K_i))
\end{equation} Since $i\leq j-1$, we notice that each $Ass(\mho_{l-1+i}(K_i))\cap Y_{l+j-1} 
\subseteq Ass(\mho_{l-1+i}(K_i))\cap Y_{l+i}=\phi$ which shows that $M\in \mathcal C(Y_j,...,Y_n)$. 

\smallskip
Conversely, consider $M\in \mathcal C(Y_j,...,Y_n)$. Since $G$ is a generator, we can form an exact sequence
$0\longrightarrow K\longrightarrow G_{j-2}\longrightarrow ... \longrightarrow G_0\longrightarrow M\longrightarrow 0$ where $G_i$ is a 
direct sum of copies of $G$ for all $0\leq i<j-1$. In order to show that $M\in \mathcal C(\tilde{Y})_{(j)}$, it suffices
to check that $K\in \mathcal C(\tilde{Y})$. Again, it follows from Corollary \ref{Cor7.10} that
\begin{equation}
Ass(\mho_{l-1}(K))\subseteq \left(\bigcup_{i=0}^{j-2}Ass(\mho_{l-j+i+1}(G_i))\right)\cup Ass(\mho_{l-j}(M))
\end{equation} Since $i\leq j-2$, we notice that $Ass(\mho_{l-j+i+1}(G_i))\cap Y_l
\subseteq Ass(\mho_{l-j+i+1}(G_i))\cap Y_{l-j+i+2}=\phi$. Since $M\in \mathcal C(Y_j,...,Y_n)$, we also
know that $Ass(\mho_{l-j}(M))\cap Y_l=\phi$. This shows that $K \in \mathcal C(\tilde{Y})$. 
\end{proof}

\begin{cor}\label{C8.4}
Let $G$ be a generator for $\mathcal A$ and let $\tilde{Y}=(Y_1,...,Y_n)$ be a $G$-sequence
of length $n$. Then, for each $1\leq j\leq n$, the subcategory $\mathcal C(\tilde{Y})_{(j)}$ is closed under taking injective hulls
and arbitrary direct sums. Additionally, the subcategory $\mathcal C(\tilde{Y})_{(n)}$ satisfies
\begin{equation}\label{shel}
\mathcal C(\tilde{Y})_{(n)}=\{\mbox{$M\in \mathcal A$ $\vert$ $Ass(M)\subseteq Ass(\mathcal C(\tilde{Y})_{(n)})$}\}
\end{equation} 
\end{cor}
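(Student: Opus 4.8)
The plan is to reduce the whole statement, via Proposition~\ref{P8.5}, to the case of a single $G$-sequence and then read everything off the defining condition \eqref{req8.8}. First I would recall from the remark preceding Proposition~\ref{P8.5} that for $1\le j\le n$ the truncation $\tilde Y_{(j)}=(Y_j,\dots,Y_n)$ is again a $G$-sequence (of length $n-j+1$), and that Proposition~\ref{P8.5} identifies $\mathcal C(\tilde Y)_{(j)}=\mathcal C(\tilde Y_{(j)})$. Hence it suffices to show that for an arbitrary $G$-sequence $\tilde Z=(Z_1,\dots,Z_m)$ the subcategory $\mathcal C(\tilde Z)$ is closed under injective hulls and arbitrary direct sums, and that when $m=1$ one has $\mathcal C(\tilde Z)=\{\mbox{$M\in\mathcal A$ $\vert$ $Ass(M)\subseteq Ass(\mathcal C(\tilde Z))$}\}$.

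Closure under arbitrary direct sums is already the content of Proposition~\ref{P8.3} applied to $\tilde Z$, so nothing new is needed there. For closure under injective hulls I would use that $E(M)$ is injective, so that its injective hull is itself; consequently $\mho_k(E(M))=0$ for every $k\ge 1$, while $\mho_0(E(M))=E(M)$, and $Ass(E(M))=Ass(M)$ by Proposition~\ref{L5.5}(a) (applied to the essential subobject $M\subseteq E(M)$). Therefore, among the $m$ clauses $Ass(\mho_{i-1}(E(M)))\cap Z_i=\phi$ defining membership of $E(M)$ in $\mathcal C(\tilde Z)$, every clause with $i\ge 2$ holds trivially because $\mho_{i-1}(E(M))=0$, and the clause $i=1$ reads $Ass(M)\cap Z_1=\phi$, which is exactly the $i=1$ clause of $M\in\mathcal C(\tilde Z)$. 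So $M\in\mathcal C(\tilde Z)\Rightarrow E(M)\in\mathcal C(\tilde Z)$.

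For the final identity, write $\mathcal D:=\mathcal C(\tilde Y)_{(n)}$. By Proposition~\ref{P8.5} we have $\mathcal D=\mathcal C(\tilde Y_{(n)})=\mathcal C(Y_n)$, and since $\mho_0(M)=M$ this is simply $\{\mbox{$M\in\mathcal A$ $\vert$ $Ass(M)\cap Y_n=\phi$}\}$. I would then establish $Ass(\mathcal D)=\mathfrak Spec(\mathcal A)\setminus Y_n$: the inclusion ``$\subseteq$'' is immediate from this description of $\mathcal D$, and for ``$\supseteq$'', given a spectral object $Q$ with $\langle Q\rangle\notin Y_n$ one has $Ass(Q)=\{\langle Q\rangle\}$ (see \cite[$\S$ 8.1]{Rose2}), whence $Q\in\mathcal D$ and $\langle Q\rangle\in Ass(Q)\subseteq Ass(\mathcal D)$. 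Substituting this back, $\{\mbox{$M$ $\vert$ $Ass(M)\subseteq Ass(\mathcal D)$}\}=\{\mbox{$M$ $\vert$ $Ass(M)\cap Y_n=\phi$}\}=\mathcal D$, which is the asserted equality \eqref{shel}.

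I do not expect a genuine obstacle; the argument is essentially an assembly of Propositions~\ref{P8.3}, \ref{P8.5} and~\ref{L5.5} together with the vanishing of higher cosyzygies of an injective object. The only steps requiring a little attention are keeping the index bookkeeping of the truncated sequences $\tilde Y_{(j)}$ straight, and, behind Proposition~\ref{P8.3}, the fact that in the locally noetherian category $\mathcal A$ one has $E(\bigoplus_i M_i)=\bigoplus_i E(M_i)$ (so that both $\mho_k$ and $Ass$ commute with direct sums), which rests on the stability of injectives under direct sums and on the essentiality of a suitable finite partial sum inside any nonzero subobject of $\bigoplus_i E(M_i)$.
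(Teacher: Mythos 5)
Your argument is correct and follows essentially the same route as the paper: the vanishing of higher cosyzygies of injectives together with $Ass(E(M))=Ass(M)$ gives closure under injective hulls, Proposition~\ref{P8.3} gives closure under direct sums, Proposition~\ref{P8.5} transfers both to the truncations $\mathcal C(\tilde Y)_{(j)}=\mathcal C(\tilde Y_{(j)})$, and the identity \eqref{shel} is read off from $\mathcal C(\tilde Y)_{(n)}=\mathcal C(Y_n)$. You merely spell out the final substitution (and the essentiality of $\bigoplus M_i\subseteq\bigoplus E(M_i)$ behind Proposition~\ref{P8.3}) a bit more explicitly than the paper does.
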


\begin{proof}  If $E\in \mathcal A$ is an injective object, then $\mho_k(E)=0$ for all $k\ne 0$. Hence, $Ass(\mho_{i-1}(E))
\cap Y_i=\phi$ for any $1<i\leq n$. Suppose now that $E=E(M)$, the injective hull of some 
object $M\in \mathcal C(\tilde{Y})$. Then, $Ass(\mho_0(E))=Ass(E(M))=Ass(M)$ does not intersect 
$Y_1$ and hence $E=E(M)\in \mathcal C(\tilde{Y})$. Hence, $\mathcal C(\tilde{Y})$ is closed under taking injective hulls. 
We have already mentioned in Proposition \ref{P8.3} that $\mathcal C(\tilde{Y})$ is closed under arbitrary direct sums.
As a result of Proposition \ref{P8.5}, this also shows that each $\mathcal C(\tilde{Y})_{(j)}$ is closed under taking injective hulls
and arbitrary direct sums. 

\smallskip
Finally, for $j=n$,  we see that $\mathcal C(\tilde{Y})_{(n)}=\mathcal C(Y_n)=\{\mbox{$M\in \mathcal A$ 
$\vert$ $Ass(M)\cap Y_n=\phi$}\}$. The result of \eqref{shel} follows easily from this.

\end{proof}

\begin{lem}\label{L8.65} (a) If $0\ne E\in \mathcal A$ is an indecomposable injective, then $E$ is the injective hull
$E=E(P)$ of some $P\in Spec(\mathcal A)$.

(b) Choose a point $\langle P\rangle \in \mathfrak Spec(\mathcal A)$ and set:
\begin{equation}\label{8.18tco}
Inj(\langle P\rangle):=\{\mbox{$E(M)$ $\vert$ $M\in \mathcal A$ and $Ass(M)=\{\langle P\rangle\}$}\}
\end{equation} Then, the collection $Inj(\langle P\rangle)$ contains an indecomposable injective $E_{\langle P\rangle}$
and every object in $Inj(\langle P\rangle)$ is the direct sum of copies of $E_{\langle P\rangle}$. 
\end{lem}
\begin{proof} (a) Since $E\ne 0$, we can find some spectral object $P\in Spec(\mathcal A)$ along with an embedding
$P\hookrightarrow E$. Then, $E(P)$ is a direct summand of $E$. Since $E$ is indecomposable, we must have
$E=E(P)$. 

\smallskip
(b) We consider $M\in \mathcal A$ with $Ass(M)=Ass(E(M))=\{\langle P\rangle\}$. Since $\mathcal A$ is locally noetherian,
a well known result of Matlis (see, for instance, \cite[Proposition V.4.5]{Bo}) shows that every injective in
$\mathcal A$ can be expressed as a direct sum of indecomposable injectives. We choose one such indecomposable
injective $E_{\langle P\rangle}$ from the direct sum decomposition of $E(M)$. Then, $\phi\ne 
Ass(E_{\langle P\rangle})\subseteq Ass(E(M))=\{\langle P\rangle\}$ and hence $E_{\langle P\rangle }\in Inj(\langle P
\rangle)$. 

\smallskip
We now suppose that $E_1$ and $E_2$ are two indecomposable injectives in $Inj(\langle P\rangle)$. From part (a),
we can find $Q_1$, $Q_2\in Spec(\mathcal A)$ such that $E_1=E(Q_1)$ and $E_2=E(Q_2)$ respectively. It is clear 
that $\{\langle Q_1\rangle\} =Ass(E_1)=Ass(E_2)=\{\langle Q_2\rangle\}=\{\langle P\rangle\}$. We consider:
\begin{equation}
\begin{CD}
\mathcal A @>L:=L_{\langle Q_1\rangle}=L_{\langle Q_2\rangle}>> \mathcal A/\langle Q_1\rangle =
\mathcal A/\langle P\rangle=\mathcal A/\langle Q_2\rangle @>i:=i_{\langle Q_1\rangle}=i_{\langle Q_2\rangle}>> \mathcal A
\end{CD}
\end{equation} From \cite[Corollaire III.2]{Gab}, we know that $0\ne iL(E_1)$ (resp. $0\ne iL(E_2)$) is a direct summand of 
 the injective $E_1$ (resp. $E_2$). Since $E_1$ and $E_2$ are indecomposable, we get $E_1=iL(E_1)$
and $E_2=iL(E_2)$. It now follows from Lemma \ref{L7.5}(c) that 
\begin{equation}\label{le8.23}
E_1=iL(E_1)=iL_{\langle Q_1\rangle}(E(Q_1))=iE(L_{\langle Q_1\rangle}(Q_1))\quad 
E_2=iL(E_2)=iL_{\langle Q_2\rangle}(E(Q_2))=iE(L_{\langle Q_2\rangle}(Q_2))
\end{equation} We notice that both $L_{\langle Q_1\rangle}(Q_1)$ and 
$L_{\langle Q_2\rangle}(Q_2)$ are quasi-final objects of $\mathcal A/\langle P\rangle$. It now follows as in Section 7 that
we can write $L_{\langle Q_1\rangle}(Q_1)=S_{\langle P\rangle}^m$ and 
$L_{\langle Q_2\rangle}(Q_2)=S_{\langle P\rangle}^n$ where $S_{\langle P\rangle}$ is the simple object in $\mathcal A/\langle P\rangle$. Combining with \eqref{le8.23}, we get
\begin{equation}
E_1=(iE(S_{\langle P\rangle}))^m \qquad E_2=(iE(S_{\langle P\rangle}))^n
\end{equation} Since $E_1$ and $E_2$ are indecomposable, we now have $m=n=1$ and $E_1=iE(S_{\langle P\rangle})
=E_2$. Hence, there is a unique indecomposable injective $E_{\langle P\rangle}$ in $Inj(\langle P\rangle)$. Finally, since
every $E\in Inj(\langle P\rangle)$ decomposes as a direct sum of indecomposable injectives and every nonzero direct summand of $E\in Inj(\langle P\rangle)$ lies in $Inj(\langle P\rangle)$, the result follows. 
\end{proof}

Before proceeding further, we record here the following result: we recall that the points of the Ziegler spectrum $Zg(\mathcal A)$ of a locally noetherian Grothendieck category $\mathcal A$ are given by  isomorphism classes of indecomposable injectives in $\mathcal A$. A basis of open sets for 
$Zg(\mathcal A)$ is given by the collection of subsets of the form (see, for instance, \cite{Herz})
\begin{equation}
\mathcal U(M):=\{\mbox{$E\in Zg(\mathcal A)$ $\vert$ $Hom_{\mathcal A}(M,E)\ne 0$}\}
\end{equation}
as $M$ varies over all finitely generated objects of $\mathcal A$. 

\begin{thm}\label{P8.65jq} There is a bijection between the points of $\mathfrak Spec(\mathcal A)$ and points of the Ziegler spectrum
$Zg(\mathcal A)$.
\end{thm}

\begin{proof} Given an indecomposable injective $E$ in $\mathcal A$, it follows from Lemma \ref{L8.65}(a) that there exists $P\in Spec(\mathcal A)$ such that $E=E(P)$. We claim that this association gives a well-defined map    $Zg(\mathcal A)\longrightarrow \mathfrak Spec(\mathcal A)$. Indeed, if 
$E\in Zg(\mathcal A)$ is such that $E=E(P_1)=E(P_2)$ for $P_1$, $P_2\in Spec(\mathcal A)$, we have $\{\langle P_1\rangle \}= Ass(P_1)=Ass(E(P_1))=Ass(E(P_2))=\{\langle P_2\rangle\}$. 

\smallskip
Now suppose that we have $E_1$, $E_2\in Zg(\mathcal A)$ with $E_1=E(P_1)$ and $E_2=E(P_2)$ for $P_1$, $P_2\in Spec(\mathcal A)$ with
$\langle P_1\rangle =\langle P_2\rangle$. Then, 
$E_1$, $E_2\in Inj(\langle P_1\rangle)=Inj(\langle P_2\rangle)$ in the sense of \eqref{8.18tco}. By Lemma \ref{L8.65}(b), we know that $Inj(\langle P_1\rangle)$ contains an indecomposable injective $E_{\langle P\rangle}$ and that both $E_1$ and $E_2$ are direct sums of copies of $E_{\langle P\rangle}$. Since
$E_1$ and $E_2$ are indecomposable, it now follows that $E_1\cong E_2$, i.e., they are identical as points of $Zg(\mathcal A)$. This shows that the map
 $Zg(\mathcal A)\longrightarrow \mathfrak Spec(\mathcal A)$ is one-one. 
 
 \smallskip
 Finally, we take  $\langle P\rangle \in \mathfrak Spec(\mathcal A)$ and choose an indecomposable injective $E'_{\langle P\rangle }\in Inj(\langle P\rangle)$
 using Lemma \ref{L8.65}(b). Then, $Ass(E'_{\langle P\rangle})=\{\langle P\rangle\}$. From Lemma \ref{L8.65}(a), it now follows that there is some $P'\in Spec(\mathcal A)$ with $E'_{\langle P\rangle}=E(P')$.  Then,  $Ass(E'_{\langle P\rangle})=Ass(P')=\{\langle P'\rangle\}$.  Hence, $\langle P\rangle 
 =\langle P'\rangle$ and the map $Zg(\mathcal A)\longrightarrow \mathfrak Spec(\mathcal A)$  is onto. This proves the result.

\end{proof}

\begin{lem}\label{LL8.7}
Let $\mathcal C\subseteq \mathcal A$ be an $n$-resolving subcategory that is closed under injective hulls.  
Then, if $P\in Spec(\mathcal A)$ is such that there exists some $M\in \mathcal C$ admitting an inclusion
$P\hookrightarrow M$,  the object $E(i_{\langle P\rangle}(K(P)))\in \mathcal C$. 

\smallskip
Suppose additionally that $\mathcal C$ is closed under arbitrary direct sums. Then, the following hold:

\smallskip
(a) If $P\in Spec(\mathcal A)$ is a spectral object such that $\langle P\rangle\in Ass(\mathcal C)$, then $E(P)\in \mathcal C$.

(b) Let $N\in \mathcal A$. Then, the injective hull   $E(N)\in \mathcal C$ if and only if 
$Ass(N)\subseteq Ass(\mathcal C)$. 
\end{lem}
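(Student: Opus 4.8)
The plan is to prove the three assertions in turn, the unnumbered one supplying the key construction that feeds into (a), which in turn feeds into (b). Throughout, the standing input is that resolving subcategories are closed under direct summands (Definition \ref{D6.1}(1)) and that $Ass(\mathcal C):=\bigcup_{M\in\mathcal C}Ass(M)$.

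For the unnumbered assertion, fix $P\in Spec(\mathcal A)$ and $M\in\mathcal C$ with $P\hookrightarrow M$. Since $\mathcal C$ is closed under injective hulls, $E(M)\in\mathcal C$; extending the composite $P\hookrightarrow M\hookrightarrow E(M)$ along the essential inclusion $P\hookrightarrow E(P)$ (using injectivity of $E(M)$) produces a monomorphism $E(P)\hookrightarrow E(M)$, which splits because $E(P)$ is injective, so $E(P)$ is a direct summand of $E(M)\in\mathcal C$ and hence $E(P)\in\mathcal C$. It then remains to identify $E(i_{\langle P\rangle}(K(P)))$ with $E(P)$, for which I would show that the unit $\eta_P\colon P\to i_{\langle P\rangle}(L_{\langle P\rangle}(P))=i_{\langle P\rangle}(K(P))$ is an essential monomorphism. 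Applying the exact functor $L_{\langle P\rangle}$ to $\eta_P$ yields an isomorphism (a formal consequence of the counit $L_{\langle P\rangle}\circ i_{\langle P\rangle}\to id$ being an isomorphism, property (2) of Section 5), so both the kernel and the cokernel of $\eta_P$ lie in $\langle P\rangle$; but the only subobject of the spectral object $P$ belonging to $\langle P\rangle$ is $0$, since any nonzero subobject of $P$ is equivalent to $P$ and so is not in $\langle P\rangle$. Thus $\eta_P$ is monic with cokernel in $\langle P\rangle$. Moreover $i_{\langle P\rangle}(K(P))$, lying in the image of $i_{\langle P\rangle}$, is $\langle P\rangle$-closed (property (3) of Section 5), hence has no nonzero subobject in $\langle P\rangle$, because $Hom_{\mathcal A}(T,i_{\langle P\rangle}(N))\cong Hom_{\mathcal A/\langle P\rangle}(L_{\langle P\rangle}(T),N)=0$ whenever $T\in\langle P\rangle=\ker L_{\langle P\rangle}$; therefore any nonzero subobject of $i_{\langle P\rangle}(K(P))$ meeting $P$ trivially would embed into the torsion quotient $i_{\langle P\rangle}(K(P))/P$, a contradiction. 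Hence $P$ is essential in $i_{\langle P\rangle}(K(P))$, so $E(i_{\langle P\rangle}(K(P)))\cong E(P)\in\mathcal C$.

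For part (a), suppose $\langle P\rangle\in Ass(\mathcal C)$, so there is $M\in\mathcal C$ and a spectral object $P'$ with $\langle P'\rangle=\langle P\rangle$ and $P'\hookrightarrow M$. The unnumbered assertion applied to $P'$ gives $E(P')\cong E(i_{\langle P'\rangle}(K(P')))\in\mathcal C$. Since $Ass(P')=\{\langle P'\rangle\}$ (cf. the proof of Lemma \ref{L7.2}) and $Ass(E(P'))=Ass(P')$ by Theorem \ref{L5.5}(a), we get $Ass(E(P'))=\{\langle P\rangle\}$, so $E(P')\in Inj(\langle P\rangle)$; by Lemma \ref{L8.65}(b) it is a direct sum of copies of the indecomposable injective $E_{\langle P\rangle}$, and since $E(P')\ne 0$ at least one summand equals $E_{\langle P\rangle}$, whence $E_{\langle P\rangle}\in\mathcal C$ by closure under direct summands. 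Finally, for the given $P$ we likewise have $Ass(E(P))=\{\langle P\rangle\}$, so $E(P)$ is a direct sum of copies of $E_{\langle P\rangle}\in\mathcal C$; since $\mathcal C$ is now assumed closed under arbitrary direct sums, $E(P)\in\mathcal C$.

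For part (b), the implication $E(N)\in\mathcal C\Rightarrow Ass(N)\subseteq Ass(\mathcal C)$ is immediate from $Ass(N)=Ass(E(N))$ (Theorem \ref{L5.5}(a)) and the definition of $Ass(\mathcal C)$. Conversely, assume $Ass(N)\subseteq Ass(\mathcal C)$. Since $\mathcal A$ is locally noetherian, Matlis's theorem (\cite[Proposition V.4.5]{Bo}) decomposes $E(N)=\bigoplus_{k}E_k$ with each $E_k$ an indecomposable injective, and by Lemma \ref{L8.65}(a) each $E_k=E(P_k)$ for a spectral object $P_k$; since $P_k\hookrightarrow E_k\hookrightarrow E(N)$ we get $\langle P_k\rangle\in Ass(E(N))=Ass(N)\subseteq Ass(\mathcal C)$, so part (a) gives $E_k=E(P_k)\in\mathcal C$ for every $k$, and closure under arbitrary direct sums yields $E(N)=\bigoplus_k E_k\in\mathcal C$. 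The only genuinely delicate step is the essentiality of $\eta_P\colon P\to i_{\langle P\rangle}(K(P))$ in the unnumbered assertion, where the torsion-theoretic features of the Gabriel localization at $\langle P\rangle$ enter; everything else is bookkeeping with associated points, the Matlis decomposition, and Lemma \ref{L8.65}.
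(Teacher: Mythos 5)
Your proof is correct, but the route through the unnumbered assertion is genuinely different from the paper's, and this difference propagates into a cleaner treatment of part (a).

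For the unnumbered assertion, the paper works entirely inside the big injective $E(M)$: since $K(P)=L_{\langle P\rangle}(P)$ embeds into $L_{\langle P\rangle}(E(M))$, applying $i_{\langle P\rangle}$ gives $i_{\langle P\rangle}(K(P))\subseteq i_{\langle P\rangle}L_{\langle P\rangle}(E(M))$; by Gabriel's Corollaire III.2 the latter is an injective direct summand of $E(M)$, and $E(i_{\langle P\rangle}(K(P)))$ is in turn a direct summand of it, so membership in $\mathcal C$ follows by two applications of closure under direct summands. You instead show directly that $E(P)$ is a direct summand of $E(M)$ (extending $P\hookrightarrow E(M)$ across the essential inclusion $P\hookrightarrow E(P)$ and splitting), and then identify $E(i_{\langle P\rangle}(K(P)))$ with $E(P)$ by proving that the unit $\eta_P\colon P\to i_{\langle P\rangle}(K(P))$ is an essential monomorphism. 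That identification step is the extra content in your argument, and your justification of it is sound: $L_{\langle P\rangle}(\eta_P)$ is an isomorphism by the triangle identity plus the counit being iso, so kernel and cokernel of $\eta_P$ lie in $\langle P\rangle=\ker L_{\langle P\rangle}$; the kernel, being a subobject of the spectral object $P$, must be $0$; and $i_{\langle P\rangle}(K(P))$, being $\langle P\rangle$-closed, is $\langle P\rangle$-torsion-free, so any subobject meeting $P$ trivially would embed in the cokernel $\in\langle P\rangle$ and hence vanish.

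The payoff is in part (a). The paper, not having the isomorphism $E(i_{\langle Q\rangle}(K(Q)))\cong E(Q)$ at hand, needs a separate adjointness computation to establish $Ass(i_{\langle Q\rangle}(K(Q)))=\{\langle Q\rangle\}$ before invoking Lemma \ref{L8.65}; in your version the relevant injective is just $E(P')$, so $Ass(E(P'))=Ass(P')=\{\langle P'\rangle\}$ is immediate from Theorem \ref{L5.5}(a). Part (b) is argued identically in both. One presentational note: in (a) you invoke facts ($E(P')\in\mathcal C$ and the isomorphism $E(P')\cong E(i_{\langle P'\rangle}(K(P')))$) that are established in the course of your proof of the unnumbered assertion rather than in its statement, so it would be cleaner to phrase the unnumbered claim to include $E(P)\in\mathcal C$, or to cite the essentiality of $\eta_P$ explicitly where you use it.
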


\begin{proof}  Suppose that we have a monomorphism
$P\hookrightarrow M$ with $P\in Spec(\mathcal A)$ and $M\in \mathcal C$. We extend this to a monomorphism
$P\hookrightarrow M\hookrightarrow E(M)$. Since $\mathcal C$ is closed under injective hulls, we know that $E(M)
\in \mathcal C$. Then, $K(P)= L_{\langle P\rangle}(P)\subseteq L_{\langle P\rangle}(E(M))$ and 
hence $i_{\langle P\rangle}(K(P))\subseteq i_{\langle P\rangle}L_{\langle P\rangle}(E(M))$. From \cite[Corollaire III.2]{Gab},
we know that $ i_{\langle P\rangle}L_{\langle P\rangle}(E(M))$ is a direct summand of the injective $E(M)$. Since 
$\mathcal C$ is a resolving subcategory, it follows from Definition \ref{D6.1} that the direct summand 
$ i_{\langle P\rangle}L_{\langle P\rangle}(E(M))\in \mathcal C$. Also $ i_{\langle P\rangle}L_{\langle P\rangle}(E(M))$ 
is injective and we see that the injective hull $E(i_{\langle P\rangle}(K(P)))$ is a direct summand of $ i_{\langle P\rangle}L_{\langle P\rangle}(E(M))$. 
It follows that $E(i_{\langle P\rangle}(K(P)))\in \mathcal C$. 

\smallskip
We now suppose additionally that $\mathcal C$ is closed under arbitrary direct sums. We consider
$P\in Spec(\mathcal A)$ with $\langle P\rangle \in Ass(\mathcal C)$. Then, there is some $Q\in Spec(\mathcal A)$ with
$\langle Q\rangle=\langle P\rangle$ such that there exists 
 a monomorphism $Q\hookrightarrow M$ with $M\in \mathcal C$. From the above, it follows that $E(i_{\langle Q\rangle}(K(Q)))\in \mathcal C$. 
We now consider an inclusion $Q'\hookrightarrow i_{\langle Q\rangle}(K(Q))$ with $Q'\in Spec(\mathcal A)$. Using adjointness, the morphism
$L_{\langle Q\rangle}(Q')\longrightarrow (K(Q))$ corresponding to this inclusion must be nonzero and hence
$L_{\langle Q\rangle}(Q')\ne 0$. Then, $L_{\langle Q\rangle}(Q')\subseteq L_{\langle Q
\rangle}i_{\langle Q\rangle}(K(Q))=K(Q)$ is an associated prime of $K(Q)=L_{\langle Q
\rangle}(Q)$. Hence,  $Ass(E(i_{\langle Q\rangle}(K(Q))))=
Ass(i_{\langle Q\rangle}(K(Q)))=\{\langle Q\rangle\}=\{\langle P\rangle\}$. 

\smallskip In the notation of Lemma \ref{L8.65}, we see that $E(i_{\langle Q\rangle}(K(Q)))\in 
\mathcal C\cap Inj(\langle P\rangle)$. Clearly,
$E(P)\in Inj(\langle P\rangle)$. Since $\mathcal C$ contains all direct summands and is closed under arbitrary
direct sums, it now follows from Lemma \ref{L8.65} that $E(P)\in \mathcal C$. This proves (a). 

\smallskip
To prove (b), we consider some $N\in \mathcal A$. Clearly, if $E(N)\in \mathcal C$, then $Ass(N)
=Ass(E(N))\subseteq \mathcal C$. Conversely, suppose that $Ass(N)\subseteq Ass(\mathcal C)$. Let $E$ be an indecomposable
injective appearing as a direct summand of $E(N)$. By Lemma \ref{L8.65}, we know that $E=E(P)$
for some $P\in Spec(\mathcal A)$ and it is clear that $\langle P\rangle \in Ass(E(N))=Ass(N)\subseteq Ass(\mathcal C)$. From 
part (a), it follows that $E=E(P)\in \mathcal C$. Since the injective $E(N)$ decomposes as a direct sum of 
indecomposable injectives and $\mathcal C$ is closed under direct sums, we get $E(N)\in \mathcal C$.  
\end{proof}

We now make a simple observation. Suppose that $\mathcal C$ is an $n$-resolving subcategory for some $n\geq 1$ and that
we have a short exact sequence 
\begin{equation}
0\longrightarrow M'\longrightarrow M\longrightarrow M''\longrightarrow 0
\end{equation} with $M\in \mathcal C$. Then, $M'\in \Omega^1_{\mathcal C}(M'')$. It follows from the definitions 
and from Proposition \ref{P6.2} that $M'\in \mathcal C$ if and only if $M''\in \mathcal C_{(2)}$. 

\begin{thm}\label{P8.7}
We fix $n\geq 1$ and let $\mathcal C\subseteq \mathcal A$ be a full subcategory satisfying the following conditions:

\smallskip
(1) $\mathcal C$ is an $n$-resolving subcategory.

(2) For each $j\geq 1$, the subcategory $\mathcal C_{(j)}$ is closed under injective hulls and arbitrary direct sums. 

(3) The subcategory $\mathcal C_{(n)}\subseteq \mathcal A$ 
satisfies $\mathcal C_{(n)}=\{\mbox{$M\in \mathcal A$ $\vert$ $Ass(M)\subseteq Ass(\mathcal C_{(n)})$}\}$.

\smallskip
For $1\leq j\leq n$, set $Y_j:=\mathfrak Spec(\mathcal A)\backslash 
Ass(\mathcal C_{(j)})$. Then:
\begin{equation*}
\begin{array}{c}
\mathcal C= \{\mbox{$M\in \mathcal A$ $\vert$ $Ass(E_{j-1}(M))\cap Y_j=\phi$ $\forall$ $1\leq j\leq n$}\}
=\{\mbox{$M\in \mathcal A$ $\vert$ $Ass(\mho_{j-1}(M))\cap Y_j=\phi$ $\forall$ $1\leq j\leq n$}\}\\
\end{array}
\end{equation*}
\end{thm}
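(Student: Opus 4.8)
The plan is to reduce both displayed descriptions to a single assertion about membership of cosyzygies in the subcategories $\mathcal C_{(j)}$, and then prove that assertion by descending induction on $j$. First I would note that $E_{j-1}(M)=E(\mho_{j-1}(M))$ is an injective hull, so $\mho_{j-1}(M)$ is an essential subobject of $E_{j-1}(M)$ and hence $Ass(E_{j-1}(M))=Ass(\mho_{j-1}(M))$ by Proposition \ref{L5.5}; thus the two subcategories on the right of the displayed equation coincide. Since $Y_j=\mathfrak Spec(\mathcal A)\setminus Ass(\mathcal C_{(j)})$, the condition $Ass(\mho_{j-1}(M))\cap Y_j=\phi$ is the same as $Ass(\mho_{j-1}(M))\subseteq Ass(\mathcal C_{(j)})$, so it suffices to prove $\mathcal C=\mathcal D$, where $\mathcal D:=\{M\in\mathcal A\mid Ass(\mho_{i-1}(M))\subseteq Ass(\mathcal C_{(i)})\ \text{for all}\ 1\le i\le n\}$.

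The heart of the matter is the claim that, for every $M\in\mathcal A$ and every $1\le j\le n$, one has $\mho_{j-1}(M)\in\mathcal C_{(j)}$ if and only if $Ass(\mho_{i-1}(M))\subseteq Ass(\mathcal C_{(i)})$ for all $j\le i\le n$. Taking $j=1$ here, and using $\mho_0(M)=M$ and $\mathcal C_{(1)}=\mathcal C$, gives exactly $\mathcal C=\mathcal D$. I would prove the claim by descending induction on $j$, starting from $j=n$: in that base case the claim reads $\mho_{n-1}(M)\in\mathcal C_{(n)}\Leftrightarrow Ass(\mho_{n-1}(M))\subseteq Ass(\mathcal C_{(n)})$, which is precisely hypothesis (3) applied to the object $\mho_{n-1}(M)$.

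For the inductive step I fix $1\le j\le n-1$ and use the short exact sequence $0\to\mho_{j-1}(M)\to E_{j-1}(M)\to\mho_j(M)\to 0$ supplied by Definition \ref{D7.1}. By Proposition \ref{P8.1} (using hypothesis (1)), $\mathcal C_{(j)}$ is an $(n-j+1)$-resolving subcategory and $(\mathcal C_{(j)})_{(2)}=\mathcal C_{(j+1)}$, and by hypothesis (2) it is closed under injective hulls and under arbitrary direct sums. If $\mho_{j-1}(M)\in\mathcal C_{(j)}$, then $E_{j-1}(M)=E(\mho_{j-1}(M))\in\mathcal C_{(j)}$, so the observation recorded immediately before the statement of the proposition, applied to $\mathcal C_{(j)}$, gives $\mho_j(M)\in(\mathcal C_{(j)})_{(2)}=\mathcal C_{(j+1)}$; combining this with the inductive hypothesis for $j+1$ and the trivial inclusion $Ass(\mho_{j-1}(M))\subseteq Ass(\mathcal C_{(j)})$ yields all the conditions for $j\le i\le n$. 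Conversely, if $Ass(\mho_{i-1}(M))\subseteq Ass(\mathcal C_{(i)})$ for all $j\le i\le n$, then the inductive hypothesis for $j+1$ gives $\mho_j(M)\in\mathcal C_{(j+1)}$, and Lemma \ref{LL8.7}(b) applied to $\mathcal C_{(j)}$ (using $Ass(\mho_{j-1}(M))\subseteq Ass(\mathcal C_{(j)})$) gives $E_{j-1}(M)=E(\mho_{j-1}(M))\in\mathcal C_{(j)}$; then the same observation applied to $\mathcal C_{(j)}$ converts $\mho_j(M)\in\mathcal C_{(j+1)}$ into $\mho_{j-1}(M)\in\mathcal C_{(j)}$, which completes the induction.

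The main obstacle I expect is organizational rather than conceptual: at each stage one must confirm that $\mathcal C_{(j)}$ genuinely satisfies the hypotheses needed to invoke the two tools above — that it is resolving of length $n-j+1$ with $(\mathcal C_{(j)})_{(2)}=\mathcal C_{(j+1)}$ (both from Proposition \ref{P8.1}) and that it is closed under injective hulls and arbitrary direct sums (hypothesis (2)) — and to keep straight that hypothesis (3) enters only in the base case $j=n$. Beyond Lemma \ref{LL8.7}, Proposition \ref{P8.1}, and the observation preceding the statement, no further input is needed.
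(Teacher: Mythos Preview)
Your proof is correct and follows essentially the same approach as the paper. The only difference is organizational: the paper inducts on $n$, applying the full statement to $\mathcal C_{(2)}$ (which satisfies hypotheses (1)--(3) for $n-1$), whereas you keep $n$ fixed and run a descending induction on $j$ through the chain $\mathcal C_{(j)}$; the two inductions unwind to the same computation using the short exact sequence $0\to\mho_{j-1}(M)\to E_{j-1}(M)\to\mho_j(M)\to 0$, Lemma~\ref{LL8.7}(b), and the observation preceding the statement.
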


\begin{proof} 

\smallskip We will prove the result by induction on $n$.  For the case of $n=1$, the result follows directly from 
assumption (3). We now consider $n>1$. Every object $M\in \mathcal A$ fits into a short exact sequence as follows:
\begin{equation}
0\longrightarrow M\longrightarrow E(M)\longrightarrow \mho(M)\longrightarrow 0
\end{equation} We know that $\mathcal C=\mathcal C_{(1)}$ is closed under injective hulls. Hence, from the observation above, we see that $M\in \mathcal C$ if and only 
if $E(M)\in \mathcal C$ and $\mho(M)\in \mathcal C_{(2)}$. 

\smallskip
From Lemma \ref{LL8.7}(b), we see that $E(M)\in \mathcal C$ if and only $Ass(M)\subseteq Ass(\mathcal C)$. On the other
hand, the subcategory $\mathcal C_{(2)}$ satisfies the assumptions (1)-(3) for $(n-1)$ and we obtain therefore that:
\begin{equation}
\mathcal C_{(2)}=\{\mbox{$N\in \mathcal A$ $\vert$ $Ass(\mho_{j-2}(N))\cap (\mathfrak Spec(\mathcal A)\backslash 
Ass(\mathcal C_{(j)}))=\phi$ $\forall$ $2\leq j\leq n$}\}
\end{equation} In particular, we have $\mho(M)\in \mathcal C_{(2)}$ if and only if
\begin{equation}
Ass(\mho_{j-1}(M))\cap  (\mathfrak Spec(\mathcal A)\backslash 
Ass(\mathcal C_{(j)}))=\phi \quad\forall\textrm{ }2\leq j\leq n
\end{equation} This proves the result.
\end{proof}

\begin{thm}\label{P8.9} We fix a generator $G$ of $\mathcal A$ and some $n\geq 1$. Let $\mathcal C\subseteq \mathcal A$ be an
$n$-resolving subcategory such that $G\in \mathcal C$ and each $\{\mathcal C_{(j)}\}_{1\leq j\leq n}$
is closed under injective hulls. Then, the tuple
\begin{equation}
(\mathfrak Spec(\mathcal A)\backslash Ass(\mathcal C_{(1)}), ...., \mathfrak Spec(\mathcal A)\backslash Ass(\mathcal C_{(n)}))
\end{equation} is a $G$-sequence of length $n$. 
\end{thm}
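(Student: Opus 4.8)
The statement asserts that when $\mathcal C\subseteq\mathcal A$ is an $n$-resolving subcategory containing the generator $G$ with each $\mathcal C_{(j)}$ closed under injective hulls, the tuple $\tilde Y=(Y_1,\dots,Y_n)$ with $Y_j:=\mathfrak Spec(\mathcal A)\setminus Ass(\mathcal C_{(j)})$ is a $G$-sequence of length $n$ in the sense of Definition \ref{D8.2}. There are exactly two things to check: (a) the decreasing chain $Y_1\supseteq\dots\supseteq Y_n$, and (b) the disjointness $Ass(\mho_{i-1}(G))\cap Y_i=\phi$ for $1\le i\le n$.

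\emph{First I would verify the chain condition (a).} By Proposition \ref{P8.1} we have the increasing chain $\mathcal C=\mathcal C_{(1)}\subseteq\mathcal C_{(2)}\subseteq\dots\subseteq\mathcal C_{(n)}$, so $Ass(\mathcal C_{(1)})\subseteq Ass(\mathcal C_{(2)})\subseteq\dots\subseteq Ass(\mathcal C_{(n)})$, where $Ass(\mathcal D):=\bigcup_{M\in\mathcal D}Ass(M)$. Taking complements immediately yields $Y_1\supseteq Y_2\supseteq\dots\supseteq Y_n$. This step is routine.

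\emph{Next, the disjointness condition (b).} Fix $i$ with $1\le i\le n$. I must show $Ass(\mho_{i-1}(G))\subseteq Ass(\mathcal C_{(i)})$. Since $G\in\mathcal C=\mathcal C_{(1)}$, iterated use of the fact that each $\mathcal C_{(j)}$ is closed under injective hulls (assumption (2) in the hypothesis, reflected in assumption here) will give $E_{i-1}(G)=E(\mho_{i-1}(G))\in\mathcal C_{(i)}$. The argument is the inductive one underlying Proposition \ref{P8.7}: from the short exact sequence $0\to\mho_{k-1}(G)\to E_{k-1}(G)\to\mho_k(G)\to0$ with $E_{k-1}(G)\in\mathcal C_{(k)}$ (once we know $\mho_{k-1}(G)\in\mathcal C_{(k)}$, closure under injective hulls gives its hull in $\mathcal C_{(k)}$), the observation preceding Proposition \ref{P8.7} shows $\mho_k(G)\in(\mathcal C_{(k)})_{(2)}=\mathcal C_{(k+1)}$ by Proposition \ref{P8.1}. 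Starting from $\mho_0(G)=G\in\mathcal C_{(1)}$, one climbs to $\mho_{i-1}(G)\in\mathcal C_{(i)}$ and hence $E_{i-1}(G)=E(\mho_{i-1}(G))\in\mathcal C_{(i)}$. Then $Ass(\mho_{i-1}(G))=Ass(E_{i-1}(G))\subseteq Ass(\mathcal C_{(i)})$, using $Ass(\mho_{i-1}(G))=Ass(E(\mho_{i-1}(G)))$ from Proposition \ref{P5.7} (or Theorem \ref{L5.5}). Therefore $Ass(\mho_{i-1}(G))\cap Y_i=Ass(\mho_{i-1}(G))\cap\bigl(\mathfrak Spec(\mathcal A)\setminus Ass(\mathcal C_{(i)})\bigr)=\phi$, as required.

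\emph{Where the work really sits.} Both conditions reduce to bookkeeping with the chain $\{\mathcal C_{(j)}\}$ from Proposition \ref{P8.1} together with closure under injective hulls. The only mildly delicate point is the inductive climb $\mho_{k-1}(G)\in\mathcal C_{(k)}\Rightarrow\mho_k(G)\in\mathcal C_{(k+1)}$: one must cite the "simple observation" just before Proposition \ref{P8.7} (that in a short exact sequence $0\to M'\to M\to M''\to0$ with $M\in\mathcal C_{(k)}$ one has $M'\in\mathcal C_{(k)}\iff M''\in(\mathcal C_{(k)})_{(2)}$) and combine it with the identity $(\mathcal C_{(k)})_{(2)}=\mathcal C_{(k+1)}$ from Proposition \ref{P8.1}, taking $M'=\mho_{k-1}(G)$, $M=E_{k-1}(G)$, $M''=\mho_k(G)$ and using that $E_{k-1}(G)\in\mathcal C_{(k)}$ because $\mathcal C_{(k)}$ is closed under injective hulls and contains $\mho_{k-1}(G)$. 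Beyond that, there is no genuine obstacle: once the climb is set up, the two defining properties of a $G$-sequence fall out directly, and the proof is essentially a corollary of Propositions \ref{P8.1}, \ref{P5.7}, and the observation preceding Proposition \ref{P8.7}.
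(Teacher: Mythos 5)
Your argument is correct and follows essentially the same route as the paper's: the chain $Y_1\supseteq\dots\supseteq Y_n$ from $\mathcal C_{(j)}\subseteq\mathcal C_{(j+1)}$, and the induction $\mho_{j-2}(G)\in\mathcal C_{(j-1)}\Rightarrow\mho_{j-1}(G)\in(\mathcal C_{(j-1)})_{(2)}=\mathcal C_{(j)}$ via the short exact sequence $0\to\mho_{j-2}(G)\to E(\mho_{j-2}(G))\to\mho_{j-1}(G)\to 0$ and closure under injective hulls. The only minor redundancy is your final detour through $E_{i-1}(G)$ and $Ass(\mho_{i-1}(G))=Ass(E_{i-1}(G))$: once $\mho_{i-1}(G)\in\mathcal C_{(i)}$ is established, $Ass(\mho_{i-1}(G))\subseteq Ass(\mathcal C_{(i)})$ is immediate from the definition of $Ass(\mathcal C_{(i)})$, which is exactly where the paper stops.
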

\begin{proof}
We set $Y_j:=\mathfrak Spec(\mathcal A)\backslash Ass(\mathcal C_{(j)})$ for $1\leq j\leq n$. Since each 
$\mathcal C_{(j)}\subseteq \mathcal C_{(j+1)}$, it is clear that $Y_1\supseteq ... \supseteq Y_n$. We claim that
$\mho_{j-1}(G)\in \mathcal C_{(j)}$ for each $1\leq j \leq n$. We already know that the result is true for 
$j=1$, i.e., $G\in \mathcal C=\mathcal C_{(1)}$. We proceed by induction and consider for $j>1$ the short exact sequence:
\begin{equation}\label{eqr8.26}
0\longrightarrow \mho_{j-2}(G)\longrightarrow E(\mho_{j-2}(G))\longrightarrow \mho_{j-1}(G)\longrightarrow 0
\end{equation} By induction assumption, we have $\mho_{j-2}(G)\in \mathcal C_{(j-1)}$. Since $\mathcal C_{(j-1)}$ is closed under injective hulls, we have $ E(\mho_{j-2}(G))\in \mathcal C_{(j-1)}$.  The exact sequence in \eqref{eqr8.26} now  shows that
$\mho_{j-1}(G)\in (\mathcal C_{(j-1)})_{(2)}=\mathcal C_{(j)}$. This proves the result. 
\end{proof}

We will now describe the most important assumption that we will make in this section. For any spectral 
object $P\in Spec(\mathcal A)$, we have already noted that the full subcategory $\langle P\rangle=\{\mbox{$M\in \mathcal A$
$\vert$ $P\not\prec M$}\}$ is a localizing subcategory of $\mathcal A$. In other words, there is a section 
functor $i_{\langle P\rangle}:\mathcal A/\langle P\rangle\longrightarrow\mathcal A$ that is right adjoint to the canonical functor
$L_{\langle P\rangle}:\mathcal A\longrightarrow \mathcal A/\langle P\rangle$. 

\smallskip
Being a right adjoint, we know that the section functor of any localizing subcategory is always
left exact. For the rest of this section, we will assume that the section functors
\begin{equation}\label{dreamexact}
\{\mbox{$i_{\langle P\rangle}:\mathcal A/\langle P\rangle\longrightarrow\mathcal A$ $\vert$ $P\in Spec(\mathcal A)$}\}
\end{equation}
are all exact functors.  In fact, the case of localizing subcategories with section functors that are exact is an  important special case
in the theory of localizing subcategories (see Gabriel \cite[$\S$ III.3]{Gab}).

\begin{lem}\label{L8.6} Let $P\in Spec(\mathcal A)$ be a spectral object. 

\smallskip
(a) Let $M$ be an object of $\mathcal A/\langle P\rangle$.  Let $\{\mho_k(M)\}_{k\geq 0}$ be the cosygyzies
of $M$ in $\mathcal A/\langle P\rangle$ and let $\{\mho_k(i_{\langle P\rangle}(M))\}_{k\geq 0}$ be the cosyzygies
of $i_{\langle P\rangle}(M)$ in $\mathcal A$. Then:
\begin{equation}
i_{\langle P\rangle}(\mho_k(M))=\mho_k(i_{\langle P\rangle}(M))\qquad\forall\textrm{ }k\geq 0
\end{equation}

\smallskip
(b) Let $K(P)=L_{\langle P\rangle}(P)$. For any $k\geq 0$, we have $Ass(\mho_k(i_{\langle P\rangle}(K(P))))\subseteq \{
\langle P\rangle\}$. 

\end{lem}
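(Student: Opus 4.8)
For part (a), the plan is to first show that $i_{\langle P\rangle}$ carries an injective hull in $\mathcal A/\langle P\rangle$ to an injective hull in $\mathcal A$, and then induct on $k$. Given $M\in\mathcal A/\langle P\rangle$ with injective hull $E(M)$, I claim $i_{\langle P\rangle}(E(M))=E(i_{\langle P\rangle}(M))$: it is injective in $\mathcal A$ since $i_{\langle P\rangle}$ is right adjoint to the exact functor $L_{\langle P\rangle}$ and hence preserves injectives; and $i_{\langle P\rangle}(M)\hookrightarrow i_{\langle P\rangle}(E(M))$ is essential, because for any nonzero $N\subseteq i_{\langle P\rangle}(E(M))$ the object $i_{\langle P\rangle}(E(M))$ is $\langle P\rangle$-closed and so has no nonzero subobject in $\langle P\rangle$, whence $L_{\langle P\rangle}(N)\ne 0$; since $M\hookrightarrow E(M)=L_{\langle P\rangle}i_{\langle P\rangle}(E(M))$ is essential in $\mathcal A/\langle P\rangle$ we get $L_{\langle P\rangle}(N)\cap M\ne 0$, and exactness of $L_{\langle P\rangle}$ gives $L_{\langle P\rangle}(N\cap i_{\langle P\rangle}(M))=L_{\langle P\rangle}(N)\cap M\ne 0$, forcing $N\cap i_{\langle P\rangle}(M)\ne 0$. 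Now apply the (by hypothesis) exact functor $i_{\langle P\rangle}$ to $0\to M\to E(M)\to\mho_1(M)\to 0$; using $i_{\langle P\rangle}(E(M))=E(i_{\langle P\rangle}(M))$, the resulting short exact sequence identifies $\mho_1(i_{\langle P\rangle}(M))=E(i_{\langle P\rangle}(M))/i_{\langle P\rangle}(M)$ with $i_{\langle P\rangle}(\mho_1(M))$. Iterating, $\mho_k(i_{\langle P\rangle}(M))=\mho_1(\mho_{k-1}(i_{\langle P\rangle}(M)))=\mho_1(i_{\langle P\rangle}(\mho_{k-1}(M)))=i_{\langle P\rangle}(\mho_k(M))$.

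For part (b), by (a) we have $\mho_k(i_{\langle P\rangle}(K(P)))=i_{\langle P\rangle}(\mho_k(K(P)))$, so it suffices to show $Ass(i_{\langle P\rangle}(\mho_k(K(P))))\subseteq\{\langle P\rangle\}$. The object $i_{\langle P\rangle}(\mho_k(K(P)))$ lies in the image of $i_{\langle P\rangle}$, hence is $\langle P\rangle$-closed, hence $\langle P\rangle$-torsion-free; therefore any spectral subobject of it lies outside $\langle P\rangle$, and so $Ass(i_{\langle P\rangle}(\mho_k(K(P))))\subseteq\mathfrak Spec(\mathcal A/\langle P\rangle)$ (viewed inside $\mathfrak Spec(\mathcal A)$). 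Applying $L_{\langle P\rangle}$ together with the identity $Ass(L_{\langle P\rangle}(X))=Ass(X)\cap\mathfrak Spec(\mathcal A/\langle P\rangle)$ of \cite{R3} to $X=i_{\langle P\rangle}(\mho_k(K(P)))$, whose localization is $\mho_k(K(P))$ by the counit isomorphism, we obtain $Ass(i_{\langle P\rangle}(\mho_k(K(P))))=Ass(\mho_k(K(P)))$, the associated points now computed in $\mathcal A/\langle P\rangle$. Finally, in the locally noetherian Grothendieck category $\mathcal A/\langle P\rangle$ (where every nonzero object has an associated point, by Lemma \ref{L5.39}) I will check by induction on $k$ that $Supp(\mho_k(K(P)))\subseteq\{\langle P\rangle\}$: the base case $k=0$ is Lemma \ref{L7.2}, and the inductive step uses $Supp(\mho_k(K(P)))\subseteq Supp(E(\mho_{k-1}(K(P))))=Supp(\mho_{k-1}(K(P)))$, the first inclusion because $\mho_k(K(P))$ is a quotient of $E(\mho_{k-1}(K(P)))$ and the equality being Proposition \ref{L5.5}(b) applied in $\mathcal A/\langle P\rangle$. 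Since $Ass\subseteq Supp$, the conclusion follows.

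The step I expect to be the main obstacle is the passage to $\mathcal A/\langle P\rangle$ in part (b): computing supports in $\mathcal A$ is too weak, since $Supp(i_{\langle P\rangle}(K(P)))$ in $\mathcal A$ is the full closure $\overline{\langle P\rangle}$, not the single point $\{\langle P\rangle\}$. The essential observation is that objects in the image of $i_{\langle P\rangle}$ are $\langle P\rangle$-closed, which pins all their associated points into the subspace $\mathfrak Spec(\mathcal A/\langle P\rangle)$; only after descending to the \emph{local} category $\mathcal A/\langle P\rangle$, where $K(P)$ is quasi-final, does the support of the cosyzygies of $K(P)$ collapse to $\{\langle P\rangle\}$. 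A secondary but important point is to confirm that the standing hypothesis that each $i_{\langle P\rangle}$ is exact is genuinely used: it is precisely what allows $i_{\langle P\rangle}$ to transport the defining short exact sequences of cosyzygies to short exact sequences, which is the crux of part (a) and hence, via it, of part (b).
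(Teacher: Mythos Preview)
Your proof is correct and follows essentially the same strategy as the paper's: for (a), show that $i_{\langle P\rangle}$ carries injective hulls to injective hulls and then invoke exactness of $i_{\langle P\rangle}$; for (b), use (a) to reduce to bounding $Ass(\mho_k(K(P)))$ in the local category $\mathcal A/\langle P\rangle$ by $Supp(K(P))=\{\langle P\rangle\}$. The only notable variation is your verification of essentiality of $i_{\langle P\rangle}(M)\hookrightarrow i_{\langle P\rangle}(E(M))$: you argue directly via $\langle P\rangle$-closedness and the exactness of $L_{\langle P\rangle}$ (so that $L_{\langle P\rangle}(N\cap i_{\langle P\rangle}(M))=L_{\langle P\rangle}(N)\cap M$), whereas the paper instead splits $i_{\langle P\rangle}(E(M))=E'\oplus E''$ with $E'$ the injective hull of $i_{\langle P\rangle}(M)$ and shows $E''=0$ in two steps (first $L_{\langle P\rangle}(E'')=0$, then the unit diagram forces $E''=0$); your route is a bit more direct.
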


\begin{proof} (a) Let $E(M)$ be the injective hull of $M\in \mathcal A/\langle P\rangle$. Then $L_{\langle P\rangle}$
being exact and $i_{\langle P\rangle}$ being its right adjoint, we know
that $i_{\langle P\rangle}(E(M))$ is injective in $\mathcal A$. Also $i_{\langle P\rangle}(M)\subseteq 
i_{\langle P\rangle}(E(M))$. Let $E'\subseteq i_{\langle P\rangle}(E(M))$ be the injective
hull of $i_{\langle P\rangle}(M)$ in $\mathcal A$. Then, $i_{\langle P\rangle}(E(M))$ splits as a direct sum $i_{\langle P\rangle}(E(M))
=E'\oplus E''$. We notice that $E''\cap i_{\langle P\rangle}(M)=0$. This gives:
\begin{equation}\label{namo8}
\begin{array}{ccc}
\begin{CD}
0 @>>> E'' \\
@VVV @VVV \\
i_{\langle P\rangle}(M) @>>> i_{\langle P\rangle}(E(M)) \\
\end{CD} & \Rightarrow &  
\begin{CD}
0 @>>> L_{\langle P\rangle}(E'') \\
@VVV @VVV \\
M=L_{\langle P  \rangle}i_{\langle P\rangle}(M) @>>> L_{\langle P\rangle}i_{\langle P\rangle}(E(M)) =E(M) \\
\end{CD}\\
\end{array}
\end{equation}  The implication in \eqref{namo8} follows from the fact that $L_{\langle P\rangle}$ is exact. 
From \eqref{namo8} we have $M\cap L_{\langle P\rangle}(E'')=0$ and $M$ being an essential subobject
of $E(M)$, we obtain $L_{\langle P\rangle}(E'')=0$. We now consider the commutative diagram
\begin{equation}\label{namo9}
\begin{CD}
E'' @>>> i_{\langle P\rangle}L_{\langle P\rangle}(E'')=0 \\
@VVV @VVV \\
i_{\langle P\rangle}(E(M)) @>>>  i_{\langle P\rangle}L_{\langle P\rangle}i_{\langle P\rangle}(E(M))=i_{\langle P
\rangle}(E(M))\\
\end{CD}
\end{equation} The left vertical arrow $E''\hookrightarrow i_{\langle P\rangle}(E(M))$ in \eqref{namo9} is 
a monomorphism and the bottom horizontal arrow is the identity $i_{\langle P\rangle}(E(M))\longrightarrow i_{\langle P\rangle}(E(M))$. Hence, $E''=0$ and $i_{\langle P\rangle}(E(M))$ becomes the injective hull of $i_{\langle P\rangle}(M)$ in 
$\mathcal A$. 
 Since $i_{\langle P\rangle}$
is assumed to be exact, it is now clear from Definition \ref{D7.1} that the cosygyzies satisfy $i_{\langle P\rangle}(\mho_k(M))=\mho_k(i_{\langle P\rangle}(M))$. 

\smallskip
(b) For any object $M\in \mathcal A/\langle P\rangle$, we have a short exact sequence
\begin{equation}\label{eq8.19}
0\longrightarrow M\longrightarrow E(M)\longrightarrow \mho(M)=\mho_1(M)\longrightarrow 0
\end{equation} where $E(M)$ is the injective hull of $M$. Since $Supp(E(M))=Supp(M)$, we observe from \eqref{eq8.19}
that $Supp(\mho(M))\subseteq Supp(M)$.  More generally, since $\mho_{k+1}(M)=\mho(\mho_k(M))$ for $k\geq 0$,
it follows that all cosyzygies satisfy $Supp(\mho_k(M))\subseteq Supp(M)$.

\smallskip
In particular, we consider $M=K(P)=L_{\langle P\rangle}(P)$. Since $K(P)$ is the quasi-final object of
$\mathcal A/\langle P\rangle$, we have $Supp(K(P))=\{\langle L_{\langle P\rangle}(P)\rangle\}$. Then, for any $k\geq 0$, we have
\begin{equation}\label{eq8.20}
Ass(\mho_k(K(P)))\subseteq Supp(\mho_k(K(P)))\subseteq Supp(K(P))= \{\langle L_{\langle P\rangle}(P)\rangle\}
\end{equation}
Now, the result of (b) is obvious if $\mho_k(i_{\langle P\rangle}(K(P)))=0$ and hence we may restrict restrict to
those $k\in \mathbb Z$ for which $\mho_k(i_{\langle P\rangle}(K(P)))\ne 0$. Then, from part (a), we get
$\mho_k(K(P))\ne 0$. Combining with \eqref{eq8.20}, we see that $\phi\ne Ass(\mho_k(K(P)))=\{\langle L_{\langle P\rangle}(P)
\rangle\}$. 

\smallskip
Now suppose that we have  $Q\in Spec(\mathcal A)$ with a monomorphism $Q\hookrightarrow i_{\langle P
\rangle}(\mho_k(K(P)))$. By adjointness, the nonzero morphism $Q\hookrightarrow i_{\langle P
\rangle}(\mho_k(K(P)))$ corresponds to a nonzero morphism $L_{\langle P\rangle}(Q)\longrightarrow \mho_k(K(P))$ 
and hence $L_{\langle P\rangle}(Q)\ne 0$. It follows from \cite[$\S$ 8.5.4]{Rose2} that $L_{\langle P\rangle}(Q)
\in Ass(\mho_k(K(P)))$ and hence $\langle L_{\langle P\rangle}(Q)\rangle=\langle L_{\langle P\rangle}(P)\rangle$ in
$\mathfrak Spec(\mathcal A/\langle P\rangle)$. Treating $\mathfrak Spec(\mathcal A/\langle P\rangle)$ as a subset of 
 $\mathfrak Spec(\mathcal A)$ as before, this gives $\langle P\rangle =\langle Q\rangle\in\mathfrak Spec(\mathcal A)$. Hence,
$Ass(i_{\langle P\rangle}(\mho_k(K(P))))=Ass(\mho_k(i_{\langle P\rangle}(K(P))))=\{\langle P\rangle\}$. 
\end{proof}

\begin{thm}\label{Pe8.10} Fix a generator $G\in \mathcal A$ and let $\tilde{Y}=(Y_1,...,Y_n)$, 
$\tilde{Y}'=(Y_1',...,Y_n')$ be two $G$-sequences of length $n$. Then, if
$\mathcal C(\tilde{Y})=\mathcal C(\tilde{Y}')$, we must have $\tilde{Y}=\tilde{Y}'$. 
\end{thm}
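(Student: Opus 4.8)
The plan is to recover each set $Y_j$ intrinsically from the subcategory $\mathcal C(\tilde Y)$, so that the equality $\mathcal C(\tilde Y)=\mathcal C(\tilde Y')$ forces $\tilde Y=\tilde Y'$ entry by entry. The first thing I would establish is that, for \emph{any} $G$-sequence $\tilde Z=(Z_1,\dots,Z_m)$, the first set $Z_1$ is already determined by $\mathcal C(\tilde Z)$. The key point is that an injective object $E\in\mathcal A$ satisfies $E(E)=E$, hence $\mho_k(E)=0$ for all $k\geq 1$; so by the defining condition \eqref{req8.8} we get $E\in\mathcal C(\tilde Z)\iff Ass(E)\cap Z_1=\phi$, the conditions for $i\geq 2$ being automatic. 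Specialising to $E=E(P)$ for a spectral object $P\in Spec(\mathcal A)$ and using that $Ass(E(P))=Ass(P)=\{\langle P\rangle\}$ (by \cite[$\S$ 8.1]{Rose2} together with Proposition \ref{L5.5}, since $P\hookrightarrow E(P)$ is essential), this reads $E(P)\in\mathcal C(\tilde Z)\iff\langle P\rangle\notin Z_1$. As every point of $\mathfrak Spec(\mathcal A)$ has the form $\langle P\rangle$ for some $P\in Spec(\mathcal A)$, we conclude $Z_1=\{\langle P\rangle \mid P\in Spec(\mathcal A),\ E(P)\notin\mathcal C(\tilde Z)\}$, which depends on $\mathcal C(\tilde Z)$ alone.

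Next I would pass to the truncated sequences. Fix $1\leq j\leq n$. The remark preceding Proposition \ref{P8.5} tells us that $\tilde Y_{(j)}=(Y_j,\dots,Y_n)$ and $\tilde Y'_{(j)}=(Y_j',\dots,Y_n')$ are again $G$-sequences (of length $n-j+1$), and Proposition \ref{P8.5} identifies $\mathcal C(\tilde Y_{(j)})=\mathcal C(\tilde Y)_{(j)}$ and $\mathcal C(\tilde Y'_{(j)})=\mathcal C(\tilde Y')_{(j)}$. Since the operation $\mathcal C\mapsto\mathcal C_{(j)}$ from \eqref{8.3pn} is purely intrinsic to the subcategory $\mathcal C$, the hypothesis $\mathcal C(\tilde Y)=\mathcal C(\tilde Y')$ gives $\mathcal C(\tilde Y)_{(j)}=\mathcal C(\tilde Y')_{(j)}$, hence $\mathcal C(\tilde Y_{(j)})=\mathcal C(\tilde Y'_{(j)})$. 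Applying the first observation to these two $G$-sequences, their common first entries must agree: $Y_j=Y_j'$. Letting $j$ range over $1,\dots,n$ yields $\tilde Y=\tilde Y'$.

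I do not expect a serious obstacle here; the argument is bookkeeping layered on top of Proposition \ref{P8.5} and the behaviour of cosyzygies of injectives. The one step that needs a little care is the very first one: confirming that $E(E)=E$ for an injective $E$, so that all higher cosyzygies vanish and membership of $E$ in $\mathcal C(\tilde Z)$ is controlled \emph{only} by the $i=1$ condition $Ass(E)\cap Z_1=\phi$. This decoupling is exactly what lets one read off $Z_1$ (and then, by truncation, every $Y_j$) from the subcategory, and is the crux of the proof.
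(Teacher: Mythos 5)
Your argument is correct, and it takes a genuinely different route from the paper's. The paper proves the result by induction on $n$: the base case $n=1$ tests with the spectral object $P$ itself, and the inductive step first invokes Proposition \ref{P8.5} to handle $Y_2,\dots,Y_n$, then recovers $Y_1$ by testing membership of the object $i_{\langle P\rangle}(K(P))$, using Lemma \ref{L8.6} to bound the associated points of its cosyzygies. That lemma is exactly where the standing hypothesis of Section 8 --- exactness of the section functors $i_{\langle P\rangle}$ --- enters, and the paper itself flags Proposition \ref{Pe8.10} (in Section 10) as one of the few results that needs this extra assumption. Your proof replaces $i_{\langle P\rangle}(K(P))$ with the injective hull $E(P)$, whose higher cosyzygies vanish trivially, so that membership in $\mathcal C(\tilde Z)$ is governed solely by the $i=1$ condition $Ass(E(P))\cap Z_1=\phi$. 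Combined with $Ass(E(P))=Ass(P)=\{\langle P\rangle\}$ (Proposition \ref{L5.5}), this gives $Z_1=\{\langle P\rangle\mid E(P)\notin\mathcal C(\tilde Z)\}$ as an invariant of the subcategory, and since $\mathcal C\mapsto\mathcal C_{(j)}$ is defined purely in terms of $\mathcal C$ as a subcategory, Proposition \ref{P8.5} then recovers every $Y_j$. This eliminates the induction entirely, bypasses Lemma \ref{L8.6}, and therefore dispenses with the exactness hypothesis on $i_{\langle P\rangle}$ --- a genuine strengthening of the result. The only point worth making explicit is that the formula for $Z_1$ is well defined independently of the choice of representative $P$ for the point $\langle P\rangle$, which holds because the criterion depends on $E(P)$ only through $Ass(E(P))=\{\langle P\rangle\}$.
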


\begin{proof}
We will prove the result by induction on $n$. Suppose that $n=1$. Then, we know that
\begin{equation}\mathcal C(\tilde{Y})=\{\mbox{$M\in \mathcal A$ $\vert$ $Ass(M)\cap Y_1=\phi$}\}
=\{\mbox{$M\in \mathcal A$ $\vert$ $Ass(M)\cap Y_1'=\phi$}\}
=\mathcal C(\tilde{Y}')
\end{equation} If $Y_1\ne Y_1'$, we can choose  $P\in Spec(\mathcal A)$ with $\langle P
\rangle \in (Y_1\backslash Y_1')\cup (Y_1'\backslash Y_1)$. Then, the object $P$ lies in exactly 
one of $\mathcal C(\tilde{Y})$ and $\mathcal C(\tilde{Y}')$, which is a contradiction.

\smallskip
We now consider $n>1$. Then, $\mathcal C(\tilde{Y})=\mathcal C(\tilde{Y}')$ implies $\mathcal C(\tilde{Y})_{(2)}=\mathcal C(\tilde{Y}')_{(2)}$ which gives $\mathcal C(Y_2,...,Y_n)=\mathcal C(Y_2',...,Y_n')$. By induction assumption for $(n-1)$,
we have $Y_j=Y_j'$ for $2\leq j\leq n$. It remains to show that $Y_1=Y_1'$. 

\smallskip
Suppose otherwise. For the sake of definiteness, we suppose that $Y_1'\backslash Y_1$ is nonempty and
choose $P\in Spec(\mathcal A)$ with $\langle P\rangle\in Y_1'\backslash Y_1$. Since $\langle P\rangle\notin Y_1$, it follows
from Lemma \ref{L8.6} that
\begin{equation}
Ass(\mho_{j-1}(i_{\langle P\rangle}(K(P)))\cap Y_j\subseteq \{\langle P\rangle\}\cap Y_1=\phi
\end{equation} Then, we see that $i_{\langle P\rangle}(K(P))\in \mathcal C(\tilde{Y})$. 
On the other hand, we have $Ass(i_{\langle P\rangle}(K(P)))\cap Y_1'=\{\langle P\rangle\}\cap Y_1'\ne \phi$ and hence
$i_{\langle P\rangle}(K(P))\notin \mathcal C(\tilde{Y}')$ which is a contradiction. 

\end{proof}

Putting together the results of Proposition \ref{P8.3}, Corollary \ref{C8.4} and Propositions \ref{P8.7}, 
\ref{P8.9} and \ref{Pe8.10} we obtain the following bijective correspondence. 

\begin{thm}\label{PR8.11}
Fix a generator $G$ of $\mathcal A$ and some $n\geq 1$. Then, the associations 
\begin{equation}
\begin{array}{c}
\tilde{Y}=(Y_1,...,Y_n)\mapsto \mathcal C(\tilde{Y}) = \{\mbox{$M\in \mathcal A$ $\vert$ $Ass(\mho_{i-1}(M))\cap Y_i=\phi$ for all $1\leq i\leq n$}\}\\
\mathcal C\mapsto (\mathfrak Spec(\mathcal A)\backslash Ass(\mathcal C_{(1)}), ...., \mathfrak Spec(\mathcal A)\backslash Ass(\mathcal C_{(n)}))\\
\end{array}
\end{equation} define mutually inverse correspondences between $G$-sequences $\tilde{Y}$ of length $n$ and 
$n$-resolving subcategories $\mathcal C\subseteq \mathcal A$ satisfying the following conditions: 

\smallskip
(1) $G\in \mathcal C$.

(2) For each $j\geq 1$, the subcategory $\mathcal C_{(j)}$ is closed under injective hulls and arbitrary direct sums. 

(3) The subcategory $\mathcal C_{(n)}\subseteq \mathcal A$ 
satisfies $\mathcal C_{(n)}=\{\mbox{$M\in \mathcal A$ $\vert$ $Ass(M)\subseteq Ass(\mathcal C_{(n)})$}\}$.
\end{thm}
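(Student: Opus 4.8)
The plan is to assemble Theorem \ref{PR8.11} from the pieces already proved, checking that the two stated associations are well-defined and mutually inverse. First I would verify that $\tilde{Y}\mapsto \mathcal C(\tilde{Y})$ lands in the right target: by Proposition \ref{P8.3}, $\mathcal C(\tilde{Y})$ is $n$-resolving, contains $G$, and is closed under arbitrary direct sums, which is part of condition (2); by Proposition \ref{P8.5} we have $\mathcal C(\tilde{Y})_{(j)}=\mathcal C(\tilde{Y}_{(j)})$, and Corollary \ref{C8.4} shows each $\mathcal C(\tilde{Y})_{(j)}$ is closed under injective hulls and direct sums (condition (2)) and that $\mathcal C(\tilde{Y})_{(n)}=\mathcal C(Y_n)$ satisfies the self-description in condition (3). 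So every $\mathcal C(\tilde{Y})$ satisfies (1)--(3).

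Next I would check that the reverse association is well-defined: given an $n$-resolving $\mathcal C$ satisfying (1)--(3), Proposition \ref{P8.9} (whose hypotheses are exactly $G\in\mathcal C$ and each $\mathcal C_{(j)}$ closed under injective hulls, both supplied by (1) and (2)) shows that the tuple $\tilde{Y}^{\mathcal C}:=(\mathfrak Spec(\mathcal A)\backslash Ass(\mathcal C_{(1)}),\dots,\mathfrak Spec(\mathcal A)\backslash Ass(\mathcal C_{(n)}))$ is a $G$-sequence of length $n$. Then I would prove the two round-trip identities. For $\mathcal C\mapsto\tilde{Y}^{\mathcal C}\mapsto\mathcal C(\tilde{Y}^{\mathcal C})$: this is precisely the content of Proposition \ref{P8.7}, which under hypotheses (1)--(3) gives $\mathcal C=\{M\in\mathcal A\mid Ass(\mho_{j-1}(M))\cap Y_j=\phi\ \forall\,1\le j\le n\}$ with $Y_j=\mathfrak Spec(\mathcal A)\backslash Ass(\mathcal C_{(j)})$, i.e. $\mathcal C=\mathcal C(\tilde{Y}^{\mathcal C})$. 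For $\tilde{Y}\mapsto\mathcal C(\tilde{Y})\mapsto\tilde{Y}^{\mathcal C(\tilde{Y})}$: injectivity of $\tilde{Y}\mapsto\mathcal C(\tilde{Y})$ is Proposition \ref{Pe8.10}, so combined with the fact that $\tilde{Y}^{\mathcal C(\tilde{Y})}$ is again a $G$-sequence with $\mathcal C(\tilde{Y}^{\mathcal C(\tilde{Y})})=\mathcal C(\tilde{Y})$ (from the first round-trip applied to $\mathcal C=\mathcal C(\tilde{Y})$, which satisfies (1)--(3)), Proposition \ref{Pe8.10} forces $\tilde{Y}^{\mathcal C(\tilde{Y})}=\tilde{Y}$.

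Concretely the proof is just: ``This follows by combining Propositions \ref{P8.3}, \ref{P8.5}, \ref{P8.7}, \ref{P8.9} and \ref{Pe8.10} together with Corollary \ref{C8.4}, as follows.'' Then spell out the four bullet checks above in a few lines each. The one point requiring a little care rather than pure citation is confirming that $\mathcal C(\tilde{Y})$ genuinely satisfies \emph{all three} conditions so that the composite $\mathcal C(\tilde{Y})\mapsto\tilde{Y}^{\mathcal C(\tilde{Y})}$ is legitimate and the first round-trip identity applies to it — but (1) is from Proposition \ref{P8.3}, (2) is from Corollary \ref{C8.4}, and (3) is the displayed equation \eqref{shel} in Corollary \ref{C8.4}, so nothing new is needed.

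I do not anticipate a genuine obstacle here: all the hard analytic work (the support/associated-point computations, the exactness of $i_{\langle P\rangle}$ entering Lemma \ref{L8.6}, the $Ext$-characterization of cosyzygies) has been front-loaded into Sections 7--8. The only thing to be vigilant about is bookkeeping of indices in the truncation identity $\mathcal C(\tilde Y)_{(j)}=\mathcal C(Y_j,\dots,Y_n)$ and making sure the hypotheses of Proposition \ref{P8.9} (closure under injective hulls of each $\mathcal C_{(j)}$, not merely $\mathcal C$ itself) are in force — but these are already part of condition (2), so the assembly is clean.
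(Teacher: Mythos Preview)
Your proposal is correct and matches the paper's own approach exactly: the paper simply states that the result follows by putting together Proposition \ref{P8.3}, Corollary \ref{C8.4}, and Propositions \ref{P8.7}, \ref{P8.9} and \ref{Pe8.10}, and your write-up is precisely this assembly (with Proposition \ref{P8.5} made explicit, which the paper uses implicitly via Corollary \ref{C8.4}). The four-bullet verification you outline is the right way to spell it out, and no new ideas are needed.
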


\section{Subcategories closed under subobjects, direct sums and essential extensions}

\smallskip

\smallskip
In this section, we will describe a one-one correspondence between arbitrary subsets of $\mathfrak Spec(\mathcal A)$ and certain subcategories of $\mathcal A_{fg}$. 
We recall
that $\mathcal A_{fg}$ is the full subcategory of finitely generated objects of $\mathcal A$ and that 
$\mathcal A_{fg}$ is an abelian category.  We will say
that a subcategory $\mathcal C\subseteq \mathcal A_{fg}$ is closed under essential extensions if given $M\in \mathcal C$ and 
$N\in \mathcal A_{fg}$ with $M\subseteq N$ an essential extension, we must have $N\in \mathcal C$. 

\begin{lem}\label{L9.1}
Let $\mathcal C\subseteq \mathcal A$ be a full subcategory that is closed under subobjects, finite direct sums and essential extensions. 
Let $Q\hookrightarrow M$ be an inclusion with $Q\in Spec(\mathcal A)$ and $M\in \mathcal C$.  Then, every object
$N\in \mathcal A_{fg}$ such that $Ass(N)=\{\langle Q\rangle\}$ lies in $\mathcal C$.  
\end{lem}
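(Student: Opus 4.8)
The plan is to realize $N$ as an essential extension of an object already known to lie in $\mathcal C$, and then invoke closure of $\mathcal C$ under essential extensions. First, since $Q\hookrightarrow M$ with $M\in\mathcal C$ and $\mathcal C$ is closed under subobjects, we get $Q\in\mathcal C$; and since $\mathcal C$ is closed under finite direct sums, $M^{n}\in\mathcal C$ for every $n\geq 1$, so every subobject of every $M^{n}$ lies in $\mathcal C$. Hence it suffices to produce, for some finite $n$, a monomorphism $N\hookrightarrow E(M)^{n}$ into a finite direct sum of copies of the injective hull $E(M)$. Given such a monomorphism, set $N_{0}:=N\cap M^{n}$, the intersection taken inside $E(M)^{n}$. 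Since $M\subseteq E(M)$ is essential and a finite direct sum of essential extensions is again essential, $M^{n}\subseteq E(M)^{n}$ is essential, so every nonzero subobject of $N$ meets $M^{n}$; thus $N_{0}\subseteq N$ is an essential subobject. As $N_{0}$ is a subobject of $M^{n}\in\mathcal C$, we have $N_{0}\in\mathcal C$; and $N_{0}$, being a subobject of the finitely generated object $N$ in the locally noetherian category $\mathcal A$, lies in $\mathcal A_{fg}$. Since $N_{0}\subseteq N$ is an essential extension with $N\in\mathcal A_{fg}$, closure of $\mathcal C$ under essential extensions gives $N\in\mathcal C$.

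To build the monomorphism $N\hookrightarrow E(M)^{n}$, I would first analyze $E(N)$. Because $Ass(N)=\{\langle Q\rangle\}$, the object $E(N)$ belongs to $Inj(\langle Q\rangle)$ in the notation of Lemma \ref{L8.65}, so by Lemma \ref{L8.65}(b) there is an index set $I$ with $E(N)\cong E_{\langle Q\rangle}^{(I)}$, where $E_{\langle Q\rangle}$ is the indecomposable injective of that lemma. Similarly $Ass(Q)=\{\langle Q\rangle\}$ for the spectral object $Q$, so $E(Q)\in Inj(\langle Q\rangle)$ is a nonzero direct sum of copies of $E_{\langle Q\rangle}$, and in particular $E_{\langle Q\rangle}$ is a direct summand of $E(Q)$. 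Extending the composite $Q\hookrightarrow M\hookrightarrow E(M)$ along injectivity of $E(M)$ yields a map $E(Q)\to E(M)$ which is a monomorphism (it restricts to the original inclusion on the essential subobject $Q$), and whose image, being injective, is a direct summand of $E(M)$. Hence $E_{\langle Q\rangle}$ is a direct summand of $E(M)$, and therefore $E_{\langle Q\rangle}^{n}$ is a subobject of $E(M)^{n}$ for every $n$.

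The remaining point is where finite generation of $N$ enters. The coproduct $E_{\langle Q\rangle}^{(I)}$ is the filtered colimit, along split monomorphisms, of its finite partial sums $E_{\langle Q\rangle}^{(J)}$ with $J\subseteq I$ finite; since $N$ is finitely generated, the monomorphism $N\hookrightarrow E(N)\cong E_{\langle Q\rangle}^{(I)}$ factors through some $E_{\langle Q\rangle}^{(J)}\cong E_{\langle Q\rangle}^{n}$ with $n=|J|<\infty$, and the factored map $N\to E_{\langle Q\rangle}^{n}$ is again a monomorphism. Composing with the summand inclusion $E_{\langle Q\rangle}^{n}\hookrightarrow E(M)^{n}$ produces the monomorphism $N\hookrightarrow E(M)^{n}$, which closes the argument.

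I expect the crux to be exactly this last finiteness step, that a monomorphism from a finitely generated object into an infinite coproduct of indecomposable injectives factors through a finite sub-coproduct; the rest is organizational: reducing the statement to the existence of $N_{0}$, identifying the indecomposable summands of $E(N)$ through Lemma \ref{L8.65}, splitting $E_{\langle Q\rangle}$ off $E(M)$, and checking that $N\cap M^{n}$ is an essential subobject of $N$ lying in $\mathcal C\cap\mathcal A_{fg}$.
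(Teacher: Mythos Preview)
Your proof is correct and follows the same overall strategy as the paper: decompose $E(N)$ into copies of the indecomposable $E_{\langle Q\rangle}$ via Lemma \ref{L8.65}, use finite generation of $N$ to cut down to finitely many copies, and then exhibit $N$ as an essential extension of a subobject already in $\mathcal C$. The tactical difference lies in how that essential subobject is produced. The paper works entirely inside $E(N)$: it first finds a spectral $P'\subseteq Q$ with $E(P')=E_{\langle Q\rangle}$ (using that $Q$ is essential in $E(Q)$), writes $E(N)=E(P')^{I}$ with $I$ finite, and intersects $N$ with $P'^{I}\in\mathcal C$. You instead make the extra move of embedding $E_{\langle Q\rangle}$ as a summand of $E(M)$ by extending $Q\hookrightarrow E(M)$ to a monomorphism $E(Q)\hookrightarrow E(M)$, thereby embedding $N$ into $E(M)^{n}$ and intersecting with $M^{n}\in\mathcal C$. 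Your route avoids isolating the special $P'$; the paper's route avoids the detour through $E(M)$. Both arguments are equally valid and of comparable length.
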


\begin{proof}
 Since $\mathcal C$ is closed under subobjects, we notice that $Q\in \mathcal C$. We consider the injective hull
$E(Q)$. From Lemma \ref{L8.65}, it follows that we can find $P\in Spec(\mathcal A)$ such that 
$E(P)$ is an indecomposable injective and $E(Q)$ is a direct sum of copies of $E(P)$. It is also clear that
$\{\langle P\rangle\}=Ass(E(P))=Ass(E(Q))=\{\langle Q\rangle\}$. 

\smallskip
Since $Q$ is an essential subobject of $E(Q)$ and we have $P\subseteq E(P)\subseteq E(Q)$, we can find
$0\ne P'\subseteq P$ such that $P'\subseteq Q$. Then, $P'\in \mathcal C$ and since $P$ is spectral, we have
$\langle P'\rangle =\langle P\rangle$.  Since $E(P)$ is an indecomposable injective, we notice that $E(P)=E(P')$. 

\smallskip
We now consider some $N\in \mathcal A_{fg}$ with $Ass(N)=\{\langle Q\rangle\}=\{\langle P\rangle\}=\{\langle P'\rangle\}$.  Again
from Lemma \ref{L8.65}, it follows that the injective hull $E(N)$ is a direct sum of copies of $E(P)=E(P')$, say
$E(N)=E(P')^{I}$. Since $N$ is finitely generated, the set $I$ is finite. Then, $E(N)$ is an essential extension of $P'^{I}$ and hence $N\subseteq E(N)$ 
is an essential extension of $N\cap P'^{I}$. 

\smallskip
Finally, since $\mathcal C$ is closed under finite direct sums,  we have $P'^{I}\in \mathcal C$. Then, the subobject
$N\cap P'^{I}$ of $P'^{I}$ lies in $\mathcal C$. Since $N$ is an essential extension of $N\cap P'^{I}$, we get
$N\in \mathcal C$. 
\end{proof}

\begin{lem}\label{L9.2} If $M\in \mathcal A_{fg}$ is a finitely generated object, then $M$ has only finitely many 
associated points.
\end{lem}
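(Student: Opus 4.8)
The plan is to reduce everything to spectral objects by means of the finite filtration of a finitely generated (equivalently noetherian) object, and then to control associated points along the filtration using the behaviour of $Ass$ under short exact sequences. First I would recall from \cite[$\S$ 1.6.4.1]{Rose1} that, $M$ being finitely generated, it carries a finite filtration $0 = M_0 \subseteq M_1 \subseteq \cdots \subseteq M_k = M$ whose successive quotients $Q_i := M_i/M_{i-1}$ are spectral objects of $\mathcal A$, and that for a spectral object $Q\in Spec(\mathcal A)$ one has $Ass(Q)=\{\langle Q\rangle\}$ (see \cite[$\S$ 8.1]{Rose2}). The goal is then the inclusion $Ass(M) \subseteq \{\langle Q_1\rangle, \dots, \langle Q_k\rangle\}$; since the right-hand side is finite, this proves the lemma, and the case $M=0$ (i.e. $k=0$) is trivial.

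The one ingredient that needs a short argument is that, for any short exact sequence $0 \to M' \to M'' \to M''' \to 0$ in $\mathcal A$, one has $Ass(M'') \subseteq Ass(M') \cup Ass(M''')$ (this is \cite[$\S$ 8.2]{Rose2}, but it is elementary to reprove). Given $\langle P\rangle \in Ass(M'')$ represented by a monomorphism $P \hookrightarrow M''$ with $P \in Spec(\mathcal A)$, set $K := P \cap M'$, the intersection taken inside $M''$. If $K = 0$, then the composite $P \hookrightarrow M'' \twoheadrightarrow M'''$ has zero kernel, hence is a monomorphism, and $\langle P\rangle \in Ass(M''')$. If $K \ne 0$, then, exactly as in the proof of Proposition \ref{L5.5}, the nonzero subobject $K$ of the spectral object $P$ is again spectral with $\langle K\rangle = \langle P\rangle$, and the inclusion $K \hookrightarrow M'$ yields $\langle P\rangle = \langle K\rangle \in Ass(M')$.

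With this sublemma in hand I would conclude by induction on $i$: one has $Ass(M_1) = Ass(Q_1) = \{\langle Q_1\rangle\}$, and for $2 \le i \le k$ the short exact sequence $0 \to M_{i-1} \to M_i \to Q_i \to 0$ gives $Ass(M_i) \subseteq Ass(M_{i-1}) \cup Ass(Q_i) = Ass(M_{i-1}) \cup \{\langle Q_i\rangle\}$, so that $Ass(M) = Ass(M_k) \subseteq \{\langle Q_1\rangle, \dots, \langle Q_k\rangle\}$ is finite. I do not expect a genuine obstacle here; the only non-formal point is the short-exact-sequence estimate for $Ass$, which is the elementary spectral-intersection argument sketched above and is in any case available from \cite[$\S$ 8.2]{Rose2}.
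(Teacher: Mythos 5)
Your argument is correct, but it takes a genuinely different route from the paper's. The paper argues via injective hulls: it uses Matlis's theorem that $E(M)$ decomposes as a direct sum of indecomposable injectives, observes that $M$ being finitely generated forces this sum to be finite, identifies each indecomposable summand as $E(P_i)$ for some $P_i\in Spec(\mathcal A)$ via Lemma \ref{L8.65}, and then invokes $Ass(M)=Ass(E(M))$ from Proposition \ref{L5.5}. Your proof instead works entirely ``inside'' $M$: you use the finite filtration $0=M_0\subseteq\cdots\subseteq M_k=M$ with spectral successive quotients from \cite[$\S$ 1.6.4.1]{Rose1}, and the short-exact-sequence bound $Ass(M'')\subseteq Ass(M')\cup Ass(M''')$ (which you correctly re-derive by pulling back a spectral subobject along $M'\hookrightarrow M''$ and splitting on whether the intersection with $M'$ vanishes). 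Each approach has its own virtue: yours is more elementary in that it avoids appealing to injective hulls and the Matlis decomposition, relying only on the filtration and the spectral-subobject mechanism used in several earlier lemmas; the paper's is slightly shorter given that Lemma \ref{L8.65} and Proposition \ref{L5.5} were already in hand, and it produces the sharper statement $Ass(M)=\{\langle P_1\rangle,\dots,\langle P_n\rangle\}$ rather than only an inclusion. Both are valid, and your containment $Ass(M)\subseteq\{\langle Q_1\rangle,\dots,\langle Q_k\rangle\}$ is all the lemma requires.
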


\begin{proof}
We know that the injective hull $E(M)$ admits a decomposition into indecomposable injectives, say $E(M)=\bigoplus_{i\in I}E_i$. 
Since $M$ is finitely generated, it must be contained inside the direct sum of some finite subcollection of
$\{E_i\}_{i\in I}$ and hence $I$ is finite. From Lemma \ref{L8.65}, we know that each indecomposable injective
$E_i=E(P_i)$ for some $P_i\in Spec(\mathcal A)$. Then, $Ass(M)=Ass(E(M))=\bigcup_{i\in I}Ass(E(P_i))=
\bigcup_{i\in I}\{\langle P_i\rangle\}$. 
\end{proof}

\begin{thm}\label{P9.3} 
Let $\mathcal C\subseteq \mathcal A$ be a full subcategory that is closed under subobjects, finite direct sums and essential extensions. Let $M\in \mathcal A_{fg}$ be such that $P\in \mathcal C$ for any $P\in Spec(\mathcal A)$ such that
there exists an inclusion $P\hookrightarrow M$. Then, $M\in \mathcal C$. 
\end{thm}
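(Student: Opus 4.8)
The plan is to exhibit inside $M$ an essential subobject that splits as a finite direct sum of finitely generated objects each having a single associated point, and to feed those pieces into Lemma~\ref{L9.1}. Write $Ass(M)=\{\langle P_1\rangle,\dots,\langle P_r\rangle\}$, which is finite by Lemma~\ref{L9.2} (if $r=0$ then $M=0$ and $M$ lies in $\mathcal C$ as the empty direct sum, so we may assume $r\geq 1$). For each $i$ fix a spectral object $P_i$ together with an inclusion $P_i\hookrightarrow M$; by the hypothesis on $M$ we then have $P_i\in\mathcal C$. Applying Lemma~\ref{L9.1} with the ambient $\mathcal C$-object taken to be $P_i$ itself, we obtain the key input: \emph{for every $i$, every finitely generated object $N$ with $Ass(N)=\{\langle P_i\rangle\}$ lies in $\mathcal C$.}

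Next I would analyze the injective hull. Since $M$ is finitely generated, $E(M)$ decomposes as a finite direct sum $E(M)=\bigoplus_{k=1}^{m}E_k$ of indecomposable injectives (as in the proof of Lemma~\ref{L9.2}); by Lemma~\ref{L8.65}(a) each $E_k=E(Q_k)$ with $Q_k\in Spec(\mathcal A)$, and since essential extensions preserve associated points (Proposition~\ref{L5.5}) we get $Ass(E_k)=Ass(Q_k)=\{\langle Q_k\rangle\}$ and $\{\langle Q_1\rangle,\dots,\langle Q_m\rangle\}=Ass(E(M))=Ass(M)$. Grouping the summands by their associated point, write $E(M)=\bigoplus_{i=1}^{r}F_i$, where $F_i$ is the (finite) direct sum of those $E_k$ with $\langle Q_k\rangle=\langle P_i\rangle$; then $Ass(F_i)=\{\langle P_i\rangle\}$. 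Now set $L_i:=M\cap F_i\subseteq M$ (intersection taken inside $E(M)$) and $L:=\sum_{i=1}^r L_i$; the sum is direct because the $F_i$ sit in direct sum position in $E(M)$, so $L=\bigoplus_{i=1}^{r}L_i\subseteq M$. Each $L_i$, being a subobject of the noetherian object $M$, is finitely generated, and $Ass(L_i)\subseteq Ass(F_i)=\{\langle P_i\rangle\}$.

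The two facts I would then need to establish are: (i) $L_i\ne 0$ for each $i$, and (ii) $L$ is essential in $M$. Both follow from the observation that a spectral object meeting a summand $F_i$ trivially must, via the projection $E(M)\twoheadrightarrow\bigoplus_{j\ne i}F_j$ whose kernel is $F_i$, embed into $\bigoplus_{j\ne i}F_j$, whose set of associated points is $\{\langle P_j\rangle : j\ne i\}$. For (i): if $L_i=M\cap F_i=0$ then $M$ itself embeds into $\bigoplus_{j\ne i}F_j$, contradicting $\langle P_i\rangle\in Ass(M)$; hence $Ass(L_i)=\{\langle P_i\rangle\}$, so $L_i\in\mathcal C$ by the key input of the first paragraph, and therefore $L=L_1\oplus\dots\oplus L_r\in\mathcal C$ since $\mathcal C$ is closed under finite direct sums. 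For (ii): given $0\ne K\subseteq M$, pick a spectral $Q\hookrightarrow K$; if $Q\cap F_i=0$ for every $i$, then applying the observation for each $i$ shows $\langle Q\rangle$ differs from every $\langle P_i\rangle$, contradicting $\langle Q\rangle\in Ass(K)\subseteq Ass(M)=\{\langle P_1\rangle,\dots,\langle P_r\rangle\}$; hence $Q\cap F_i\ne 0$ for some $i$, and then $0\ne Q\cap F_i\subseteq K\cap(M\cap F_i)\subseteq K\cap L$.

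Given (i) and (ii), the conclusion is immediate: $L\in\mathcal C$ is an essential subobject of $M$, and $\mathcal C$ is closed under essential extensions, so $M\in\mathcal C$. I expect the verification of (ii) to be the only genuinely delicate part; the supporting bookkeeping — that the associated points of a finite direct sum are the union of the associated points of the summands, and that a nonzero subobject of a spectral object is again spectral and represents the same point — is routine, and once the ``disjoint associated points force an essential splitting'' principle is pinned down, everything else is straightforward.
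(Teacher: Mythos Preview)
Your proof is correct. Both you and the paper follow the same overall scheme: write $Ass(M)=\{\langle P_1\rangle,\dots,\langle P_r\rangle\}$, build inside $M$ a finite direct sum $\bigoplus_i N_i$ with $Ass(N_i)=\{\langle P_i\rangle\}$, invoke Lemma~\ref{L9.1} to get each $N_i\in\mathcal C$, and then show the direct sum is essential in $M$. The difference is in how the $N_i$ are produced and how essentiality is verified. The paper stays entirely inside $M$: it takes each $N_i$ to be a \emph{maximal} subobject of $M$ with $Ass(N_i)=\{\langle P_i\rangle\}$ (using that $M$ is noetherian), checks that distinct $N_i$ have zero intersection because their associated-point sets are disjoint, and then proves essentiality by a maximality contradiction---a nonzero $N\subseteq M$ disjoint from the sum would contain a spectral $Q$ with, say, $\langle Q\rangle=\langle P_1\rangle$, and then $Q\oplus N_1$ would strictly enlarge $N_1$ while keeping the same associated point. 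You instead pass to the injective hull, group its indecomposable summands by associated point as $E(M)=\bigoplus_i F_i$, and set $L_i=M\cap F_i$; essentiality then comes from the projection argument you sketch. Your route leans on a bit more machinery (Lemma~\ref{L8.65} and the Matlis decomposition), but it makes the ``isotypic'' decomposition completely explicit; the paper's route is more self-contained and uses only the noetherian hypothesis on $M$ together with the associated-point calculus. Both are clean and neither is clearly superior.
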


\begin{proof}
From Lemma \ref{L9.2}, we know that $M\in \mathcal A_{fg}$ has only a finite number of associated points. Suppose therefore
that $Ass(M)=\{\langle P_1\rangle, ...., \langle P_k\rangle\}$ where each $P_i$, $1\leq i\leq k$ is a spectral object
admitting an inclusion into $M$. We know that $Ass(P_i)=\{\langle P_i\rangle\}$.  Since $M$ is finitely generated 
(hence Noetherian), we can choose for each $i$ a maximal subobject $N_i\subseteq M$ such that
$Ass(N_i)=\{\langle P_i\rangle\}$. It is also clear that each $N_i\subseteq M$ is finitely generated. 

\smallskip
By the assumption on $M$, we know that each $P_i\in \mathcal C$. Since $Ass(N_i)=\{\langle P_i\rangle\}$, it now
follows from Lemma \ref{L9.1} that
each $N_i\in \mathcal C$. 

\smallskip
We now claim that $N_1\cap N_2=0$. Indeed, if there exists $0\ne K\subseteq N_1\cap N_2$, we have
$Ass(K)\subseteq Ass(N_1)\cap Ass(N_2)=\phi$ which is a contradiction. Hence, we can form the direct sum 
$N_1\oplus N_2$ as a subobject of $M$. Similarly, we note that $N_3\cap (N_1\oplus N_2)=0$ and so on, which
allows us to form the direct sum $N_1 \oplus ... \oplus N_k$ as a subobject of $M$. Since $\mathcal C$ 
is closed under finite direct sums, we have $N_1 \oplus ... \oplus N_k\in \mathcal C$. 

\smallskip
We will complete the proof by showing that $M$ is an essential extension of $N_1 \oplus ... \oplus N_k$. Otherwise,
there is some $0\ne N\subseteq M$ such that $N\cap (N_1 \oplus ... \oplus N_k)=0$. We pick some inclusion
$Q\hookrightarrow N$ with $Q\in Spec(\mathcal A)$. Then, $\langle Q\rangle\in Ass(N)\subseteq Ass(M)=\{\langle P_1\rangle, ...., \langle P_k\rangle\}$. For the sake of definiteness, we suppose that $\langle Q\rangle =\langle P_1\rangle$. We notice
that $Q\cap N_1\subseteq Q\cap (N_1 \oplus ... \oplus N_k)\subseteq N\cap (N_1 \oplus ... \oplus N_k)=0$. Hence, we can form
the direct sum $Q\oplus N_1$ as a subobject of $M$. We notice that $Q\oplus N_1\supsetneq N_1$ and 
$Ass(Q\oplus N_1)=\{\langle P_1\rangle\}$ which contradicts the assumption that $N_1$ is a maximal element 
among subobjects of $M$ having $\langle P_1\rangle$ as their only associated point. 
\end{proof}

\begin{thm}\label{Dr9.4} The associations:
\begin{equation*}
\begin{array}{c}
\mathcal C\mapsto \Phi(\mathcal C):=\underset{M\in \mathcal C}{\bigcup}\textrm{ }Ass(M)\\
S\mapsto \Psi(S):=\{\mbox{$M\in \mathcal A_{fg}$ $\vert$ $Ass(M)\subseteq S$}\}
\end{array}
\end{equation*} induce mutually inverse bijections between:

\smallskip
(a) The full and replete subcategories $\mathcal C\subseteq \mathcal A_{fg}$ that are closed under 
subobjects, finite direct sums and essential extensions.

\smallskip
(b) The subsets $S\subseteq \mathfrak Spec(\mathcal A)$ of the spectrum $\mathfrak Spec(\mathcal A)$
of $\mathcal A$. 
\end{thm}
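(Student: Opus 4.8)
The plan is to check that $\Phi$ and $\Psi$ are well-defined maps between the two collections, then that $\Phi\circ\Psi$ and $\Psi\circ\Phi$ are identities, with essentially all of the content concentrated in the inclusion $\Psi(\Phi(\mathcal C))\subseteq\mathcal C$.

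First I would verify well-definedness and the identity $\Phi(\Psi(S))=S$. The set $\Phi(\mathcal C)$ is patently a subset of $\mathfrak Spec(\mathcal A)$. The subcategory $\Psi(S)$ is full by construction and replete because $Ass$ is an isomorphism invariant; it is closed under subobjects, since in a locally noetherian category every subobject of a finitely generated object is finitely generated and $Ass(N)\subseteq Ass(M)$ whenever $N\subseteq M$; closed under finite direct sums because $Ass(M_1\oplus M_2)=Ass(M_1)\cup Ass(M_2)$, which follows from the short-exact-sequence behaviour of $Ass$ recalled from \cite{Rose2}; and closed under essential extensions by Proposition \ref{L5.5}(a). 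Thus $\Psi(S)$ lies in collection (a). For $\Phi(\Psi(S))=S$, the inclusion $\subseteq$ is immediate from the definitions, and for $\supseteq$ one takes $\langle P\rangle\in S$ with $P\in Spec(\mathcal A)$: by Lemma \ref{L3.2} the spectral object $P$ is finitely generated, and $Ass(P)=\{\langle P\rangle\}\subseteq S$, so $P\in\Psi(S)$ and $\langle P\rangle\in Ass(P)\subseteq\Phi(\Psi(S))$.

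Next I would prove $\Psi(\Phi(\mathcal C))=\mathcal C$. The inclusion $\mathcal C\subseteq\Psi(\Phi(\mathcal C))$ is trivial. For the reverse inclusion, let $M\in\mathcal A_{fg}$ with $Ass(M)\subseteq\Phi(\mathcal C)$. By Proposition \ref{P9.3} it suffices to show that every spectral object $P$ admitting a monomorphism $P\hookrightarrow M$ lies in $\mathcal C$. For such $P$ we have $\langle P\rangle\in Ass(M)\subseteq\Phi(\mathcal C)$, so there is $M'\in\mathcal C$ with $\langle P\rangle\in Ass(M')$; by the definition of $Ass$ this gives a spectral object $Q$ with $\langle Q\rangle=\langle P\rangle$ and a monomorphism $Q\hookrightarrow M'$. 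Since $P$ is also finitely generated (Lemma \ref{L3.2}) and $Ass(P)=\{\langle P\rangle\}=\{\langle Q\rangle\}$, Lemma \ref{L9.1}, applied to the inclusion $Q\hookrightarrow M'\in\mathcal C$, forces $P\in\mathcal C$. Proposition \ref{P9.3} then gives $M\in\mathcal C$, completing the argument.

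I expect the main obstacle to be precisely this last step: passing from ``$\mathcal C$ contains some spectral object in the equivalence class $\langle P\rangle$ which is a subobject of an object of $\mathcal C$'' to ``$\mathcal C$ contains the particular spectral representative $P\hookrightarrow M$'' is not formal, and it is exactly the purpose of Lemma \ref{L9.1} (via indecomposable injective hulls and closure under essential extensions) to supply it, with Proposition \ref{P9.3} then bootstrapping from spectral subobjects of $M$ to $M$ itself. Everything else reduces to the bookkeeping properties of $Ass$ under subobjects, finite direct sums, and essential extensions established earlier.
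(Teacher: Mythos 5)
Your proof is correct and follows essentially the same route as the paper: both establish $\Phi\Psi(S)=S$ from the fact that each spectral $P$ with $\langle P\rangle\in S$ is a finitely generated object in $\Psi(S)$, and both reduce $\Psi\Phi(\mathcal C)\subseteq\mathcal C$ to Lemma \ref{L9.1} (to pass from some spectral representative $Q\hookrightarrow M'\in\mathcal C$ with $\langle Q\rangle=\langle P\rangle$ to $P\in\mathcal C$) followed by Proposition \ref{P9.3} (to get from spectral subobjects of $M$ to $M$ itself). The only material difference is that you explicitly verify that $\Psi(S)$ lands in collection (a), a well-definedness check the paper leaves implicit.
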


\begin{proof}
It is immediate from the definitions that $\Phi\Psi(S)\subseteq S$. Conversely, if $P\in Spec(\mathcal A)$ is such that
$\langle P\rangle\in S$, we know that $P\in \Psi(S)$. Then, $\{\langle P\rangle\} = Ass(P)\subseteq \Phi\Psi(S)$ and hence
we have $S=\Phi\Psi(S)$. 

\smallskip
It is also clear that $\mathcal C\subseteq \Psi\Phi(\mathcal C)$. Conversely, we take $M\in \Psi\Phi(\mathcal C)$
and consider some $P\in Spec(\mathcal A)$ such that there is an inclusion $P\hookrightarrow M$. Then, $\langle P\rangle 
\in Ass(M)\subseteq \Phi(\mathcal C)$. Accordingly, we can find $N\in \mathcal C$ and an inclusion $Q\hookrightarrow N$
with $Q\in Spec(\mathcal A)$ such that $\langle Q\rangle =\langle P\rangle$. Since 
$Ass(P)=\{\langle P\rangle\}=\{\langle Q\rangle\}$, it follows from Lemma \ref{L9.1} that $P\in \mathcal C$. Applying 
Proposition \ref{P9.3}, we now obtain $M\in \mathcal C$. Hence, $\mathcal C= \Psi\Phi(\mathcal C)$. 

\end{proof}

\section{Examples} 

\medskip

\medskip
We will now present examples of abelian categories $\mathcal A$ whose subcategories may be studied using
the theory in the previous sections. For this, we will first make a list of the conditions that have been  applied to the locally noetherian Grothendieck category
$\mathcal A$ at various points in the paper:

\smallskip
(1) Every nonzero object of $\mathcal A$ has an associated point, i.e., for every $0\ne M\in \mathcal A$, we can find
a monomorphism $P\hookrightarrow M$ with $P\in Spec(\mathcal A)$. 

\smallskip
(2) For every spectral object $Q\in Spec(\mathcal A)$
and every $M\in \mathcal A$, we have
$
Ass(L_{\langle Q\rangle}(M))=Ass(M)\cap \mathfrak Spec(\mathcal A/\langle Q\rangle)
$ (see \eqref{tcondc1}).

\smallskip
(3) For any $P\in Spec(\mathcal A)$, the right adjoint $i_{\langle P
\rangle}:\mathcal A/\langle P\rangle\longrightarrow \mathcal A$ to the localization $L_{\langle P
\rangle}:\mathcal A\longrightarrow \mathcal A/\langle P\rangle$ is an exact functor. (see \eqref{dreamexact}).

\smallskip
These assumptions are easily verified in the case of $R$ being a commutative noetherian ring. 
In order to understand the  case of noncommutative $R$, we will discuss these assumptions one by one. 
We know that 
the category $R-Mod$ is locally noetherian exactly when $R$ is a left noetherian ring (see, for instance, \cite[(3.3)]{Ver}). 

\smallskip
In order to study the existence of associated points, Rosenberg  \cite[Chapter I]{R3} considers the following
preorder on the collection of left ideals : for left ideals $\mathfrak{m}$, $\mathfrak{n}\subseteq R$, set :
\begin{equation}\label{eq10.1}
\mathfrak{m}\leq \mathfrak{n} \quad\mbox{if}\quad(\mathfrak{m}:Z)=\{\mbox{$x\in R$ $\vert$ $xZ\subseteq 
\mathfrak{m}$ }\}\subseteq\mathfrak{n}\textrm{ }\mbox{for 
some $Z\in P(R)$}
\end{equation} where $P(R)$ denotes the collection of finitely generated  $\mathbb Z$-submodules 
of $R$. Then, Rosenberg \cite[$\S$ I.6.5.6]{R3} shows  that  if $R$ is ``left $\leq$-noetherian'', i.e., every
nonempty set of left ideals in $R$ has a maximal element with respect to $\leq$, then any nonzero left $R$-module has
an associated point. From 
\eqref{eq10.1}, it is clear that if $R$ is left noetherian and every left ideal in $R$ is also two-sided, then
$R$ is left $\leq$-noetherian and therefore satisfies (1). Further, we know from \cite[V.C2.3]{R3}  that for a left $\leq$-noetherian ring $R$, every
left $R$-module $M$ satisfies $Ass(M)=LAss(M)$. Applying Proposition \ref{sIPx5.10l}, it now follows that the category of modules
over any left $\leq$-noetherian ring $R$ satisfies (2).

\smallskip
We now come to condition (3). As  mentioned before, each spectral object $P\in Spec(\mathcal A)$ corresponds
to a localizing subcategory $\langle P\rangle$ and a flat localization:
\begin{equation}
L_{\langle P\rangle}:\mathcal A\longrightarrow \mathcal A/\langle P\rangle
\end{equation} We recall that the right adjoint $i_{\langle P\rangle}$ of $L_{\langle P\rangle}$ is full and faithful
and hence $\mathcal A/\langle P\rangle$ may be viewed as a subcategory of $\mathcal A$. When $\mathcal A=R-Mod$, we know (see \cite[I.0.4.1]{R3}) that each such subcategory arising from a flat localization is identical to a subcategory $(R-Mod)/F$ constructed from a  ``radical filter $F$ of left ideals in $R$.'' Further, any such filter $F$ defines a functor $\mathbb G_F: R-Mod\longrightarrow R-Mod$ 
known as the ``Gabriel functor''   (see \cite[I.0.4]{R3}), with $\mathbb G_F(R)$ carrying the structure of an $R$-algebra.
Every object in $(R-Mod)/F$ carries the structure of a left $\mathbb G_F(R)$-module. 

\smallskip This situation becomes rather simple
when $R$ is a left principal ideal domain. In that case every radical filter $F$ of left ideals gives a left Ore multiplicative 
set $S$ (see \cite[I.A.1.2]{R3}) and we can construct the left ring of fractions $S^{-1}R$. Then, the flat localization of $R-Mod$ is described
by the extension of scalars $R-Mod\longrightarrow S^{-1}R-Mod$. The right adjoint to this is the restriction 
of scalars $S^{-1}R-Mod\longrightarrow R-Mod$, which is clearly an exact functor. 

\smallskip
\begin{exm} 
\emph{Let $D$ be a
commutative principal ideal domain  which contains a copy $\rho: K\longrightarrow D$ of its quotient field $K$. We assume that
there is some $d\in D$ such that $\rho(d)\ne d$. The simplest case would be to take $D=K$ and $\rho$ to be a nontrivial
automorphism of $K$. For more examples, the reader may see, for instance, \cite[$\S$ 4]{Jat}. 
We  define the skew formal power series ring $R=D\langle x,\rho\rangle$ as follows : as an abelian group, $D\langle 
x,\rho\rangle$ consists of all formal power series $\sum_{i\geq 0}d_ix^i$ with all coefficients $d_i\in D$. The multiplication 
is defined by the rule:}
\begin{equation}
xd =\rho(d)x \qquad\forall\textrm{ }d\in D
\end{equation} \emph{Then, an interesting result of Jategaonkar \cite[Theorem 1]{Jat} states that the skew
formal power series ring $D\langle x,\rho\rangle$ is a left principal ideal domain in which every left ideal is two-sided. Since 
a left principal ideal domain is automatically  left noetherian, it is clear from the discussion above that the category
of left modules over $R=D\langle x,\rho\rangle$ satisfies all the conditions (1), (2),  and (3) above.}
\end{exm}

\begin{exm}\emph{
 We know that the category $QCoh(X)$ of quasi-coherent sheaves on a separated noetherian scheme $
(X,\mathcal O_X)$. 
 is a locally noetherian Grothendieck category (see \cite[$\S$ II.7]{Hart} and \cite[Tag 077P]{Stacks}). 
We now consider a point $x\in X$. The point $x\in X$ determines a quasi-coherent sheaf $P_x\in QCoh(X)$ as follows : if $U$
is an affine open containing $x$, we set $P_x(U)=\mathcal O_X(U)/p_x(U)$, where $p_x(U)$ is the prime ideal of $\mathcal O_X(U)$
corresponding to $x$. On all other affines, $P_x$ is set to be zero. Then, Rosenberg \cite[$\S$ 7.2]{Rose1} has shown that every
spectral object in $QCoh(X)$ is equivalent to some such $P_x$. Then, the right adjoint of the localization 
$QCoh(X)\longrightarrow QCoh(X)/\langle P_x\rangle$ is the pushforward on quasi-coherent sheaves along the morphism:
$
j_x: Spec(\mathcal O_{X,x})\longrightarrow X
$. If $U$ is any affine containing $x$, the morphism $j_x$ factors as:
\begin{equation}\label{eq10.5}
Spec(\mathcal O_{X,x})=Spec(\mathcal O_X(U)_{p_x(U)}){\longrightarrow} Spec(\mathcal O_X(U))=U{\hookrightarrow} X
\end{equation} The pushforwards on quasi-coherent sheaves along both the morphisms in \eqref{eq10.5} are exact and hence
so is the pushforward $j_{x,\ast}$. This proves condition  (3) for the category $Qcoh(X)$.}

\smallskip
\emph{We now come to condition (1) : we have show that every nonzero quasi-coherent sheaf $\mathcal M$ on $X$ has an associated point, i.e., $\mathcal M$
has a quasi-coherent subsheaf that is a spectral object in $QCoh(X)$. It follows from \cite[Tag 01PF, 01XZ, 01YF]{Stacks} that we can find an integral closed subscheme $i:Z\hookrightarrow X$
and a sheaf $0\ne \mathcal I\subseteq\mathcal O_Z$ of ideals on $Z$ such that $i_\ast(\mathcal I)\subseteq \mathcal M$.  It suffices
therefore to prove the result for $\mathcal M=i_\ast(\mathcal I)$.}

\smallskip
\emph{We now consider an affine open $j_U:U=Spec(S)\hookrightarrow X$ such that $\mathcal M(U)\ne 0$. Then, there is a point $x\in Spec(S)
\subseteq X$ such that there is a monomorphism $S/p_x(U)\hookrightarrow \mathcal M(U)$, where $p_x(U)$
is the prime ideal in $S$ corresponding to $x\in Spec(S)$. This shows that $(P_x|U)\hookrightarrow
(\mathcal M|U)$ where $P_x\in QCoh(X)$ is the spectral object of $QCoh(X)$ corresponding to $x\in X$ as constructed above. 
Applying the right adjoint functor $j_{U*}:QCoh(U)\longrightarrow QCoh(X)$, we obtain a monomorphism
$j_{U*}(P_x|U)\hookrightarrow
j_{U*}(\mathcal M|U)$. }

\smallskip
\emph{We now claim that the canonical morphism $\mathcal M\longrightarrow j_{U*}(\mathcal M|U)$ 
is a monomorphism for $\mathcal M=i_*(\mathcal I)$ as above. In other words,
we claim that for each affine $V\subseteq X$, the natural morphism:
\begin{equation}\label{eq10.61e}
\mathcal M(V)=\mathcal I(V\cap Z)\longrightarrow  \mathcal I(U\cap V\cap Z)=\mathcal M(U\cap V)=(\mathcal M|U)(U\cap V)=j_{U*}(\mathcal M|U)(V)
\end{equation} is a monomorphism. Since closed immersions are affine and $X$ is separated, the inclusion $U\cap V
\cap Z\hookrightarrow V\cap Z$ is a Zariski immersion of affine schemes $Spec(B)=U\cap V
\cap Z\hookrightarrow V\cap Z=Spec(A)$. Since $Z$ is an integral scheme, we know that $A$ is an integral domain
and hence the ideal $\mathcal I(V\cap Z)\subseteq A$ is a torsion free $A$-module. Then, for any nonzero element 
$f\in A$, the morphism $\mathcal I(V\cap Z)\longrightarrow \mathcal I(V\cap Z)\otimes_AA_f$ is a monomorphism. Since
the Zariski open $Spec(B)\hookrightarrow  Spec(A)$ contains at least one basic open set of the form $Spec(A_f)$, $0\ne f\in A$, 
it follows that $\mathcal I(V\cap Z)\longrightarrow \mathcal I(V\cap Z)\otimes_AB=\mathcal I(U\cap V\cap Z)$
is a monomorphism. }

\smallskip
\emph{We now form the fiber square:
\begin{equation}\label{eq10.7}
\begin{CD}
\mathcal N @>>> \mathcal M=i_*(\mathcal I)\\
@VVV @VVV\\
j_{U*}(P_x|U) @>>> j_{U*}(\mathcal M|U)
\end{CD}
\end{equation} Then, we have monomorphisms $\mathcal N\hookrightarrow \mathcal M$ and 
$\mathcal N\hookrightarrow j_{U*}(P_x|U)$. We claim that the quasi-coherent subsheaf $\mathcal N\hookrightarrow \mathcal M$ is a spectral object
of $QCoh(X)$. First of all, if we restrict the diagram \eqref{eq10.7} to the open set $U$, the right vertical arrow becomes
an isomorphism and hence $\mathcal N|U=P_x|U$. This gives $\mathcal N\ne 0$. Finally, the proof of \cite[$\S$ 7.2]{Rose1} shows
that $j_{U*}(P_x|U)$ is also a spectral object with $\langle j_{U*}(P_x|U)\rangle =\langle P_x\rangle\in \mathfrak Spec
(QCoh(X))$. Then $\mathcal N$ which is a non-trivial subobject of $j_{U*}(P_x|U)$ is also spectral and we have obtained
an associated point of the object $\mathcal M\in QCoh(X)$ in the sense of Section 2. }

\smallskip
\emph{It remains to check  (2) for the category $QCoh(X)$. By Proposition \ref{sIPx5.10l}, it suffices to show that $LAss(\mathcal M)=Ass(\mathcal M)$
for each $\mathcal M\in QCoh(X)$. We first suppose that $\mathcal M=i_*(\mathcal I^{\oplus r})$ for some $r\geq 0$, where $i:Z\hookrightarrow X$ is an integral
closed subscheme and $\mathcal I\subseteq \mathcal O_Z$ is a sheaf of ideals. We choose a point $\langle P_x\rangle \in LAss(\mathcal M)$ and consider
the  stalk $\mathcal M_x$ at $x\in X$. Then, $\mathcal M_x$ is an $\mathcal O_{X,x}$-module. If 
$m_x$ is the maximal ideal in $\mathcal O_{X,x}$,  it follows that $m_x$ is an associated prime of $\mathcal M_x$.   If 
$j_U:Spec(S)=U\hookrightarrow X$ is an affine open with $x\in U$, we use the fact that $LAss(\mathcal M(U))=Ass(\mathcal M(U))$ to conclude
that there is a monomorphism $P_x|U\hookrightarrow \mathcal M|U$. Since $\mathcal M=i_*(\mathcal I^{\oplus r})$, it now follows as in \eqref{eq10.61e}
and \eqref{eq10.7} that $\langle P_x\rangle \in Ass(\mathcal M)$. }

\smallskip
\emph{More generally, suppose that $\mathcal M$ is a coherent sheaf on $X$, i.e., quasi-coherent and of finite type. Choose $\langle P_x\rangle \in LAss(\mathcal M)$, i.e., $m_x$ is an associated prime of $\mathcal M_x$.  We consider the closed integral
subscheme $i:Z_x:=\overline{\{x\}}\hookrightarrow X$ and its corresponding sheaf of ideals $\mathcal J_x\subseteq \mathcal O_X$. We now consider $\mathcal M'\subseteq \mathcal M$ defined by setting
\begin{equation}
\mathcal M'(V):=\{\mbox{$n\in \mathcal M(V)$ $\vert$ $\mathcal J_x(V)n=0$}\}
\end{equation} for each affine open $V\subseteq X$. It follows from \cite[Tag 01PO]{Stacks} that $\mathcal M'$ is quasi-coherent and that
\begin{equation}
\mathcal M'_x=\{\mbox{$n\in \mathcal M_x$ $\vert$ $sn=0$ for all $s\in m_x$}\}
\end{equation} It is immediate that $\mathcal M'\ne 0$ and that 
the maximal ideal $m_x$ is an associated prime of $\mathcal M'_x$.   Actually, since $\mathcal M'_x$ is annihilated by $m_x$, it follows
from \cite[Tag 01YE]{Stacks} that there is an integer $r\geq 0$, a sheaf of ideals $\mathcal I\subseteq \mathcal O_{Z_x}$ and an affine open $W$ containing $x$ such that we have a monomorphism 
$
i_*(\mathcal I^{\oplus r})\hookrightarrow \mathcal M'
$  which restricts to an isomorphism on $W$. Then,  $m_x$ is an associated prime of $(i_*(\mathcal I^{\oplus r}))_x=\mathcal M'_x$, i.e.,
$\langle P_x\rangle\in LAss(i_*(\mathcal I^{\oplus r}))$. From the above, we know that $LAss(i_*(\mathcal I^{\oplus r}))=Ass(i_*(\mathcal I^{\oplus r}))$. Hence, $\langle P_x\rangle \in Ass(i_*(\mathcal I^{\oplus r}))\subseteq Ass(\mathcal M')\subseteq Ass(\mathcal M)$. }

\smallskip
\emph{Finally, we know that any quasi-coherent sheaf $\mathcal M$ on $X$ may be expressed as a directed colimit of its quasi-coherent submodules of finite type. Applying \cite[III.8.2, V.C2.3.1]{R3}, it now follows that
\begin{equation}
LAss(\mathcal M)=\underset{\tiny \begin{array}{c}\mbox{$\mathcal N\subseteq \mathcal M$} \\ \mbox{q.c. \& f.g.}\\
\end{array}}{\bigcup}\textrm{ }LAss(\mathcal N)=\underset{\tiny \begin{array}{c}\mbox{$\mathcal N\subseteq \mathcal M$} \\ \mbox{q.c. \& f.g.}\\
\end{array}}{\bigcup}\textrm{ }Ass(\mathcal N)=Ass(\mathcal M)
\end{equation} This proves assumption (2) for the category $QCoh(X)$.  }
\end{exm}

\small

\end{document}